\numberwithin{equation}{section}
\newtheorem{theo}{Theorem}[section]
\newtheorem{lem}[theo]{Lemma}
\newtheorem{pro}[theo]{Proposition}
\newtheorem{cor}[theo]{Corollary}
\newtheorem{defi}[theo]{Definition}
\def\eps{\epsilon}
\def\bmt{\left[\begin{array}}
\def\emt{\end{array}\right]}
\title[Normally Hyperbolic Invariant Cylinder]{Normally Hyperbolic Invariant Cylinders Passing Through Multiple Resonance}
\author{Chong-Qing Cheng \& Min Zhou}
\address{Department of mathematics, Nanjing Univerisity, Nanjing 210093, China}
\email{chengcq@nju.edu.cn, minzhou@nju.edu.cn}
\begin{document}
\maketitle
\begin{abstract}
We study the continuation of periodic orbits from various compound of homoclinics in classical system. Together with the homoclinics, the periodic orbits make up a $C^1$-smooth, normally hyperbolic invariant cylinder with holes. It plays a key role to cross multiple resonant point.
\end{abstract}
\renewcommand\contentsname{Index}

\section{Introduction}
\setcounter{equation}{0}
Given an autonomous Hamiltonian, if a hyperbolic periodic orbit exists in an energy level set, the implicit function theorem implies a continuation of periodic orbits nearby, which make up a normally hyperbolic invariant cylinder (NHIC). So, it is natural to ask whether there exists a NHIC extending from the orbits homoclinic to a fixed point. In this paper, we study the problem for the classical system
\begin{equation}\label{eq1.1}
H(x,y)=\frac 12\langle Ay,y\rangle-V(x),\qquad z=(x,y)\in\mathbb{T}^n\times\mathbb{R}^n,
\end{equation}
where the matrix $A$ is positive definite, the smooth potential $V$ attains its minimum at a point $x_0$ only. In this case, $z_0=(x_0,0)$ is a fixed point of the Hamiltonian flow $\Phi_H^t$ and there exist some orbits homoclinic to the fixed point \cite{Bo}. Be aware that the system admits a symmetry $\mathbf{s}:(x,y)\to(x,-y)$, we see that if $z^+(t)=(x^+(t),y^+(t))$ is an orbit, $z^-(t)=\mathbf{s} z^+(t)=(x^+(-t),-y^+(-t))$ is also an orbit. Hence, non-shrinkable homoclinic orbits emerge paired.

To formulate our result, by a translation of variables $x\to x-x_0$ and $V\to V-V(x_0)$ we assume $x_0=0$, $V(0)=0$ and the following conditions:

({\bf H1}), {\it the Hessian matrix of $V$ at $x=0$ is positive definite. The $2n$ eigenvalues of $J\mathrm{diag}(-\partial ^2V(0),A)$ are all different, where $J$ denotes the standard symplectic matrix,
$$
-\lambda_n<\cdots<-\lambda_2<-\lambda_1<0<\lambda_1<\lambda_2<\cdots<\lambda_n,
$$
$(\lambda_1,\cdots,\lambda_n)$ is non-resonant and $V\in C^{2\kappa+1}$ with $(\kappa-1)\lambda_1>\lambda_n$. Let $\Xi_i^{+}=(\Xi_{i,x},\Xi_{i,y})$ be the eigenvector for $\lambda_i$, where $\Xi_{i,x},\Xi_{i,y}$ denote the component for the coordinates $x$ and $y$ respectively, then $\Xi_i^{-}=(\Xi_{i,x},-\Xi_{i,y})$ is the eigenvector for $-\lambda_i$.}

({\bf H2}), {\it for a pair of homoclinic orbits $\{z^+(t),z^-(t)\}$, the curve $x^+(t)$ approaches the origin in the direction of $\Xi_{1,x}$, $x^-(t)$ approaches in the direction of $-\Xi_{1,x}$
$$
\lim_{t\to\pm\infty}\frac{\dot x^+(t)}{\|\dot x^+(t)\|}=\Xi_{1,x}, \qquad \lim_{t\to\pm\infty}\frac{\dot x^-(t)}{\|\dot x^-(t)\|}=-\Xi_{1,x}.
$$
The stable manifold $W^s$ intersects the unstable manifold $W^u$ transversally along the orbit $z^{\pm}(t)$ in the following sense that
\begin{equation}\label{eq1.2}
T_zW^s\oplus T_zW^u=T_zH^{-1}(0)
\end{equation}
holds any for $z\ne 0$ on the homoclinic curve.}

We study $k$ pairs of homoclinic orbits $\{z^\pm_1(t),\cdots,z^\pm_k(t)\}$. A periodic orbit $z^+(t)$ is said to {\it shadow the orbits $\{z^+_1(t),\cdots,z^+_k(t)\}$} if the period admits a partition $[0,T]=[0,t_1]\cup[t_1,t_2]\cup\cdots\cup[t_{k-1},T]$ such that $z^{+}(t)|_{[t_{i-1},t_i]}$ falls into a small neighborhood of $z^+_i(t)$. In this case, its $\mathbf{s}$-symmetric counterpart $z^-(t)=\mathbf{s}z^+(t)$ shadows the orbits $\{z^-_k(t),\cdots,z^-_1(t)\}$.  

To study the case $k\ge 2$, we work in the covering spaces $\bar\pi_h$: $\mathbb{R}^n \times\mathbb{R}^n\to\mathbb{T}^n_h\times\mathbb{R}^n$ and $\pi_h:\mathbb{T}^n_h \times\mathbb{R}^n\to\mathbb{T}^n\times\mathbb{R}^n$, where $\mathbb{T}^n_h=\{(x_1,x_2, \cdots,x_n)\in\mathbb{R}^n:x_i\ \mathrm{mod}\ h_i\in\mathbb{N}\backslash 0\}$. To decide the class $h$, we let $\bar z_1(t)$ be the lift of $z^+_1(t)$ to $\mathbb{R}^{2n}$ such that $\lim_{t\to-\infty}\bar z_1(t)=0$, then choose a lift $\bar z_2(t)$ of $z^+_2(t)$ with $\lim_{t\to -\infty}\bar z_2(t)=\lim_{t\to\infty}\bar z_1(t)$. In the way, we get successively a lift $\bar z_i(t)$ of $z^+_i(t)$ for each $i$. Let $\bar\Gamma$ be the closure of $\cup_{t\in\mathbb{R}}(\cup_{i\le k}\bar z_i(t))$, we construct a shift $\sigma\bar\Gamma$. A curve $\bar z'_1(t)\subset \sigma\bar\Gamma$ is the lift of $z^+_1(t)$ such that $\lim_{t\to-\infty}\bar z'_1(t)=\lim_{t\to\infty}\bar z_k(t)$. Other $\bar z'_i(t)$ is successively fixed. Let $\sigma\bar\Gamma$ be the closure of $\cup_{t\in\mathbb{R}}(\cup_i\bar z'_i(t))$.

({\bf H3}), {\it for $k$ pairs of homoclinic orbits $\{z^\pm_1(t),\cdots,z^\pm_k(t)\}$, there exists a non negative integer $\ell$ and a covering space $\bar\pi_h$: $\mathbb{R}^n\times\mathbb{R}^n\to\mathbb{T}^n_h\times\mathbb{R}^n$ such that $\bar\pi_h(\bar\Gamma\cup\sigma\bar\Gamma \cup\cdots\cup\sigma^\ell\bar\Gamma)$ is a closed curve without self-intersection.} 

\begin{theo}\label{mainresult}
Assume $k$ pairs of homoclinic orbits $\{z^\pm_1(t),\cdots,z^\pm_k(t)\}$ satisfying the hypotheses $(${\bf H1,H2,H3}$)$. Then, there exists a continuation of periodic orbits from the homoclinic orbits $\{z^\pm_1(t),\cdots,z^\pm_k(t)\}$. More precisely, some $E_0>0$ exists such that

1, for any $E\in(0,E_0]$ there exist unique periodic orbit $z^+_E(t)$ and its $\mathbf{s}$-symmetric orbit $z^-_E(t)=\mathbf{s}z^+_E(t)$ shadowing the orbits $\{z^+_1(t),\cdots,z^+_k(t)\}$ and $\{z^-_k(t),\cdots,z^-_1(t)\}$ respectively. As a set depending on $E$, $\cup_tz^{\pm}_E(t)$ approaches $\cup_i\Gamma^\pm_i$ in Hausdorff metric as $E\downarrow0$;

2, for any $E\in[-E_0,0)$ there exists a unique periodic orbit $z_{E,i}$ shadowing the orbits $\{z^+_i(t),z^-_{i}(t)\}$ for $i=1,\cdots,k$. As a set depending on $E$, $\cup_tz_{E,i}(t)$ approaches $\Gamma^+_i\cup\Gamma^-_{i}$ in Hausdorff metric as $E\uparrow 0$; 

Let $\Pi=\Pi^+\cup_{1\le i\le k}(\Pi^-_i\cup\Gamma^+_i\cup\Gamma^-_i)$ where $\Pi^+=\cup_{E>0}(\cup_tz^{+}_E(t)\cup z^{-}_E(t))$ and $\Pi^-_i=\cup_{E<0}\cup_t z_{E,i}(t)$. For $k=1$, it makes up a $C^1$-NHIC with one hole. For $k\ge 2$, each connected component in the pull-back $\pi_h^{-1}\Pi$ of $\Pi$ to $\mathbb{T}^n_h\times\mathbb{R}^n$ is a $C^1$-NHIC with $(\ell+1)k$ holes. The homoclinic orbits are contained inside of the manifold.
\end{theo}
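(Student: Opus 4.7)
The plan is to reduce existence of the shadowing periodic orbits to a hyperbolic fixed-point problem for compositions of a local passage map near the fixed point $z_0$ and global transition maps along the homoclinic arcs, and then to verify that the family of fixed points, indexed by the energy $E$, assembles into a $C^1$-smooth normally hyperbolic cylinder. Under (\textbf{H1}), the non-resonance among the $\lambda_i$'s together with the smoothness bound $V\in C^{2\kappa+1}$, $(\kappa-1)\lambda_1>\lambda_n$ permits a Sternberg/Moser-type linearization to produce $C^\kappa$ coordinates $(u,v)\in\R^n\times\R^n$ on a small box $B_\delta\ni z_0$ in which the flow is essentially the linear hyperbolic flow with multipliers $e^{\pm\lambda_i t}$, and $W^s=\{v=0\}$, $W^u=\{u=0\}$. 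Choosing cross-sections $\Sigma^\pm\subset\partial B_\delta$ transverse to the flow, one obtains an explicit local passage map $\mathcal{L}_E:\Sigma^-\to\Sigma^+$ whose transit time diverges like $\lambda_1^{-1}\log(1/E)$ as $E\downarrow 0$, and which contracts $u$-directions and expands $v$-directions, both exponentially in $T(E)$.

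Outside $B_\delta$, integrating the flow along each lift $\bar z_i(t)$ provides a smooth global diffeomorphism $\mathcal{G}_i$ from a neighborhood of the exit point of $z_i^+$ on $\Sigma^+$ to a neighborhood of the entry point of $z_{i+1}^+$ (indices read cyclically) on $\Sigma^-$, compatible with the energy fibration; the transversality (\textbf{H2}) prevents $D\mathcal{G}_i$ from rotating $TW^u$ into $TW^s$. For $E\in(0,E_0]$ a periodic orbit shadowing $(z_1^+,\dots,z_k^+)$ is then precisely a fixed point of the return map $R_E=\mathcal{G}_k\circ\mathcal{L}_E\circ\cdots\circ\mathcal{G}_1\circ\mathcal{L}_E$. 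Decomposing the tangent spaces into the weak direction along $\Xi_{1,x}$ (privileged by (\textbf{H2})) and the strong directions spanned by $\Xi_i^\pm$ with $i\ge 2$, the alternation of strong contraction by $\mathcal{L}_E$ in $u$ and strong expansion in $v$ renders $R_E$ a hyperbolic map; a graph-transform / Banach contraction argument then yields a unique fixed point depending smoothly on $E$, and the $\mathbf{s}$-image gives the companion orbit. For $E\in[-E_0,0)$ the same scheme applies to each single pair $(z_i^+,z_i^-)$: below the critical energy, trajectories cannot pass through $z_0$ and instead turn around near the homoclinic endpoints, so $\mathcal{L}_E$ is replaced by a reflected passage combining the linear flow with the symmetry $\mathbf{s}$.

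The principal obstacle is $C^1$-regularity of $\Pi$ at $E=0$, where both the period and components of $DR_E$ diverge. I would handle this by expressing the local passage in linearized coordinates as $v\mapsto v\,e^{\lambda_1 T(u,E)}$ with $T$ determined implicitly by $H=E$, so that the apparent singularities cancel, and the parametrization of $\Pi$ by $(E,s)$ (with $s$ a leafwise coordinate) extends continuously and differentiably to $E=0$; the limit leaves are then exactly the homoclinic orbits, which is the Hausdorff convergence asserted in the theorem. Normal hyperbolicity of $\Pi$ is inherited from (\textbf{H1}): the tangent direction to $\Pi$ corresponds to the weakest rate $\lambda_1$, whereas the $2(n-1)$ transverse eigendirections $\Xi_i^\pm$, $i\ge 2$, have strictly larger rates $|\lambda_i|$, giving the required normal-rate inequality for a $C^1$-NHIC. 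Finally, (\textbf{H3}) ensures that in the cover $\T^n_h\times\R^n$ the periodic orbits from different $E$'s and different copies $\sigma^j\bar\Gamma$ do not collide, so each connected component of $\pi_h^{-1}\Pi$ is genuinely a cylinder with $(\ell+1)k$ holes (one per lifted homoclinic pair) rather than a self-intersecting surface. The hardest step, by a clear margin, is the regularity up to the boundary $E=0$: the hyperbolic fixed-point argument for each fixed $E$ is standard, but controlling the delicate cancellation between the logarithmically divergent transit time and the exponentially large Jacobians is where all the work lies.
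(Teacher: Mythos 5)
Your skeleton is correct and matches the paper's strategy at the coarse level: local passage near $z_0$ composed with global transition maps along the homoclinics, graph transform and Banach contraction for each $E$, and a separate argument for $C^1$-regularity at $E=0$. But two of the steps you gloss over are exactly where the paper spends almost all its effort, and as sketched they would not go through.

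\textbf{First}, you invoke a Sternberg/Moser linearization to get $C^\kappa$ coordinates in which the flow is ``essentially linear.'' If that linearization is meant to be symplectic, it is generally impossible to kill all nonlinear terms even with non-resonance: the best one can do is reduce to a Birkhoff normal form $H=\sum\lambda_iu_iv_i+N(I_1,\dots,I_n)+O(\|z\|^{2\kappa+1})$, which is precisely what the paper does; the polynomial $N(I)$ and the $O(\|z\|^{2\kappa+1})$ tail do not go away. If instead you linearize the flow non-symplectically (Hartman--Grobman with finite regularity), you destroy the symplectic form and with it the preservation of $d\hat u\wedge d\hat v$ under the inner/outer maps, the symplectic pairing of eigenvalues $(\sigma,\sigma^{-1})$ of $d\Phi_{E,r,r}$, and the constraint that the flow stays on energy levels --- all of which the paper uses crucially (Lemma~2.3, Proposition~6.2, Lemma~7.5). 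Moreover, the available regularity of the conjugacy (via van Strien's theorem, which the paper cites) is only $\mathrm{id}+O(\|z\|^{1+\nu})$ with small $\nu>0$; that is enough to get differentiability of $\Pi$ \emph{at the single point} $z=0$ (Section~6.1), but it is far short of $C^\kappa$ coordinates, and you cannot use it to differentiate the return map.

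\textbf{Second}, and more seriously, your treatment of $C^1$-regularity at $E=0$ misses the target. You write the local passage as $v\mapsto v\,e^{\lambda_1 T(u,E)}$ and claim the singularities cancel. But $(u_1,v_1)$ are (essentially) the coordinates \emph{along} the cylinder $\Pi$; the smoothness question is how the transverse coordinates $\hat z=(\hat u,\hat v)$ of the cylinder depend on $(u_1,v_1,E)$, i.e.\ whether the tangent planes $T_z\Pi^+$ and $T_z\Pi^-$ converge to a common limit along each homoclinic orbit. That is not a cancellation in a scalar formula; it requires controlling the full $2(n-1)\times 2(n-1)$ differential of the inner map. The paper does this by proving that $d\Phi_{E,r,r}$ has eigenvalues $\sigma_i\sim |E|^{-\lambda_i/\lambda_1}$ with eigenvectors converging to the coordinate axes at rate $O(|E|)$ (Propositions~6.2 and~6.6), and these bounds are obtained from a careful domination estimate on the fundamental matrix of the variational equation along the periodic orbit (Lemmas~7.2--7.3) together with a lemma on composing eigenvectors of nearly block-diagonal maps (Proposition~7.7). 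Those estimates are where the hypothesis $(\kappa-1)\lambda_1>\lambda_n$ actually enters (to make the Birkhoff remainder decay fast enough), and they are precisely the ``delicate cancellation'' you acknowledge but do not supply. Without them your argument stops at $C^0$.

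A smaller point: for $E<0$ the symmetry $\mathbf{s}$ is destroyed by the flattening change of coordinates, so the paper does not use $\mathbf{s}$ to construct the reflected passage; it composes four maps, $\Phi_{E,-r,r}\Phi_{E,-r}\Phi_{E,r,-r}\Phi_{E,r}$, and verifies that the orbit necessarily exits on the opposite section when $E<0$ (Lemma~2.8, item~1). Uniqueness also needs an argument that the Smale horseshoe strips other than the central one produce orbits that exit a fixed neighborhood of $\cup_i\Gamma^\pm_i$; your proposal assumes uniqueness from hyperbolicity of $R_E$, which only gives a unique fixed point on the invariant graph, not uniqueness among \emph{all} periodic orbits close to the homoclinics.
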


\begin{figure}[htp]
  \centering
  \begin{subfigure}[b]{0.45\textwidth}
                \includegraphics[width=5.8cm,height=2.5cm]{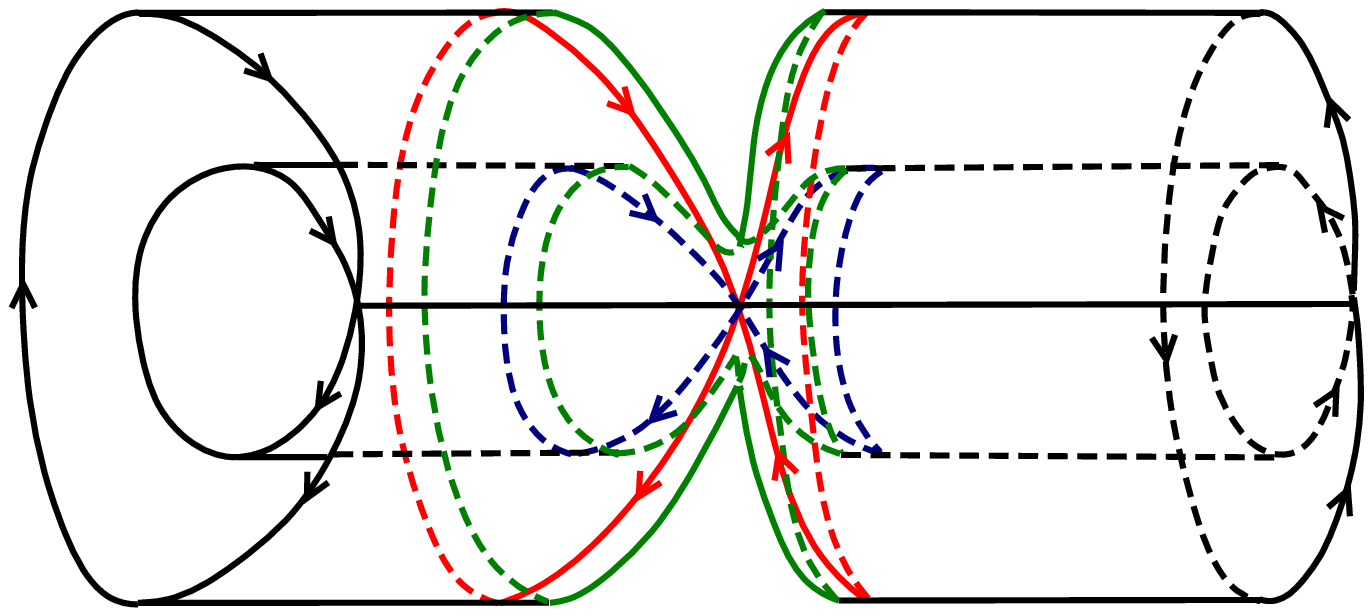}
      \end{subfigure}
  \begin{subfigure}[b]{0.35\textwidth}
               \includegraphics[width=3.3cm,height=3.5cm]{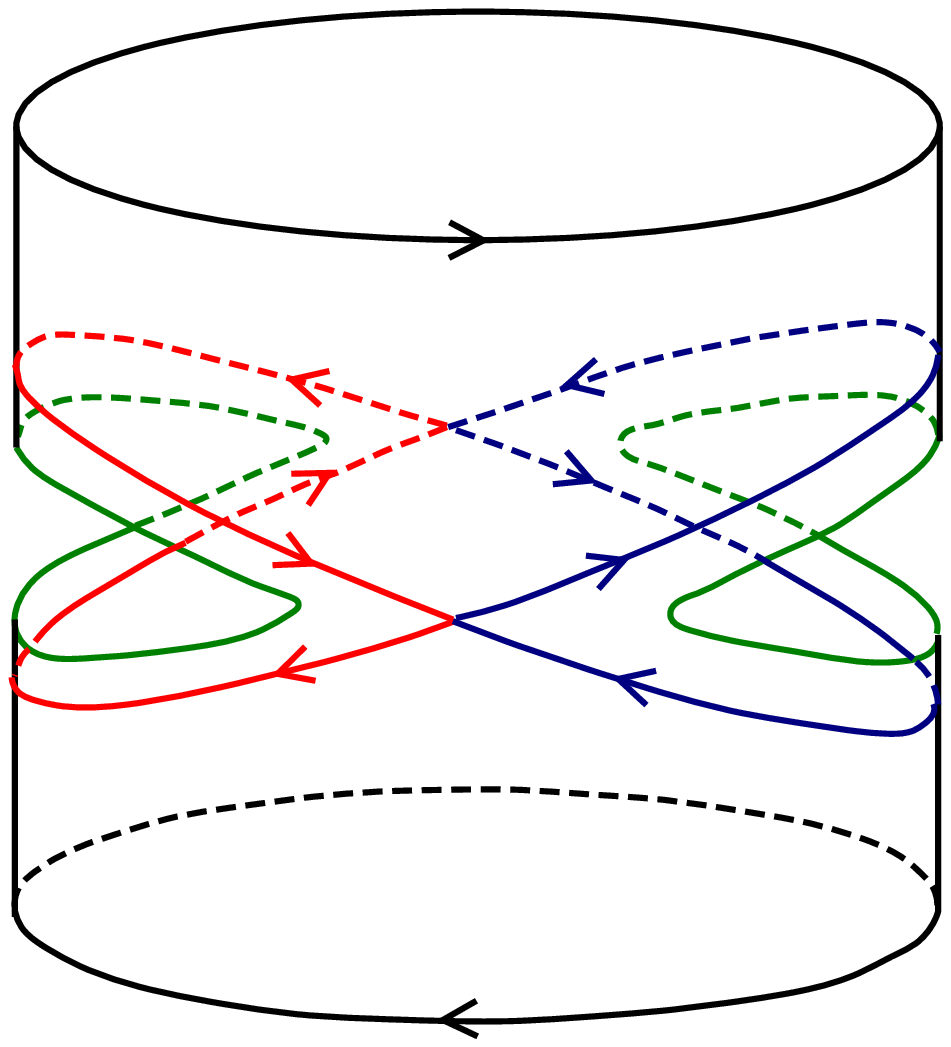}
      \end{subfigure}
  \caption{The left figure shows a singular cylinder in $\mathbb{T}^n\times\mathbb{R}^n$ for the case $k=2$, there are two pairs of homoclinic orbits, one is in red and another one is in dark blue. The right one is its lift to $\mathbb{T}^n_h\times\mathbb{R}^n$.}
  \label{fig1}
\end{figure}
\noindent{\bf Remark}. For $k\ge 2$, it is possible that $\ell\ge 1$. Here is an example that $n=2$, there are two pairs of homoclinic orbits $z^\pm_1(t)$ and $z^\pm_2(t)$ with $[z^+_1]=(1,0)$ and $[z^+_2]=(0,1)$. In this case,  $\mathbb{T}^2_h=\{(x_1,x_2)\in\mathbb{R}^2:x_1,x_2\mod 2\}$ and $\ell=1$. 

Without the condition ({\bf H3}), $\Pi$ can be still treated as a surface with self-intersection. Also, the $k$ pairs of homoclinic orbits are not required to be all different.

The hypotheses ({\bf H1, H2}) are open-dense condition in $C^{r}$-topology for any $r\ge 2$. The first one is obvious. To see the second, we notice that the local stable and unstable manifold $W^{s}_{loc}$ and $W^{u}_{loc}$ have their generating functions $S^s$ and $S^u$ respectively such that $W^{s,u}_{loc}=\mathrm{graph}(dS^{s,u})$.
A homoclinic orbit has to be in the local stable or unstable manifold when it approaches the fixed point as $t\to\infty$ or $t\to -\infty$. Therefore, in a ball $B_r\subset\mathbb{R}^n_x$ about the origin of suitably small radius $r$, each point uniquely determines an orbit lying in the stable (unstable) manifold. Since $\lambda_1<\lambda_2$, each $x\in B_r$ determines an orbit lying $W^{s,u}_{loc}$ that approaches the origin in the direction of $\Xi_1^{\pm}$ if and only if $x$ does not lie in a co-dimension one hypersurface $S$ which is diffeomorphic to a disc of $(n-1)$-dimension. To check the condition (\ref{eq1.2}) of transversal intersection, we notice that $dS^s=dS^u$ holds along the homoclinic orbits around the origin. There are plenty of small perturbations such that $\partial^2(S^u-S^s)$ is non-degenerate when it is restricted on a codimension-one section transversal to the curve $x^\pm(t)$.

The existence of the periodic orbits is reduced to the problem to find fixed point of the Poincar\'e return map. It will be down by applying Banach's fixed point theorem. We first study the periodic orbit of a single homology type in Section \ref{sec.2} for $E>0$ and in Section \ref{sec.3} for $E<0$. The periodic orbit of compound type homology class is studied in Section \ref{sec.5} and the uniqueness is proved in Section \ref{sec.4}. Because the return map is not defined on the level set $H^{-1}(0)$, the hard part is to prove the $C^1$-smoothness around the homoclinic orbits, which is fulfilled in Section \ref{sec.6} and \ref{sec.7}. The application of Theorem \ref{mainresult} to the problem of double resonance is discussed finally in Section \ref{sec.8}.

\section{Periodic orbit with single homology class}\label{sec.2}
\setcounter{equation}{0}
In this section we study the continuation of periodic orbits from a single homoclinic orbit $z^+(t)$ to positive energy region. Let $B_{r'}\subset\mathbb{R}^{2n}$ denote a ball about the origin of radius $r'$, where the coordinates $(x,y)$ are chosen such that 
\begin{equation*}
H(x,y)=G(x,y)+R(x), \qquad \mathrm{for}\ (x,y)\in B_{r'}
\end{equation*}
where $G=\sum_{i=1}^n\frac 12(y_i^2-\lambda_i^2x_i^2)$ and $R$ is the higher order term $R=O(\|x\|^3)$, namely, $|R(x)|/\|x\|^3$ is bounded as $\|x\|\to 0$.

For a vector $x=(x_1,x_2,\cdots,x_n)$, we use $\|x\|=(\sum_{i=1}^nx_i^2)^{\frac12}$ to denote its Euclidean norm and use $|x|=\max\{|x_1|,|x_2|,\cdots,|x_n|\}$ to denote its maximum norm.

Restricted in a neighborhood of the origin, we introduce a canonical transformation for convenience of notation
\begin{equation}\label{eq2.1}
\left[\begin{matrix}x_i \\ y_i\end{matrix}\right]=
\frac 1{\sqrt{2}}\left[\begin{matrix}
\frac 1{\sqrt{\lambda_i}} & -\frac 1{\sqrt{\lambda_i}}\\
\sqrt{\lambda_i} & \sqrt{\lambda_i}
\end{matrix}\right]\left[\begin{matrix}u_i \\ v_i\end{matrix}\right].
\end{equation}
In $(u,v)$-coordinates, the Hamiltonian $H$ takes the form $H(u,v)=\sum\lambda_iu_iv_i+R(u,v)$ with $R(u,v)=O(\|(u,v)\|^3)$, the Hamiltonian equation turns out to be
\begin{equation}\label{eq2.2}
\dot u_i=\lambda_iu_i+\frac{\partial R}{\partial v_i}, \qquad
\dot v_i=-\lambda_iv_i-\frac{\partial R}{\partial u_i}.
\end{equation}
If the Hamiltonian is $C^{2\kappa+1}$-smooth with $\kappa\in\mathbb{N}$, one has its Birkhoff normal form
\begin{equation}\label{eq2.3}
H(u,v)=\sum\lambda_iu_iv_i+N(I_1,\cdots,I_n)+O(\|(u,v)\|^{2\kappa+1})
\end{equation}
where $I_i=u_iv_i$ and $N$ is a polynomial of degree $\kappa$ without constant and linear part.

Since it is hyperbolic, the fixed point $z=0$ has its stable (unstable) manifold $W^s$ and $W^u$. Some $r'>0$ exists such that, restricted in $B_{r'}$, they are the graph of some maps. In the coordinates $(u,v)$, $\exists$ $U:\{v\in\mathbb{R}^n\}\cap B_{r'}\to\mathbb{R}^n$ and $V:\{u\in\mathbb{R}^n\}\cap B_{r'}\to\mathbb{R}^n$ such that
$$
W^s|_{B_{r'}}=\{U(v),v\},\qquad W^u|_{B_{r'}}=\{u,V(u)\}.
$$
\begin{lem}\label{flatpro}
Restricted in $B_{r'}$ with suitably small $r'>0$, there exists a canonical transformation $(p,q)=T(u,v)$ such that, for the Hamiltonian flow of $H^*=H\circ T^{-1}$, the local stable manifold $W^s$ and the unstable one $W^u$ satisfy the condition
\begin{equation}\label{eq2.4}
W^s|_{B_{r'}}=\{0,q\},\qquad W^u|_{B_{r'}}=\{p,0\}.
\end{equation}
If $H$ is a Birkhoff normal form, then $H^*=H\circ T^{-1}$ is also a Birkhoff normal form.
\end{lem}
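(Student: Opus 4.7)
The plan is to build $T=T_2\circ T_1$ as a composition of two canonical shears, each flattening one of $W^{s,u}$, arranged so that $T_2$ does not disturb the flatness of $W^s$ already produced by $T_1$.

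Because $W^s$ is the stable manifold of a hyperbolic fixed point of a Hamiltonian flow, it is Lagrangian, and its graph presentation $W^s=\{(U(v),v)\}$ forces $\partial U_i/\partial v_j=\partial U_j/\partial v_i$, i.e.\ $dU(v)\wedge dv=0$. This identity is exactly what is needed to make
\begin{equation*}
T_1:(u,v)\longmapsto(u',v')=(u-U(v),v)
\end{equation*}
canonical, since $du'\wedge dv'=du\wedge dv-dU(v)\wedge dv=du\wedge dv$. Under $T_1$, $W^s$ becomes $\{(0,v')\}$, and $W^u$ is carried to another Lagrangian graph $\{(u',V'(u'))\}$ over the $u'$-axis (for $r'$ small, using $dU(0)=dV(0)=0$ and the implicit function theorem applied to $u\mapsto u-U(V(u))$), with $V'(0)=0$ and $dV'(0)=0$. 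Applying the analogous Lagrangian shear
\begin{equation*}
T_2:(u',v')\longmapsto(p,q)=(u',v'-V'(u'))
\end{equation*}
sends $W^u$ to $\{(p,0)\}$, while on $W^s=\{(0,v')\}$ it acts by $(0,v')\mapsto(0,v'-V'(0))=(0,v')$ because $V'(0)=0$; hence $W^s$ in the final $(p,q)$-coordinates is still $\{(0,q)\}$. Setting $T=T_2\circ T_1$ and shrinking $r'$ so $T$ is a diffeomorphism onto its image gives the required canonical transformation.

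For the Birkhoff normal form assertion, the truncation $H_0=\sum_i\lambda_iu_iv_i+N(I_1,\ldots,I_n)$ with $I_i=u_iv_i$ has vector field $\dot u_i=u_i(\lambda_i+\partial_{I_i}N)$, $\dot v_i=-v_i(\lambda_i+\partial_{I_i}N)$, so the coordinate planes $\{u=0\}$ and $\{v=0\}$ are invariant manifolds of $H_0$. Since $H-H_0=O(\|(u,v)\|^{2\kappa+1})$, the standard uniqueness of formal stable/unstable manifolds yields $U(v)=O(\|v\|^{2\kappa})$ and $V(u)=O(\|u\|^{2\kappa})$. Consequently $T-\mathrm{id}=O(\|\cdot\|^{2\kappa})$, and substituting $(u,v)=T^{-1}(p,q)$ perturbs each of $\sum\lambda_iu_iv_i$ and $N(u_1v_1,\ldots,u_nv_n)$ only at order $\|(p,q)\|^{2\kappa+1}$, so
\begin{equation*}
H^*(p,q)=\sum_i\lambda_ip_iq_i+N(p_1q_1,\ldots,p_nq_n)+O(\|(p,q)\|^{2\kappa+1})
\end{equation*}
is again a Birkhoff normal form with the same polynomial $N$.

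The one point that needs real care is the order estimate $U,V=O(\|\cdot\|^{2\kappa})$ in the normal-form setting; the two Lagrangian-shear constructions themselves are essentially formal once the Lagrangian graph description of $W^{s,u}$ is in hand, and the smoothness and invertibility of $T$ on a small enough ball follow from $T=\mathrm{id}+O(\|\cdot\|^2)$ via the inverse function theorem.
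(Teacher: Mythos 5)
Your proof is correct and takes essentially the same route as the paper: your shears $T_1$ and $T_2$ are precisely the paper's canonical transformations $\Psi$ and $\Psi'$ (the paper packages them via the generating functions $S(u,q')=\langle u,q'\rangle-F_u(q')$ and $S'(p,q')=\langle p,q'\rangle-F'_v(p)$ rather than verifying canonicity through the symmetry of $\partial U_i/\partial v_j$, but they are the same maps, and $T=T_2\circ T_1=\Psi'\Psi$). Your more detailed treatment of the normal-form assertion — deriving $U,V=O(\|\cdot\|^{2\kappa})$ from the invariance of $\{u=0\}$, $\{v=0\}$ under the truncated flow and then substituting $T^{-1}$ into $H$ — fills in what the paper states tersely as $F_{s,u}=O(|z|^{2\kappa+1})$.
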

\begin{proof}
Because the stable and unstable manifold are Lagrangian sub-manifold, both $U$ and $V$ are the differential of some functions
$$
U(v)=\partial F_u(v),\qquad V(u)=\partial F_v(u).
$$
By the generating function $S(u,q')=\langle u,q'\rangle-F_u(q')$, we get a canonical transformation $\Psi$: $v=q'$, $p'=u-\partial F_u(q')$. In the coordinates $(p',q')$, the local stable manifold lies in the subspace $\mathbb{R}^n_q=\{p'=0\}$, the unstable manifold is a graph $\{p',\partial F'_v(p')\}$ of some function $F'_v$. With the generating function $S'(p,q')=\langle p,q'\rangle-F'_v(p)$, we get another canonical transformation $\Psi'$: $p=p'$, $q=q'-\partial F'_v(p)$. Let $T=\Psi'\Psi$, the stable and unstable manifold of $\Phi^t_{H^*}$ satisfy the condition (\ref{eq2.4}).

For Birkhoff normal form, the generating function $F_s$ ($F_u$) of the stable (unstable) manifold is of order $2\kappa+2$, $F_{s,u}=O(|z|^{2\kappa+1})$. The generating function $S(u,q')=\langle u,q'\rangle-F_u(q')$ satisfies the condition $F_u(q')=O(|q'|^{2\kappa+1})$. Thus, the transformation does not change the normal form.
\end{proof}

Since $F_u(q)=O(\|q\|^2)$ and $F'_v(p)=O(\|p\|^2)$ when they are restricted in $B_{r'}$, we are able to extend $F_u$ and $F'_v$ $C^2$-smoothly to the whole space by setting $F_u=0$, $F'_v=0$ when they are valued at the place outside of $B_{\sqrt{r'}}$. With the lemma, we are able to assume that the Hamiltonian $H(u,v)$ satisfies one more condition:

({\bf H4}), {\it the local stable $($respectively unstable$)$ manifold of the hyperbolic fixed point $z=0$ is a neighborhood of the fixed point in the stable $($respectively unstable$)$ subspace of the linear flow $\Phi^t_G$.}

\begin{lem}
Under the assumption $(${\bf H4}$)$, the remainders in Equation $($\ref{eq2.2}$)$ satisfy the conditions $\partial_vR(0,v)=0$ and $\partial_uR(u,0)=0$ for $|(u,v)|<r'$. Thus, the remainder $R$ of $H$ admits the form
\begin{equation}\label{eq2.5}
R(u,v)=\langle R_{1,1}(u,v)u,v\rangle, \qquad \mathrm{with}\ \ R_{1,1}(0,0)=0.
\end{equation}
\end{lem}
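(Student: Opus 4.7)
The plan is to translate the invariance of the flat local stable/unstable manifolds given by (H4) into pointwise vanishing conditions on the gradient of $R$, and then use Hadamard's lemma to extract the bilinear factorization.

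First I would exploit invariance. By (H4) the local stable manifold of $\Phi^t_H$ coincides with $W^s_{\text{loc}}=\{u=0\}$ and the local unstable manifold with $W^u_{\text{loc}}=\{v=0\}$ inside $B_{r'}$. Since $\{u=0\}$ is flow-invariant, any trajectory starting there must satisfy $\dot u_i=0$ at $u=0$. Plugging $u=0$ into the first equation of (\ref{eq2.2}) gives $\partial_{v_i}R(0,v)=0$ for every $i$ and every $v$ with $|v|<r'$, i.e.\ $\partial_vR(0,v)=0$. The same argument applied to $\{v=0\}$ yields $\partial_uR(u,0)=0$.

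Next I would upgrade these gradient identities to $R(0,v)\equiv 0$ and $R(u,0)\equiv 0$. Since $\partial_vR(0,v)=0$, the function $v\mapsto R(0,v)$ is constant on its connected domain; because $R=O(\|(u,v)\|^3)$ we have $R(0,0)=0$, hence $R(0,v)=0$. Symmetrically $R(u,0)=0$.

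Then I would apply Hadamard's lemma twice. Writing
\begin{equation*}
R(u,v)=R(u,v)-R(u,0)=\int_0^1\partial_vR(u,tv)\cdot v\,dt,
\end{equation*}
and then, using $\partial_vR(0,tv)=0$,
\begin{equation*}
\partial_vR(u,tv)=\partial_vR(u,tv)-\partial_vR(0,tv)=\int_0^1\partial_u\partial_vR(su,tv)\,u\,ds,
\end{equation*}
I obtain the bilinear representation with the matrix-valued coefficient
\begin{equation*}
R_{1,1}(u,v)=\int_0^1\!\!\int_0^1\partial_u\partial_vR(su,tv)\,ds\,dt,
\end{equation*}
so that $R(u,v)=\langle R_{1,1}(u,v)u,v\rangle$. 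The normalization $R_{1,1}(0,0)=0$ follows immediately because $R=O(\|(u,v)\|^3)$ implies that all second-order partial derivatives of $R$ vanish at the origin.

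The only non-routine point is the very first step, namely recording cleanly how flow-invariance of a smooth Lagrangian graph forces the Hamiltonian vector field to be tangent, thereby killing the relevant components of $\nabla R$ on that graph; everything afterward is an application of the fundamental theorem of calculus and order counting.
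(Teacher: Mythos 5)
Your proof is correct and follows essentially the same route as the paper: invariance of $\{u=0\}$ and $\{v=0\}$ kills the relevant gradient components, and then a double application of the fundamental theorem of calculus to the mixed derivative yields the factorization. Your Hadamard-lemma formulation with explicit $\int_0^1 \cdots ds\,dt$ is a cleaner, unambiguously multi-dimensional rendering of the paper's somewhat loose $\int_0^u$, $\int_0^v$ notation, and your intermediate observation $R(u,0)=R(0,v)=0$ replaces the paper's auxiliary functions $F(v)$, $G(u)$ being constant, but the underlying argument is identical.
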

\begin{proof}
Since the local stable (respectively unstable) manifold of the hyperbolic fixed point $\{z=0\}$ lies in the stable (respectively unstable) subspace of the linear flow $\Phi^t_G$. It holds on the $\{v=0\}\cap B_{r'}$ (respectively $\{u=0\}\cap B_{r'}$) that $\dot v=0$ (respectively $\dot u=0$), namely, $\partial_uR(u,0)=0$ and $\partial_vR(0,v)=0$. For the proof of (\ref{eq2.5}), we have
$$
\begin{aligned}
R(u,v)=&\int_0^u\frac{\partial R}{\partial u}(u,v)du+F(v).
\end{aligned}
$$
It follows that
$$
\frac{\partial R}{\partial v}(u,v)=\int_0^u\frac{\partial^2R}{\partial u\partial v}du+\partial F(v),
$$
from which, we obtain $\partial F(v)=0$ by applying the relation $\partial_vR(0,v)=0$. Therefore, one has
$$
R(u,v)=\int_0^v\int_0^u\frac{\partial^2R}{\partial u\partial v}dudv+G(u).
$$
By applying the condition that $\partial_uR(u,0)=0$, we obtain $\partial G(u)=0$.
Since $R(u,v)=O(|(u,v)|^3)$, the condition (\ref{eq2.5}) is proved.
\end{proof}

Let $\Sigma_{\pm r}^-=\{u_1=\pm r\}$ and $\Sigma_{\pm r}^+=\{v_1=\pm r\}$ where the superscript ``+" indicates that the orbit $z^\pm(t)$ is approaching the origin when it passes through the section, while ``$-$" indicates that the orbit is getting away from the origin when it crosses the section. Because of the assumption ({\bf H2}), the homoclinic orbit $z^\pm(t)$ passes through the section $\Sigma^-_{\pm r}$ and $\Sigma^+_{\pm r}$ at the points $z^-_{\pm r}$ and $z^+_{\pm r}$ respectively. Emanating from $z^-_{\pm r}$ at $t=0$, it moves along the homoclinic orbit lying outside of $B_r(0)$ before it arrives at $z^+_{\pm r}$ after a finite time $T_0$. Due to the continuous dependence of solution of ODE on its initial value, some neighborhood $U_r$ of $z^-_{r}$ lying in the section $\Sigma_r^-$ exists satisfying the condition: emanating from any point $z\in U_r$, the orbit $\Phi_H^t(z)$ keeps close to $z^+(t)|_{[0,T_0]}$ before it arrives at $z'\in\Sigma^+_r$ after a finite time. In this way we get a map $\Phi_r$: $U_r\to\Sigma_r^+$ such that $z'=\Phi_r(z)$ and call it {\it outer map}, because it is defined by orbits lying outside of $B_r(0)$. Another outer map $\Phi_{-r}$: $U_{-r}\to\Sigma_{-r}^+$ is defined similarly.

With the Hamiltonian flow $\Phi_H^t$ we define {\it inner map} $\Phi_{r,r}$. Emanating from a point $z\in\Sigma^+_r\backslash W^s$ around $z^+_r$, the orbit keeps close to the stable manifold until arrives at the section $\{u_1=v_1\}$, then it keeps close to the unstable manifold until it arrives at $z'\in\Sigma^-_r$. We define $z'=\Phi_{r,r}(z)$. As we shall see later, it is well-defined only when $z\in H^{-1}(E)$ with $E>0$. For $E<0$, it shall cross the section $\{u_1=-v_1\}$ and hit a point $z'\in\Sigma^-_{-r}$, we get another inner map $\Phi_{r,-r}(z)=z'$ in this case. The inner map $\Phi_{-r,\pm r}$ is defined similarly.

Let $\Sigma^-_{E,\pm r}=H^{-1}(E)\cap\{u_1=\pm r\}$ and $\Sigma^+_{E,\pm r}=H^{-1}(E)\cap\{v_1=\pm r\}$ be the section in the energy level set $H^{-1}(E)$. Correspondingly, $U_r$ admits a foliation of energy level set $U_r=\cup_EU_{E,r}$. The restriction of the outer map on $U_{E,\pm r}$ is denoted by $\Phi_{E,\pm r}$, see Figure \ref{Figure2}. Let $V_r=\Phi_rU_r$, it also admits a foliation of energy level sets $V_r=\cup_EV_{E,r}$. The restriction of $\Phi_{r,r}$ on $V_{E,r}$ is denoted by $\Phi_{E,r,r}$, see Figure \ref{Figure2} also. As we shall see later, the inner map $\Phi_{E,r,r}$ is studied by decomposing it as the composition of $\Phi_{E,r,0}$: $S_{E,r}\to\{u_1=v_1\}$ and $\Phi_{E,0,r}$: $\{u_1=v_1\}\to U_{E,r}$.

\begin{lem}\label{map1.2}
The maps $\Phi_{E,r}$, $\Phi_{E,r,0}$ and $\Phi_{E,0,r}$ preserve the closed 2-form
$$
\hat\omega=d\hat x\wedge d\hat y=d\hat u\wedge d\hat v.
$$
\end{lem}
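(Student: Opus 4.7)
The unifying observation is that all three maps $\Phi_{E,r}$, $\Phi_{E,r,0}$, $\Phi_{E,0,r}$ are Poincar\'e maps: each is of the form $z\mapsto\Phi_H^{\tau(z)}(z)$ where $\tau$ is the first arrival time at the target section, and the source and target are codimension-one submanifolds of $H^{-1}(E)$ transversal to the Hamiltonian vector field $X_H$. So the plan is to reduce the claim to the standard symplectic fact that a flow map between two such sections preserves the symplectic form induced on each from the ambient form $\omega=du\wedge dv$.

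First, I would verify that on each of the three source/target sections inside $H^{-1}(E)$ the ambient form $\omega$ pulls back to $\hat\omega=d\hat u\wedge d\hat v$. For $\Sigma^{\pm}_{E,r}=\{u_1=\pm r\}\cap H^{-1}(E)$, one has $du_1=0$ on the section, hence $\omega=\sum_{i\ge2}du_i\wedge dv_i$ modulo the ideal generated by $du_1$; using $(\hat u,\hat v)=(u_2,\dots,u_n,v_2,\dots,v_n)$ as coordinates (solving the equation $H=E$ for $v_1$, which is possible since $\partial_{v_1}H\ne 0$ near the homoclinic), the pullback is exactly $\hat\omega$. The same happens on $\Sigma^{+}_{E,r}=\{v_1=r\}\cap H^{-1}(E)$ by the symmetric argument, and on $\{u_1=v_1\}\cap H^{-1}(E)$ because $du_1=dv_1$ there, so $du_1\wedge dv_1=0$ on the section. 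The identity $d\hat x\wedge d\hat y=d\hat u\wedge d\hat v$ is then immediate from $(\ref{eq2.1})$, which is componentwise canonical, so $du_i\wedge dv_i=dx_i\wedge dy_i$ for every $i\ge2$.

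Second, for a general Poincar\'e-type map $\Psi:\Sigma_0\to\Sigma_1$, $\Psi(z)=\Phi_H^{\tau(z)}(z)$, between codimension-one sections in $H^{-1}(E)$ transversal to $X_H$, I would compute
\begin{equation*}
d\Psi_z(\xi)=(\Phi_H^{\tau(z)})_*\xi+d\tau_z(\xi)\,X_H(\Psi(z)).
\end{equation*}
Using that $\Phi_H^t$ preserves $\omega$ and that $\iota_{X_H}\omega=-dH$, a short computation gives
\begin{equation*}
(\Psi^*\omega)(\xi_1,\xi_2)=\omega(\xi_1,\xi_2)+d\tau(\xi_1)\,dH(\xi_2)-d\tau(\xi_2)\,dH(\xi_1).
\end{equation*}
Since $\xi_1,\xi_2\in T_z\Sigma_0\subset T_zH^{-1}(E)$ annihilate $dH$, the correction vanishes and $\Psi^*(\omega|_{\Sigma_1})=\omega|_{\Sigma_0}$. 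Combined with the first step, this gives $\Psi^*\hat\omega=\hat\omega$ for each of the three maps.

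The only real point to watch is the intermediate section $\{u_1=v_1\}$ appearing in the decomposition of the inner map: I need $X_H$ to be transversal to it for both $\Phi_{E,r,0}$ and $\Phi_{E,0,r}$ in the relevant region, so that $\tau$ is well defined and smooth and so that $(\hat u,\hat v)$ are genuine coordinates on $\{u_1=v_1\}\cap H^{-1}(E)$. This transversality holds because along an orbit that has just entered a neighborhood of the origin along the stable manifold the quantity $u_1-v_1$ has a non-zero derivative, a fact one reads off from the form of $H$ in $(u,v)$-coordinates combined with $(\ref{eq2.5})$; the details of that check are the only non-trivial piece of the argument, and they fit naturally with the inner-map analysis that follows.
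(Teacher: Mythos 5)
Your proposal is correct, and it takes a genuinely different route from the paper's. The paper proves the lemma globally via the Poincar\'e--Cartan integral invariant: it works in the $(2n+1)$-dimensional extended phase space with the $1$-form $y\,dx-H\,dt$, integrates around a closed curve encircling a vortex tube, and then uses Stokes' formula together with the fact that $u_1$ (resp.\ $v_1$) is constant on the source (resp.\ target) section and $dH=0$ to kill the extraneous contributions; the case of $\Phi_{E,r,0}$, whose target lies in $\{u_1=v_1\}$, requires the additional observation that the projection of the capping disc to the $(u_1,v_1)$-plane has no interior. Your argument is instead pointwise and infinitesimal: you write the differential of the Poincar\'e return map as $(\Phi_H^\tau)_*\xi + d\tau(\xi)\,X_H$, expand $\Psi^*\omega$, and observe that the cross terms involve $\iota_{X_H}\omega=-dH$ which annihilates tangent vectors to $H^{-1}(E)$, while the $X_H$-only term is $\omega(X_H,X_H)=0$. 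Your unified treatment dispenses with the Poincar\'e--Cartan machinery and handles all three maps identically, at the cost of the extra (small) step of verifying that the ambient $\omega$ restricts to $d\hat u\wedge d\hat v$ on each of the three sections and that $X_H$ is transversal to each; you correctly flag the transversality of $X_H$ to $\{u_1=v_1\}$ as the one point that needs checking, and it does indeed hold near the homoclinic because $\dot u_1-\dot v_1=\lambda_1(u_1+v_1)(1+O(r))$ is bounded away from zero there. Both proofs yield the same conclusion; yours is perhaps the more elementary and more systematic presentation.
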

\begin{proof}
Consider the vortex lines of the form $ydx-Hdt$ in $(2n+1)$-dimensional extended phase space. If $\sigma$ is a piece of vortex tube and $\gamma_1$ and $\gamma_2$ are closed curve encircling the same tube such that $\gamma_1-\gamma_2=\partial\sigma$, one has the integral invariant of Poincar\'e-Cartan
$$
\oint_{\gamma_1}ydx-Hdt=\oint_{\gamma_2}ydx-Hdt.
$$
Let $\gamma_1$ be a closed curve lying in $U_{E,r}$ and $\gamma_2=\Phi_{E,r}\gamma_1$. Because $dH(v)=0$ holds for any vector tangent to $H^{-1}(E)$, $u_1$ keeps constant in $U_{E,r}$ and $v_1$ keeps constant in $V_{E,r}$ we obtain from Stock's formula that the following holds
\begin{equation}\label{symplectic}
\int_{\sigma_1}d\hat u\wedge d\hat v=\int_{\sigma_2}d\hat u\wedge d\hat v
\end{equation}
for 2-dimensional disc $\sigma_1\subset U_{E,r}$, $\sigma_2\subset V_{E,r}$ bounded by $\gamma_1$ and $\gamma_2$ respectively. It finishes the proof for $\Phi_{E,r}$. For $\Phi_{E,r,0}$, let $\gamma_1$ be a closed curve lying in $S_{E,r}$ and let $\gamma_2=\Phi_{E,r,0}\gamma_1$. then the projection of $\sigma_2$ to the plan $\{(u_1,v_1)\}$ does not contain interior if $\sigma_2\subset\{u_1=v_1\}$ is a surface bounded by $\gamma_2$. Therefore, \eqref{symplectic} also holds.
\end{proof}

To study the outer map $\Phi_{E,r}$ and the inner map $\Phi_{E,r,r}$, we introduce some rules of notation. Let $\hat u=(u_2,\cdots,u_n)$ and $\hat v=(v_2,\cdots,v_n)$. The principle of the notation also applies to $\hat x$, $\hat y$. 
Let $\hat\pi:\mathbb{R}^{2n}\to\mathbb{R}^{2(n-1)}$ be the projection so that $\hat\pi(u,v)=(\hat u,\hat v)$ and let $\pi_u,\pi_v$ the be projection such that $\pi_u(u,v)=u$, $\pi_v(u,v)=v$, $\pi_u(\hat u,\hat v)=\hat u$ and $\pi_v(\hat u,\hat v)=\hat v$.

Recall the homoclinic orbit $z^\pm(t)$ passes through $\Sigma^\pm_{\pm r}$ at the points  $z^\pm_{\pm r}$ which are written in coordinates
$$
z^-_{\pm r}=(\pm r,\hat u^-_{\pm r},v^-_{\pm r}),\qquad z^+_{\pm r}=(u^+_{\pm r},\pm r, \hat v^+_{\pm r})
$$
where $\hat u^-_{\pm r}=(u^-_{2,\pm r},\cdots,u^-_{n,\pm r})$ and the same principle of notation also applies to $\hat v^-_{\pm r}$, $\hat u^+_{\pm r}$ and $\hat v^+_{\pm r}$. The hypotheses ({\bf H2},{\bf H4}) imply that
$$
\hat v^-_{\pm r}=0,\quad \hat u^+_{\pm r}=0, \quad |\hat u^-_{\pm r}|=o(r),\quad |\hat v^+_{\pm r}|=o(r).
$$
To find the periodic orbit, let us specify what is the set $U_r$.

\begin{defi}\label{def2.1}
Given small $\delta>0$, let $U_{E,r}\subset H^{-1}(E)$ be a subset such that
$$
\hat\pi U_{E,r}=\hat U_{\delta}=\{|\hat u-\hat u^-_r|\le\delta,|\hat v-\hat v^-_r|\le\delta\}.
$$
Let $U_r=\cup_{-E_0\le E\le E_0}U_{E,r}$.
\end{defi}
Clearly, there exist $E_0,\delta>0$ depending on $r>0$ such that $U_{E,r}$ is well-defined for $|E|\le E_0$. Let $V_{E,r}=\Phi_{E,r}U_{E,r}$ over which the inner map $\Phi_{E,r,r}$ or $\Phi_{E,r,-r}$ may not be well-defined. We define a set $S_{E,r}\subseteq V_{E,r}$ where the inner map $\Phi_{E,r,r}$ or $\Phi_{E,r,-r}$ is well-defined.

Each $z\in U_{E,r}$ is determined by its $\hat z$-component, there exists a unique $v_1=v_1(\hat z,E)$ such that $z=(r,v_1,\hat z)$. The projection of map $\Phi_{E,r}$, denoted by $\hat\Phi_{E,r}$, is well defined such that $\hat\Phi_{E,r}(\hat\pi z)=\hat\pi\Phi_{E,r}(z)$. It is symplectic and smoothly depends on $E$ when $E$ is suitably small. For a $C^1$-map $F$: $\pi_u \hat U_\delta\to\pi_v \hat U_\delta$, each $E\in[-E_0,E_0]$ induces a graph $\mathcal{G}_{F,E}=\{(r,v_1(\hat u,F(\hat u),E),\hat u,F(\hat u)):|\hat u-\hat u^-_r|\le\delta\}$. The transversal intersection property \eqref{eq1.2} makes sure that the outer map $\Phi_{E,r}$ brings the graph $\mathcal{G}_{F,E}$ to a graph $\mathcal{G}_{\Phi_{E,r}^*F}$ of $\Phi_{E,r}^*F$. To check, we define the cones with $\alpha>0$
$$
\begin{aligned}
\hat K^-_{\alpha}&=\{(\xi_{\hat u},\xi_{\hat v})\in\mathbb{R}^{2n-2}:\alpha|\xi_{\hat u}|\ge |\xi_{\hat v}|\},\\
\hat K^+_{\alpha}&=\{(\xi_{\hat u},\xi_{\hat v})\in\mathbb{R}^{2n-2}:\alpha|\xi_{\hat v}|\ge |\xi_{\hat u}|\}.
\end{aligned}
$$
Let $\mathscr{F}=\{F\in C^1(\pi_u \hat U_\delta,\pi_v \hat U_\delta):\|DF\|\le\eta\}$ denote a set of maps, we are going to show that if $\mathcal{G}_F$ is the graph of $F\in\mathscr{F}$, then $\hat\Phi_{E,r}\mathcal{G}_F\subset\hat V_{E,r}$ is also a graph if $\eta>0$ is assumed suitably small. For this end, we consider the tangent map $d\hat\Phi_{E,r}$. For a vector $(\xi_{\hat u},\xi_{\hat v})\in T_{\hat z}\mathcal{G}_F$, i.e. $|\xi_{\hat u}|\ge \eta^{-1}|\xi_{\hat v}|$, let $(\xi'_{\hat u},\xi'_{\hat v})=d\hat\Phi_{E,r}(\hat z)(\xi_{\hat u},\xi_{\hat v})$. It follows from the transversal intersection property (\ref{eq1.2}) that $|\xi'_{\hat u}|\ne 0$ if $\eta>0$ is suitably small. Indeed, we have the lemma

\begin{lem}\label{lem2.2}
For small $|E|\le E_0$, the transversal intersection hypothesis $($\ref{eq1.2}$)$ implies that there exist $\lambda, M>0$, each $\alpha\in(0,\frac{\lambda}{M})$ determines $\alpha^*=\frac{(1+\alpha)M}{\lambda-M\alpha}$ such that

1) for $(\xi_{\hat u},\xi_{\hat v})\in\hat K^-_\alpha$ and $z\in U_{E,r}$, one has $(\xi^*_{\hat u},\xi^*_{\hat v})=d\hat\Phi_{E,r}(\hat z)(\xi_{\hat u},\xi_{\hat v})\in\hat K^-_{\alpha^*}$;

2) for $(\xi_{\hat u}^\star,\xi_{\hat v}^\star)\in\hat K^+_\alpha$ and $z\in S_{E,r}$, one has $(\xi_{\hat u},\xi_{\hat v})=d\hat\Phi^{-1}_{E,r}(\hat z)(\xi_{\hat u}^\star,\xi_{\hat v}^\star)\in\hat K^+_{\alpha^*}$.
\end{lem}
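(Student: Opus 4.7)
The plan is to read $d\hat\Phi_{E,r}$ as a $(2n-2)\times(2n-2)$ block matrix
$$
d\hat\Phi_{E,r}(\hat z)=\bmt{cc} A_{11} & A_{12}\\ A_{21} & A_{22}\emt,
$$
with the blocks acting on the splitting $(\xi_{\hat u},\xi_{\hat v})$ used to define the cones $\hat K^\pm_\alpha$, and then to extract two quantitative inputs: (i) a uniform upper bound $M$ on all four blocks, and (ii) a uniform lower bound $\lambda>0$ on $A_{11}$ in the sense $|A_{11}\xi_{\hat u}|\ge\lambda|\xi_{\hat u}|$. The upper bound is immediate from the fact that $\Phi_{E,r}$ is a finite-time flow, smoothly depending on $E$, on a compact neighborhood of the piece of the homoclinic $z^+(t)|_{[0,T_0]}$. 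The lower bound is where the transversality hypothesis ({\bf H2}), namely \eqref{eq1.2}, enters, and providing it is the main geometric obstacle.

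To obtain $\lambda$, I would first work at $E=0$. The hypothesis ({\bf H4}) gives $T_{z^-_r}W^u=\{\xi_v=0\}$ and $T_{z^+_r}W^s=\{\xi_u=0\}$. Since $\Phi^t_H$ carries $W^u$ to itself, the image of the horizontal plane under $d\Phi_{0,r}$, when restricted to the section $\Sigma^+_{0,r}$ and projected by $\hat\pi$, is exactly $\hat\pi T_{z^+_r}(W^u\cap\Sigma^+_{0,r})$. The transversality \eqref{eq1.2} at $z^+_r$, combined with $T_{z^+_r}W^s$ being vertical and with the fact that the energy direction and the $v_1$-direction are used up by passing to the section, translates to: this image $(n-1)$-plane is transverse to the vertical plane $\{\xi_{\hat u}=0\}$ at $\hat z^+_r$. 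That is precisely the statement that $A_{11}(0)$ is an isomorphism; a compactness/continuity argument then yields a uniform lower bound $\lambda>0$ on $A_{11}(E)$ for all $|E|\le E_0$ after shrinking $E_0$.

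With $M$ and $\lambda$ in hand, part 1 is a one-line estimate: for $(\xi_{\hat u},\xi_{\hat v})\in\hat K^-_\alpha$, i.e.\ $|\xi_{\hat v}|\le\alpha|\xi_{\hat u}|$,
$$
|\xi^*_{\hat u}|\ge|A_{11}\xi_{\hat u}|-|A_{12}\xi_{\hat v}|\ge(\lambda-M\alpha)|\xi_{\hat u}|,\qquad |\xi^*_{\hat v}|\le|A_{21}\xi_{\hat u}|+|A_{22}\xi_{\hat v}|\le M(1+\alpha)|\xi_{\hat u}|,
$$
so for $\alpha<\lambda/M$ one has $|\xi^*_{\hat v}|/|\xi^*_{\hat u}|\le(1+\alpha)M/(\lambda-M\alpha)=\alpha^*$, which is the claimed cone invariance.

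For part 2 I would mirror the argument with $d\hat\Phi^{-1}_{E,r}$. The role of $W^u$ at $z^-_r$ is played by $W^s$ at $z^+_r$: $\Phi^{-t}_H$ carries $W^s$ to itself, so the image under $d\hat\Phi^{-1}_{0,r}$ of the vertical plane at $\hat z^+_r$ is $\hat\pi T_{z^-_r}(W^s\cap\Sigma^-_{0,r})$, which by \eqref{eq1.2} applied at $z^-_r$ is transverse to the horizontal at $\hat z^-_r$. This gives a uniform lower bound on the $\hat v\to\hat v$ block of $d\hat\Phi^{-1}_{E,r}$, and the same algebraic manipulation yields the vertical cone invariance with the same $\alpha^*$. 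The only delicate point to double-check is that the relevant constants can be chosen so that a single pair $(\lambda,M)$ works simultaneously for the forward and backward statement; this again follows from compactness of the closure of the homoclinic together with smooth dependence on $E$.
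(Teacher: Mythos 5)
Your proof is correct and follows essentially the same route as the paper: read $d\hat\Phi_{E,r}$ as a $2\times2$ block matrix, use the transversality hypothesis \eqref{eq1.2} to show $A_{11}$ (resp.\ $B_{22}$) is nonsingular at $E=0$, extend by continuity, and run the elementary cone estimate with $\lambda$ and $M$. The only cosmetic differences are that the paper invokes symplecticity of $\hat\Phi_{0,r}$ to conclude the image of $(\xi_{\hat u},0)$ is nonzero whereas you use invertibility of the diffeomorphism's differential (equivalent here), and your phrasing of $\lambda$ as a lower bound on $|A_{11}\xi_{\hat u}|/|\xi_{\hat u}|$ is actually slightly more precise than the paper's ``smallest eigenvalue'' wording, which really means the smallest singular value.
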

\begin{proof}
In $(u,v)$-coordinates, we consider the differential of $\hat\Phi_{E,r}$ at $\hat z$:
\begin{equation}\label{eq2.13}
d\hat\Phi_{E,r}(\hat z)=\left[\begin{matrix}
A_{11}(E,\hat z) & A_{12}(E,\hat z) \\ A_{21}(E,\hat z) & A_{22}(E,\hat z)\end{matrix}\right],
\end{equation}
where $A_{ij}$ is $(n-1)\times(n-1)$ sub-matrix. We claim $\mathrm{det}(A_{11}(0,\hat z^-_{r}))\ne 0$.

If not, there would be a vector $\xi_{\hat u}\ne 0$ such that $A_{11}\xi_{\hat u}=0$. Since $\Phi_{0,r}$ is symplectic preserving, $d\hat\Phi_{0,r}(\hat z^-_{r})(\xi_{\hat u},0)=(0,\xi'_{\hat v})\ne 0$ is a non-zero vector that must lie in the stable subspace. But it is absurd because the stable manifold intersects the unstable manifold transversally in the sense of (\ref{eq1.2}), it is then impossible that $d\Phi_{0,r}$ maps a vector of $T_{z^-_{r}}W^u$ into $T_{z^+_{r}}W^s$.

The matrix $d\hat\Phi_{E,r}(\hat z)$ continuously depends on $E$ and $\hat z$. Therefore, for $\hat z$ around $\hat z^-_r$ and small $E$, one has $\mathrm{det}(A_{11}(E,\hat z))\ne 0$. Let $\lambda>0$ be smaller than the absolute value of the smallest eigenvalue of $A_{11}$ and let $M>0$ be larger than $|A_{ij}|$, the norm of $A_{ij}$. In this case, for a vector $(\xi_{\hat u},\xi_{\hat v})\in\hat K^-_\alpha$ with $\alpha<\frac{\lambda}{M}$
$$
\begin{aligned}
|\xi^*_{\hat u}|&=|A_{11}(E,\hat z) \xi_{\hat u}+A_{12}(E,\hat z)\xi_{\hat v}|\ge(\lambda-M\alpha)|\xi_{\hat u}|>0\\
|\xi^*_{\hat v}|&=|A_{21}(E,\hat z) \xi_{\hat u}+A_{22}(E,\hat z)\xi_{\hat v}|\le(1+\alpha)M|\xi_{\hat u}|.
\end{aligned}
$$
Let $\alpha^*=\frac{(1+\alpha)M}{\lambda-M\alpha}$, then $(\xi^*_{\hat u},\xi^*_{\hat v})=d\hat\Phi_{E,d,d}(\hat z)(\xi_{\hat u},\xi_{\hat v})\in K^-_{\alpha^*}$. It proves the first item.

The second item is proved similarly. In $(u,v)$-coordinates, we write the differential of $\hat\Phi^{-1}_{E,r}$ at $\hat z$ in the form:
\begin{equation*}\label{eq2.14}
d\hat\Phi^{-1}_{E,r}(\hat z_{r})=\left[\begin{matrix}
B_{11}(E,\hat z_{r}) & B_{12}(E,\hat z_{r}) \\ B_{21}(E,\hat z_{r}) & B_{22}(E,\hat z_{r})\end{matrix}\right],
\end{equation*}
where $B_{ij}$ is $(n-1)\times(n-1)$ sub-matrix. For the same reason to show $\mathrm{det}(A_{11}(0,\hat z^-_{r}))\ne 0$, one has $\mathrm{det}(B_{22}(0,\hat z^+_{r}))\ne 0$. If we let $\lambda>0$ be smaller than the absolute value of the smallest eigenvalue of $B_{11}$ and let $M>0$ be larger than $|B_{ij}|$ for small $E$ and $z$ around $z^+_{r}$, then the rest of the proof is the same as above.
\end{proof}

We next show that, for small $E>0$, $\Phi_{E,r,r}\mathcal{G}_{\Phi_{E,r}^*F}\cap S_{E,r}$ intersects $U_{E,r}$. It is the first step to show the existence of periodic orbits emerging from homoclinics.

\begin{pro}\label{pro2.5}
There exists some $E_0>0$, for each $E\in(0,E_0]$, some set $S_{E,r}\subseteq V_{E,r}$ exists so that $\Phi_{E,r,r}(\mathcal{G}_{\Phi_{E,r}^*F}\cap S_{E,r})\cap U_{E,r}\ne\varnothing$. The map $\Phi_{E,r,r}$ expands $\mathcal{G}_{\Phi_{E,r}^*F}\cap S_{E,r}$ in $\hat u$-component such that $\Phi_{E,r,r}S_{E,r}$ covers $\{|\hat u-\hat u^-_r|\le r\}$ in the sense that
\begin{equation}\label{eq2.6}
\pi_{u}\hat\pi\Phi_{E,r,r}(\mathcal{G}_{\Phi_{E,r}^*F}\cap S_{E,r})\supseteq\{|\hat u-\hat u^-_r|\le r\},
\end{equation}
and it contracts $S_{E,r}$ in the $\hat v$-component such that
\begin{equation}\label{eq2.7}
\pi_{v}\hat\pi(\Phi_{E,r,r}S_{E,r}\cap U_{E,r})\subseteq\{|\hat v|\le cr^{3-2c'r}E^{1-c'r}\}.
\end{equation}
where $c,c'>0$ are constants independent of $E$ and $r$.
\end{pro}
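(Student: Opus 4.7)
I would analyze the inner map $\Phi_{E,r,r}$ by integrating the Hamiltonian flow explicitly in the coordinates provided by Lemmas 2.1 and 2.2. There the local stable and unstable manifolds coincide with the coordinate planes $\{u=0\}$ and $\{v=0\}$, and the remainder factors as $R(u,v)=\langle R_{1,1}(u,v)u,v\rangle$ with $R_{1,1}(0)=0$. Consequently the equations of motion take the multiplicative form
\[
\dot u_i=\lambda_i u_i+O(|z||u|),\qquad \dot v_i=-\lambda_i v_i+O(|z||v|),
\]
in which the $u$- and $v$-blocks are perturbed only by multiplicative corrections of size $O(|z|)$. Since every trajectory contributing to $\Phi_{E,r,r}$ stays in a ball of radius $O(r)$ around the origin, these corrections are uniformly $O(r)$.

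The first step is to parametrize $V_{E,r}$ by $(\hat u^0,\hat v^0)$, with $v_1^0=r$ and $u_1^0=E/(\lambda_1 r)+O(rE)$ determined by $H(z^0)=E$. Integrating the $u_1$-equation yields the transit time
\[
T(E,z^0)=\frac{1}{\lambda_1}\log\frac{\lambda_1 r^2}{E}(1+O(r)),
\]
and applying Gronwall to the $\hat u$- and $\hat v$-equations produces, to leading multiplicative order,
\[
\hat u_i(T)=\hat u_i^0(r^2/E)^{\lambda_i/\lambda_1+O(r)},\qquad \hat v_i(T)=\hat v_i^0(E/r^2)^{\lambda_i/\lambda_1+O(r)}.
\]
The $O(r)$ correction in the exponents arises because the $O(r)$ nonlinear perturbation accumulates over the logarithmically long time $T\sim|\log E|$, producing a multiplicative error of the form $e^{CrT}=(r^2/E)^{c'r}$. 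I would then define $S_{E,r}\subseteq V_{E,r}$ as the subdomain on which $\hat u^0$ is small enough for $\hat u(T)$ to cover $\{|\hat u-\hat u^-_r|\le r\}$; because $\partial\hat u(T)/\partial\hat u^0$ is to leading order a diagonal operator with very large positive entries $(r^2/E)^{\lambda_i/\lambda_1}$, a standard degree or inverse-function argument---together with the cone preservation from Lemma 2.3 on the outer-map side---ensures that the $\hat u$-projection of $\Phi_{E,r,r}(\mathcal G_{\Phi^*_{E,r}F}\cap S_{E,r})$ covers $\{|\hat u-\hat u^-_r|\le r\}$, giving (2.6) and hence the non-empty intersection with $U_{E,r}$. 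For (2.7), on $\mathcal G_{\Phi^*_{E,r}F}$ the initial value $\hat v^0$ has size $O(r)$ (since $|\hat v^+_r|=o(r)$ and the graph slope $\eta$ is bounded via Lemma 2.3); substituting in the contraction factor and repackaging the exponents produces the stated bound $|\hat v(T)|\le cr^{3-2c'r}E^{1-c'r}$.

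The main obstacle is the long-time Gronwall estimate across the logarithmically divergent transit time $T\sim|\log E|$: the factorized form $R=\langle R_{1,1}u,v\rangle$ with $R_{1,1}(0)=0$ has to be exploited in full so that the nonlinear corrections remain \emph{multiplicative} of order $(r^2/E)^{c'r}$; a merely additive bound would overwhelm the hyperbolic expansion/contraction, and the sharp exponents in (2.6)--(2.7) would be lost. A secondary subtlety is to make the covering argument in Step 2 uniform in $E\in(0,E_0]$ and in $F\in\mathscr F$, which requires the cone conditions of Lemma 2.3 to mesh cleanly with the flow-induced hyperbolic estimates before and after the linearizing coordinate change.
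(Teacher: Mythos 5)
Your overall strategy (Gronwall estimates for the flow in the flattened coordinates of Lemma \ref{flatpro}, a transit‐time asymptotic, and a covering argument on the $\hat u$-projection) is aligned with the paper's, but two of your central claims do not follow from what you invoke. First, the factorized remainder gives $\partial_{v_i}R=\langle u,U'_i\rangle$ with $|U'_i|=O(|z|)$, so the $i$-th coordinate satisfies $\dot u_i=\lambda_i u_i+O(r|u|)$ -- the correction involves the \emph{full} vector $|u|$, not $|u_i|$. Gronwall therefore controls the magnitudes $\varrho=|u|$ and $\rho=|v|$ (this is precisely why the paper passes to polar-spherical coordinates in (\ref{eq2.8})), with rates pinched only between $\lambda_1-cr$ and $\lambda_n+cr$; it does not give the per-coordinate asymptotics $\hat u_i(T)=\hat u_i^0(r^2/E)^{\lambda_i/\lambda_1+O(r)}$, nor does it make $\partial\hat u(T)/\partial\hat u^0$ diagonal to leading order, because the cross-coupling can dominate when $|\hat u_i^0|$ is small. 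For the actual conclusions (\ref{eq2.6})--(\ref{eq2.7}) the uniform bounds $e^{(\lambda_1-cr)t}\le \varrho(t)/\varrho(0)\le e^{(\lambda_n+cr)t}$ and the mirror bounds for $\rho$ suffice, so this gap is fixable by retreating to the vector estimates; as stated, however, your intermediate formulae are false.

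The more substantive omission is the well-posedness of the inner map for $E>0$ together with the precise transit time. You must verify that an orbit entering through $\Sigma^+_r$ and staying inside $B_r$ exits through $\Sigma^-_r$ (same sign of $u_1$) if and only if $E>0$; otherwise it escapes through $\Sigma^-_{-r}$, which is exactly the $E<0$ regime treated in Section \ref{sec.3}. ``Integrating the $u_1$-equation'' does not decide this: the $u_1$-equation carries the same $O(r|u|)$ coupling, and the Gronwall cone $[\lambda_1-cr,\lambda_n+cr]$ applied over $T\sim\lambda_1^{-1}|\log E|$ produces an additive error $O(r|\log E|)$, which diverges as $E\downarrow 0$. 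That is fine for the exponents in (\ref{eq2.7}), but it is not enough to pin down the sign of $\sum\lambda_i u_{i,0}v_{i,0}+R$, which is what determines $\mathrm{sign}(E)$. The paper handles this in Lemma \ref{pro2.2} by invoking van Strien's $C^{1+\nu}$ linearization (the conjugacy $h=\mathrm{id}+f$ with $f=O(\|z\|^{1+\nu})$), which yields the precise lower/upper bounds on the initial data in (\ref{eq2.18}) and hence the uniformly bounded constant $c_E$ in (\ref{eq2.9}). Without that step your proposal assumes the inner map lands where you want it to without verifying it; you also need the sign statement to build $S_{E,r}$ uniformly in $E$ and to make the covering argument (which you outsource to ``a standard degree argument'') go through on each energy slice.
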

\begin{proof}
It is proved by using the hyperbolic property of the flow $\Phi_H^t$ when it is restricted around the origin. To see the property more clearly, we introduce multi-dimensional polar-spherical coordinates
$$
v_i=\rho\Psi_i(\psi),\quad u_i=\varrho\Phi_i(\phi), \qquad \mathrm{for}\ i=1,\cdots,n
$$
where $\sum\Psi_i^2=1$ and $\sum\Phi_i^2=1$ for $\phi,\psi\in\mathbb{S}^{n-1}$. For instance, let $\Psi_1=\sin\psi_1\sin\psi_2$, $\Psi_2=\sin\psi_1\cos\psi_2$ and $\Psi_3=\cos\psi_1$ for $n=3$. Since $\sum\Psi_i^2=1$ implies $\sum\langle\dot\psi,\partial\Psi_i \rangle\Psi_i=0$, it is reduced from the equation (\ref{eq2.2}) and the relation $\dot v_i=\dot \rho\Psi_i+\rho\langle\dot\psi,\partial\Psi_i\rangle$ that
\begin{equation*}
\begin{aligned}
\dot\rho&=\dot\rho\sum\Psi_i^2+\rho\sum\langle\dot\psi,\partial\Psi_i\rangle\Psi_i =\sum\dot v_i\Psi_i\\
&=-\rho\sum\lambda_i\Psi_i^2-\sum\partial_{u_i}R(\varrho\Phi,\rho\Psi)\Psi_i.\\
\dot\varrho&=\varrho\sum\lambda_i\Phi_i^2+\sum\partial_{v_i}R(\varrho\Phi,\rho\Psi) \Phi_i.
\end{aligned}
\end{equation*}
Since the stable and unstable manifold of $\Phi_H^t$ are assumed to be the stable and unstable subspace of $e^{\mathrm{diag}(\Lambda,-\Lambda)t}$ as shown in (\ref{eq2.4}), there exist smooth functions $U'_i$ and $V'_i$ such that $\partial_{v_i}R=\langle u,U'_i\rangle$ and $\partial_{u_i}R=\langle v,V'_i\rangle$ with $U'_i(0,0)=0$ and $V'_i(0,0)=0$. Therefore, we see that some $c>0$ exists such that
\begin{equation*}
\begin{aligned}
(\lambda_n+cr)\varrho&\ge\dot\varrho\ge(\lambda_1-cr)\varrho,\\
-(\lambda_n+cr)\rho&\le \dot\rho\le-(\lambda_1-cr)\rho
\end{aligned}
\end{equation*}
holds along each orbit $\Phi_H^t(u,v)$ lying $B_{2r}$. By Gro\"nwell's inequality one has
\begin{equation}\label{eq2.8}
\begin{aligned}
\varrho(0)e^{(\lambda_n+cr)t}&\ge\varrho(t)\ge \varrho(0)e^{(\lambda_1-cr)t},\\
\rho(0)e^{-(\lambda_n+cr)t}&\le \rho(t)\le\rho(0)e^{-(\lambda_1-cr)t}.
\end{aligned}
\end{equation}

Let $\mathcal{G}'=\cup_{-E_0\le E\le E_0}\mathcal{G}_{\Phi_{E,r}^*F}$. Because of the property $\partial_{u_1}H(z^+_r)=\lambda_1r(1+O(r))>0$ and Lemma \ref{lem2.2}, there exists some $\delta'>0$ such that $\pi_u\mathcal{G}'\supset\{|u|\le\delta'\}$. We consider the set
$$
\Phi_t\mathcal{G}'=\{\Phi^t_H(z):z\in\mathcal{G}',|\Phi_H^s(z)|\le 2r,\ \forall\ s\in[0,t]\}.
$$
Notice that $\Phi_t\mathcal{G}'$ may not be the same as $\Phi_H^t\mathcal{G}'$, one has $\Phi_t\mathcal{G}'\subsetneq\Phi_H^t\mathcal{G}'$ for large $t$. It follows from  (\ref{eq2.8}) that some $t_1\le\frac 1{\lambda_1-cr}(\ln 2r-\ln\delta')$ exists such that for $t\ge t_1$ one has $\pi_u\Phi_t\mathcal{G}'=\{|u|\le 2r\}$ and $\pi_v\Phi_t\mathcal{G}'\subseteq\{v:|v|\le \frac{\delta'}{2r}\}$. The set $\cup_{t\ge t_1}\Phi_t\mathcal{G}'$ intersects $U_r\subset\Sigma^-_r$ on an $n$-dimensional strip
$$
\Pi_{r}=U_r\cap(\cup_{t\ge t_1}\Phi_t\mathcal{G}').
$$
\begin{lem}\label{pro2.2}
Let $z_E(t)=(u_E(t),v_E(t))$ be an orbit of $\Phi_H^t$ lying in the set $H^{-1}(E)$ with small $|E|>0$, let $t_E>0$ be the number such that $z_E(\pm t_E)\in\partial B_{r}$ and $z_E(t)\in B_r$ $\forall$ $t\in[-t_E,t_E]$. If the boundary values $u_E(t_E)=(u_1(t_E),\cdots,u_n(t_E))$ and $v_E(-t_E)=(v_1(-t_E),\cdots,v_n(-t_E))$ satisfy the condition $|u_1(t_E)|\ge|\hat u(t_E)|$, $|v_1(-t_E)|\ge|\hat v(-t_E)|$ and $|u_1(t_E)|=|v_1(t_E)|=r$ then

1) $E>0$ if $v_1(-t_E)u_1(t_E)>0$ and $E<0$ if $v_1(-t_E)u_1(t_E)<0$;

2) some constant $c_E>0$ exists, uniformly bounded as $|E|\to 0$ such that
\begin{equation}\label{eq2.9}
\frac 1{\lambda_1}\ln\frac 1{|E|}+2\ln r-c_E\le 2t_E\le\frac 1{\lambda_1}\ln\frac 1{|E|}+2\ln r+c_E.
\end{equation}
holds for suitably small $|E|>0$.
\end{lem}
It follows from Lemma \ref{pro2.2} that $H(z)>0$ for any $z\in\Pi_{r}$, we shall present its proof in the end of this section.
\begin{lem}\label{foreq2.8}
Let $\Pi_{E,r}=\Pi_{r}\cap H^{-1}(E)$, then $\pi_u\Pi_{E,r}\supseteq\{|\hat u-\hat u^-_r|\le r\}$ holds for all $E\in(0,E_0]$.
\end{lem}
\begin{proof}
It follows from $|\hat u_r^-|=o(r)$ that $\hat u\in\{|\hat u-\hat u^-_r|\le r\}\subset\{|u|\le 2r\}$. So, we have $\pi_u\Phi_t\mathcal{G}'\cap\Sigma^-_r\supseteq\{|\hat u-\hat u^-_r|\le r\}$, namely, for any $\hat u\in\{|\hat u-\hat u^-_r|\le r\}$ some $v=v(\hat u,t)$ such that $(r,\hat u,v)\in\pi_u\Phi_t\mathcal{G}'$ provided $t\ge t_1$.
Because of \eqref{eq2.9}, some $c>1$ exists such that
$$
c^{-1}r^{2\lambda_1}e^{-\lambda_1t}\le H(r,\hat u,v)\le cr^{2\lambda_1}e^{-\lambda_1t}.
$$
Therefore, we have $H(r,\hat u,v(\hat u,t))\to 0$ as $t\to\infty$. Let $E_0=c^{-1}r^{2\lambda_1}e^{-\lambda_1t_1}$, then for any $E\in(0,E_0]$, there exists $v$ such that $(r,\hat u,v)\in\pi_u\Phi_t\mathcal{G}'$ and $H(r,\hat u,v)=E$. 
\end{proof}

With the initial position in $z_E(0)\in S_{E,r}$, we assume that the orbit arrives at the section $U_{E,r}$ after a time $t_z$. By applying Lemma \ref{pro2.2}, we see that $t_z=2t_E$ is controlled by \eqref{eq2.9}. It follows from \eqref{eq2.8} that $|v_E(t_z)|\le|v_E(0)|e^{-(\lambda_1-cr)t_z}\le cr^{3-2c'r}E^{1-c'r}$. It verifies \eqref{eq2.7}. The proof of Lemma \ref{pro2.5} is completed.
\end{proof}

The arguments also apply to the case of negative energy. Let $U_{-r}=\cup_EU_{E,-r}$ where $U_{E,-r}\subset\{u_1=-r\}\cap H^{-1}(E)$ is defined such that
$\hat\pi U_{E,-r}=\{|\hat u-\hat u^-_{-r}|\le\delta,|\hat v-\hat v^-_{-r}|\le\delta\}.$
In this case, the set $\cup_{t\ge t_1}\Phi_tV_r$ also intersects $U_{-r}$ on a $(2n-1)$-dimensional strip
$\Pi_{-r}=U_{-r}\cap(\cup_{t\ge t_1}V_r),$
it follows from Lemma \ref{pro2.2} that $H(z)<0$ for any $z\in\Pi_{-r}$. Therefore, each $S_{E,r}$ with small $E<0$ also maps to $\Pi_{E,r}$ which satisfies the condition $\Pi_{E,r}=\{|\hat u-\hat u^-_{-r}|\le r\}$. It implies the existence of some small $E'_0>0$ such that for $E\in [-E'_0,0)$ one has
\begin{equation}\label{eq2.10}
\pi_u\Phi_{E,r,-r}(\mathcal{G}_{\Phi_{E,r}^*F}\cap S_{E,r})\supset\pi_u U_{E,-r}, \qquad \pi_v\Phi_{E,r,-r}S_{E,r}\subset\pi_v U_{E,-r}.
\end{equation}
\begin{figure}[htp]
  \centering
  \includegraphics[width=7.5cm,height=4.5cm]{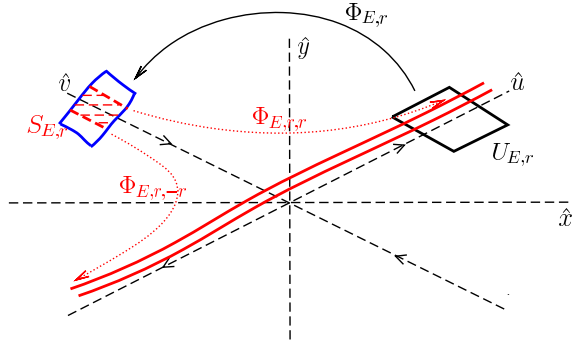}
  \caption{The image of $U_{E,r}$ under $\Phi_{E,r.r}\Phi_{E,r}$ intersects itself.}
  \label{Figure2}
\end{figure}

In the rest of this paper, we always use $c,c',c_i$ to denote positive constants independent of $E$ and $r$. They may be differently valued in different places if there is no danger of confusion.

As the second step to find periodic orbit, we establish the contraction property of the graph transformation. It is induced by the map $\Phi_E=\Phi_{E,r,r}\Phi_{E,r}$ for $E>0$. Given a map $F\in C^1(\pi_u\hat\pi U_{E,r},\pi_v\hat\pi U_{E,r})$, we have a subset $\mathcal{G}_F=\mathrm{graph}F\cap U_{E,r}$.

\begin{pro}\label{theo2.2}
Some $E_0>0$ exists such that for each $E\in(0,E_0]$, there exists a $C^1$-map $F_E\in C^1(\pi_u\hat U_{\delta},\pi_v\hat U_{\delta})$ satisfying the condition $\hat\Phi_E \mathcal{G}_{F_E}\supseteq\mathcal{G}_{F_E}$. Moreover, the inverse of $\hat\Phi_E$, when it is restricted on $\mathcal{G}_{F_E}$, is a contraction map.
\end{pro}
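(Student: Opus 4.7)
The plan is to realise $F_E$ as the unique fixed point of a graph transform $\mathcal{T}_E$ induced by $\hat\Phi_E=\hat\Phi_{E,r,r}\hat\Phi_{E,r}$ on the space $\mathscr{F}$, with the slope bound $\eta$ fixed small enough that $\eta<\lambda/M$ in Lemma \ref{lem2.2}. For any $F\in\mathscr{F}$ the outer map sends $\mathcal{G}_F$ to a graph $\mathcal{G}_{\Phi_{E,r}^*F}\subset V_{E,r}$ whose tangent vectors lie in $\hat K^-_{\eta^*}$ with $\eta^*=(1+\eta)M/(\lambda-M\eta)$. Proposition \ref{pro2.5} then provides the two geometric ingredients: $\hat\Phi_{E,r,r}$ covers $\{|\hat u-\hat u^-_r|\le r\}\supset\pi_u\hat U_\delta$ in the $\hat u$-direction by \eqref{eq2.6}, and confines the image to $\{|\hat v|\le cr^{3-2c'r}E^{1-c'r}\}\subset\pi_v\hat U_\delta$ by \eqref{eq2.7} once $E\le E_0$ is small. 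The strict $\hat u$-expansion makes $\hat\Phi_E|_{\mathcal{G}_F}$ injective in the $\hat u$-coordinate, so I can define $(\mathcal{T}_E F)(\hat u)$ as the unique $\hat v$ with $(\hat u,\hat v)\in\hat\pi\hat\Phi_E\mathcal{G}_F$. A direct calculation using the hyperbolic estimates \eqref{eq2.8} shows that $d\hat\Phi_{E,r,r}$ contracts the slope ratio $|\xi_{\hat v}|/|\xi_{\hat u}|$ by a factor $O(E^{2(1-c'r)})$ arising from the same exponential bound that yields \eqref{eq2.7}; combined with Lemma \ref{lem2.2} for the outer factor this keeps $\|D(\mathcal{T}_E F)\|\le\eta$, so $\mathcal{T}_E:\mathscr{F}\to\mathscr{F}$.

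For the $C^0$ contraction, given $F_1,F_2\in\mathscr{F}$ I would estimate $|(\mathcal{T}_E F_1)(\hat u)-(\mathcal{T}_E F_2)(\hat u)|$ by pulling back along $\hat\Phi_E^{-1}$. Two points of $\hat\Phi_E\mathcal{G}_{F_i}$ sharing the same $\hat u$-coordinate correspond to preimages on $\mathcal{G}_{F_1},\mathcal{G}_{F_2}$; since both graphs have slope $\le\eta$ and $\hat\Phi_{E,r,r}^{-1}$ contracts the $\hat u$-component by $O(E^{1-c'r})$ while $\hat\Phi_{E,r}^{-1}$ has uniformly bounded derivative in small $E$, I obtain a Lipschitz constant $O(E^{1-c'r})<1$ for $\mathcal{T}_E$ in the $C^0$ metric. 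Banach's fixed point theorem on the $C^0$-closed subset of $\eta$-Lipschitz maps containing $\mathscr{F}$ produces a unique Lipschitz $F_E$ with $\mathcal{G}_{F_E}\subseteq\hat\Phi_E\mathcal{G}_{F_E}$. The same estimate applied to $q_1,q_2\in\mathcal{G}_{F_E}$ directly yields $|\hat\Phi_E^{-1}(q_1)-\hat\Phi_E^{-1}(q_2)|\le(1+\eta)\,O(E^{1-c'r})\,|q_1-q_2|$, which is the asserted contraction of $\hat\Phi_E^{-1}|_{\mathcal{G}_{F_E}}$.

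The main obstacle is upgrading $F_E$ from Lipschitz to $C^1$: the set $\mathscr{F}$ is not closed for the $C^0$ metric in the $C^1$ topology, so Banach gives only Lipschitz regularity. I would close the argument by the standard invariant-cone construction of hyperbolic dynamics. At each $p\in\mathcal{G}_{F_E}$, the forward iterates of $d\hat\Phi_E$ along the backward orbit $\{p_{-n}\}\subset\mathcal{G}_{F_E}$ send $\hat K^-_\eta$ into a nested family of cones whose opening angles shrink geometrically at the rate $O(E^{2(1-c'r)})$ established in the first paragraph. Their intersection is an $(n-1)$-dimensional linear subspace depending continuously on $p$, and by uniqueness of the invariant graph it must coincide with $T_p\mathcal{G}_{F_E}$. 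Continuity of this invariant distribution then gives $F_E\in C^1$.
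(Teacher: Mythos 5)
Your plan --- a graph transform $\mathcal{T}_E$ on $\mathscr{F}$, Lemma \ref{lem2.2} for the cone invariance under the outer map, Proposition \ref{pro2.5} for covering and confinement, and Banach's fixed point theorem --- is the same as the paper's. The gap is in the inner-map tangent estimate. You claim that ``a direct calculation using the hyperbolic estimates \eqref{eq2.8}'' shows $d\hat\Phi_{E,r,r}$ contracts the slope ratio by $O(E^{2(1-c'r)})$, but \eqref{eq2.8} controls the base orbit $(u(t),v(t))$, not tangent vectors. The variational equation has off-diagonal coupling (the $U_\rho\rho$ and $V_\varrho\varrho$ terms in \eqref{eq2.24}), and $d\Phi_{E,r,r}$ is not $d\Phi_H^{t_z}$ but $d\Phi_H^{t_z}$ corrected by a term along the Hamiltonian field on the section (Lemma \ref{differentialmapofsection}). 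What your step actually requires is the paper's Lemma \ref{lem2.4}, whose Gronwall-type analysis of the tangent cones yields the absolute bound $|\xi^*_{\hat v}|\le c're^{-(\lambda_1-cr)t_z}t_z|\xi^*_{\hat u}|$, i.e.\ slope $O(|\ln E|\,E^{1-c'r})$ after the inner map. The coupling is exactly why this estimate carries the extra $t_z$ and runs at rate $\lambda_1$ rather than the ``double $\lambda_1$'' your heuristic would give. The conclusion survives (both bounds go to zero as $E\to 0$), but as written your argument cites the wrong estimate and reports an exponent it cannot support.

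On the $C^1$-regularity of $F_E$ you are in fact more careful than the paper, which only asserts the fixed point ``is of course $C^1$-smooth''; you correctly note that $\mathscr{F}$ is not $C^0$-complete and that Banach a priori yields a Lipschitz $F_E$. But your invariant-cone repair is circular as stated: you conclude the limiting plane ``by uniqueness of the invariant graph must coincide with $T_p\mathcal{G}_{F_E}$'', whereas the existence of $T_p\mathcal{G}_{F_E}$ is exactly what is in question when $F_E$ is only known to be Lipschitz. The non-circular completion is to take $L_p$ to be the linear map defined by the invariant plane field, prove directly via the contraction estimates that $F_E(q)-F_E(p)-L_p(q-p)=o(|q-p|)$, and then use continuity of $p\mapsto L_p$ (which your cone argument does establish) to conclude $F_E\in C^1$. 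That intermediate differentiability estimate is the missing step.
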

\begin{proof}
It is proved by Banach's fixed point theorem. We recall the outer map $\Phi_{E,r}$: $U_{E,r}\to V_{E,r}$ and the inner map $\Phi_{E,r,r}$: $S_{E,r}\subseteq V_{E,r}\to U_{E,r}$. Their projections $\hat\Phi_{E,r}$, $\hat\Phi_{E,r,r}$, are well defined such that $\hat\Phi_{E,r}(\hat\pi z)=\hat\pi\Phi_{E,r}(z)$ and $\hat\Phi_{E,r,r}(\hat\pi z)=\hat\pi\Phi_{E,r,r}(z)$. By applying the following lemma, of which the proof is postponed to the end of this section, we claim that the map $\Phi_E$ induces a transformation $F\in\mathscr{F}\to\Phi^*_EF\in\mathscr{F}$.

\begin{lem}\label{lem2.4}
For $(\xi_{\hat u},\xi_{\hat v})\in\hat K^-_{1}$ and $z\in S_{E,r}$, let $(\xi^*_{\hat u},\xi^*_{\hat v})=d\hat\Phi_{E,r,r}(\hat z)(\xi_{\hat u},\xi_{\hat v})$. Then, there exist constants $c,c'>0$ such that
\begin{equation}\label{eq2.11}
|\xi^*_{\hat u}|\ge e^{(\lambda_2-cr)t_z}|\xi_{\hat u}|, \qquad |\xi^*_{\hat v}|\le c're^{-(\lambda_1-cr)t_z}t_z|\xi^*_{\hat u}|
\end{equation}
where $t_z$ is the time when $\Phi_H^{t_z}z\in\{u_1=r\}$ and $\Phi_H^{t}z\notin\{u_1=r\}$ for $t\in[0,t_z)$. For any $(\xi_{\hat u},\xi_{\hat v})\in\hat K^+_{1}$ and $z\in S_{E,r}$, let $(\xi^{\star}_{\hat u},\xi^{\star}_{\hat v})=d\hat\Phi_{E,r,r}^{-1}(\Phi_H^{t_z}(\hat z))(\xi_{\hat u},\xi_{\hat v})$, one has
\begin{equation}\label{eq2.12}
|\xi_{\hat v}|\le e^{-(\lambda_2-cr)t_z}|\xi_{\hat v}^{\star}|, \qquad |\xi_{\hat u}^{\star}|\le c're^{-(\lambda_1-cr)t_z}t_z|\xi_{\hat v}^{\star}|.
\end{equation}
\end{lem}

Recall the set of maps $\mathscr{F}=\{F\in C^1(\pi_u \hat U_\delta,\pi_v \hat U_\delta):\|DF\|\le\eta\}$, we set $\eta\le\frac{\lambda}{2M}$. Applying Lemma \ref{lem2.4} we find that each tangent vector $(\xi_{\hat u},\xi_{\hat v})\in T_{\hat z}\mathcal{G}_F$ is mapped by $d\hat\Phi_{E,r}$ to a vector $(\xi^*_{\hat u},\xi^*_{\hat v})\in \hat K^-_{\eta^*}$ with $\eta^*=\frac{(1+\eta)M}{\lambda-M\eta}$. Therefore, $\hat\Phi_{E,r}$ maps the graph $\mathcal{G}_F$ to a graph over $\pi_{u}\hat\Phi_{E,r}\mathcal{G}_F$, denoted by $\mathcal{G}_{\Phi_{E,r}^*F}$.

\begin{lem}
Restricted on $\hat\pi S_{E,r}\subseteq\hat\pi V_{E,r}$, the graph $\hat\Phi_{E,r} \mathcal{G}_F$ is mapped by $\hat\Phi_{E,r,r}$ to a graph $\mathcal{G}_{F'}$ satisfying the condition $\pi_u\mathcal{G}_{F'}\supset\{|\hat u|\le r\}$ and $F'|_{\pi_u\hat U_{\delta}}\in\mathscr{F}$.
\end{lem}
\begin{proof}
For each $\hat z\in \hat\pi S_{E,r}$ we consider each tangent vector $(\xi'_{\hat u},\xi'_{\hat v})\in T_{\hat z}\hat\Phi_{E,r}\mathcal{G}_F$. Since $F\in\mathscr{F}$, $(\xi'_{\hat u},\xi'_{\hat v})\in K^{-1}_{\eta^*}$. Let $(\xi^*_{\hat u},\xi^*_{\hat v})=d\hat\Phi_{E,r,r}(\hat z)(\xi'_{\hat u},\xi'_{\hat v})$. Since $t_z$ is bounded by the estimate (\ref{eq2.9}), it follows from the second estimate in (\ref{eq2.11}) that
\begin{equation}\label{eq2.15}
|\xi^*_{\hat v}|\le c're^{-(\lambda_1-cr)t_z}t_z|\xi^*_{\hat u}|\le c'r^{1-2(\lambda_1-cr)}|\ln E||E|^{\frac{\lambda_1-cr}{\lambda_1}}|\xi^*_{\hat u}|.
\end{equation}
Since $|\ln E||E|^{\frac{\lambda_1-cr}{\lambda_1}}\to 0$ as $E\to 0$, it implies that the set $\hat\Phi_{E,r,r}\hat\Phi_{E,r}\mathcal{G}_F$ is also a graph, almost horizontal in the sense that each tangent vector lies in $\hat K^-_{\alpha}$ with $\alpha=O(|E|)$.

The graph $\hat\Phi_{E,r}\mathcal{G}_F$ induces a map $\hat u\to \hat v=F'(\hat u)$ such that $(\hat u,F'(\hat u))\in \hat\Phi_{E,r}\mathcal{G}_F$, with which we are able to define a map $\Psi_E:\pi_u(\hat\Phi_{E,r}\mathcal{G}_F\cap\hat S_{E,r})\to \mathbb{R}^{n-1}_{\hat u}$ such that
$$
\Psi_E(\hat u)=\pi_u\hat\Phi_{E,r,r}(\hat u,F'(\hat u))
$$
and obtain the expansion property from the first estimate in (\ref{eq2.11}) with (\ref{eq2.9})
\begin{equation*}
|d\Psi_E(\hat u)\xi_{\hat u}|\ge e^{(\lambda_2-cr)t_z}|\xi_{\hat u}|\ge c'r^{2(\lambda_2-cr)} |E|^{-\frac{\lambda_2-cr}{\lambda_1}} |\xi_{\hat u}|.
\end{equation*}
It guarantees $\Psi_E\pi_u(\hat\Phi_{E,r}\mathcal{G}_F\cap\hat S_{E,r})\supset\pi_u U_{\delta}$ if $E$ is suitably small. Indeed, any ball in $\mathbb{R}^{n-1}_{\hat u}$ with small radius $\rho$ is mapped by $\Psi_E^{-1}$ back into a ball with radius not larger than $O(|E|^{(\lambda_2-cr)/\lambda_1}\rho)$ while the size of $\pi_u\hat\Phi_{E,r}\mathcal{G}_F$ is bounded from below uniformly in $E$. Thus, the set $\Phi_E\mathcal{G}_F$ is a graph of some map $F'\in C^1(\pi_u U_{\delta}, \mathbb{R}^{n-1})$, i.e. the map $\Phi_E$ induces a transformation $F\to F'=\Phi_E^*F$ such that $\Phi_E\mathcal{G}_F=\mathcal{G}_{F'}$. For small $E$, \eqref{eq2.15} implies that $\|F'\|\le\eta$, namely, $F'\in\mathscr{F}$.
\end{proof}

We claim that the transformation $\Phi_E^*$ is a contraction in $C^0$-topology. Given two maps $F_1$ and $F_2$, we assume that $\max_{\hat u\in\pi_{u}\hat\pi\Phi_{E}\Phi^{-1}_{E,r}S_{E,r}}|\Phi_E^*F_1(\hat u)-\Phi_E^*F_2(\hat u)|$ is achieved at a point $\hat u'$.  Let $(\xi_{\hat u}^{\star},\xi^{\star}_{\hat v})=d\hat\Phi_{E,r,r}^{-1}(\hat u',\Phi_E^*F_1(\hat u'))(0,\xi'_{\hat v})$ with $\xi'_{\hat v}=(F_2-F_1)(\hat u')$. Since $t_z$ is bounded by (\ref{eq2.9}), by applying Lemma \ref{lem2.4} we obtain that
$$
|\xi'_{\hat v}|\le cr^{-2(\lambda_2-cr)}|E|^{\frac{\lambda_2-cr}{\lambda_1}}|\xi^{\star}_{\hat v}|,\qquad |\xi_{\hat u}^{\star}|\le c'r^{1-2(\lambda_1-cr)}|\ln E||E|^{\frac{\lambda_1-cr}{\lambda_1}}|\xi_{\hat v}^{\star}|,
$$
i.e. $(\xi_{\hat u}^{\star},\xi^{\star}_{\hat v})\in\hat K^+_{\alpha}$ with $\alpha=O(|E|)$. Let $(\xi_{\hat u},\xi_{\hat v})$ be a vector such that $d\hat\Phi_{E,r}(\xi_{\hat u},\xi_{\hat v})=(\xi_{\hat u}^{\star},\xi^{\star}_{\hat v})$. By applying Lemma \ref{lem2.2} one has $(\xi_{\hat u},\xi_{\hat v})\in\hat K^+_{\alpha^*}$ with $\alpha^*=\frac{(1+O(|E|))M}{\lambda-O(|E|)M}$.

Let $(\hat u_i,\hat v_i)=\hat\Phi_{E}^{-1}(\hat u',\Phi_E^*F_i(\hat u'))$ for $i=1,2$, then $\hat v_i=F_i(\hat u_i)$. The demonstration right above shows $(\hat u_1-\hat u_2,F_1(\hat u_1)-F_2(\hat u_2))\in K^+_{\alpha^*}$, i.e. $|\hat u_1-\hat u_2|\le\alpha^*|F_1(\hat u_1)-F_2(\hat u_2)|$. $\|DF_i\|\le\eta\le\frac{\lambda}{2M}$,
$$
\begin{aligned}
|F_1(\hat u_1)-F_2(\hat u_1)|&=|F_1(\hat u_1)-F_2(\hat u_2)+F_2(\hat u_2)-F_2(\hat u_1)|\\
&\ge |F_1(\hat u_1)-F_2(\hat u_2)|-\eta|\hat u_1-\hat u_2|\\
&\ge (1-\eta\alpha^*)|F_1(\hat u_1)-F_2(\hat u_2)|.
\end{aligned}
$$
By the definition, one has $0<\eta\alpha^*<1$. Applying the first estimate in \eqref{eq2.12} one has
$$
\begin{aligned}
\|\Phi_E^*F_1-\Phi_E^*F_2\|&=\max_{\hat u\in\pi_{u}\hat\pi\Phi_{E}\Phi^{-1}_{E,r} S_{E,r}}|\Phi_E^*F_1(\hat u)-\Phi_E^*F_2(\hat u)| \\
&\le e^{-(\lambda_2-cr)t_z}|F_1(\hat u_1)-F_2(\hat u_2)|\\
&\le(1-\eta\alpha^*)^{-1}e^{-(\lambda_2-cr)t_z}\|F_1-F_2\|,
\end{aligned}
$$
i.e. the map is a contraction when $t_z$ is large, it corresponds to small $|E|$.
Hence, the map $\Phi_E$ induces a contraction map $\Phi_E^*$: $\mathscr{F}\to\mathscr{F}$. Banach's fixed point theorem leads to the existence of a unique invariant $F_E\in\mathscr{F}$, it is of course $C^1$-smooth.

The contraction property of $\Phi_E^{-1}|_{\mathcal{G}_{F_E}}$ is shown by checking the expansion property of $\Phi_E$. For $(\xi_{\hat u},\xi_{\hat v})\in K^-_{\eta}$, Lemma \ref{lem2.2} implies $(\xi_{\hat u}^*,\xi_{\hat v}^*)\in K^-_{\eta^*}$ with $|\xi_{\hat u}^*|\ge (\lambda-\eta M)|\xi_{\hat u}|$. Hence, $|d\Phi_E(\xi_{\hat u},\xi_{\hat v})|\ge e^{(\lambda_2-cr)t_z} (\lambda-\eta M)|(\xi_{\hat u},\xi_{\hat v})|$ is got from (\ref{eq2.11}). For large $t_z$ one has $e^{-(\lambda_2-cr)t_z}(\lambda-\eta M)^{-1}<1$. The proof of Theorem \ref{theo2.2} is finished.
\end{proof}

\begin{theo}\label{theo2.3}
Some $E_0>0$ exists such that for each $E\in(0,E_0]$ there is a periodic orbit $z^+_E(t)\subset H^{-1}(E)$ entirely lying in the vicinity of $z^+(t)$.
\end{theo}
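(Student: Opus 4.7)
The plan is to realize the periodic orbit as a fixed point of the full return map $\Phi_E=\Phi_{E,r,r}\circ\Phi_{E,r}$ on the energy-level Poincar\'e section $U_{E,r}\cap H^{-1}(E)$. Proposition \ref{theo2.2} already supplies an invariant graph $\mathcal{G}_{F_E}$ for the projected map $\hat\Phi_E$, with the crucial property that the inverse $\hat\Phi_E^{-1}\!\mid_{\mathcal{G}_{F_E}}$ is a contraction. I would therefore invoke Banach's fixed point theorem a second time on the complete metric space $\mathcal{G}_{F_E}$, which is diffeomorphic to the closed Euclidean ball $\pi_u\hat U_\delta$, to produce a unique fixed point $\hat z_E^*\in\mathcal{G}_{F_E}$ of $\hat\Phi_E$.

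Next I would lift $\hat z_E^*$ to a genuine fixed point of $\Phi_E$ in the full phase space. Every point of $U_{E,r}$ has its first coordinate fixed by $u_1=r$, so a point of $U_{E,r}$ is determined by $(\hat u,\hat v)$ together with the single scalar $v_1$. The relation $\partial_{v_1}H(z_r^-)=\lambda_1 r(1+O(r))\neq 0$ together with the implicit function theorem makes $v_1=v_1(\hat z,E)$ a smooth function on $\hat\pi U_{E,r}$, so the energy constraint singles out a unique lift $z_E^*=(r,v_1(\hat z_E^*,E),\hat z_E^*)\in U_{E,r}$. Now both $z_E^*$ and $\Phi_E(z_E^*)$ lie in $U_{E,r}\cap H^{-1}(E)$ and project under $\hat\pi$ to the same point $\hat z_E^*=\hat\Phi_E(\hat z_E^*)$; the uniqueness of the lift forces $\Phi_E(z_E^*)=z_E^*$.

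Finally, one must verify that the orbit $z^+_E(t):=\Phi_H^t(z_E^*)$, which is automatically periodic by construction, lies in the claimed neighborhood of $z^+(t)$. The first return decomposes into the outer leg produced by $\Phi_{E,r}$ and the inner leg produced by $\Phi_{E,r,r}$: on the outer leg, continuous dependence of the flow on initial data keeps the orbit $O(\delta)$-close to $z^+(t)|_{[0,T_0]}$ because $z_E^*\in U_{E,r}$ lies within distance $\delta$ of $z_r^-$; on the inner leg the orbit remains inside $B_{2r}$ by the very definition of $S_{E,r}$ given in Proposition \ref{pro2.5}. I expect the fixed-point step itself to be essentially routine; the genuine difficulty has been absorbed into the hyperbolic estimates \eqref{eq2.11}--\eqref{eq2.12} of Lemma \ref{lem2.4} and the covering property \eqref{eq2.6}, which together make $\Phi_E^*$ a contraction on $\mathscr{F}$. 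The only mild subtlety in the present step is that $\hat\Phi_E$ is defined only as a projection, so one must be careful that a fixed point of the projection lifts to a fixed point of the full map, and this is precisely what energy conservation and the unique determination of $v_1$ give.
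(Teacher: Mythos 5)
Your proposal follows the same route as the paper: Proposition \ref{theo2.2} supplies an invariant graph $\mathcal{G}_{F_E}$ on which the inverse return map contracts, and Banach's fixed point theorem produces the fixed point generating the periodic orbit. The only difference is that you make explicit the lift from the projected fixed point of $\hat\Phi_E$ to a genuine fixed point of $\Phi_E$ via the energy constraint $u_1=r$, $H=E$ and the non-vanishing of $\partial_{v_1}H$ near $z_r^-$, a step the paper's one-line proof leaves implicit.
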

\begin{proof}
Due to Proposition \ref{theo2.2} and Banach's fixed point theorem, there is a fixed point $z_{E,r}$ of $\Phi_E$ in $\mathcal{G}_{F_E}$, since the map $\Phi_E^{-1}$ is a contraction when it is restricted on $\mathcal{G}_{F_E}$.
\end{proof}

What remains to complete this section is the proof for the technical lemmas applied before. We now do it.

\begin{proof}[Proof of Lemma \ref{pro2.2}]
We write $z_E(t)=(u_E(t),v_E(t))$ with $u_E(t)=(u_1(t),\cdots,u_n(t))$ and $v_E(t)=(v_1(t),\cdots,v_n(t))$. By the method of variation of constants, we see that the solution of the Hamilton equation generated by $H$ satisfies the equation
\begin{equation}\label{flateq5}
u_{i}(t)=e^{\lambda_{i}t}(u_{i,0}+F_{u,i}), \quad
v_{i}(t)=e^{-\lambda_{i}t}(v_{i,0}-F_{v,i}),
\end{equation}
for $i=1,\cdots,n$, $u_{i}^{0}$ and $v_{i}^{0}$ are the initial value and
$$
F_{u,i}^-=\int_0^te^{-\lambda_{i}s}\partial_{v_{i}}R(u(s),v(s))ds, \quad
F_{v,i}^+=\int_0^te^{\lambda_{i}s}\partial_{u_{i}}R(u(s),v(s))ds.
$$
If $(u(t),v(t))\in B^{2n}_{r}$ for $t\in[-t_E,t_E]$, it follows from an improved Hartman-Grobman Theorem that there is a conjugacy $h$ between $\Phi_H^t$ and $e^{\mathrm{diag}(\Lambda,-\Lambda)t}$ such that
$$
\Phi_H^t(u,v)=h^{-1}e^{\mathrm{diag}(\Lambda,-\Lambda)t}h(u,v),
$$
where $\Lambda=\mathrm{diag}(\lambda_1,\cdots,\lambda_n)$. Moreover, if writing $h=id+f$ and $h^{-1}=id+g$, we obtain from Theorem 1.1 of \cite{vS} that $f=O(\|(u,v)\|^{1+\nu})$ and $g=O(\|(u,v)\|^{1+\nu})$ with $\nu>0$, since $H$ is assumed to be at least $C^3$-smooth.

Let $f=(f_u,f_v)$, $g=(g_u,g_v)$ and $f_u=(f_{u,1}\cdots,f_{u,n})$. The principle of notation for $f_u$ also applies to $f_v,g_u,g_v$. Let $u_0=(u_{1,0}\cdots,u_{n,0})$ and $v_0=(v_{1,0}\cdots,v_{n,0})$, then
\begin{equation}\label{Eq2.17}
\begin{aligned}
u(t)&=e^{\Lambda t}(u_0+f_u)+g_u(e^{\Lambda t}(u_0+f_u),e^{-\Lambda t}(v_0+f_v)),\\
v(t)&=e^{-\Lambda t}(v_0+f_v)+g_v(e^{\Lambda t}(u_0+f_u),e^{-\Lambda t}(v_0+f_v)).
\end{aligned}
\end{equation}
Setting $t=t_E$ in the first equation of \eqref{Eq2.17} and setting $t=-t_E$ in the second we obtain
\begin{equation}\label{eq2.18}
\begin{aligned}
|u_{i,0}+f_{u,i}(u_0,v_0)|&=e^{-\lambda_it_E}\Big(u_i(t_E)-g_{u,i}(e^{\Lambda t_E}(u_0+f_u),e^{-\Lambda t_E} (v_0+f_v))\Big),\\
|v_{i,0}+f_{v,i}(u_0,v_0)|&=e^{-\lambda_it_E}\Big(v_i(-t_E)-g_{v,i}(e^{-\Lambda t_E}(u_0+f_u),e^{\Lambda t_E} (v_0+f_v))\Big).
\end{aligned}
\end{equation}
Since $g=O(\|(u,v)\|^{1+\nu})$, $|e^{\Lambda t_E}(u_0+f_u)|\le 2r,|e^{-\Lambda t_E} (v_0+f_v)|\le 2r$, $|u_1(t_E)|=r$ and $|v_1(-t_E)|=r$, one has
$$
\begin{aligned}
|u_{1,0}+f_{u,1}(u_0,v_0)|&\ge r(1-O(r))e^{-\lambda_1 t_E}, \\
|v_{1,0}+f_{v,1}(u_0,v_0)|&\ge r(1-O(r))e^{-\lambda_1 t_E},\\
|u_{i,0}+f_{u,i}(u_0,v_0)|&\le O(r)e^{-\lambda_i t_E}, \quad \forall\ i\ge 2 \\
|v_{i,0}+f_{v,i}(u_0,v_0)|&\le O(r)e^{-\lambda_i t_E}.
\end{aligned}
$$
Since $f=O(\|(u,v)\|^{1+\nu})$ and $\lambda_i>\lambda_1$, we obtain from these estimates that
\begin{equation*}\label{eq2.19}
\begin{aligned}
|u_{1,0}|&\ge \frac12re^{-\lambda_1 t_E}, \quad
&&|v_{1,0}|\ge \frac12re^{-\lambda_1 t_E},\\
|u_{i,0}|&\le O(r)e^{-\bar\lambda_\ell t_E}, \quad
&&|v_{i,0}|\le O(r)e^{-\bar\lambda_\ell t_E}, \quad \forall\ i\ge 2.
\end{aligned}
\end{equation*}
where $\bar\lambda_i=\min\{2\lambda_1,\lambda_i\}$, provided $t_E$ is suitably large. Substituting $(u(t),v(t))$ into $H$, we obtain a constraint for the initial values
\begin{equation*}\label{eq2.20}
\begin{aligned}
H(u(0),v(0))=&\sum_{i=1}^n\lambda_i u_{i,0}v_{i,0}+R(u_{i,0},v_{i,0})\\
=&\frac 14\mathrm{sign}(u_{1,0}v_{1,0})\lambda_1r^2e^{-2\lambda_1t_E}+O(e^{-2\mu t_E})
\end{aligned}
\end{equation*}
where $\mu=\min\{\frac 32\lambda_1,\bar\lambda_2\}>\lambda_1$. Taking logarithm on both sides we find some constant $c_E>0$ exists, uniformly bounded as $E\to 0$, such that (\ref{eq2.9}) holds, from which we see that $t_E$ is large if $|E|$ is small and $\mathrm{sign}(E)=\mathrm{sign}(v_1(-t_E)u_1(t_E))$.
\end{proof}

\begin{proof}[Proof of Lemma \ref{lem2.4}]
We study the differential of the map $\Phi_H^t$ through the variational equation along an orbit $z(t)$ of the Hamiltonian flow $\Phi_H^t$. In the coordinates $(u,v)$ let $\xi=(\xi_u,\xi_v)=(\delta u,\delta v)$, the equation takes the form
\begin{equation}\label{eq2.21}
\dot\xi=A(t)\xi,
\end{equation}
where the $2n\times 2n$ matrix $A(t)=\mathrm{diag}\{\Lambda,-\Lambda\}+P(u(t),v(t))$, $\Lambda=\mathrm{diag}\{\lambda_1,\cdots,\lambda_n\}$. Let $\Psi(t)$ be the fundamental matrix of the variational equation such that $\Psi(0)=I$. Be aware that $R(z)=O(\|z\|^3)$ in (\ref{eq2.5}), each element of $P(u(t),v(t))$ is bounded by $cr$ for $|z(t)|\le r$. For $\alpha>0$, we consider the cone
\begin{equation*}\label{eq2.22}
\begin{aligned}
K^-_{\alpha,k}=&\{(\xi_u,\xi_v)\in\mathbb{R}^{2n}:\alpha |(\xi_{u_{k+1},\cdots,\xi_{u_n}})|\ge|(\xi_{u_1},\cdots,\xi_{u_k},\xi_v)|\},\\
K^-_{\alpha}=&\{(\xi_u,\xi_v)\in\mathbb{R}^{2n}:\alpha|\xi_u|\ge|\xi_v|\}.
\end{aligned}
\end{equation*}
\begin{lem}\label{lem2.5}
Assume $z(s)\in B_r$ for $s\in[0,t]$, then $\exists$ $\alpha_r>0$ with $\alpha_r\to 0$ as $r\to 0$ such that for $\alpha\in[\alpha_r,1]$, the cones $K^-_{\alpha,k}$ and $K^-_\alpha$ are all invariant for $\Psi(t)$ with $t>0$.
\end{lem}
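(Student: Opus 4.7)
The strategy is the classical cone-field invariance argument: for each cone, exhibit a quadratic indicator $Q(\xi)$ that vanishes on the cone boundary and is negative strictly inside; it then suffices to show $\dot Q\le 0$ on the boundary, since a trajectory can leave the cone only by crossing it with $\dot Q>0$. I would first record that, thanks to (\ref{eq2.5}) and the hypothesis $|z(s)|\le r$, every entry of $P(u(s),v(s))$ is $O(r)$, so the variational equation decomposes as $\dot\xi_u=\Lambda\xi_u+O(r)\xi$ and $\dot\xi_v=-\Lambda\xi_v+O(r)\xi$, with $\Lambda=\mathrm{diag}(\lambda_1,\ldots,\lambda_n)$.

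For the standard unstable cone $K^-_\alpha$, I would take $Q_\alpha(\xi)=|\xi_v|^2-\alpha^2|\xi_u|^2$ in Euclidean norm (the max-norm cone in the statement is equivalent up to dimensional constants, so invariance for one yields the other after adjusting $\alpha_r$ by a harmless factor). A direct computation gives $\dot Q_\alpha\le -2\lambda_1(|\xi_v|^2+\alpha^2|\xi_u|^2)+cr(|\xi_v|+\alpha^2|\xi_u|)(|\xi_u|+|\xi_v|)$. On the boundary $|\xi_v|=\alpha|\xi_u|$ the main term equals $-4\lambda_1\alpha^2|\xi_u|^2$ while the perturbation is bounded by $cr\alpha|\xi_u|^2(1+\alpha)^2$. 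Hence $\dot Q_\alpha\le 0$ as soon as $\alpha\ge c'r/\lambda_1$, which fixes a threshold of order $r$.

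For the wedged cone $K^-_{\alpha,k}$ I would split $\xi$ into the ``fastest unstable'' block $\eta_+=(\xi_{u_{k+1}},\ldots,\xi_{u_n})$ and the ``slow plus stable'' block $\eta_-=(\xi_{u_1},\ldots,\xi_{u_k},\xi_v)$. The diagonal part of $A(t)$ acts on $\eta_+$ with spectrum contained in $[\lambda_{k+1},\lambda_n]$, and on $\eta_-$ with spectrum contained in $\{-\lambda_n,\ldots,-\lambda_1,\lambda_1,\ldots,\lambda_k\}$, all bounded above by $\lambda_k<\lambda_{k+1}$. Taking the indicator $Q=|\eta_-|^2-\alpha^2|\eta_+|^2$, the diagonal contribution on the boundary $|\eta_-|=\alpha|\eta_+|$ is at most $2\lambda_k|\eta_-|^2-2\alpha^2\lambda_{k+1}|\eta_+|^2 = -2\alpha^2(\lambda_{k+1}-\lambda_k)|\eta_+|^2$, while the $O(r)$ off-diagonal coupling contributes $O(r\alpha|\eta_+|^2)$. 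The positive gap $\lambda_{k+1}-\lambda_k$, provided by the strict ordering in (\textbf{H1}), again dominates the $O(r)$ error once $\alpha\ge c'r/(2(\lambda_{k+1}-\lambda_k))$.

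Setting $\alpha_r$ to be the maximum of these thresholds over $k=1,\ldots,n-1$ yields $\alpha_r=O(r)\to 0$, and both families of cones are forward invariant for every $\alpha\in[\alpha_r,1]$. The one subtle point I anticipate is the bookkeeping on the boundary: the negative main term is of order $\alpha^2$ whereas the cross-coupling error is of order $\alpha r$, so one must extract the gap $\lambda_{k+1}-\lambda_k$ with the correct sign from the diagonal computation before absorbing the perturbation; this is exactly where the eigenvalue separation in hypothesis (\textbf{H1}) is used.
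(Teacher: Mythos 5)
Your proof is correct and is essentially the same as the paper's. The paper phrases the argument via polar-spherical coordinates and the radial derivatives $\dot\varrho',\dot\rho'$ and checks $\alpha\dot\varrho'>\dot\rho'$ on the cone boundary; your quadratic indicator satisfies $\dot Q=2\alpha\varrho'(\dot\rho'-\alpha\dot\varrho')$ there, so the boundary condition you verify is literally the same, and both derivations exploit the same eigenvalue gap $\lambda_{k+1}-\lambda_k>0$ from (\textbf{H1}) to dominate the $O(r)$ coupling and obtain a threshold $\alpha_r=O(r)$.
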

\begin{proof}
To consider the cone $K^-_{\alpha,k}$, we introduce the polar-spherical coordinates
\begin{equation*}
\begin{aligned}
\xi_{u_i}&=\varrho'\Phi_i(\phi), \quad  i=k+1,\cdots,n, \\
\xi_{u_i}&=\rho'\Psi_i(\psi), \quad i=1,\cdots,k\\
\xi_{v_i}&=\rho'\Psi_{i+k}(\psi), \quad i=1,\cdots,n.
\end{aligned}
\end{equation*}
where $\sum\Phi_i^2=1$ for $\phi\in\mathbb{S}^{n-k}$ and $\sum\Psi_i^2=1$ for $\psi\in\mathbb{S}^{n+k}$. We obtain from Equation (\ref{eq2.21}) that
\begin{equation}\label{eq2.23}
\begin{aligned}
\dot \varrho'&=\varrho'\sum_{i=k+1}^n\lambda_i\Phi_i^2+U'_\varrho\varrho' +U'_\rho\rho'>(\lambda_{k+1}-cr)\varrho'-cr\rho',\\
\dot\rho'&=\rho'(\sum_{i=1}^k\lambda_i\Psi_i^2-\sum_{i=1}^n\lambda_i\Psi_{i+k}^2) +V'_\varrho\varrho'+V'_\rho\rho'<(\lambda_k +cr)\rho'+cr\varrho'
\end{aligned}
\end{equation}
where $U'_\varrho, U'_\rho,V'_\varrho$ and $V'_\rho$ depend on $z(t)=(u(t),v(t)),\phi$ and $\psi$. $|U'_\varrho|, |U'_\rho|, |V'_\varrho|$ and $|V'_\rho|$ are all bounded by $cr$ if $|z(t)|\le r$. So in $K^-_{\alpha,k}$, $\dot\varrho'\ge(\lambda_{k+1} -(1+ \alpha)cr)\varrho'$ and on the boundary of $K^-_{\alpha,k}$ one has $\dot\rho'\le (\lambda_k +(1+\frac 1\alpha)cr)\rho'$. It guarantees that $\alpha\dot\varrho'>\dot \rho'$ holds on the boundary of $K^-_{\alpha,k}$ if $\lambda_{k+1} -(1+ \alpha)cr>\lambda_k +(1+\frac 1\alpha)cr$, i.e. $\frac 1\alpha+\alpha< \frac{\lambda_{k+1}-\lambda_k}{cr}-2$. Since $\lambda_{k+1}>\lambda_k$ and $r>0$ is small, there exist positive numbers $\hat\alpha^-_r<\hat\alpha^+_r$ such that $\frac 1\alpha+\alpha< \frac{\lambda_{k+1}-\lambda_k}{cr}-2$ holds for all $\alpha\in(\hat\alpha^-_r,\hat\alpha^+_r)$. In this case, the cone $K^-_{\alpha,k}$ is invariant.
Since $\lambda_{k+1}>\lambda_k$, $r$ is chosen suitably small, one has $\hat\alpha^+_r=O(r^{-1})$ and $\hat\alpha^-_r=O(r)$.

For the cone $K^-_\alpha=K^-_{\alpha,0}$, we also introduce the polar-spherical coordinates
$$
\xi_{u_i}=\varrho\Phi_i(\phi), \quad\xi_{v_i}=\rho\Psi_i(\psi), \quad i=1,\cdots,n,
$$
where $\sum\Phi_i^2=1$ for $\phi\in\mathbb{S}^{n}$ and $\sum\Psi_i^2=1$ for $\psi\in\mathbb{S}^{n}$. In this case, we have
\begin{equation}\label{eq2.24}
\begin{aligned}
\dot\varrho&=\varrho\Sigma_{i=1}^n\lambda_i\Phi_i^2+U_\varrho\varrho+U_\rho\rho >(\lambda_1-cr)\varrho-cr\rho, \\ \dot\rho&=-\rho\Sigma_{i=1}^n\lambda_i\Psi_i^2+V_\varrho\varrho+V_\rho\rho< -(\lambda_1-cr)\rho+cr\varrho.
\end{aligned}
\end{equation}
For $\alpha\in(\frac{cr}{\lambda_1-cr},\frac{\lambda_1}{cr}-1)$, it holds on the boundary of $K^-_\alpha$ that $\dot\varrho>0$ and $\dot\rho<0$. It implies that $K^-_\alpha$ is invariant. Therefore, for each $\alpha\in(\max\{\hat\alpha^-_r,\frac{cr}{\lambda_1-cr}\},\min\{\hat\alpha^+_r, \frac{\lambda_1}{cr}-1\})$, both $K^-_{\alpha,k}$ and $K^-_\alpha$ are invariant. 
\end{proof}

\begin{lem}\label{lem2.9}
If $\xi=(\xi_u,\xi_v)\in T_zV^\pm_E$ with small $|E|$, then $\xi_{u_1},\xi_{v_1}=o(|(\xi_{\hat u},\xi_{\hat v}|)$ and $(\xi_{\hat u},\xi_{\hat v})\in\hat K^-_{1}$ implies that $(\xi_u,\xi_v)\in K^-_{1,1}\cap K^-_1$.
\end{lem}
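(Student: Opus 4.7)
The plan is to exploit the two linear constraints that define $V_E^\pm$: the section constraint $v_1=\pm r$ forces $\xi_{v_1}\equiv 0$ on every tangent vector, and the energy constraint $H\equiv E$ forces $dH(z)\cdot\xi=0$. Together these leave $(\xi_{\hat u},\xi_{\hat v})$ essentially free while coupling $\xi_{u_1}$ to them through the gradient of $H$.

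The key computation is to check that at $z^+_{\pm r}$ the partial derivative $\partial_{u_1}H$ dominates all the others by a full order of magnitude in $r$. By hypotheses $(\mathbf{H2})$ and $(\mathbf{H4})$ the homoclinic orbit lies in $W^s_{loc}=\{u=0\}$ for large positive time, so $u^+_{\pm r}=o(r)$ (in fact $\hat u^+_{\pm r}=0$), while $\hat v^+_{\pm r}=o(r)$ from the exponential rate comparison $e^{-(\lambda_k-\lambda_1)t}$ for $k\ge 2$. Using the structural identity $R(u,v)=\langle R_{1,1}(u,v)u,v\rangle$ with $R_{1,1}(0,0)=0$, direct differentiation gives
\[
\partial_{u_1}H\bigl|_{z^+_{\pm r}}=\pm\lambda_1 r(1+O(r)),\qquad \partial_{u_k}H\bigl|_{z^+_{\pm r}},\ \partial_{v_k}H\bigl|_{z^+_{\pm r}}=o(r)
\]
for every remaining index. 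By continuity, the same bounds persist at every $z\in V_{E,\pm r}$ once the neighborhood parameter $\delta$ and $|E|$ are chosen small relative to $r$.

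Solving the linearized energy constraint for $\xi_{u_1}$ then yields
\[
\xi_{u_1}=-\bigl(\partial_{u_1}H(z)\bigr)^{-1}\Bigl(\sum_{k\ge 2}\partial_{u_k}H(z)\,\xi_{u_k}+\sum_{k\ge 2}\partial_{v_k}H(z)\,\xi_{v_k}\Bigr)=o(1)\,|(\xi_{\hat u},\xi_{\hat v})|,
\]
which together with the exact identity $\xi_{v_1}=0$ proves the first assertion. For the cone inclusions, the assumption $|\xi_{\hat u}|\ge|\xi_{\hat v}|$ immediately gives $|\xi_v|^2=|\xi_{\hat v}|^2\le|\xi_{\hat u}|^2\le|\xi_u|^2$, so $\xi\in K^-_1$; and $|(\xi_{u_1},\xi_v)|^2=\xi_{u_1}^2+|\xi_{\hat v}|^2\le(1+o(1))|\xi_{\hat u}|^2$, which after absorbing the $o(1)$ deficit into a marginally narrower cone parameter (uniformly as $r,|E|\to 0$) puts $\xi$ in $K^-_{1,1}$.

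The main obstacle is uniformity: the $o(r)$ and $o(1)$ estimates must be valid not only at the reference point $z^+_{\pm r}$ but throughout the tangent bundle of $V_{E,\pm r}$, and uniformly in the direction of $\xi$. This is handled by the bilinear-in-$uv$ structure of $R$ together with the smallness of $u$ and $\hat v$ on $V_{E,\pm r}$, which force $|\partial_{u_1}H|\ge \lambda_1 r/2$ and keep every other first-order partial of $H$ genuinely $o(r)$ throughout the domain; the borderline case $|\xi_{\hat u}|=|\xi_{\hat v}|$ at the boundary of $\hat K^-_1$ is absorbed by noting that the $o(1)$ control on $\xi_{u_1}$ becomes arbitrarily small as $r$ and $|E|$ shrink.
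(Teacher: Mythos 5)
Your approach is the same one the paper takes: use the section constraint to kill $\xi_{v_1}$, linearize the energy constraint, and observe that $\partial_{u_1}H$ is of order $r$ while every other first-order partial of $H$ is $o(r)$ on $V_{E,\pm r}$ (thanks to $(\mathbf{H4})$ and the form $R=\langle R_{1,1}u,v\rangle$ with $R_{1,1}(0,0)=0$). The paper's proof writes the linearized constraint explicitly (equation~(\ref{eq2.25})) and reads off $|\xi_{u_1}|=o(|\xi_{\hat u}|,|\xi_{\hat v}|)$ in one step, whereas you first verify the gradient estimates at the reference point $z^+_{\pm r}$ and then extend by continuity; these are the same argument organized differently.

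There is, however, a small but real slip in your closing cone-inclusion step. The paper's cones are defined with the \emph{maximum} norm $|\cdot|$ (see the notation fixed at the start of Section~\ref{sec.2}), and with that norm the conclusion is exact: since $\xi_{v_1}=0$, one has $|(\xi_{u_1},\xi_v)|=\max\{|\xi_{u_1}|,|\xi_{\hat v}|\}\le\max\{o(1)|\xi_{\hat u}|,|\xi_{\hat u}|\}=|\xi_{\hat u}|$, so $\xi\in K^-_{1,1}$ on the nose. Your computation $|(\xi_{u_1},\xi_v)|^2=\xi_{u_1}^2+|\xi_{\hat v}|^2\le(1+o(1))|\xi_{\hat u}|^2$ uses the Euclidean norm, and at the boundary of $\hat K^-_1$ (when $|\xi_{\hat u}|=|\xi_{\hat v}|$) this only places $\xi$ in the strictly larger cone $K^-_{1+o(1),1}$, not in $K^-_{1,1}$. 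The remark about ``absorbing the deficit into a marginally narrower cone'' does not repair this: membership in a wider cone never implies membership in the original one. The fix is simply to compute in the maximum norm as the paper's conventions require, after which the $o(1)$ surplus disappears and the inclusion is clean.
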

\begin{proof}
By the definition, the coordinates of $z=(u,v)\in V_{E,\pm r}$ takes the form $v_1=r$, $|\hat u|=o(r)$, $|\hat v|=o(r)$ and
$\sum\lambda_iu_iv_i+\langle R_{1,1}(u,v)u,v\rangle=E$
where $R_{1,1}(z)=O(|z|)$. As $\xi=(\xi_u,\xi_v)$ is tangent to $V^\pm_E$ at $z$, one has $\xi_{v_1}=0$ and
\begin{equation}\label{eq2.25}
\sum_{i=1}^n\lambda_i(u_i\xi_{v_i}+v_i\xi_{u_i})+\langle R_u(u,v),\xi_{u}\rangle+\langle R_v(u,v),\xi_{v}\rangle=0.
\end{equation}
where $R_u(u,v),R_v(u,v)=O(|z|^2)$. For small $|E|$, $v_1=\pm\sqrt{2\lambda_1}r+o(r)$. Since $|u_i|$,$|v_i|=o(r)$ for $i\ge 2$, it follows from (\ref{eq2.25}) that $|\xi_{u_1}|=o(|\xi_{\hat u}|,|\xi_{\hat v}|)$. Thus, a vector $(\xi_u,\xi_v)\in T_zV^\pm_E$ with $(\xi_{\hat u},\xi_{\hat v})\in\hat K^-_{\alpha}$ implies that $(\xi_u,\xi_v)\in K^-_{\alpha,1}\cap K^-_\alpha$.
\end{proof}

Due to the properties $\partial_{u_i}R(u,0)=0$ and $\partial_{v_i}R(0,v)=0$, the terms $V_\varrho$ and $U_\rho$ in (\ref{eq2.24}) satisfy the condition that $V_r/|v(t)|$ and $U_\rho/|u(t)|$ are bounded as $|v(t)|\to 0$, $|u(t)|\to 0$ respectively. Since $|v_E(t)|\le |v_E(0)|e^{-(\lambda_1-cr)t}$ with $|v_E(0)|=O(r)$ cf. (\ref{eq2.8}), we find from the second equation in (\ref{eq2.24}) that some $c_1>0$ exists such that
$$
\dot\rho\le -(\lambda_1-cr)\rho+c_1re^{-(\lambda_1-cr)t}\varrho(t).
$$
By a variant of the Gro\"nwell inequality we obtain
$$
\begin{aligned}
\rho(t)&\le e^{-(\lambda_1-cr)t}\Big\{\rho(0)+c_1r\int_0^t\varrho(s)ds \Big\}\\
&\le \rho(0)e^{-(\lambda_1-cr)t}+c_1r\max_{s\in[0,t]}\varrho(s)te^{-(\lambda_1-cr)t}.
\end{aligned}
$$
For $(\xi_u,\xi_v)\in K^-_{\alpha}$ with $\alpha\le 1$, we reduce from the first inequality of (\ref{eq2.24}) that $\dot\varrho>0$. In this case, $\max_{s\in[0,t]}\varrho(s)=\varrho(t)$. Notice that $\|\xi_u\|=\varrho$ and $\|\xi_v\|=\rho$ we get from the inequality right above that
\begin{equation}\label{eq2.26}
\frac{|\xi_v(t)|}{|\xi_u(t)|}\le c_2re^{-(\lambda_1-cr)t}t, \quad \mathrm{for}\ t>0.
\end{equation}
To control the growth of $|\xi_{u_1}(t)|$ we make use of Formula (\ref{eq2.23}).
For $(\xi_{\hat u},\xi_{\hat v})\in\hat K^-_1$, we obtain from Lemma \ref{lem2.9} that $(\xi_u,\xi_v)\in K^-_{1,1}\cap K^-_1$ if $(\xi_u,\xi_v)\in T_zS_{E,\pm r}$. Consequently, we have $\dot\varrho'> (\lambda_2-2cr)\varrho'$, through which we induce from (\ref{eq2.23}) that
\begin{equation}\label{eq2.27}
\frac{d\rho'}{d\varrho'}=\frac{\dot\rho'}{\dot\varrho'} \le\frac{\lambda_1+cr}{\lambda_2-2cr}\frac{\rho'}{\varrho'}+\frac{cr}{\lambda_2-2cr}.
\end{equation}
Thus, the initial condition $\rho'(0)\le\alpha\varrho'(0)$ with $\alpha\le 1$ leads to the relation $\frac{d\rho'}{d\varrho'}\le 1$, i.e. $\dot\rho'(t)\le\dot\varrho'(t)$ holds for all $t\ge 0$. So, that $|\xi_{u_1}(t)|\le\rho'(t)\le\varrho'(t)=|\xi_{\hat u}(t)|$ leads to
\begin{equation}\label{eq2.28}
\frac{|\xi_{\hat v}(t)|}{|\xi_{\hat u}(t)|}\le \frac{|\xi_v(t)|}{|\xi_u(t)|}, \qquad \forall\  (\xi_u(0),\xi_v(0))\in T_zS_{E,\pm r}, \ \ (\xi_{\hat u}(0),\xi_{\hat v}(0))\in\hat K^-_1.
\end{equation}
Since $(\xi_u,\xi_v)\in K^-_{1,1}$, it follows from the first inequality of (\ref{eq2.23}) that
\begin{equation}\label{eq2.30}
\varrho'(t)\ge\varrho'(0)e^{(\lambda_2 -2cr)t}.
\end{equation}
By a variable substitution $\rho'=s\varrho'^{\frac{\lambda_1+cr}{\lambda_2-2cr}}$, we obtain from \eqref{eq2.27} that $s(t)$ and $\varrho$ satisfy the inequality
$\frac{ds}{d\varrho'}\le\frac{cr}{\lambda_2-2cr}\varrho'^{-(\lambda_1+cr)/(\lambda_2-2cr)}$.
Consequently, we have
\begin{equation}\label{eq2.29}
\begin{aligned}
\rho'(t)\le&\frac{cr}{\lambda_2-\lambda_1-3cr}\varrho'(t)+ c'\varrho'(t)^{\frac{\lambda_1+cr}
{\lambda_2-2cr}}\\
=&\Big(\frac{cr}{\lambda_2-\lambda_1-3cr}+c'\varrho'(t)^{-\frac{\lambda_2-\lambda_1-3cr}
{\lambda_2-2cr}}\Big)\varrho'(t)
\end{aligned}
\end{equation}
where the constant $c'>0$ is chosen such that it holds for $t=0$. Because $\lambda_2>\lambda_1$, we have $\lambda_2-\lambda_1-3cr>0$ for small $r>0$.
For large $t$, it follows from \eqref{eq2.29} that for $t_z$ lower bounded by \eqref{eq2.9} with small $E$
\begin{equation}\label{eq2.31}
|(\xi_{u_1},\xi_v)(t_z)|\le c_3r|\xi_{\hat u}(t_z)|.
\end{equation}

Next, let us establish the relation between the differential of $\Phi_{E,r,r}$ and of $\Phi_H^t$.
Let $X_H=(X_{u_1},X_{u_2},\cdots X_{u_n},X_{v_1},X_{v_2},\cdots X_{v_n})$ denote the Hamiltonian field, then
\begin{lem}\label{differentialmapofsection}
Let $\xi=(0,\xi_{\hat u},\xi_{v_1},\xi_{\hat v})\in T_zV_{E,r}$ and assume $\Phi_H^t(z)\in U_{E,r}$, then
$$
d\Phi_{E,r,r}(z)\xi=d\Phi_H^t(z)\xi+\nu X_H(\Phi_H^t(z))
$$
where $\nu=-\xi'_{u_1}X^{-1}_{u_1}(\Phi_H^t(z))$ if we write $d\Phi_H^t(z)\xi=(\xi'_{u_1},\xi'_{\hat u},\xi'_ {v_1},\xi'_{\hat v})$.
\end{lem}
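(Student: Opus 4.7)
The plan is to recognize $\Phi_{E,r,r}$ as a Poincar\'e section map built from the Hamiltonian flow together with an implicitly defined crossing time, and then to produce the formula by a direct chain-rule calculation. Concretely, for $z\in V_{E,r}$ let $t(z)$ denote the first time at which $\Phi_H^{t(z)}(z)$ lies on $\Sigma_r^-=\{u_1=r\}$, so that $\Phi_{E,r,r}(z)=\Phi_H^{t(z)}(z)$. The statement of the lemma is then the assertion that the correction between $d\Phi_H^t$ and $d\Phi_{E,r,r}$ is precisely the infinitesimal reparametrization of time needed to land back on the target section.

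The first step is to verify that $t(\cdot)$ is $C^1$ near the given $z$ via the implicit function theorem applied to $u_1\bigl(\Phi_H^{t}(z)\bigr)=r$. The required non-degeneracy is $X_{u_1}(\Phi_H^{t(z)}(z))=\partial_{v_1}H(\Phi_H^{t(z)}(z))\ne 0$. Writing $\partial_{v_1}H=\lambda_1 u_1+\partial_{v_1}R$ with $R=O(\|z\|^3)$ and noting $u_1=r$ at the arrival point gives $X_{u_1}=\lambda_1 r(1+O(r))$, which is nonzero for small $r$. This is essentially the same transversality used earlier in the paper to record $\partial_{u_1}H(z^+_r)=\lambda_1 r(1+O(r))>0$.

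With $t(\cdot)$ smooth, I would differentiate $\Phi_{E,r,r}(z)=\Phi_H^{t(z)}(z)$ in the direction $\xi$. Using $\partial_s\Phi_H^s=X_H\circ\Phi_H^s$, the chain rule produces
$$
d\Phi_{E,r,r}(z)\xi=d\Phi_H^{t(z)}(z)\xi+\bigl(dt(z)\xi\bigr)\,X_H(\Phi_H^{t(z)}(z)).
$$
To identify $\nu:=dt(z)\xi$, I differentiate the defining relation $u_1\bigl(\Phi_H^{t(z+s\xi)}(z+s\xi)\bigr)=r$ at $s=0$ and read off the $u_1$-component. This gives
$$
0=\xi'_{u_1}+X_{u_1}(\Phi_H^{t(z)}(z))\,\nu,
$$
from which $\nu=-\xi'_{u_1}/X_{u_1}(\Phi_H^{t(z)}(z))$, matching the formula in the lemma.

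The only nontrivial ingredient is the transversality of the Hamiltonian vector field to the target section at the crossing point, which justifies applying the implicit function theorem to obtain a smooth hitting time. Everything beyond that is a mechanical application of the chain rule, so I do not anticipate a serious obstacle; the proof is short, and its only real content is the geometric bookkeeping of how reparametrization of time compensates for leaving the section.
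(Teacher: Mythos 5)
Your proof is correct and is essentially the same argument as the paper's: both identify the tangent map of the section return map as the flow differential plus a time-reparametrization term along $X_H$, and determine the coefficient $\nu$ by requiring the $u_1$-component of the image to vanish, using $X_{u_1}\ne 0$ at the arrival point. The paper phrases this as a finite-difference computation taking $z'\to z$, whereas you phrase it via the implicit function theorem applied to the hitting-time function $t(z)$ followed by the chain rule; these are two standard presentations of the same calculation, and your version has the minor advantage of making the smoothness of $t(\cdot)$ explicit.
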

\begin{proof} Emanating from the points $z,z'\in V^\pm_E$, the trajectories arrive at the set $U^\pm_E$ after the time $t$ and $t'$ respectively. One has $t'-t\to 0$ if $z'\to z$. We have the identity
$$
\begin{aligned}
\Phi_H^{t'}(z')-\Phi_H^t(z)&=\Phi_H^{t'}(z')-\Phi_H^{t}(z')+\Phi_H^{t}(z')-\Phi_H^t(z)\\
&=\Phi_H^{t'-t}\Phi_H^{t}(z')-\Phi_H^{t}(z')+d\Phi_H^{t}(z)(z'-z)+O(|z'-z|^2))\\
&=X_H(\Phi_H^t(z'))(t'-t)+d\Phi_H^{t}(z)(z'-z)+O(|z'-z|^2,|t'-t|^2).
\end{aligned}
$$
Since the $u_1$-component of $\Phi_H^{t'}(z')-\Phi_H^t(z)$ vanishes and the $u_1$-component of $X_H$ is non-zero, the number $\nu$ is uniquely defined such that the lemma holds.
\end{proof}

To apply the lemma, we denote by $\xi'=d\Phi_{H}^{t_z}(z)\xi=(\xi'_{u_1},\xi'_{\hat u},\xi'_{v_1},\xi'_{\hat v})$. Since the initial vector $(\xi_u,\xi_v)\in T_zV_E$, it follows from Lemma \ref{lem2.9} that $(\xi_u,\xi_v)\in K^-_{\alpha,1}\cap K^-_\alpha$ provided $(\xi_{\hat u},\xi_{\hat v})\in\hat K^-_{\alpha}$. In this case, we get from (\ref{eq2.31}) that $|\xi'_{u_1}|\le O(r)|\xi'_{\hat u}|$. Be aware the special form of $R$ in (\ref{eq2.5}) The Hamiltonian vector field of $H$ takes the form
\begin{equation*}
X_{u_i}=\lambda_iu_i(1+O(r)), \qquad X_{v_i}=-\lambda_iv_i(1+O(r)).
\end{equation*}
For each $z\in\Phi_{E,r,r}V^+_E$, one has $|u_1|=\lambda_1r$, $|\hat u|=o(r)$, $|v|\le O(r)e^{-(\lambda_1-cr)t_z}$ and
$$
\frac{|X_{u_i}(z)|}{|X_{u_1}(z)|}\le O(r),\qquad \frac{|X_{v_i}(z)|}{|X_{u_1}(z)|}\le c'e^{-(\lambda_1-cr)t_z}, \quad \forall\ i\ge 2.
$$
Applying Lemma \ref{differentialmapofsection} to our situation, we find $\nu X_{\hat u}=-\xi'_{u_1}\frac{X_{\hat u}}{X_{u_1}}$ and $\nu X_{\hat v}=-\xi'_{u_1}\frac{X_{\hat v}}{X_{u_1}}$. Recall the notation $(\xi_{\hat u}^*,\xi_{\hat v}^*) =d\hat\Phi_{E,r,r}(\xi_{\hat u},\xi_{\hat v})$ and in view of (\ref{eq2.29})
$$
|\xi_{\hat u}^*|=(1+o(r))|\xi'_{\hat u}|,\qquad |\xi_{\hat v}^*|\le(1+O(r)e^{-(\lambda_1-cr)t_z})|\xi'_{\hat v}|,
$$
with which and \eqref{eq2.30} we get the first estimate in \eqref{eq2.11}, with \eqref{eq2.26} and \eqref{eq2.28} we get the second one in \eqref{eq2.11}. The estimates in \eqref{eq2.12} can be proved in a similar way. The proof of Lemma \ref{lem2.4} is completed.
\end{proof}

\section{Continuation of periodic orbit with negative energy}\label{sec.3}
\setcounter{equation}{0}


The Hamiltonian (\ref{eq1.1}) is symmetric for the operation $\mathbf{s}:(x,y)\to (x,-y)$. With the homoclinic orbit $z^+(t)$ we studied in the last section, one obtains another homoclinic orbit $z^-(t)=\mathbf{s}z^+(-t)$. Such a symmetry may be destroyed during the transformation introduced in the proof of Proposition \ref{flatpro}. 
However, the Hamiltonian flow $\Phi_H^t$ still admits two homoclinic orbits $z^+(t)$ and $z^-(t)$ such that $[z^+(t)]=-[z^-(t)]$, the hypotheses ({\bf H2}) and ({\bf H4}) hold.

Recall $\Sigma^-_{\pm r}=\{u_1=\pm r\}$, $\Sigma^+_{\pm r}=\{v_1=\pm r\}$ and $\Sigma_{E,\pm r}^{\pm}=H^{-1}(E)\cap\Sigma^\pm_{\pm r}$. The homoclinic orbit $z^+(t)$ intersects the sections at the points $z^+_{r}$ and $z^-_{r}$, the orbit $z^-(t)$ intersects the sections at the points $z^-_{-r}$ and $z^+_{-r}$ respectively. In $(u,v)$-coordinate, 
$$
z^-_{\pm r}=(\pm r,\hat u^-_{\pm r},v^-_{\pm r}), \qquad z^+_{\pm r}=(u^+_{\pm r},\pm r, \hat v^+_{\pm r}).
$$
\begin{defi}\label{def3.1}
For small $\delta>0$, let $U_{E,\pm r}\subset H^{-1}(E)\cap\Sigma^-_{\pm r}$ be the subset such that
$$
\hat\pi U_{E,\pm r}=\{|\hat u-\hat u^-_{\pm r}|\le\delta,|\hat v-\hat v^-_{\pm r}|\le \delta\}.
$$
Let $S_{E,\pm r}\subseteq \Phi_{E,\pm r}U_{E,\pm r}$ be the set such that the inner map is well defined. 
\end{defi}

In contrast with the outer map $\Phi_{E,\pm r}$ which is well-defined for any $E\in[-E_0,E_0]$, the inner map $\Phi_{E,r,r}$ is valid only for $E>0$. Forced by Lemma \ref{pro2.2}, we get inner maps $\Phi_{E,r,-r}$ and $\Phi_{E,-r,r}$ for small $E<0$.

Similar to the inner map $\Phi_{E,r,\pm r}$, the inner map $\Phi_{E,-r,r}$: $S_{E,-r}\to\Sigma^-_{E,r}$ is defined as follows: for $z\in S_{E,-r}$, the orbit $\Phi_H^t(z)$ remains in $B_r(0)$ until it arrives at a point $z'\in\Sigma^-_{E,r}$, we set $\Phi_{E,-r,r}(z)=z'$. Due to Lemma \ref{pro2.2}, the energy $E$ must be negative. We also define the projection of the maps such that $\hat\pi\Phi_{E,\pm r,\mp r}(z)=\hat\Phi_{E,\pm r,\mp r}(\hat z)$. As a convention of notation, the selection of  $+$ in $\pm$ leads to the selection of $-$ in $\mp$, e.g. there are only two cases for $\Phi_{E,\pm r,\mp r}$, either $\Phi_{E,r,-r}$ or $\Phi_{E,-r,r}$ because $E<0$.

We have the following results similar to Proposition \ref{pro2.5} plus Lemma \ref{lem2.4}. The proof is also almost the same. Recall the definition of the cones $\hat K^{\pm}_{\alpha}$, $K^\pm_{\alpha,1}$ and $K^\pm_\alpha$. In the proof of Lemma \ref{lem2.4}, the range for $\alpha$ is defined, $\alpha\in(\max\{\hat\alpha^-_r,\frac{cr}{\lambda_1-cr}\}, \min\{\hat\alpha^+_r,\frac{\lambda_1}{cr}-1\})$, both $K^-_{\alpha,1}$ and $K^-_\alpha$ are all invariant. Notice that $\max\{\hat\alpha^-_r,\frac{cr}{\lambda_1-cr}\}\to 0$ as $r\to 0$.

\begin{pro}\label{theo3.1}
Some small $E_0>0$ exists such that for any $E\in[-E_0,0)$, the map $\Phi_{E,\pm r,\mp r}$ expands $S_{E,\pm r}$ in $\hat u$-component such that $\Phi_{E,\pm r,\mp r}S_{E,\pm r}$ covers $\{|\hat u-\hat u^-_{\mp r}|\le r\}$ in the sense
\begin{equation}\label{eq3.1}
\pi_{u}\hat\pi\Phi_{E,\pm r,\mp r}S_{E,\pm r}\supseteq\{|\hat u-\hat u^-_{\mp r}|\le r\},
\end{equation}
and it contracts $S_{E,\pm r}$ in the $\hat v$-component such that for $\bar\lambda=\min\{\lambda_2-cr,2\lambda_1-cr)\}$
\begin{equation}\label{eq3.2}
\pi_{v}\hat\pi(\Phi_{E,\pm r,\mp r}S_{E,\pm r}\cap U_{E,\mp r})\subseteq\{|\hat v-\hat v^-_{\mp r}|\le cr^{3-2c'r}|E|^{1-c'r}\}.
\end{equation}

The differential of the map $\hat\Phi_{E,\pm r,\mp r}$ is hyperbolic. For $(\xi_{\hat u},\xi_{\hat v})\in\hat K^-_{\alpha}$ with $\hat z\in\hat S_{E,\pm r}$, let $(\xi_{\hat u}^*,\xi_{\hat v}^*)=d\hat\Phi_{E,\pm r,\mp r}(\hat z)(\xi_{\hat u},\xi_{\hat v})$. Then there exist constants $c,c'>0$ such that
\begin{equation}\label{eq3.3}
|\xi_{\hat u}^*|\ge e^{(\lambda_2-cr)t_z}|\xi_{\hat u}|, \qquad |\xi_{\hat v}^*|\le c're^{-(\lambda_1-cr)t_z}t_z|\xi_{\hat u}^*|
\end{equation}
where $t_z$ is the time for $\Phi_H^{t_z}z$ arrives at $\{u_1=\mp r\}$.
\end{pro}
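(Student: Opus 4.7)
My plan is to observe that Proposition \ref{theo3.1} is a direct transposition of Proposition \ref{pro2.5} and Lemma \ref{lem2.4} to the negative-energy setting, and that every estimate used in Section \ref{sec.2} can be re-run essentially verbatim once we identify the correct exit section. The crucial ingredient is the sign dichotomy in Lemma \ref{pro2.2}(1): since $E<0$ forces $v_1(-t_E)u_1(t_E)<0$, any orbit entering the ball $B_r$ through $\Sigma^-_{\pm r}$ with $u_1=\pm r$ cannot leave through $\Sigma^-_{\pm r}$ but must arrive at $\Sigma^-_{\mp r}$. This is precisely what makes the inner maps $\Phi_{E,r,-r}$ and $\Phi_{E,-r,r}$ well-defined for $E<0$, so $S_{E,\pm r}$ is non-empty.

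First I would establish the analog of Lemma \ref{foreq2.8}. Consider the outer-map image $\mathcal{G}'=\cup_{|E|\le E_0}\mathcal{G}_{\Phi_{E,\pm r}^*F}$ as in Section \ref{sec.2}, which covers a ball $\{|u|\le\delta'\}$ in the $u$-direction by the transversality provided by Lemma \ref{lem2.2}. Propagating by $\Phi_t$ in $B_{2r}$, the linearized estimates \eqref{eq2.8} again expand the $u$-direction and contract the $v$-direction, so the flow-saturated set $\cup_{t\ge t_1}\Phi_t\mathcal{G}'$ meets $\{u_1=\mp r\}$ in a strip whose $\hat u$-projection contains $\{|\hat u-\hat u^-_{\mp r}|\le r\}$; intersecting with $H^{-1}(E)$ gives \eqref{eq3.1}. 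For the $\hat v$-contraction \eqref{eq3.2}, I would apply the time bound \eqref{eq2.9} from Lemma \ref{pro2.2}, which is insensitive to the sign of $E$: substituting $t_z\ge \frac{1}{\lambda_1}\ln|E|^{-1}+2\ln r-c_E$ into the second inequality of \eqref{eq2.8} yields $|v_E(t_z)|\le|v_E(0)|e^{-(\lambda_1-cr)t_z}\le cr^{3-2c'r}|E|^{1-c'r}$, exactly as in the proof of Proposition \ref{pro2.5}.

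The hyperbolic splitting \eqref{eq3.3} follows by rerunning the cone analysis of Lemma \ref{lem2.4} without modification. The variational equation \eqref{eq2.21}, the invariance of $K^-_{\alpha,1}\cap K^-_\alpha$ (Lemma \ref{lem2.5}), and the tangency lemma (Lemma \ref{lem2.9}) are all statements purely about the linearization inside $B_r$ and the level-set tangent structure; none of them reference the sign of $E$. Consequently inequalities \eqref{eq2.26}--\eqref{eq2.31} go through verbatim along an orbit that traverses $B_r$ and exits on $\{u_1=\mp r\}$, and the correction term in Lemma \ref{differentialmapofsection} (which converts the flow differential to the section differential) is again controlled by the Hamiltonian field estimates $|X_{u_i}/X_{u_1}|=O(r)$ and $|X_{v_i}/X_{u_1}|=O(e^{-(\lambda_1-cr)t_z})$ for $i\ge 2$. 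Combining yields the two inequalities in \eqref{eq3.3}.

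The only step that deserves any genuine care is the verification of the transversality input at the entry section $\Sigma^-_{-r}$ (equivalently, that $\det A_{11}(0,\hat z^-_{-r})\ne 0$ for the second homoclinic $z^-(t)$). This is where hypothesis ({\bf H2}) for the symmetric partner $z^-(t)$ is used: exactly as in Lemma \ref{lem2.2}, if the rank condition failed, $d\hat\Phi_{0,-r}$ would send a non-zero unstable direction into the stable subspace, contradicting \eqref{eq1.2}. Once this is in hand, the entire apparatus of Section \ref{sec.2} applies to $\Phi_{E,\pm r,\mp r}$ and the proposition follows. The main potential obstacle is bookkeeping of signs across the four possible configurations of $(\pm r,\mp r)$, but this is controlled uniformly because all estimates depend only on $|E|$ and on the geometry in $B_r$.
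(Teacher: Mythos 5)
Your proposal is correct and follows essentially the same route as the paper's proof, which simply declares that Proposition \ref{pro2.5} and Lemma \ref{lem2.4} carry over once one notes that $E<0$ forces $u_1(t_E)v_1(-t_E)<0$, so the orbit exits through $\{u_1=\mp r\}$. Your unpacking of the covering argument via \eqref{eq2.10}, the time bound \eqref{eq2.9}, the cone invariance from Lemma \ref{lem2.5}, and the transversality of the outer map at the symmetric homoclinic are exactly the implicit steps the paper references.
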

\begin{proof}
The set $S_{E,\pm r}$ is treated as a union of the graphs $\mathcal{G}_{\Phi^*_{E,r}F}$, the proof of Proposition \ref{pro2.5} applies here. The proof of (\ref{eq3.1}) is contained in the proof of (\ref{eq2.6}) in Proposition \ref{pro2.5}, see (\ref{eq2.10}). That $E<0$ implies $u_1(t_E)v_1(-t_E)=-r^2$. The proof of (\ref{eq3.2}) is the same as (\ref{eq2.7}). The estimates in \eqref{eq3.3} are proved in Lemma \ref{lem2.4}.
\end{proof}

With the property established in Proposition \ref{theo3.1}, we are able to construct a Smale horseshoe shown in the following figure
\begin{figure}[htp] 
  \centering
  \includegraphics[width=7.0cm,height=4.7cm]{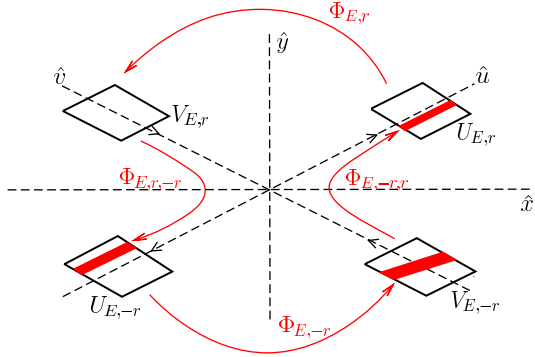}
  \caption{Smale horseshoe for $E<0$.}
  \label{fig3}
\end{figure}

According to Proposition \ref{theo3.1}, the set $\Phi^{-1}_{E,r}S_{E,r}\subseteq U_{E,r}$ is mapped by $\Phi_{E,r,-r}\Phi_{E,r}$ to a set which intersects the set $U_{E,-r}$ in the way such that $\pi_u\hat\pi\Phi_{E,r,-r}S_{E,r}\supset\pi_u\hat\pi U_{E,-r}$ and $\pi_{v}\hat\pi(\Phi_{E,r,-r}S_{E,r}\cap U_{E,-r})\subset\{|\hat v|\le cr^{3-2c'r}|E|^{1-c'r}\}$.

Next, we consider how the map $\Phi_{E,-r}$ acts on the set $\Phi_{E,r,-r}S_{E,r}$. Because of the transversal intersection property ({\bf H2}), we have $\mathrm{det}(A_{11}(z))\ne 0$ (cf. Lemma \ref{lem2.2}) if we write
$$
d\hat\Phi_{E,-r}=\left[\begin{matrix}
A_{11}(E,z) & A_{12}(E,z) \\ A_{21}(E,z) & A_{22}(E,z)\end{matrix}\right],\quad \mathrm{for}\ z\in U_{E,-r}.
$$
For $(\xi_{\hat u},\xi_{\hat v})$ such that $|\xi_{\hat u}|\gg|\xi_{\hat v}|$, there exists some $\eta>0$ such that $|\xi_{\hat u}^*|\ge\eta|\xi_{\hat v}^*|$ if we write $(\xi_{\hat u}^*,\xi_{\hat v}^*)=d\Phi_{E,-r}(\xi_{\hat u},\xi_{\hat v})$. It implies that some $\delta>0$ exists such that
$$
\pi_u\hat\pi\Phi_{E,-r}(\Phi_{E,r,-r}S_{E,r}\cap U_{E,-r})\supseteq\{|\hat u|\le\delta\}.
$$
So, in the same way to prove \eqref{eq3.1} and \eqref{eq3.2}, one can see that there exists some set
$$
S_{E,-r}\subseteq \Phi_{E,-r}(\Phi_{E,r,-r}S_{E,r}\cap U_{E,-r})
$$
so that $\pi_u\hat\pi\Phi_{E,-r,r}S_{E,-r}\supseteq\pi_u\hat\pi U_{E,r}$ and $\pi_{v}\hat\pi\Phi_{E,-r,r}S_{E,-r}\subseteq\{|\hat v|\le cr^{3-2c'r}|E|^{1-c'r}\}$.  Let $\Phi_E=\Phi_{E,-r,r}\Phi_{E,-r}\Phi_{E,r,-r}\Phi_{E,r}$.

\begin{theo}\label{theo3.2}
There exists $E_0>0$ such that for each $E\in[-E_0,0)$, there exists a $C^1$-map $F_E\in C^1(\pi_u\hat\pi U_{E,r},\pi_v\hat\pi U_{E,r})$ satisfying the condition $\mathcal{G}_{F_E}\subseteq\Phi_E \mathcal{G}_{F_E}$. Restricted on $\mathcal{G}_{F_E}$ the inverse of $\Phi_E$ is a contraction map. Consequently, there is a fixed point $z_{E,r}$ of $\Phi_E$ lying in $\mathcal{G}_{F_E}$.
\end{theo}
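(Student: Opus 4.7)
The plan is to mimic the Banach fixed point argument used for Proposition \ref{theo2.2} and Theorem \ref{theo2.3}, but now with the Poincar\'e return map decomposed into four factors $\Phi_E=\Phi_{E,-r,r}\Phi_{E,-r}\Phi_{E,r,-r}\Phi_{E,r}$ instead of two. I would fix a small $\eta>0$ (so that $\eta\le\lambda/(2M)$ with $\lambda,M$ as in Lemma \ref{lem2.2}) and consider the family $\mathscr{F}=\{F\in C^1(\pi_u\hat U_\delta,\pi_v\hat U_\delta):\|DF\|\le\eta\}$, and show that $\Phi_E$ induces a graph transformation $\Phi_E^\ast:\mathscr{F}\to\mathscr{F}$.

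The graph-invariance step chases $\mathcal{G}_F$ through the four factors. Apply $\Phi_{E,r}$: by Lemma \ref{lem2.2} the cone $\hat K^-_\eta$ is pushed into $\hat K^-_{\eta^\ast}$, so $\Phi_{E,r}\mathcal{G}_F$ is again a graph. Restrict to $S_{E,r}$ and apply the inner map $\Phi_{E,r,-r}$; the estimates \eqref{eq3.3} of Proposition \ref{theo3.1} together with the bound \eqref{eq2.9} on $t_z$ reproduce exactly the computation leading to \eqref{eq2.15} and show that the image is an almost horizontal graph over a domain covering $\hat U_\delta$ in $U_{E,-r}$, with slope $O(|E|^{(\lambda_1-cr)/\lambda_1}|\ln E|)$. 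Apply $\Phi_{E,-r}$ (Lemma \ref{lem2.2} again preserves the slope bound) and then $\Phi_{E,-r,r}$, which once more expands in $\hat u$ and contracts in $\hat v$ by Proposition \ref{theo3.1}. One lands back in $U_{E,r}$ with a graph covering $\hat U_\delta$ whose slope is controlled by the product of two small factors, hence still in $\mathscr{F}$.

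For the contraction of $\Phi_E^\ast$ in $C^0$-topology, repeat the argument of Proposition \ref{theo2.2}. Given $F_1,F_2\in\mathscr{F}$, pick $\hat u'$ achieving the maximum of $|\Phi_E^\ast F_1-\Phi_E^\ast F_2|$ and pull the vertical difference vector $(0,\xi'_{\hat v})$ back through the four factors. Each inner map $\Phi_{E,\pm r,\mp r}^{-1}$ contracts the $\hat v$-direction by $e^{-(\lambda_2-cr)t_z}$ and keeps the resulting vector in a cone $\hat K^+_\alpha$ with $\alpha=O(|E|)$ (second inequality of \eqref{eq3.3}); each outer map $\Phi_{E,\pm r}^{-1}$ preserves the cone $\hat K^+_{\alpha^\ast}$ by Lemma \ref{lem2.2}. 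Accumulating the two inner factors gives
\begin{equation*}
\|\Phi_E^\ast F_1-\Phi_E^\ast F_2\|\le(1-\eta\alpha^\ast)^{-1}e^{-2(\lambda_2-cr)t_z}\|F_1-F_2\|,
\end{equation*}
and $t_z$ is large by \eqref{eq2.9} once $E_0$ is chosen small enough, so $\Phi_E^\ast$ is a genuine contraction on $\mathscr{F}$ (endowed with the $C^0$ norm, which is complete on the closed subset of $C^1$ maps with slope $\le\eta$). Banach's fixed point theorem yields a unique $F_E\in\mathscr{F}$ with $\Phi_E^\ast F_E=F_E$, i.e. $\mathcal{G}_{F_E}\subseteq\Phi_E\mathcal{G}_{F_E}$ (the inclusion rather than equality because $\Phi_E\mathcal{G}_{F_E}$ covers a larger domain than $\pi_u\hat U_\delta$).

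For the last claim, one checks expansion of $\Phi_E$ on $\mathcal{G}_{F_E}$: a tangent vector $(\xi_{\hat u},\xi_{\hat v})\in\hat K^-_\eta$ is mapped by each outer factor to a vector with $|\xi^\ast_{\hat u}|\ge(\lambda-\eta M)|\xi_{\hat u}|$ (Lemma \ref{lem2.2}) and by each inner factor with the additional factor $e^{(\lambda_2-cr)t_z}$ from \eqref{eq3.3}, so $|d\Phi_E(\xi_{\hat u},\xi_{\hat v})|\ge e^{2(\lambda_2-cr)t_z}(\lambda-\eta M)^2|(\xi_{\hat u},\xi_{\hat v})|$, which is greater than $1$ for small $|E|$. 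Hence $\Phi_E^{-1}$ is a contraction on the compact invariant graph $\mathcal{G}_{F_E}$, and a second application of Banach's theorem produces the fixed point $z_{E,r}$. The main obstacle is purely bookkeeping: one must verify that the slope bound $\eta$ and the cone parameter $\alpha^\ast$ remain within the regimes of validity of Lemma \ref{lem2.2} and Proposition \ref{theo3.1} along the composition of four factors, and that the cumulative contraction rate genuinely beats the factor $(1-\eta\alpha^\ast)^{-1}$ coming from the mismatch between preimage base points.
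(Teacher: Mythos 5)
Your proposal is correct and follows essentially the same argument as the paper: the paper also invokes the graph-transform machinery from Proposition~\ref{theo2.2} on the four-factor composition $\Phi_E=\Phi_{E,-r,r}\Phi_{E,-r}\Phi_{E,r,-r}\Phi_{E,r}$, though it packages the argument as the composition of two half-step contractions $\mathscr{F}^+\to\mathscr{F}^-\to\mathscr{F}^+$ rather than chasing the cone through all four factors at once. The only minor bookkeeping slip is that your contraction constant $(1-\eta\alpha^\ast)^{-1}e^{-2(\lambda_2-cr)t_z}$ should carry the mismatch factor $(1-\eta\alpha^\ast)^{-1}$ twice (once per inner factor), but this does not affect the conclusion since that factor is $O(1)$ and the exponential dominates.
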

\begin{proof}
Similar to the proof of Proposition \ref{theo2.2}, let $\mathscr{F}^\pm=\{F\in C^1(\pi_u\hat\pi U_{E,\pm r},\pi_v\hat\pi U_{E,\pm r}):\|DF\|\le\eta\}$ be a set of maps with suitably small $\eta>0$. By applying the proof of Proposition \ref{theo2.2}, we see that the map $\Phi_{E,r,-r}\Phi_{E,r}$ induces a transformation $F\in\mathscr{F}^+\to(\Phi_{E,r,-r}\Phi_{E,r})^*F\in\mathscr{F}^-$ such that $\Phi_{E,r,-r}\Phi_{E,r}\mathcal{G}_F= \mathcal{G}_{(\Phi_{E,r,-r}\Phi_{E,r})^*F}$ and
$$
\|(\Phi_{E,r,-r}\Phi_{E,r})^*F_1-(\Phi_{E,r,-r}\Phi_{E,r})^*F_2\|<\mu\|F_1-F_2\|
$$
holds for any two maps $F_1,F_2\in\mathscr{F}$ with $0<\mu<1$.

In the same reason, we see that the map $\Phi_{E,-r,r}\Phi_{E,-r}$ also induces a transformation $F\in\mathscr{F}^-\to(\Phi_{E,-r,r}\Phi_{E,-r})^*F\in\mathscr{F}^+$ such that $\Phi_{E,-r,r}\Phi_{E,-r}\mathcal{G}_F=\mathcal{G}_{(\Phi_{E,-r,r} \Phi_{E,-r})^*F}$ is a contraction map also. Since $\Phi_E$ is the composition of the two maps $\Phi_{E,-r,r}\Phi_{E,-r}$ and $\Phi_{E,r,-r}\Phi_{E,r}$, it induces a transformation on $\mathscr{F}$: $F\to\Phi_E^*F$ which is obviously a contraction map either. Therefore, there exists a unique fixed point $F_E$ of the map $\Phi_E^*$. Restricted on the graph of $F_E$, the inverse map $\Phi_E^{-1}$ is also contracting.

By Banach's fixed point theorem, $\Phi_E$ has a unique fixed point $z_{E,r}$ in the graph $\mathcal{G}_{F_E}$. It corresponds to a periodic orbit $z_E(t)$ on negative energy level set $H^{-1}(E)$.
\end{proof}

\section{Periodic orbit with compound type homology class}\label{sec.5}
The continuation of periodic orbits takes place not only from single homoclinic orbit but also from a compound of homoclinic orbits. 

\begin{theo}\label{theo5.1}
Assume $k$ pairs of homoclinic orbits $\{z^\pm_1(t),\cdots,z^\pm_k(t)\}$ satisfying the hypotheses $(${\bf H1},{\bf H2}$)$. There exist $E_0>0$ such that for each $E\in(0,E_0]$ there exists a unique periodic orbit $z^+_E(t)$ $(z^-_E(t)$ resp.$)$ which shadows the orbits $\{z^+_1(t),\cdots,z^+_k(t)\}$ $(\{z^-_k(t),\cdots,z^-_1(t)\}$ resp.$)$ in the prescribed order. As a subset in $\mathbb{T}^n\times\mathbb{R}^n$ depending on $E$, $\cup_tz^{\pm}_E(t)$ approaches $\Gamma^\pm=\cup_i\cup_tz^{\pm}_i(t)$ in Hausdorff metric as $E\downarrow0$;
\end{theo}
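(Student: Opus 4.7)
The plan is to extend the single-homoclinic Banach fixed-point construction of Proposition \ref{theo2.2} to a $k$-fold chain along the prescribed ordering. For each $i=1,\ldots,k$, the orbit $z_i^+(t)$ meets $\Sigma_r^-$ at $z_{i,r}^-=(r,\hat u_{i,r}^-,v_{i,r}^-)$ and $\Sigma_r^+$ at $z_{i,r}^+=(u_{i,r}^+,r,\hat v_{i,r}^+)$ with $|\hat u_{i,r}^-|,|\hat v_{i,r}^+|=o(r)$ by $(\textbf{H2},\textbf{H4})$. With $\delta\ll r$, form neighborhoods $U_{E,r}^{(i)}\subset\Sigma_{E,r}^-$ of radius $\delta$ around $z_{i,r}^-$ as in Definition \ref{def2.1}, together with the outer maps $\Phi_{E,r}^{(i)}:U_{E,r}^{(i)}\to V_{E,r}^{(i)}\subset\Sigma_{E,r}^+$ following $z_i^+(t)$ outside $B_r$.

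The central observation is that $\Phi_{E,r,r}$ is a single inner map whose image, by (\ref{eq2.6}), $\hat u$-projects onto $\{|\hat u|\le r\}$; since $|\hat u_{j,r}^-|=o(r)$ for every $j$, this region contains each small disc $\hat u_{j,r}^-+\{|\hat u|\le\delta\}$ in its interior. Hence for each $i$ there is a well-defined local branch $\Phi_{E,r,r}^{(i,i+1)}$ of $\Phi_{E,r,r}$ selecting pre-images that land in $U_{E,r}^{(i+1)}$ (indices mod $k$). Form the chained return map
$$
\Phi_E:=\Phi_{E,r,r}^{(k,1)}\Phi_{E,r}^{(k)}\circ\cdots\circ\Phi_{E,r,r}^{(1,2)}\Phi_{E,r}^{(1)}:U_{E,r}^{(1)}\to U_{E,r}^{(1)}.
$$
Its fixed points are precisely the initial points of periodic orbits shadowing $z_1^+,\ldots,z_k^+$ in the prescribed order.

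To produce such a fixed point, I would run the graph transformation of Proposition \ref{theo2.2} loop by loop in the spaces $\mathscr{F}^{(i)}=\{F\in C^1(\pi_u\hat\pi U_{E,r}^{(i)},\pi_v\hat\pi U_{E,r}^{(i)}):\|DF\|\le\eta\}$. Each single factor $\Phi_{E,r,r}^{(i,i+1)}\Phi_{E,r}^{(i)}$ satisfies verbatim the hypotheses of Lemmas \ref{lem2.2}, \ref{lem2.4} (only the identity of the outer map differs), sends $\mathscr{F}^{(i)}$ into $\mathscr{F}^{(i+1)}$, and contracts in $C^0$-distance with rate $O(|E|^{(\lambda_2-cr)/\lambda_1})$; its output graph has slope bounded as in (\ref{eq2.15}), far below $\eta$ for small $|E|$. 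Composing $k$ such factors makes $\Phi_E^*:\mathscr{F}^{(1)}\to\mathscr{F}^{(1)}$ a contraction, so Banach's theorem produces a unique invariant graph $\mathcal{G}_{F_E}$; the expansion estimate (\ref{eq2.11}) renders $\Phi_E^{-1}|_{\mathcal{G}_{F_E}}$ contracting, yielding a unique fixed point $z_{E,r}^+\in\mathcal{G}_{F_E}$ and therefore the periodic orbit $z_E^+(t)$. Its $\mathbf{s}$-symmetric image $z_E^-(t)=\mathbf{s}z_E^+(t)$ automatically shadows $\{z_k^-(t),\ldots,z_1^-(t)\}$. The Hausdorff convergence $\cup_t z_E^\pm(t)\to\Gamma^\pm$ as $E\downarrow 0$ is obtained by letting $r\to 0$ jointly with $E\to 0$: outside $B_r$ the outer portions of $z_E^+$ depend continuously on $E$ and converge to $z_i^+(t)$, while inside $B_r$ both $z_E^+$ and $\Gamma^+$ are trapped in $B_{2r}$ by (\ref{eq2.8}).

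The main technical obstacle is the coherent branch selection across iterates of the graph transformation: at every intermediate stage one must verify that the image $\hat\Phi_{E,r,r}(\mathcal{G}_{\hat\Phi_{E,r}^{(i)*}F}\cap S_{E,r})$ covers $\pi_u\hat\pi U_{E,r}^{(i+1)}$ in its $\hat u$-projection, so that the branches $\Phi_{E,r,r}^{(i,i+1)}$ remain well-defined throughout the iteration. This reduces, via (\ref{eq2.6}) and the clustering $|\hat u_{j,r}^-|=o(r)$, to choosing $\delta$ sufficiently small relative to $r$; once this is arranged the uniform estimates of Section \ref{sec.2} apply loop by loop and the Banach contraction closes, giving existence and uniqueness simultaneously.
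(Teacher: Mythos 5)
Your construction coincides with the paper's proof of Theorem~\ref{theo5.1}: neighborhoods $U_{E,r,i}$ about the $k$ entry points $z^-_{r,i}$ on $\Sigma^-_r$, a chained return map obtained by alternating the outer maps along each $z^+_i(t)$ with the uniformly stretching inner map through $B_r$, a loop-by-loop graph transformation in the function spaces $\mathscr{F}^{(i)}$ yielding a Banach contraction, and $\mathbf{s}$-symmetry for $z^-_E(t)$. The only mild divergence is the Hausdorff-convergence step, which the paper postpones to Theorem~\ref{theo4.2} and proves via the transversality estimate $|\hat z^\pm_{E,r,i}-\hat z^\pm_{r,i}|\le c'|E|^{1-cr}$ (keeping $r$ fixed), whereas your diagonal $r\to 0$ argument is workable only once the uniqueness from Section~\ref{sec.4} is in place to guarantee that the orbit does not depend on the auxiliary radius.
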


\begin{proof}
Recall $\Sigma^-_{\pm r}=\{u_1=\pm r\}$ and $\Sigma^+_{\pm r}=\{v_1=\pm r\}$. Let $z^{\pm}_{r,i}$ denote the point where the homoclinic orbit $z^+_i(t)$ intersects the section $\Sigma^{\pm }_{r}$ respectively and let $z^{\pm}_{- r,i}$ denote the intersection point of the homoclinic orbit $z^-_i(t)$ with the section $\Sigma^{\pm}_{-r}$ respectively. Let $\hat U_{\pm r,i}$ a cube centered at $\hat z^-_{\pm r,i}$ with side length $2\delta$, namely,
$$
\hat U_{\pm r,i}=\{\hat z\in\mathbb{R}^{2(n-1)}:|\hat z-\hat z^-_{\pm r,i}|<\delta\}.
$$
It uniquely determines a set $U_{E,\pm r,i}\subset H^{-1}(E)$ for small $E$ such that $\hat\pi U_{E,\pm r,i}=\hat U_{\pm r,i}$. Once $r>0$ is fixed, some suitably small $\delta>0$ exists such that $U_{E,\pm r,i}\cap U_{E,\pm r,j}=\varnothing$ if $i\ne j$.

Similar to the case of single homology class for $E>0$, the periodic orbit is found by searching for invariant graph via Banach's fixed point theorem. A map $F$: $\pi_u \hat U_{r,i}\to \pi_v \hat U_{r,i}$ determines a graph $\mathcal{G}_{F,E}=\cup_{\hat u}(r,\hat u,v_1(E),F(\hat u))\subset H^{-1}(E)$. For each small $E$, $\mathcal{G}_{F,E}$ is sent by the outer map $\Phi_{E,r}$ to a graph $\Phi_{E,r}\mathcal{G}_{F,E}$, because the submatrix $A_{11}$ of \eqref{eq2.13} is non-degenerate in the sense $\mathrm{det}A_{11}\ne 0$, guaranteed by the hypothesis ({\bf H2}). Around the point $z^+_{r,i}$ it follows from $\partial_{u_1}H=\lambda_1r+o(r)$ that $\partial_E u_1=-(\lambda_1r+o(r))^{-1}$. It implies that  $\mathcal{G}_i=\cup_{E\in[-E_0,E]}\Phi_{E,r}\mathcal{G}_{F,E}$ is a graph over $\{|u|\le\delta'\}$ for some $\delta'>0$, i.e. $\pi_u\mathcal{G}_i\supset\{|u|\le\delta'\}$. Let
\begin{equation}\label{compound}
\Phi_t\mathcal{G}_{i}=\{\Phi^t_H(z):z\in\mathcal{G}_{i},|\Phi_H^s(z)|\le 2r,\ \forall\ s\in[0,t]\},
\end{equation}
It follows from the first inequality of \eqref{eq2.8} that some $t_1\le\frac 1{\lambda_1-cr}(\ln 2r-\ln\delta')$ exists such that for $t\ge t_1$ one has $\pi_u\Phi_t\mathcal{G}_{i}=\{|u|\le 2r\}$.

Let $\Pi_{r,j}=U_{r,j}\cap(\cup_{t\ge t_1}\Phi_t\mathcal{G}_i)$. Lemma \ref{pro2.2} implies $H(z)>0$ for any $z\in\Pi_{r,j}$ and the second inequality of \eqref{eq2.8} implies that for any $z\in\Phi_t\mathcal{G}_{i}$ one has $d(z,\{u=0\})\to 0$ as $t\to\infty$. Hence, by applying Lemma \ref{foreq2.8} we see that it admits a foliation of energy level sets
$
\Pi_{r,j}=\cup_{E\in(0,E_0]}\Pi_{E,r,j}
$
such that $\pi_u\Pi_{E,r,j}\supseteq\{|\hat u-\hat u^-_{r,j}|\le\delta\}\cap\Sigma^-_r$ for any small $E>0$. 

Therefore, the Hamiltonian flow $\Phi_H^t$ establishes 1-1 correspondence between $\mathcal{G}_F$ and $\Pi_{E,r,j}$, namely, a map $\Phi_{E,i,j}$ exists such that $\Pi_{E,r,j}=\Phi_{E,i,j}\mathcal{G}_F$. Because
$\Pi_{E,r,j}$ is the graph of some function $F_{E,j}$ defined on $\{|\hat u-\hat u^-_{r,j}|\le\delta\}$, we get a map $\Phi^*_{E,i,j}$ such that $F_{E,j}=\Phi^*_{E,i,j}F$

We extend the set $\cup_{E\in(0,E_0]}\Phi_{E,r}\Pi_{F,E,r,j}$ to the part $\{H^{-1}(E): E\in[-E_0,0]\}$ to construct a graph $\mathcal{G}_j$ such that $\pi_u\mathcal{G}_j \supset\{|u|\le\delta''\}$ for some $\delta''>0$ and $\mathcal{G}_j\cap\{H^{-1}(E):E\in(0,E_0]\}= \cup_{E\in(0,E_0]}\Phi_{E,r}\Pi_{F,E,r,j}$. Since we are only concerned about the graph in positive energy level sets, the extension of $\cup_{E\in(0,E_0]}\Phi_{E,r} \Pi_{F,E,r,j}$ to negative energy part can be arbitrary. Let
$\Phi_t\mathcal{G}_{j}=\{\Phi^t_H(z):z\in\mathcal{G}_{j},|\Phi_H^s(z)|\le Kr,\ \forall\ s\in[0,t]\}$, then $\Pi_{r,k}=U_{r,k}\cap(\cup_{t\ge t_1}\Phi_t\mathcal{G}_j)$ admits a foliation of energy level sets $\Pi_{r,k}=\cup_{E\in(0,E_0]}\Pi_{E,r,k}$. Again, the Hamiltonian flow $\Phi_H^t$ establishes the 1-1 correspondence $\Phi_{E,j,k}$: $\Pi_{E,r,j}\to\Pi_{E,r,k}$ and the associated map $\Phi^*_{E,j,k}$ such that $\Pi_{E,r,k}=\mathcal{G}_{\Phi^*_{E,j,k}\Phi^*_{E,i,j}F}$.

Repeating the process for $i=1,2,\cdots,k$, we obtain the transformations $\Phi^*_{E,i,i+1}$ mod $k$. The composition of the transformations $\Phi_E=\Pi_{i=1}^k\Phi_{E,i,i+1}$ maps the graph $\mathcal{G}_E$ to a graph $\Phi_E\mathcal{G}_E$ over $\{|\hat u-\hat u^-_{r,i}|\le\delta\}$. In the same way to prove Theorem \ref{theo2.2}, we see that each map $\Phi^*_{E,i,i+1}$ is a contraction map. Thus, $\Phi^*_E=\Pi_{i=1}^k\Phi^*_{E,i,i+1}$ is a contraction map from $\mathscr{F}=\{F\in C^1(\pi_u\hat\pi U_{E,\pm r,i},\pi_v\hat\pi U_{E,\pm r,i}):\|F\|\le\eta\}$ to itself. Therefore, there exists a unique invariant function $F_E$ such that $\Phi^*_EF_E=F_E$. Restricted on $\mathcal{G}_{F_E}$, the map $\Phi_E^{-1}$ is a contraction. The existence of the fixed point of $\Phi_E$ proves the existence of the periodic orbit $z^+_E(t)$ that shadows the orbits $\{z^+_1(t),\cdots,z^+_k(t)\}$ in the prescribed order. Because of the $\mathbf{s}$-symmetry, the orbit $z^-_E(t)=\mathbf{s}z^+_E(t)$ shadows the orbits $\{z^-_k(t),\cdots,z^-_1(t)\}$ in the order.
\end{proof}

\section{Uniqueness of the periodic orbit}\label{sec.4}
We are going to show that there exists only one periodic orbit in each level set which entirely lies in a small neighborhood of the homoclinic orbit(s). We study the periodic orbit shadowing a single homoclinic orbit first.
For an orbit $z(t)$, let $S(z(t))=\overline{\{z(t):t\in\mathbb{R}\}}$. Let $d_H(S_1,S_2)$ denote the Hausdorff distance between two set $S_1,S_2$.

\begin{theo}\label{theo4.1}
Some $E_0>0$ exists, for any $E\in(0,E_0]$ the level set $H^{-1}(E)$ contains exactly one periodic orbit $z^\pm_E(t)$ such that $d_H(S(z^\pm_E(t)),S(z^\pm(t)))\to 0$ as $E\to 0$.
\end{theo}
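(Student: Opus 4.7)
My plan is to leverage the machinery already built in Section \ref{sec.2}: the existence of the invariant graph $\mathcal{G}_{F_E}$ and the contraction of $\Phi_E^{-1}$ on it established in Proposition \ref{theo2.2}. Since existence of $z_E^\pm(t)$ is already delivered by Theorem \ref{theo2.3}, only the uniqueness assertion and the Hausdorff convergence are at issue. I would deal with $z_E^+(t)$ and then invoke the $\mathbf{s}$-symmetry for $z_E^-(t)$.

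First, I would show that any periodic orbit $\bar z_E(t)\subset H^{-1}(E)$ contained in a sufficiently thin tubular neighborhood of $S(z^+(t))$ meets the Poincar\'e section $\Sigma^-_r$ transversally at a single point $\bar z_E$ per period, and moreover $\bar z_E\in U_{E,r}$. Transversality at $z^-_r$ comes from $\partial_{u_1}H(z^-_r)=\lambda_1 r(1+O(r))\neq 0$, which is stable under small $C^0$-perturbations of the orbit. Shrinking the tubular radius relative to $\delta$ forces $\hat\pi\bar z_E\in\hat U_\delta$. By definition of the return map, $\bar z_E$ is then a fixed point of $\Phi_E=\Phi_{E,r,r}\Phi_{E,r}$.

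The key step, which I view as the heart of the argument, is to show $\bar z_E\in\mathcal{G}_{F_E}$. For this I would use the graph transform directly. Pick any $F_0\in\mathscr{F}$ whose graph $\mathcal{G}_{F_0,E}$ contains $\bar z_E$; for instance one can take $F_0\equiv\pi_v\hat\pi\bar z_E$ constant, which trivially satisfies $\|DF_0\|=0\le\eta$. Since $\Phi_E\bar z_E=\bar z_E$ and $\Phi_E\mathcal{G}_{F_0,E}=\mathcal{G}_{\Phi_E^*F_0,E}$, the image graph still passes through $\bar z_E$. Iterating, $\bar z_E\in\mathcal{G}_{\Phi_E^{*n}F_0,E}$ for every $n\ge 0$. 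By the $C^0$-contraction of $\Phi_E^*$ on $\mathscr{F}$ (Proposition \ref{theo2.2}), $\Phi_E^{*n}F_0$ converges uniformly to $F_E$, so passing to the limit in $\Phi_E^{*n}F_0(\hat u_*)=\hat v_*$ with $(\hat u_*,\hat v_*)=\hat\pi\bar z_E$ yields $F_E(\hat u_*)=\hat v_*$, i.e.\ $\bar z_E\in\mathcal{G}_{F_E}$. Uniqueness then follows at once: $\Phi_E^{-1}|_{\mathcal{G}_{F_E}}$ is a contraction by Proposition \ref{theo2.2}, so its fixed point on $\mathcal{G}_{F_E}$ is unique, hence $\bar z_E=z_{E,r}$.

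For the Hausdorff convergence I would split the period into outer and inner segments. The outer segment has bounded duration $T_0$ and starts at $\bar z_E$, which depends continuously on $E$ with $\bar z_E\to z^-_r$ as $E\downarrow 0$; continuous dependence of ODE solutions then makes it stay $C^0$-close to $z^+(t)|_{[0,T_0]}$. The inner segment has duration $2t_E\sim\lambda_1^{-1}\ln(1/E)$ by Lemma \ref{pro2.2}, and the exponential estimates \eqref{eq2.8} force the orbit to descend to distance $O(\sqrt{E})$ from the origin while tracking the local unstable manifold on the way in and the local stable manifold on the way out, which is exactly how $z^+(t)$ behaves near $0$. Combining the two, $\cup_t z_E^+(t)$ converges to $S(z^+(t))$ in Hausdorff metric as $E\downarrow 0$. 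The main obstacle I anticipate is the opening step: giving precise quantitative meaning to ``sufficiently thin tubular neighborhood'' so that any orbit staying in it is forced to cross $U_{E,r}$ exactly once per period and to land in the domain of $\Phi_E$; this coupling between the neighborhood radius, $\delta$, $r$ and $E$ must be arranged carefully, and Lemma \ref{pro2.2} controlling the time spent in $B_r$ is essential here.
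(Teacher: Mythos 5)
Your reduction---show any orbit in a thin tube is a fixed point of $\Phi_E$ on the invariant graph, then invoke the contraction from Proposition~\ref{theo2.2}---follows the paper's structure, and your iteration of the graph transform $\Phi_E^{*n}F_0$ to pull $\bar z_E$ into $\mathcal{G}_{F_E}$ is a clean, explicit version of a claim the paper states more tersely. But the opening step is where the real content lies, and you have not closed it. Having $\bar z_E\in U_{E,r}$ only places it in the domain of the outer map; for $\bar z_E$ to be a fixed point of $\Phi_E=\Phi_{E,r,r}\Phi_{E,r}$ you also need $\Phi_{E,r}\bar z_E\in S_{E,r}$, i.e.\ the inner leg of the orbit must stay inside $B_{r'}$. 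This is nontrivial: the first-return set $\Phi'_{E,r,r}\Phi_{E,r}(U_{E,r})$ may intersect $U_{E,r}$ in several horseshoe strips, and only one of them lies over $S_{E,r}$. The paper dismisses the others via hypothesis (\textbf{H2}): since the homoclinic approaches the origin along $\Xi_1$, one has $|\hat z^+(t)|=o(r')$ whenever $|v_1^+(t)|\le r'$, so any periodic orbit passing through $V_{E,r}\setminus S_{E,r}$ must visit a region where $|\hat z|>r'$ while $0<|v_1|<r$, and therefore sits at Hausdorff distance at least $r'-o(r')$ from $S(z^+(t))$, uniformly in $E$. You need this observation to make ``lands in the domain of $\Phi_E$'' mean what you want; you flagged the coupling as the main obstacle but did not supply the mechanism that resolves it.

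The Hausdorff convergence has a second gap. Your claim that $\bar z_E\to z^-_r$ ``by continuous dependence'' is not available: the inner transit time diverges like $\lambda_1^{-1}\ln(1/E)$, and the estimates \eqref{eq2.8} control only the contracting coordinates, yielding $|\hat v^-_{E,r}|\le c|E|^{1-cr}$ and $|\hat u^+_{E,r}|\le c|E|^{1-cr}$ but saying nothing about $\hat u^-_{E,r}$, which is the \emph{expanding} coordinate of the inner map. The paper closes the loop with the transversality encoded in (\textbf{H2}): since $\det A_{11}\ne 0$ for the differential of the outer map, the identity
\[
\hat u^+_{E,r}=A_{11}(\hat u^-_{E,r}-\hat u^-_r)+A_{12}\hat v^-_{E,r}+\partial_E\hat\Phi_{E,r}\,E
\]
can be inverted to give $|\hat u^-_{E,r}-\hat u^-_r|\le c'|E|^{1-cr}$, and only then does $\bar z_E\to z^-_r$ follow. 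As a small side remark, the inner leg tracks $W^s$ on the way in and $W^u$ on the way out, not the other way around as you wrote.
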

\begin{proof}
A periodic orbit $z^+_E(t)$ corresponds to a fixed point $z^-_{E,r}$ of the map $\Phi_{E,r,r}\Phi_{E,r}$. When $z^+_E(t)$ moves from $z^+_{E,r}$ to $z^-_{E,r}=\Phi_{E,r,r}z^+_E(t)$, its $\hat z$-component remains in $o(r)$-neighborhood of $\hat z=0$ while its $v_1$-component decreases from $v_1=r$ to $v_1=0$.

The inner map $\Phi_{E,r,r}$ is defined only on a subset $S_{E,r}$ of $\Phi_{E,r}(U_{E,r})$. Starting from $z\in\Phi_{E,r}(U_{E,r})\backslash S_{E,r}$, the orbit may still hit the cube $U_{E,r}$ after it passes some part outside $B_{r'}$. Therefore, the flow $\Phi_H^t$ defines a map $\Phi'_{E,r,r}$ from some part $S'_{E,r}\supset S_{E,r}$ of $\Phi_{E,r}(U_{E,r})$, by which the set $S'_{E,r}$ will be stretched and folded such that the set $\Phi'_{E,r,r}\Phi_{E,r}(U_{E,r})$ may intersect the cube $U_{E,r}$ several times. It results in the existence of Smale horseshoe. At first glance, there are $k$ fixed points if the set $\Phi_{E}(U_{E,r})\cap U_{E,r}$ contains $k$ connected components. Each fixed point corresponds to a periodic orbit of $\Phi_H^t$ lying in the energy level set $H^{-1}(E)$. However, the multiplicity of the fixed points does not damage the unique continuation of periodic orbits from homoclinical orbit.

By the definition, a point $z$ is said to lie in $S_{E,r}$ if and only if, starting from the point $z\in\Sigma_{E,r}^+$, the orbit $\Phi_H^t(z)$ remains in the ball $\{|z|\le r'\}$ before it touches the section $\Sigma^-_{E,r}$ after a time $t_z$. It has been proved that passing through $S_{E,r}$ there is only one periodic orbit, which corresponds to the fixed point lying in the graph of an invariant function $\Phi^*_EF_E=F_E$. Restricted on the graph, the map $\Phi_E$ has only one fixed point. Therefore, if there is another periodic orbit $z'_E(t)\subseteq H^{-1}(E)$ that intersects $V_{E,r}$ at a point not in $S_{E,r}$, there must be a point on the orbit $z'_E(t^*)=(u^*_1,\hat u^*,v^*_1,\hat v^*)$ such that $|\hat z'_E(t^*)|>r'$ while $0<|v^*_1|<r$.

By the hypothesis ({\bf H2}), the homoclinic orbit approaches to origin in direction of the eigenvector for $\lambda_1$. If we write the homoclinic orbit $z^+(t)=(u_1^+(t),\hat u^+(t),v_1^+(t),\hat v^+(t))$, the hypothesis ({\bf H2}) implies that $|\hat u^+(t),\hat v^+(t)|=o(r')$ if $|v_1^+(t)|\le r'$.
Therefore, no matter how small the energy $E>0$ will be, any periodic orbit $z'_E(t)\subset H^{-1}(E)$ other than $z_E(t)$ will deviate from the homoclinic orbit $z^+(t)$ if it passes through the section $\Sigma^+_{E,r}$ at some point not contained in $S_{E,r}$. Thus, one has an estimate on the Hausdorff distance
$d_H(S(z'_E(t)),S(z^+(t)))\ge r'-o(r')$ for all small $E>0$. We illustrate the situation by the following figure.
\begin{figure}[htp] 
  \centering
  \includegraphics[width=8.5cm,height=3.5cm]{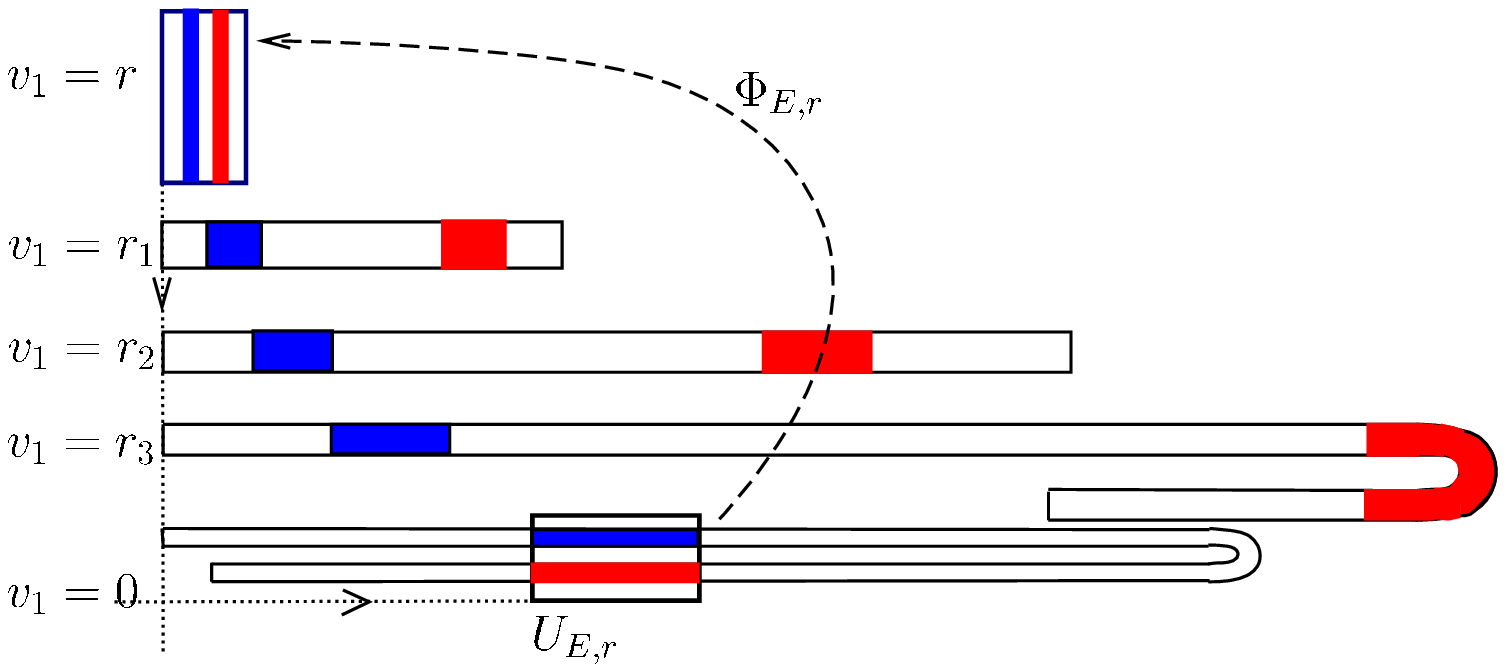}
  \label{fig4}
\end{figure}

The rectangle $\pi U_{E,r}\subset\Sigma^-_{E,r}$ is mapped to a set lying in $\Sigma^+_{E,r}$ containing the vertical rectangle. When $v_1$ decreases from $r$ through $r_1>r_2>r_3$ approaching $v_1=0$, the vertical rectangle is stretched in $\hat u$ and compressed in $\hat v$ and folded. The blue strip always stays in $B_{r'}$ as $v_1$ decreases from $r$ to $0$, while the red strip has to pass through some place outside of $\{|\hat z|\le r'\}$.

From \eqref{eq2.7} we see the $\hat v$-component of $\hat z^-_{E,r}$ is at least $c|E|^{1-cr}$-close to that of $\hat z^-_r$. In the same principle, we derive that the point $\hat z^+_{E,r}$ falls into a strip $\{\hat z:|\hat u|\le c|E|^{1-cr}\}$, i.e. $|\hat u^+_{E,r}|\le c|E|^{1-cr}$.

To measure how the $\hat u$-component of $\hat z^-_{E,r}$ deviates from $\hat z^-_r$, we apply the transversal intersection property ({\bf H2}), it implies $\mathrm{det}(A_{11}(E,\hat z))\ne 0$ (cf. Lemma \ref{lem2.2}) if we write
\begin{equation*}\label{zhuanyijuzhen}
d\hat\Phi_{E,r}(\hat z)=\left[\begin{matrix}
A_{11}(E, \hat z) & A_{12}(E, \hat z) \\ A_{21}(E, \hat z) & A_{22}(E, \hat z)\end{matrix}\right],\quad \mathrm{for}\ z\in U_{E,r}.
\end{equation*}
Because $\hat u^+_r=0$, $\hat v^-_r=0$ and $|\hat z^-_{E,r}-\hat z^-_r|$ is small, we have
$$
\hat u^+_{E,r}=\pi_u(z^+_{E,r}-z^+_r)=A_{11}(\hat u^-_{E,r}-\hat u^-_r)+ A_{12}\hat v^-_{E,r}+\partial_E\hat\Phi_{E,r} E
$$
where $A_{11}$, $A_{12}$ and $\partial_E\hat\Phi_{E,r}$ are valued at some place between $z^-_{E,r}$ and $z^-_r$. It follows from $\mathrm{det}A_{11}\ne 0$, $|\hat u^+_{E,r}|\le c|E|^{1-cr}$ and $|\hat v^-_{E,r}|\le c|E|^{1-cr}$ that $|\hat u^-_{E,r}-\hat u^-_r|\le c'|E|^{1-cr}$. By applying the same method, we also see $|\hat v^+_{E,r}-\hat v^+_{r}|\le c'|E|^{1-cr}$. Consequently, $z^+_{E}(t)$ keeps $c'|E|^{1-cr}$-close to the homoclinic orbit when it moves from the section $\{u_1=r\}$ to $\{v_1=r\}$. It leads to the conclusion that $d_H(S(z^\pm_E(t)),S(z^\pm(t)))\to 0$ as $E\to 0$.
\end{proof}

The estimate on the position of $\hat z^-_{E,r}$ is not so precise that can be used to study the smoothness of the cylinder. We shall get more precise estimation later.

The idea is applicable to prove the same result for the periodic orbit in the case of compound type homology class as well as the case $E<0$.

\begin{theo}\label{theo4.2}
In the case of compound type homology class, there exists $E_0>0$ such that for each $E\in(0,E_0]$, the level set $H^{-1}(E)$ admits exactly one periodic orbit $z^\pm_E(t)$ which entirely lies in the vicinity of $\cup z^\pm_i(t)$ such that
\begin{equation*}
d_H(S(z^\pm_E(t)),S(\cup_iz^\pm_i(t)))\to 0\qquad \mathrm{as}\ E\downarrow0.
\end{equation*}
For each $E\in[-E_0,0)$, the level set $H^{-1}(E)$ admits exactly one periodic orbit $z_{E,i}(t)$ which entirely lies in the vicinity of $z^+_i(t)\cup z^-_i(t)$ such that
\begin{equation*}
d_H(S(z_{E,i}(t)),S(z^+_i\cup z^-_i(t)))\to 0\qquad \mathrm{as}\ E\uparrow0.
\end{equation*}
\end{theo}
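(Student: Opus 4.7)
The plan is to extend the argument of Theorem \ref{theo4.1} to the compound setting, treating each homoclinic $z^\pm_i(t)$ in the chain the way a single homoclinic was treated before. First I would collect the local data near each passage: at the $i$-th segment, the orbit enters a neighborhood of the origin through the section $\Sigma^-_{r}$ at a point in $U_{E,r,i}$, exits through $\Sigma^+_{r}$ at a point in $V_{E,r,i}$, then travels along $z^+_i(t)$ outside $B_{r'}$ to reach $U_{E,r,i+1}$ (indices mod $k$). A candidate periodic orbit that does shadow the full chain in the prescribed order therefore corresponds to a fixed point of the cyclic composition $\Phi_E=\prod_{i=1}^k\Phi_{E,i,i+1}$ already studied in Theorem \ref{theo5.1}, and Banach's fixed point theorem gives at most one fixed point on the invariant graph $\mathcal{G}_{F_E}$.

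Next I would rule out extraneous periodic orbits. Suppose $z'_E(t)\subset H^{-1}(E)$ shadows $\{z^+_1,\ldots,z^+_k\}$ but crosses some section $V_{E,r,i}$ at a point outside $S_{E,r,i}$. Following Theorem \ref{theo4.1} verbatim, there must then exist a time $t^*$ at which the orbit has $0<|v_1|<r$ yet $|(\hat u,\hat v)|>r'$. On the other hand hypothesis ({\bf H2}) forces $|(\hat u^+_i(t),\hat v^+_i(t))|=o(r')$ on the portion of each $z^+_i(t)$ with $|v_1^+_i(t)|\le r'$, and continuous dependence at the bounded outer segments gives the analogous bound for the shadowing orbit. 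Thus $z'_E(t)$ must deviate from the chain by at least $r'-o(r')$, contradicting that it shadows it. Consequently any periodic orbit shadowing the chain passes only through the sets $S_{E,r,i}$ and coincides with the fixed point of $\Phi_E$.

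For the Hausdorff convergence $d_H(S(z^+_E(t)),S(\cup_i z^+_i(t)))\to 0$, I would propagate the pointwise estimates already obtained in Theorem \ref{theo4.1}. At each passage, Proposition \ref{pro2.5} gives $|\hat v^-_{E,r,i+1}|\le cr^{3-2c'r}E^{1-c'r}$, hence $|\hat v^+_{E,r,i}|\le c|E|^{1-c'r}$ through the outer map. Transversality (invertibility of the block $A_{11}$ for $d\hat\Phi_{E,r}$ at $\hat z_{r,i}^-$) then bounds $|\hat u^-_{E,r,i+1}-\hat u^-_{r,i+1}|$ and $|\hat v^+_{E,r,i}-\hat v^+_{r,i}|$ by $c'|E|^{1-c'r}$. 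Iterating around the cycle closes the estimates uniformly in $i$, so each branch of $z^+_E(t)$ stays within $O(|E|^{1-c'r})$ of the corresponding $z^+_i(t)$ and the $\mathbf{s}$-symmetric statement for $z^-_E(t)$ follows automatically.

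For $E\in[-E_0,0)$, Theorem \ref{theo3.2} already furnishes, for each pair $(z^+_i,z^-_i)$, a unique fixed point of the return map $\Phi_E^{(i)}=\Phi_{E,-r,r}\Phi_{E,-r}\Phi_{E,r,-r}\Phi_{E,r}$ defined on $U_{E,r,i}$, corresponding to a periodic orbit $z_{E,i}(t)$ shadowing $z^+_i\cup z^-_i$. The same dichotomy argument as above excludes any shadowing orbit that would stray outside the admissible inner sets, and the same $|E|^{1-c'r}$ estimates (now with Lemma \ref{pro2.2} forcing $u_1(t_E)v_1(-t_E)<0$) give Hausdorff convergence. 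The main obstacle in this plan is bookkeeping: one has to verify that the transversality and contraction estimates of Sections \ref{sec.2}--\ref{sec.3}, originally stated for one homoclinic, remain uniform when composed around the full cycle in the compound case, so that the $O(|E|^{1-c'r})$ deviation does not accumulate into something larger than $r'$. This is ensured because each factor $\Phi^*_{E,i,i+1}$ of the composed map is contracting with factor strictly less than one independently of $i$, as already exploited in the proof of Theorem \ref{theo5.1}.
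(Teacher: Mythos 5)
Your proposal is correct and follows essentially the same route as the paper: existence and uniqueness on the invariant graph via the cyclic composition $\Phi_E=\prod_i\Phi_{E,i,i+1}$ already built in Theorem \ref{theo5.1}, exclusion of extraneous orbits by the same deviation argument ($0<|v_1|<r$ but $|\hat z|>r'$ at some time) used for Theorem \ref{theo4.1}, and Hausdorff convergence from the $|E|^{1-cr}$ estimates plus invertibility of $A_{11}$ propagated around the cycle, with the $E<0$ case handled symmetrically through Theorem \ref{theo3.2}. Your remark that the uniformity of the contraction across the $k$ factors is the point that needs checking is exactly what the paper relies on implicitly.
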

\begin{proof}
In the case of compound type homology class, it corresponds to the fixed point of $\Phi^k_E=(\Phi_{E,r,r}\Phi_{E,r})^k$. The orbit passes through $B_r(0)$ for $k$ times.

During each time when the orbit passes through the neighborhood, we have a Smale horseshoe which may contain many strips. Each strip determines a periodic orbit. As shown in the figure right above, the orbit we got must stay in the only strip that is entirely contained in the neighborhood when the $v_1$-coordinate decreases from $r$ to $0$ (the blue strip in the figure). Any other periodic orbit $z'_E(t)$, if it passes through $V_{E,r}$, it shall not intersect the set $S_{E,r}$. It implies that $z'_E(t)$ shall pass through some place out of $B_{r'}$ before it returns back to the cube $U_{E,r}$. In other words, $z'_E(t)$ does not lie entirely in some neighborhood of $S(z^+(t)\cup z^+(t))$ no matter how small the energy $E$ is. Restricted on the strip that entirely lies in the neighborhood of the origin, the uniqueness is guaranteed by Banach's fixed point. Indeed, the flow is hyperbolic when it is restricted in the strip, it allows  only one fixed point. It proves the uniqueness. The argument also applies to the case $E<0$ to show the uniqueness.

To show the convergence, let $z^\pm_{E,r,i}$ denote the point where the periodic orbit $z^+_{E}(t)$ intersects the section $\Sigma^\pm_{E,r}$ which is close to the point $z^\pm_{r,i}$ where the homoclinic orbit $z^+_i(t)$ intersects the section $\Sigma^\pm_{r}$. Let $\hat z^\pm_{E,r,i}=(\hat u^\pm_{E,r,i}, \hat v^\pm_{E,r,i})$, then $|\hat u^+_{E,r,i}|\le c|E|^{1-cr}$ and $|\hat v^-_{E,r,i}|\le c|E|^{1-cr}$. Because $\mathrm{det}A_{11}\ne 0$, it also follows from
$$
\hat u^+_{E,r,i}=\hat\pi(z^+_{E,r,i}-z^+_r)=A_{11}(\hat u^-_{E,r,i}-\hat u^-_{r,i})+ A_{12}\hat v^-_{E,r,i}+\partial_E\hat\Phi_{E,r} E
$$
that $|\hat u^-_{E,r,i}-\hat u^-_{r,i}|\le c'|E|^{1-cr}$. By the same method, we see $|\hat v^+_{E,r,i}-\hat v^+_{r,i}|\le c'|E|^{1-cr}$ also. So, $z^+_{E}(t)$ keeps $c'|E|^{1-cr}$-close to the homoclinic orbit $z^+_i(t)$ when it moves from the section $\{u_1=r\}$ to $\{v_1=r\}$. Thus we have $d_H(S(z^\pm_E(t)),(\cup_iz^\pm_i(t)))\to 0$ as $E\downarrow 0$. The case of $E<0$, let $z_{E,i}(t)$ be the period orbit shadowing $\{z^+_i(t),z^-_i(t)\}$, then the proof of $d_H(S(z^+_i(t),z^-_i(t),z_{E,i}(t))\to 0$ as $E\to 0$ is similar.
\end{proof}

We return back to the original Hamiltonian (\ref{eq1.1}). It is symmetric under the operation $\mathbf{s}$: $(x,y)\to(x,-y)$, $H(\sigma(x,y))=H(x,y)$.
An orbit $z(t)$ is called $\mathbf{s}$-{\it symmetric} if the set $S(z(t))=\overline{\{z(t):t\in\mathbb{R}\}}$ is invariant for the operation $\sigma$, i.e. $S(z(t))=\mathbf{s} S(z(t))$.

\begin{pro}
The periodic orbit $z_E(t)$ for $E<0$ is $\mathbf{s}$-symmetric and passes through the section $\{y=0\}$ twice during one period.
\end{pro}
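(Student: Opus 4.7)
The plan is to leverage the uniqueness statement in Theorem \ref{theo4.2} against the $\mathbf{s}$-symmetry of the Hamiltonian to force $z_{E,i}(t)$ to be setwise invariant under $\mathbf{s}$, then convert that setwise invariance into a count of intersections with $\{y=0\}$.

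First I would define $\tilde z(t)=\mathbf{s}z_{E,i}(-t)$. Because $H\circ\mathbf{s}=H$, the Hamiltonian flow satisfies $\mathbf{s}\Phi_H^t=\Phi_H^{-t}\mathbf{s}$, so $\tilde z(t)$ is also an orbit of $\Phi_H^t$, lying in the same energy level $H^{-1}(E)$ with $E<0$ and having the same period $T$ as $z_{E,i}(t)$. Next I would check that $\tilde z$ shadows the same homoclinic pair $\{z^+_i(t),z^-_i(t)\}$: since $z_{E,i}(t)$ has a partition of $[0,T]$ on which it follows $z^+_i$ on one piece and $z^-_i$ on the other, and since $\mathbf{s}z^\pm_i(-t)=z^\mp_i(t)$ (the very relation defining the symmetric pair in the introduction), the orbit $\tilde z(t)$ admits an analogous partition on which it stays close to $z^-_i$ and then to $z^+_i$. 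Thus $\tilde z(t)$ is a periodic orbit in $H^{-1}(E)$ shadowing $\{z^+_i(t),z^-_i(t)\}$, and the uniqueness half of Theorem \ref{theo4.2} forces $\tilde z(t)=z_{E,i}(t+t_0)$ for some phase shift $t_0$. In particular the set $S(z_{E,i})$ is invariant under $\mathbf{s}$.

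The remaining step is to count the intersections with $\{y=0\}$. From the relation $\mathbf{s}z_{E,i}(-t)=z_{E,i}(t+t_0)$, define $\phi:\mathbb{R}/T\mathbb{Z}\to\mathbb{R}/T\mathbb{Z}$ by $\phi(t)=t_0-t$, i.e.\ $\mathbf{s}z_{E,i}(t)=z_{E,i}(\phi(t))$. A direct check using $\mathbf{s}^2=\mathrm{id}$ gives $\phi^2=\mathrm{id}$. A time $t$ satisfies $y_{E,i}(t)=0$ if and only if $\mathbf{s}z_{E,i}(t)=z_{E,i}(t)$, which, using that $z_{E,i}$ is not a fixed point of $\mathbf{s}$ (the orbit is non-trivial) and hence is injective on one period up to crossings, is equivalent to $\phi(t)=t$, i.e.\ $2t\equiv t_0\pmod T$. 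This congruence has exactly two solutions $t\equiv t_0/2$ and $t\equiv t_0/2+T/2$ in $\mathbb{R}/T\mathbb{Z}$, giving exactly two passages of $z_{E,i}(t)$ through the section $\{y=0\}$ during one period.

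The main obstacle I anticipate is the shadowing comparison in the middle step: one must verify that $\mathbf{s}z_{E,i}(-t)$ falls in the small neighborhoods of $z^\pm_i$ that were used in the definition of shadowing, which requires nothing more than matching the choice of neighborhood with its image under $\mathbf{s}$ (both stable under the symmetry, since the pair $\{z^+_i,z^-_i\}$ is). Once this identification is made, the invocation of Theorem \ref{theo4.2} is immediate, and the remaining involution-counting argument is elementary.
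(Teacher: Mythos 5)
Your proof is correct, and the first half (establishing $\mathbf{s}$-symmetry via the uniqueness of Theorem~\ref{theo4.2} applied to the time-reversed image $\mathbf{s}z_{E,i}(-t)$) coincides with the paper's approach in substance, though you verify the reparametrization more explicitly. For the second half the paper argues only loosely---``by construction it passes through the neighborhood of the fixed point twice during one period; if it did not cross $\{y=0\}$ twice it would pass through the neighborhood more than twice''---whereas you give the classical reversible-systems argument: the uniqueness-induced relation $\mathbf{s}z_{E,i}(t)=z_{E,i}(t_0-t)$ defines an involution $\phi(t)=t_0-t$ on $\mathbb{R}/T\mathbb{Z}$, a crossing with $\{y=0\}$ (the fixed set of $\mathbf{s}$) occurs precisely at a fixed time of $\phi$ because the periodic orbit is a simple closed curve, and $2t\equiv t_0\pmod T$ has exactly two solutions. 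This is a genuine improvement in rigor: it produces exactly two crossings directly and needs no appeal to the geometric construction of the orbit. The one point worth tightening is the phrase ``not a fixed point of $\mathbf{s}$'': what you actually need is that the orbit is not contained in $\{y=0\}$, which holds because on $\{y=0\}$ the equations force $\dot x=0$ and hence a rest point, contradicting $H=E<0$. With that clarification the involution count is airtight.
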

\begin{proof}
As it has been proved in the last section, the orbit $z_E(t)$ is the only periodic orbit that lies entirely in a small neighborhood of $S(z^+(t)\cup z^-(t))$. If it is not $\mathbf{s}$-symmetric, then $\sigma z_E(t)$ is also a periodic orbit lying around $S(z^+(t)\cup z^+(t))$. But it contradicts the uniqueness. By the construction of the periodic orbit, it passes through the neighborhood of point twice during one period. If it does not pass through the section $\{y=0\}$ twice during one period, it would pass through the neighborhood more than two times.
\end{proof}

The periodic orbits on each positive energy level set are related by the $\mathbf{s}$-symmetry. Once one obtains one periodic orbit $z^+_E(t)$ around the homoclinics $z^+(t)$, then $z^-_E(t)=\mathbf{s} z^+_E(t)$ is the periodic orbit around $z^-(t)$.

\section{$C^1$-smoothness of the cylinder}\label{sec.6}
By the work in the previous sections, a singular invariant cylinder has been proved to exist, illustrated in Figure \ref{fig1}. It consists of periodic orbits of $\Phi_H^t$ and some pair(s) of homoclinics
$$
\Pi=\Pi^+\cup \Pi^-\cup\Gamma^+\cup\Gamma^-.
$$
where $\Pi^+=\cup_{E\in(0,E_0]}z^+_E(t)\cup z^-_E(t)$, $\Pi^-=\cup_{E\in[-E_0,0)}\cup_{i=1}^k z_{E,i}(t)$ and $\Gamma^\pm$ is the closure of the set $\cup_{t\in\mathbb{R}} (z^\pm_1(t)\cup\cdots\cup z^\pm_k(t))$. In the case of single homology class, the topological structure is clear, $\Pi$ is a cylinder with one hole lying in negative energy region.

In the case of $k\ge 2$, the set $\Pi$ is not a manifold, although it still has nice structure. To reveal it, we work in a finite covering space $\mathbb{T}^n_h$ of $\mathbb{T}^n$. Recall the curve $\bar\Gamma$ introduced before the statement of the condition ({\bf H3}), due to which $\tilde\Gamma^+=\bar\pi_h (\bar\Gamma\ast\sigma\bar\Gamma \ast\cdots\ast\sigma^\ell\bar\Gamma)$ is a closed curve without self-intersection, shadowed by an orbit $\tilde z^+_E(t)$ in the lift of $z^+_E(t)$. Therefore, the set $\tilde\Pi^+_+=\cup_{E>0}(\cup_t\tilde z^{+}_E(t))$ is a cylinder taking $\tilde\Gamma^+$ as its boundary lying in $H^{-1}(0)$. Let $\tilde\Pi^+_-$, $\tilde\Gamma^-$ be the counterpart of $\tilde\Pi^+_+$, $\tilde\Gamma^+$ via the symmetry $\mathbf{s}$ if both are pushed forward to the original coordinates, then the set $\tilde\Pi^+_+\cup\tilde\Gamma^+$ touches the set $\tilde\Gamma^-\cup\tilde\Pi^+_-$ at $(\ell+1)k$ points, $\tilde\Pi_{\ge 0}=\tilde\Pi^+_+\cup\tilde\Gamma^+\cup \tilde\Gamma^-\cup\tilde\Pi^+_-$ is a cylinder with $(\ell+1)k$ holes. Let $D_{j,i}$ denote the holes for $j=0,1,\cdots,\ell$ and $i=1,2,\cdots,k$ and let $\partial D_{j,i}$ denote their boundary, then $\pi_h\partial D_{j,i}=\cup_{t\in\mathbb{R}} (z^+_{i}(t)\cup z^-_{i}(t))\cup\{z=0\}$. For small $E<0$, the periodic orbit $z_{E,i}(t)$ shadows $\{z^+_i(t),z^-_i(t)\}$, the set $\Pi^-_i=\cup_{E\in(0,-E_0]}\cup_t z_{E,i}(t)$ looks like an annulus, shrinkable in $\mathbb{T}^n\times\mathbb{R}^n$. The pull back of $\Pi^-_i$ to $\mathbb{T}^n_h\times \mathbb{R}^n$ consists of shrinkable annuli. $\tilde\Pi_{\ge 0}$ is connected to these annuli, denoted by $\tilde\Pi_{j,i}$, along $\{\partial D_{j,i}\}$. Let
\begin{equation}\label{tilde-Pi}
\tilde\Pi=\tilde\Pi^+_+\cup\tilde\Gamma^+\cup \tilde\Gamma^-\cup\tilde\Pi^+_-\cup_{i,j} \tilde\Pi_{j,i}
\end{equation}
it is a cylinder with $(\ell+1)k$ holes, as illustrated in Figure \ref{fig1}.
This section is devoted to the study its $C^1$-smoothness. We study the case $k=1$ first.

\begin{theo}\label{theo6.1}
In the single homology class case, $\Pi$ is a $C^{1}$-smooth cylinder with one hole, invariant for the flow $\Phi^t_H$.
\end{theo}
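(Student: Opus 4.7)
The plan is to establish $C^1$-smoothness of $\Pi$ in three stages: smoothness in the positive- and negative-energy open regions $\Pi^+\setminus(\Gamma^+\cup\Gamma^-)$ and $\Pi^-\setminus(\Gamma^+\cup\Gamma^-)$, existence of tangent planes at points $z^*\in\Gamma^\pm$, and continuous matching of these tangent planes across $\Gamma^\pm$.

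For the first stage, I would apply the implicit function theorem to the fixed-point equations $\Phi_E^* F_E = F_E$ coming from Proposition \ref{theo2.2} (for $E>0$) and Theorem \ref{theo3.2} (for $E<0$). Since $\Phi_E^{-1}$ restricted to $\mathcal{G}_{F_E}$ is a strict contraction, the linearization $I-d\Phi_E$ at the unique fixed point $z_{E,r}$ is invertible, so $z_{E,r}$ depends $C^1$-smoothly on $E\ne 0$. Pushing forward by the flow, the two-parameter map $(E,\tau)\mapsto\Phi_H^\tau(z_{E,r})$ is then a $C^1$-immersion whose image is $\Pi^+\cup\Pi^-$ away from the homoclinic boundary.

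The crux is the matching across $\Gamma^\pm$. Fix a homoclinic point $z^*\in\Gamma^+$ away from $\{z=0\}$ and choose a local codimension-one section $\Sigma$ transverse to $X_H(z^*)$. Parameterize $\Pi$ near $z^*$ by $(E,\tau)\mapsto\Phi_H^\tau(z_E^*)$ with $z_E^*=\Pi\cap\Sigma$; the task reduces to showing that $E\mapsto z_E^*$ extends $C^1$ through $E=0$, with $z_0^*=z^*$ and with matching one-sided derivatives. The transversality condition \eqref{eq1.2} in ({\bf H2}) singles out the natural candidate: the one-dimensional line $T_{z^*}W^u\cap T_{z^*}\Sigma$. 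Using the cone invariance from Lemma \ref{lem2.5} together with the hyperbolic estimates \eqref{eq2.11}--\eqref{eq2.12} of Lemma \ref{lem2.4}, I would argue that the tangent vector to $E\mapsto z_E^*$, once expressed in the $(u,v)$-coordinates inside $\Sigma$, is squeezed into $\hat K^-_{\alpha(E)}$ with $\alpha(E)\to 0$ as $|E|\to 0$, so it converges to a vector in $T_{z^*}W^u\cap T_{z^*}\Sigma$ from each side.

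The main obstacle will be upgrading the Hausdorff-type estimate $|\hat z^-_{E,r}-\hat z^-_r|\le c|E|^{1-cr}$ obtained in Theorem \ref{theo4.1} into a genuine $C^1$-bound on $\partial_E z_E^*$, and showing that the one-sided limits $\lim_{E\to 0^\pm}\partial_E z_E^*$ exist and coincide. I would do this by differentiating the fixed-point equation $\Phi_E^*F_E=F_E$ with respect to $E$, obtaining a linear fixed-point problem for $\partial_E F_E$ whose contraction constant is controlled by the same factor $e^{-(\lambda_2-cr)t_z}$ appearing in Lemma \ref{lem2.4}; the symplectic invariance from Lemma \ref{map1.2} then forces the limiting derivative to be the same unit vector in $T_{z^*}W^u\cap T_{z^*}\Sigma$ when approached from $E>0$ and from $E<0$, because both expressions are computed by the same linear combination of outer-map data at $z^*$ while the inner-map contribution vanishes in the limit $t_z\to\infty$. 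Once this matching is proved, the tangent plane at $z^*$ is $\mathrm{span}\{X_H(z^*),\,\lim_{E\to 0}\partial_E z_E^*\}$, which varies continuously along $\Gamma^+$; the analogous argument across $\Gamma^-$ follows from the $\mathbf{s}$-symmetry, yielding the full $C^1$-smooth NHIC with one hole.
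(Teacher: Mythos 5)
Your overall three-stage plan (interior smoothness by the implicit function theorem, then matching the tangent planes across $\Gamma^{\pm}$) is the right architecture, and the heuristic that the inner-map contribution should wash out as $t_z\to\infty$ is the correct intuition. But there is a fatal misidentification of the limiting tangent direction. Along the curve $E\mapsto z^*_E$ you have $H(z^*_E)=E$, hence $dH(\partial_E z^*_E)\equiv 1$; on the other hand $W^u\subset H^{-1}(0)$ forces $T_{z^*}W^u\subset\ker dH(z^*)$. Therefore $\partial_E z^*_E$ cannot converge to any vector in $T_{z^*}W^u\cap T_{z^*}\Sigma$, and in fact the norm of $\partial_E z^*_E$ stays bounded (the paper's Lemma \ref{lem6.1} gives the finite nonzero component $\pi_1\partial_E z^-_{E,r}=\frac1{\mu_1 r}(1+O(r))$), so there is no rescaling that rescues the claim. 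The actual limit, recorded in \eqref{eq6.10}, has vanishing $\hat v$-component but nonvanishing $v_1$-component and $\hat u$-component $-A_{11}^{-1}A_{13}\,\pi_1\partial_E z^-_{E,r}$; it is a specific linear functional of the \emph{outer}-map data at $z^*$, not a vector tangent to $W^u$. The tangent plane $T_{z^*}\Pi$ is therefore transverse to $H^{-1}(0)$, which is also required since $\Pi$ fibers over an interval of energies.

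Two further gaps. First, you never address differentiability of $\Pi$ at the fixed point $z=0$, which lies on the boundary of the hole and is a genuine point of $\Pi$; this is not covered by your transversal-section argument (you explicitly take $z^*$ away from the origin). The paper handles it separately (\S 6.1) using van Strien's smooth Hartman--Grobman conjugacy to prove $|\hat z_0|\le c|(u_{1,0},v_{1,0})|^{1+\nu'}$, hence $d\hat z(0,0)=0$. Second, the cone estimates of Lemmas \ref{lem2.4}--\ref{lem2.5} only give you $\alpha(E)\to 0$, i.e.\ an aspect-ratio bound; they do not pin down the eigendirections of $d\Phi_{E,r,r}$, which is what the matching argument ultimately needs. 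The paper has to prove Proposition \ref{pro6.6} (the eigenvectors of $d\Phi_{E,r,r}$ align with the coordinate axes to order $O(|E|)$), and that proof relies essentially on the Birkhoff normal form and the condition $(\kappa-1)\lambda_1>\lambda_n$ to control the remainder in the variational equation over return times $t_z\approx\frac1{\lambda_1}\ln\frac1{|E|}$ (see Lemmas \ref{lemmaforvariationalequation}--\ref{lem7.9}). Differentiating the fixed-point equation $\Phi_E^*F_E=F_E$ in $E$ does not by itself give a uniform bound as $E\to 0$ because the linearized operator degenerates; without the normal-form machinery you cannot close the estimate.
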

Both manifolds $\Pi^+$ and $\Pi^-$ consist of periodic orbits, all of them are hyperbolic. Thus, it follows from the implicit function theorem that $\Pi$ is differentiable everywhere except along the homoclinic orbits. So, the proof includes three steps. The first step is to show the differentiability of $\Pi$ at the fixed point $z=0$, the second is to show the tangent space $T_z\Pi^+$ and $T_z\Pi^-$ converges as $z$ approaches the boundary on $H^{-1}(0)$ and finally to show that $\Pi^+$ and $\Pi^-$ are $C^1$-joined together along the homoclinics.

\subsection{Differentiability at the fixed point} Restricted around the origin, $\Pi$ appears to be a graph $\mathcal{G}$ of a map $(u_1,v_1)\to\hat z(u_1,v_1)$. We will show $d\hat z(0,0)=0$. Let $z(t)=(u_{1}(t)\cdots u_{n}(t),v_{1}(t)\cdots v_{n}(t))$ be an orbit with $z(0)=(u_{1,0}\cdots u_{n,0},v_{1,0}\cdots v_{n,0})\in\mathcal{G}$. If it is not homoclinic orbit, large $t^-,t^+>0$ exist such that $|v_{1}(-t^-)|=2r$, $|u_{1}(t^+)|=2r$, $|\hat z(-t^-)|=o(r)$ and $|\hat z(t^+)|=o(r)$, see the figure below.
\begin{figure}[htp] 
  \centering
  \includegraphics[width=5.0cm,height=3.3cm]{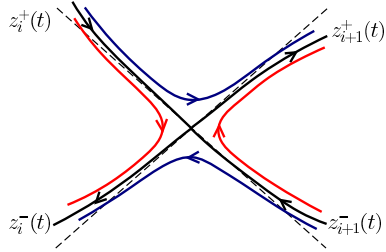}
  \caption{if $k=1$, one has $z_i^\pm(t)=z^\pm_{i+1}(t)$.}
  \label{fig5}
\end{figure}
The closer the point $z_0$ is getting to the origin, the larger the numbers $t^-$ and $t^+$ will be. According to Hartman-Grobman Theorem, there exists a conjugacy $h$ between $\Phi_H^t$ and $e^{\mathrm{diag}(\Lambda,-\Lambda)t}$ such that
$$
\Phi_H^t(u,v)=h^{-1}e^{\mathrm{diag}(\Lambda,-\Lambda)t}h(u,v),
$$
where $\Lambda=\mathrm{diag}(\lambda_1,\cdots,\lambda_n)$. If writing $h=id+f$ and $h^{-1}=id+g$, we obtain from Theorem 1.1 of \cite{vS} that $f=O(\|(u,v)\|^{1+\nu})$ and $g=O(\|(u,v)\|^{1+\nu})$ with $\nu>0$. Let $f=(f_u,f_v)$, $g=(g_u,g_v)$ and $f_u=(f_{u,1}\cdots,f_{u,n})$. The principle of notation for $f_u$ also applies to $f_v,g_u,g_v$. In the same way to get (\ref{eq2.18}) we have
\begin{equation*}
\begin{aligned}
|u_{i,0}+f_{u,i}(u_0,v_0)|&=e^{-\lambda_it^+}\Big(u_i(t^+)-g_{u,i}(e^{\Lambda t^+}(u_0+f_u),e^{-\Lambda t^+} (v_0+f_v))\Big),\\
|v_{i,0}+f_{v,i}(u_0,v_0)|&=e^{-\lambda_it^-}\Big(v_i(-t^-)-g_{v,i}(e^{-\Lambda t^-}(u_0+f_u),e^{\Lambda t^-} (v_0+f_v))\Big),
\end{aligned}
\end{equation*}
for $i=1,2,\cdots,n$.
Since $g=O(\|(u,v)\|^{1+\nu})$, $|e^{\Lambda t^+}(u_0+f_u)|,|e^{-\Lambda t^-} (v_0+f_v)|\le r$, $|u_1(t^+)|=|v_1(-t^-)|=2r$, $\hat u(t^+)=o(r)$ and $\hat v(-t^-)=o(r)$ one has
\begin{equation}\label{components}
\begin{aligned}
\frac 32re^{-\lambda_1 t^+}\le |u_{1,0}+f_{u,1}(u_0,v_0)|&\le \frac 52re^{-\lambda_1 t^+}, \\
\frac 32re^{-\lambda_1 t^-}\le |v_{1,0}+f_{v,1}(u_0,v_0)|&\le \frac 52de^{-\lambda_1 t^-},\\
|u_{i,0}+f_{u,i}(u_0,v_0)|&\le o(r)e^{-\lambda_i t^+}, \\
|v_{i,0}+f_{v,i}(u_0,v_0)|&\le o(r)e^{-\lambda_i t^-}, \quad \forall\ i\ge 2
\end{aligned}
\end{equation}
if $r>0$ is small. Let $t^*=\min\{t^+,t^-\}$, that $\lambda_i>\lambda_1$ for $i\ge 2$ results in the estimate
$$
\frac 32r e^{-\lambda_1 t^*}\le|(u_0,v_0)+f(u_0,v_0)|\le\frac 52r e^{-\lambda_1 t^*}.
$$
Since $f=O(\|(u,v)\|^{1+\nu})$, for suitably small $r>0$ one has
$$
re^{-\lambda_1 t^*}\le|(u_0,v_0)|\le 2re^{-\lambda_1 t^*}.
$$
Consequently, it follows from the first two inequalities in (\ref{components}) and the property $f=O(\|(u,v)\|^{1+\nu})$ that
\begin{equation}\label{position}
\begin{aligned}
re^{-\lambda_1 t^*}\le|(u_{1,0},v_{1,0})|\le 2re^{-\lambda_1 t^*},&\\
|(u_{i,0},v_{i,0})|\le c|(u_{1,0},v_{1,0})|^{1+\nu}+o(r)(e^{-\lambda_1 t^*})^{\frac{\lambda_i}{\lambda_1}},&\quad \forall\ i\ge 2.\\
\end{aligned}
\end{equation}
Therefore, we have
$$
|(\hat u_0,\hat v_0)|\le c|(u_{1,0},v_{1,0})|^{1+\nu'}, \qquad \nu'=\min\Big\{\nu,\frac{\lambda_2}{\lambda_1}-1 \Big\}
$$
namely, $d\hat z(u_1,v_1)|_{(u_1,v_1)=0}=0$. It proves the differentiability of $\Pi$ at $\{z=0\}$.

\subsection{$C^1$-smoothness of $\Pi^\pm$} For small $E\ne 0$, the periodic orbit intersects the section $\Sigma^-_{r}$ at the point $z^-_{E,r}$, which is a fixed point of the return map $\Phi_E$. However, no return map is defined for $E=0$. When $E\to 0$, the return time approaches infinity. It makes complicated to check the $C^1$-differentiability around the homoclinics. To this end, we apply the Birkhoff normal form \eqref{eq2.3} where $k$ satisfies the condition $k\lambda_1>\lambda_n$.

Recall $U_{E,\pm r}\subset H^{-1}(E)\cap\{u_1=\pm r\}$, $S_{E,\pm r}\subset \Phi_{E,\pm r}U_{E,\pm r}\subset H^{-1}(E)\cap\{u_1=\pm r\}$ introduced in Definition \ref{def3.1}. Let $U_{\pm r}=\cup_{|E|\le E_0} U_{E,\pm r}$, $V_{\pm r}=\cup_{|E|\le E_0} \Phi_{E,r}U_{E,\pm r}$  and $S_{\pm r}=\cup_{|E|<E_0}S_{E,\pm r}$. The Hamiltonian flow $\Phi_H^t$ defines two types of maps
\begin{enumerate}
  \item the outer map $\Phi_r$: $U_r\to V_r$. Emanating from $z\in U_r$ the orbit $\Phi_H^t(z)$ keeps close to a segment of the homoclinic orbit $z^+(t)$ that is from $z^-_{r}$ to $z^+_r$;
  \item the inner map $\Phi_{r,\pm r}$: $S_r|_{\pm E>0}\subset V_r\to U_{\pm r}$. Emanating from $z\in S_r|_{\pm E>0}$, the orbit remains in $B_r(0)$ until it reaches $U_{\pm r}$.
\end{enumerate}
Restricted on $H^{-1}(E)$ with $E\ne 0$, one has $\Phi_{r,r}=\Phi_{E,r,r}$ and $\Phi_r=\Phi_{E,r}$. In coordinate components, the inner map takes the form $\Phi_{r,r}$: $(u_1,\hat u,r,\hat v)\to(r,\hat u,v_1,\hat v)$.

\begin{defi}\label{defi6.2}
A vector $\eta=(\eta_1,\eta_{\hat u},\eta_{\hat v})$ is said to be an eigenvector of $d\Phi_{r,r}$ for the eigenvalue $\sigma$ if $d\Phi_{r,r}(\eta_1,\eta_{\hat u},0,\eta_{\hat v})=\sigma(0,\eta_{\hat u},\eta_1,\eta_{\hat v})$. A vector $\eta=(\eta_1,\eta_{\hat u},\eta_{\hat v})\in T_zS_{E,r}$ is said to be an eigenvector of $d\Phi_{E,r,r}$ for the eigenvalue $\sigma$ if some number $\eta'_1$ exists such that $d\Phi_{r,r}(\eta_1,\eta_{\hat u},0,\eta_{\hat v})=\sigma(0,\eta_{\hat u},\eta'_1,\eta_{\hat v})$ with $(\eta_{\hat u},\eta'_1,\eta_{\hat v})\in T_{\Phi_{r,r}z}U_{E,r}$.
\end{defi}

As usual, we let $e_i$ denote a unit vector whose elements are all equal to zero except for the $i$-th element which is equal to 1.

\begin{pro}\label{pro6.2}
The map $d\Phi_{r,r}$ has an eigenvalue $\sigma_1=1+o(r)$ associated with the eigenvector $\eta_1=e_1+b_1$ with $|b_1|=o(r)$. The map $d\Phi_{E,r,r}$ has $(n-1)$ pairs of eigenvalues $\{\sigma_i, \sigma_{i+n}=\sigma_i^{-1}:2\le i\le n\}$, associated with the eigenvectors $\eta_i=e_{i}+b_i$ and $\eta_{i+n}=e_{i+n-1}+b_{i+n}$ respectively, where $|b_i|,|b_{i+n}|=o(r)$, $\sigma_i=\mu_i |E|^{-\lambda_i/\lambda_1}$ with $0<\inf_E\mu_i<\sup_E\mu_i<\infty$ as $E\downarrow 0$. Let $\hat\eta_i=(\eta_{i,\hat u},\eta_{i,\hat v})$, the matrix $T_E=[\hat\eta_2,\cdots,\hat\eta_n, \hat\eta_{2+n},\cdots,\hat\eta_{2n}]$ is symplectic if a suitable factor $\nu_i=1+o(r)$ is multiplied to $\hat\eta_i\to\nu_i\hat\eta_i$ for each $i\le n$. The result also holds for $d\Phi_{-E,\pm r,\mp r} (z^+_{E,\pm r})$.
\end{pro}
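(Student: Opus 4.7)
The plan is to compute $d\Phi_{E,r,r}$ explicitly at a homoclinic point using the Birkhoff normal form \eqref{eq2.3}, and then treat the BNF remainder $R^*=O(\|z\|^{2\kappa+1})$ as a small perturbation. In the truncation $H_0=\sum\lambda_iu_iv_i+N(I)$, each action $I_i=u_iv_i$ is a first integral, so the flow is $u_i(s)=u_i(0)e^{\mu_i(I)s}$, $v_i(s)=v_i(0)e^{-\mu_i(I)s}$ with $\mu_i(I)=\lambda_i+\partial_iN(I)$. The traversal time from $\{v_1=r\}$ to $\{u_1=r\}$ is $t=\mu_1^{-1}\log(r/u_1)$, yielding the closed-form truncated inner map
\begin{equation*}
v_1^*=u_1,\qquad u_i^*=u_i\,(r/u_1)^{\mu_i/\mu_1},\qquad v_i^*=v_i\,(u_1/r)^{\mu_i/\mu_1}\quad(i\ge 2).
\end{equation*}

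Linearizing at a homoclinic point $(u_1,0,0)\in\{v_1=r\}$, where $I_j=0$ for $j\ge 2$, the cross-terms $\partial\mu_i/\partial I_j$ all act trivially because they are multiplied by $\hat u=\hat v=0$, and the differential decouples in the basis $\{e_1,e_i^{\hat u},e_i^{\hat v}\}_{i\ge 2}$. In the sense of Definition \ref{defi6.2} the $u_1$-direction maps to the $v_1^*$-direction with multiplier $1$, giving $\sigma_1^{(0)}=1$; the $\hat u_i$-direction scales by $(r/u_1)^{\mu_i/\mu_1}$ and the $\hat v_i$-direction by the reciprocal. On $H^{-1}(E)\cap\{v_1=r\}$, the constraint $H_0=E$ pins $u_1=E/(\lambda_1r)(1+o(1))$, so the multipliers become $\sigma_i^{(0)}=\mu_i|E|^{-\lambda_i/\lambda_1}(1+o(1))$ with $\mu_i$ bounded above and away from $0$ as $E\downarrow 0$, matching the claim.

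Reintroducing $R^*$ adds to the variational equation a perturbation of pointwise size $O(\|z\|^{2\kappa-1})$. Along the traversing orbit, $\|z(s)\|\le c\,r\bigl(e^{-\lambda_1 s}+e^{\lambda_1(s-t)}\bigr)$, so estimating the monodromy perturbation via a Dyson expansion in the eigencoordinates of the truncated flow shows that, relative to the leading matrix entries, the correction is $o(r)$ both on the diagonal and on the off-diagonal; the hypothesis $(\kappa-1)\lambda_1>\lambda_n$ is precisely what controls the most expansive off-diagonal mode between $e_1$ and $e_n^{\hat u}$. Because non-resonance and the strict ordering of the $\lambda_i$ in H1 make all the leading eigenvalues simple, first-order perturbation theory then delivers $\sigma_i=\sigma_i^{(0)}(1+o(r))$ and $\eta_i=e_i+b_i$, $\eta_{i+n}=e_{i+n-1}+b_{i+n}$ with $|b_i|,|b_{i+n}|=o(r)$.

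For the symplectic normalization, Lemma \ref{map1.2} says $\hat\Phi_{E,r,r}$ preserves $\omega=d\hat u\wedge d\hat v$ on the projected section. A symplectic linear map has eigenvalues in reciprocal pairs, and invariant $2$-planes belonging to distinct pairs with $\sigma_i\sigma_j\ne 1$ are automatically $\omega$-orthogonal; here this holds by the distinct ratios $\lambda_i/\lambda_1$. On each invariant plane $\mathrm{span}(\hat\eta_i,\hat\eta_{i+n})$ the form $\omega$ restricts to a non-degenerate pairing whose leading value on $(e_i^{\hat u},e_i^{\hat v})$ is $1$, so rescaling $\hat\eta_{i+n}\to\nu_{i+n}\hat\eta_{i+n}$ by a factor $\nu_{i+n}=1+o(r)$ normalizes it to $1$, and the block structure makes $T_E$ symplectic. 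The statements for $d\Phi_{E,\pm r,\mp r}$ with $E<0$ follow from the same local BNF computation at the corresponding section. The main obstacle is the relative remainder estimate: the eigenvalues $\sigma_i^{(0)}$ blow up as $|E|\to 0$, so the BNF remainder must be controlled \emph{relatively} rather than absolutely, and $(\kappa-1)\lambda_1>\lambda_n$ is exactly the threshold at which this becomes possible in the most expansive direction.
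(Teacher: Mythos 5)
Your high-level plan — integrate the Birkhoff normal form flow in closed form, then treat the BNF remainder as a Dyson-type perturbation, and use symplecticity of the projected section map for the normalization of $T_E$ — does track the skeleton of the paper's proof (Lemmas \ref{lemmaforvariationalequation}--\ref{lem7.6}), and your reading of the role of $(\kappa-1)\lambda_1>\lambda_n$ and of Lemma \ref{map1.2} is correct. But there is a genuine gap that your argument elides, and it is precisely the reason the paper does \emph{not} compute the eigenvectors of the full traversal $\Phi_{r,r}$ directly. Your explicit Jacobian of the truncated inner map carries entries of the type $\partial u_i^*/\partial u_1 \sim (\mu_i/\mu_1)\,(u_i/u_1)\,\sigma_i$, and the flat assertion that ``the cross-terms $\partial\mu_i/\partial I_j$ all act trivially because they are multiplied by $\hat u=\hat v=0$'' does not hold at the periodic point $z^+_{E,r}$: there $\hat u,\hat v$ are small but non-zero while $u_1\sim E/r$ is itself small, so the ratio $u_i/u_1$ must be proved to be $o(r)$ uniformly as $E\downarrow 0$ before the eigenvector claim $\eta_1=e_1+b_1$, $|b_1|=o(r)$, can follow. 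The crude bound $|u_i(t)|\le cr\sqrt{|E|}$ from \eqref{Birkhoffcontrol1} does not supply that, and your proposal contains no substitute estimate, nor does it account for the Poincar\'e-section time correction of Lemma \ref{lem7.4}, which contributes a term of comparable size to $\partial u_i^*/\partial u_1$.

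The paper circumvents all this by splitting the inner map at the symmetric section $\{u_1=v_1\}$ into $\Phi_{r,0}$ and $\Phi_{0,r}$: Lemmas \ref{lemmaforvariationalequation}--\ref{lem7.3new} give a dominance estimate on the fundamental matrix of each half (the $\prec$ bookkeeping), Lemma \ref{lem7.5} then extracts eigenvalues and eigenvectors of each half by row-dominance of $\imath\Psi_E$, and the composition theorem Proposition \ref{productofeigenvalue} combines the two transversal flags to produce the eigenvalue $\sigma_1=\sigma_0\sigma_0'(1+O(\delta))=1+o(r)$ with eigenvector still $e_1+o(r)$. That composition step is the crux: on the half-map the ``middle'' eigenvalue is $\sim |E|^{\mp1/2}$ with a clean eigendirection $e_1+o(r)$, and Proposition \ref{productofeigenvalue} is what allows multiplying them without losing the $o(r)$ control, since the two flags are $O(\delta)$-close. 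Nothing in your proposal replaces this step, so the control of $\eta_1$ (and of the $b_i$, $b_{i+n}$, which face the same composition issue) does not follow from what you wrote. The symplectic normalization of $T_E$ in your last paragraph matches Lemma \ref{lem7.6}, but it presupposes the eigenvector structure that the missing composition argument is supposed to deliver.
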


We apply the proposition to check the $C^1$-smoothness first and postpone its proof to the next section. By the notation, $\eta_1=(\eta_{1,1},\eta_{1,\hat u},\eta_{1,\hat v})$ is the eigenvector of $d\Phi_{r,r}$ for $\sigma_1$, $\eta_i=(\eta_{i,1},\eta_{i,\hat u},\eta_{i,\hat v})$ and $\eta_{i+n}=(\eta_{i+n,1},\eta_{i+n,\hat u}, \eta_{i+n,\hat v})$ are the eigenvector of $d\Phi_{E,r,r}$ for $\sigma_i$ and $\sigma_i^{-1}$ respectively. By Definition \ref{defi6.2}, we have $\eta'_1=(\eta_{1,\hat u},\eta_{1,1},\eta_{1,\hat v})$, $\eta'_i=(\eta_{i,\hat u},\eta'_{i,n+1},\eta_{i,\hat v})\in T_{z^-_{E,r}}U_{E,r}$ and $\eta'_{i+n}=(\eta_{i+n,\hat u},\eta'_{i+n,n+1},\eta_{i+n,\hat v})\in T_{z^-_{E,r}}U_{E,r}$. We claim that some $O(r)>0$ exists such that for all $i\ge 2$, it holds that
\begin{equation}\label{bili}
\begin{aligned}
&(1-O(r))|\eta'_i|\le|\eta_i|\le (1+O(r))|\eta'_i|,\\
&(1-O(r))|\eta'_{i+n}|\le|\eta_{i+n}|\le (1+O(r))|\eta'_{i+n}|.
\end{aligned}
\end{equation}
Indeed, each pair $(\frac{\partial}{\partial u_i}, \frac{\partial}{\partial v_i})$ determines four numbers $\eta_{i,1},\eta_{i+n,1},\eta'_{i,1+n},\eta'_{i+n,1+n}$ such that $\frac{\partial}{\partial u_i}+\eta_{i,1}\frac{\partial}{\partial u_1}$, $\frac{\partial}{\partial v_i}+\eta_{i,1+n}\frac{\partial}{\partial u_1}\in T_{z^+_{E,r}}S_{E,r}$, $\frac{\partial}{\partial u_i}+\eta'_{i+n,1}\frac{\partial}{\partial u_1}$, $\frac{\partial}{\partial v_i}+\eta'_{i+n,1+n}\frac{\partial}{\partial u_1}\in T_{z^-_{E,r}}U_{E,r}$. We have $|\eta_{i,1}|=|\frac{\lambda_iv_i}{\lambda_1v_1}(1+o(r))|=O(r)$ since $v_1=r$ and $|v_i|=o(r)$ hold for small $E$, cf. the condition ({\bf H2}). Similarly,  $|\eta_{i,1+n}|, |\eta'_{i+n,1}|,|\eta'_{i+n,1+n}|=O(r)$ also hold.

Defining $E_{\hat u}=\mathrm{span}\{\eta_2,\cdots,\eta_n\}$, $E_{\hat v}=\mathrm{span}\{\eta_{n+2},\cdots,\eta_{2n}\}$, $E^+_1=\frac{\partial}{\partial u_1}\mathbb{R}$ and $E^-_1=\frac{\partial}{\partial v_1}\mathbb{R}$, we have the decomposition $T_{z^+_{E,r}}S_{r}=E^+_1\oplus E_{\hat u}\oplus E_{\hat v}$ and $T_{z^-_{E,r}}S_{r}=E^-_1\oplus E_{\hat u}\oplus E_{\hat v}$. Let $\pi_{E,1}$, $\pi_{E,\hat u}$ and $\pi_{E,\hat v}$ denote the projection from $T_{z^+_{E,r}}S_{r}$ to $E^+_1$, $E_{\hat u}$ and $E_{\hat v}$ and from $T_{z^-_{E,r}}U_{r}$ to $E^-_1$, $E_{\hat u}$ and $E_{\hat v}$ respectively. We put $E$ in the subscripts to remind that the projection $\pi_{E,\hat u}$ and $\pi_{E,\hat v}$ depend on the energy $E$. Hence, we use $\eta^\pm_E=(\eta^\pm_{E,1},\eta^\pm_{E,\hat u},\eta^\pm_{E,\hat v})$ to denote tangent vectors in the corresponding tangent spaces, where $\eta^\pm_{E,\hat u}=\pi_{E,\hat u}\eta^\pm_E$, $\eta^\pm_{E,\hat v}=\pi_{E,\hat v}\eta^\pm_E$ and $\eta^\pm_{E,1}=\pi_{1}\eta^\pm_E$.


The differential $d\Phi_{r}(z^-_{E,r})$ of the outer map $\Phi_r$ at $z^-_{E,r}$ is represented by a matrix
\begin{equation}\label{eq6.2new}
\left[\begin{matrix}\eta^+_{E,\hat u}\\ \eta^+_{E,\hat v}\\ \eta^+_{E,1}\end{matrix}\right]=\left[\begin{matrix} A_{E,11} & A_{E,12} & A_{E,13}\\
A_{E,21} & A_{E,22} & A_{E,23}\\
A_{E,31} & A_{E,32} & A_{E,33}
\end{matrix}\right] \left[\begin{matrix}\eta^-_{E,\hat u}\\ \eta^-_{E,\hat v}\\ \eta^-_{E,1}\end{matrix}\right]=A_E\left[\begin{matrix}\eta^-_{E,\hat u}\\ \eta^-_{E,\hat v}\\ \eta^-_{E,1}\end{matrix}\right].
\end{equation}
Let $\tau_E>0$ be the time to define the outer map $\Phi_r$, namely, $\Phi_H^{\tau_E}(z^-_{E,r})=z^+_{E,r}$, then it continuously depends on $E$ and have their limit as $E\to 0$. Therefore, all elements in the matrix of \eqref{eq6.2new} continuously depend on $E$ and remain bounded as $E\to 0$.
Since $\eta_i=e_{i}+o(r)$ and $\eta_{i+n}=e_{i+n-1}+o(r)$ and $r$ can be set suitably small, we find from \eqref{eq2.13} that $\mathrm{det}A_{E,11}\ne 0$. It is guaranteed by the fact that the stable and unstable manifolds intersect ``transversally" in the sense of \eqref{eq1.2}.

Let $\Delta z^\pm=z^\pm_{E',r}-z^\pm_{E,r}$. Because the time for $\Phi_H^t$ to go from $U_{E,r}$ to $V_{E,r}$ is finite, some $\nu\ge1$ exists such that
\begin{equation}\label{conservation}
\nu^{-1}|\Delta z^+|\le|\Delta z^-|\le\nu|\Delta z^+|,
\end{equation}
and it follows from \eqref{eq6.2new} that
\begin{equation}\label{eq6.?}
\begin{aligned}
\pi_{E,\hat u}\Delta z^+&=A_{E,11}\pi_{E,\hat u}\Delta z^-+A_{E,12}\pi_{E,\hat v}\Delta z^-+A_{E,13}\pi_{E,1}\Delta z^-+O(|\Delta z^-|^2),\\
\pi_{E,\hat v}\Delta z^+&=A_{E,21}\pi_{E,\hat u}\Delta z^-+A_{E,22}\pi_{E,\hat v}\Delta z^-+A_{E,23}\pi_{E,1}\Delta z^-+O(|\Delta z^-|^2).
\end{aligned}
\end{equation}
The inner map also establishes a relation between $\Delta z^-$ and $\Delta z^+$, $\Delta z^-=d\Phi_{r,r}\Delta z^++O(|\Delta z^+|^2)$. Hence, for sufficiently small $|\Delta z^+|$ and in view of \eqref{bili}, we have
\begin{equation}\label{equation6.4}
|\pi_{E,\hat u}\Delta z^+|\le\frac1{2\mu_2}|E|^{\frac{\lambda_2}{\lambda_1}} |\pi_{E,\hat u}\Delta z^-|,\qquad |\pi_{E,\hat v}\Delta z^-|\le \frac 1{2\mu_2}|E|^{\frac{\lambda_2}{\lambda_1}}|\pi_{E,\hat v}\Delta z^+|.
\end{equation}

\begin{lem}\label{lem6.1}
For small $E>0$, let $z^-_{E,r}$ denote the point where the periodic orbit $z_E(t)$ intersects the section $\Sigma^-_r$. Then, the tangent vector $\partial _Ez^-_{E,r}$ of $\Pi|_{\Sigma^-_r}$ at the point $z^-_{E,r}$ satisfies the condition that $\pi_{E,1}\partial_E z^-_{E,r}=\frac1{\mu_1r}(1+O(r))$, $\pi_{E,\hat v}\partial_E z^-_{E,r}=o(|E|)$ and
$$
\pi_{E,\hat u}\partial_E z^-_{E,r}= -A^{-1}_{E,11}A_{E,13}\pi_{1}\partial_E z^-_{E,r}+o(|E|).
$$
\end{lem}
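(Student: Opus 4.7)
The point $z^-_{E,r}$ is determined as the unique fixed point of the return map on $\Sigma^-_r$ together with the energy constraint $H(z^-_{E,r})=E$. For $E\ne 0$, the implicit function theorem applied to the hyperbolic linearization gives $C^1$ dependence on $E$, so $w:=\partial_E z^-_{E,r}$ and $w^+:=\partial_E z^+_{E,r}=d\Phi_r(z^-_{E,r})w$ are well-defined; the task is to control them asymptotically as $E\downarrow 0$ by coupling the outer-map linearization \eqref{eq6.?} with the sharp inner-map estimate \eqref{equation6.4}.

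Dividing \eqref{equation6.4} by $|E'-E|$ and letting $E'\to E$ gives
\begin{equation*}
|\pi_{E,\hat u} w^+|\le c|E|^{\lambda_2/\lambda_1}|\pi_{E,\hat u} w|,\qquad |\pi_{E,\hat v} w|\le c|E|^{\lambda_2/\lambda_1}|\pi_{E,\hat v} w^+|,
\end{equation*}
which record the expansion and contraction of the inner map in the $\hat u,\hat v$ directions. Substituting the first into the outer-map relation $\pi_{E,\hat u} w^+=A_{E,11}\pi_{E,\hat u} w+A_{E,12}\pi_{E,\hat v} w+A_{E,13}\pi_{E,1} w$, and using that $A_{E,11}$ is invertible (transversality, $(\textbf{H2})$; cf.\ Lemma \ref{lem2.2}), yields, for small $|E|$,
\begin{equation*}
\pi_{E,\hat u} w=-A_{E,11}^{-1}A_{E,13}\pi_{E,1} w-A_{E,11}^{-1}A_{E,12}\pi_{E,\hat v} w+O(|E|^{\lambda_2/\lambda_1})(|\pi_{E,\hat u} w|+|\pi_{E,1} w|).
\end{equation*}
Plugging the analogous expression for $\pi_{E,\hat v} w^+$ from the second line of \eqref{eq6.?} into the second inner-map estimate gives $|\pi_{E,\hat v} w|=O(|E|^{\lambda_2/\lambda_1})(|\pi_{E,\hat u} w|+|\pi_{E,1} w|)$; reinserting this into the display forces $\pi_{E,\hat u} w$ to be controlled by $\pi_{E,1} w$, producing the third identity $\pi_{E,\hat u} w=-A_{E,11}^{-1}A_{E,13}\pi_{E,1} w+o(|E|)$ and, as a byproduct, $\pi_{E,\hat v} w=o(|E|)$, which is the second identity (using $\lambda_2>\lambda_1$).

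It remains to pin down $\pi_{E,1} w$, which I would do from the energy identity $\langle\nabla H(z^-_{E,r}),w\rangle=1$. In the Birkhoff normal form with flat remainder $R(u,v)=\langle R_{11}u,v\rangle$, $R_{11}(0)=0$, the coordinates $u_1=r$, $|\hat u|,|\hat v|=o(r)$ at $z^-_{E,r}$ give $\partial_{v_1}H=\lambda_1 r(1+O(r))$, while all other components of $\nabla H$ along $\hat u,\hat v$ are $o(r)$; together with the bounds on $\pi_{E,\hat u} w$ and $\pi_{E,\hat v} w$ just obtained, the energy identity reduces to $\lambda_1 r(1+O(r))\pi_{E,1} w=1+o(1)$, so $\pi_{E,1} w=\frac{1}{\lambda_1 r}(1+O(r))$, identifying $\mu_1=\lambda_1$. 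The main obstacle is uniformity as $E\downarrow 0$: the inner-map eigenvalues $\sigma_i$ diverge like $|E|^{-\lambda_i/\lambda_1}$, so the $O(|\Delta z|^2)$ remainders in \eqref{eq6.?} must be handled carefully before dividing by $\Delta E$. The cleanest route is to run the entire argument on the finite differences $\Delta z^\pm/\Delta E$ first, passing to the limit only after assembling the closed linear system; this is legitimate because the contracting factor $|E|^{\lambda_2/\lambda_1}$ in \eqref{equation6.4} dominates the quadratic remainders whenever $\Delta E$ is small, with uniform constants inherited from the Banach fixed-point construction of Section~\ref{sec.2}.
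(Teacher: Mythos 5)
Your proposal is correct and follows essentially the same route as the paper: couple the hyperbolic inner-map contraction/expansion bounds \eqref{equation6.4} with the outer-map linearization \eqref{eq6.?} to control $\pi_{E,\hat u}\partial_E z^-_{E,r}$ and $\pi_{E,\hat v}\partial_E z^-_{E,r}$, then pin down $\pi_{E,1}\partial_E z^-_{E,r}$ by differentiating the energy constraint. The one cosmetic difference is how you bound the $\hat v$-component: the paper appeals directly to \eqref{conservation} ($\nu^{-1}|\Delta z^+|\le|\Delta z^-|\le\nu|\Delta z^+|$, which follows from the outer map being a finite-time diffeomorphism) to turn $|\pi_{E,\hat v}\Delta z^-|\le c|E|^{\lambda_2/\lambda_1}|\pi_{E,\hat v}\Delta z^+|$ into $|\pi_{E,\hat v}\partial_E z^-_{E,r}|\le c|E|^{\lambda_2/\lambda_1}|\partial_E z^-_{E,r}|$, whereas you feed the second row of \eqref{eq6.?} back in and absorb the small $\pi_{E,\hat v}w$ term — both yield the same estimate with the same constants, so this is not a structurally different argument. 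Your closing remark on handling the quadratic remainders via finite differences before passing to $\partial_E$ correctly identifies the uniformity issue that the paper leaves implicit.
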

\begin{proof}
The cylinder $\Pi|_{E>0}$ is obviously smooth, it makes sense to consider the tangent space $T_z\Pi|_{E>0}$ for each $z\in\Pi|_{E>0}$. It follows from \eqref{conservation} and the second inequality of \eqref{equation6.4} that $|\pi_{E,\hat v}\partial_E z^-_{E,r}|\le c|E|^{\lambda_2/\lambda_1}|\partial_E z^-_{E,r}|$, with which and the first inequality of \eqref{equation6.4} we find from the first equation of \eqref{eq6.?} that
$$
|A_{E,11}\pi_{E,\hat u}\partial_E z^-_{E,r}+A_{E,13}\pi_{E,1}\partial_E z^-_{E,r}|\le c_2|E|^{\lambda_2/\lambda_1}|\partial_E z^-_{E,r}|.
$$
Since $\mathrm{det}A_{E,11}\ne 0$, the term $|\pi_{E,\hat u}\partial_E z^-_{E,r}|$ is controlled by $\partial_E z^-_{E,r}$.

Restricted on the section $\{u_1=r\}$, we take first derivative in $E$ on both sides of the equation $H(z)=E$ at $z^-_{E,r}$, we get
$$
1=\lambda_1r\partial_E v_1+\sum_{i=2}^n\lambda_iu_i\partial_E v_i+ \lambda_iv_i\partial_E u_i+\partial_E R.
$$
At the point $z^-_{E,r}$ we have $|u_i|,|v_i|=o(r)$ for $i\ge 2$. So, it follows from the definition of $\eta'_1$, $\eta'_i$ and $\eta'_{i+n}$ that
$1=\lambda_1r(1+O(r))\pi_{1}\partial_Ez^-_{E,r}+\langle o(r),\pi_{E,\hat u}\partial_Ez^-_{E,r}+\pi_{E,\hat u}\partial_Ez^-_{E,r}\rangle$.
\end{proof}

Notice that $\Pi^+$ has two connected components, one consists of $\{z^+_E(t):E\in (0,E_0]\}$, the other one consists of $\{z^-_E(t):E\in (0,E_0]\}$. Recall that $z^-_E(t)$ denotes the periodic orbit shadowing the homoclinical orbit $z^-_E(t)$. Let $z^-_{E,-r}$, $z_{-E,-r}$ be the point where the periodic orbit $z^-_E(t)$ and $z_{-E}(t)$ intersects the section $\Sigma^-_{-r}$ respectively, then Lemma \ref{lem6.1} also holds for the tangent vector $\partial _Ez^-_{E,-r}$ of $\Pi|_{\Sigma^-_{-r}}$ at the point $z^-_{E,-r}$.

To consider the case $E<0$, we recall that $z^-_{E,\pm r}$ and $z^+_{E,\pm r}$ denote the point where the periodic orbit $z_E(t)$ intersects the section $\{u_1=\pm r\}$ and $\{v_1=\pm r\}$ respectively, see Figure \ref{fig3}. Applying Proposition \ref{pro6.2} to $d\Phi_{r,-r}(z^+_{E,r})$ and $d\Phi_{E,r,-r}(z^+_{E,r})$, we see that $d\Phi_{r,-r}(z^+_{E,r})$ has an eigenvalue $\sigma_1=1+O(r)$ with the eigenvector $\eta_1=(\eta_{1,1},\eta_{1,\hat u},\eta_{1,\hat v})$, $d\Phi_{E,r,-r}(z^+_{E,r})$ has
$(n-1)$ pairs of eigenvalues $\{\sigma_i,\sigma_i^{-1},i=2,\cdots,n\}$ associated with the eigenvector $\eta_i=(\eta_{i,1},\eta_{i,\hat u},\eta_{i,\hat v})$ and $\eta_{i+n}=(\eta_{i+n,1},\eta_{i+n,\hat u}, \eta_{i+n,\hat v})$ respectively. We also apply Proposition \ref{pro6.2} to $d\Phi_{-r,r}(z^+_{E,r})$ and $d\Phi_{E,-r,r}(z^+_{E,r})$, let $\sigma'_1,\sigma'_i$ and $\sigma'^{-1}_i$ be the eigenvalues associated with an eigenvectors $\eta'_1=(\eta'_{1,\hat u},\eta'_{1,1},\eta'_{1,\hat v})\in T_{z^-_{E,r}}U_{r}$ and other $2n-2$ eigenvectors $\eta'_i=(\eta'_{i,\hat u},\eta'_{i,n+1},\eta'_{i,\hat v}),\eta'_{i+n}=(\eta'_{i+n,\hat u},\eta'_{i+n,n+1},\eta'_{i+n,\hat v})\in T_{z^-_{E,r}}U_{E,r}$ respectively.

Similarly, we set $E^+_{\hat u}=\mathrm{span}\{\eta_2,\cdots,\eta_n\}$, $E^+_{\hat v}=\mathrm{span}\{\eta_{n+2},\cdots,\eta_{2n}\}$, $E^+_1=\frac{\partial}{\partial u_1}\mathbb{R}$, $E^-_1=\frac{\partial}{\partial v_1}\mathbb{R}$, $E^+_{\hat u}=\mathrm{span}\{\eta'_2,\cdots,\eta'_n\}$ and  $E^-_{\hat v}=\mathrm{span}\{\eta'_{n+2},\cdots,\eta'_{2n}\}$. So we have the decomposition $T_{z^+_{E,r}}S_{r}=E^+_1\oplus E^+_{\hat u}\oplus E^+_{\hat v}$ and $T_{z^-_{E,r}}S_{r}=E^-_1\oplus E^-_{\hat u}\oplus E^-_{\hat v}$. The tangent spaces $T_{z^-_{E,-r}}U_{-r}$ and $T_{z^+_{E,-r}}S_{-r}$ also admit similar decomposition. Similar to the case $E>0$, we define $\pi_{E,1}$, $\pi_{E,\hat u}$ and $\pi_{E,\hat v}$ to be the projection from $T_{z^+_{E,\pm r}}S_{\pm r}$, $T_{z^-_{E,\pm r}}U_{\pm r}$ to the corresponding subspaces respectively.

Let $z^-_{E',\pm r}$ and $z^+_{E',\pm r}$ denote the point where the periodic orbit $z_{E'}(t)$ intersects the section $\Sigma^-_{\pm r}$ and $\Sigma^+_{\pm r}$ respectively. Let $\Delta z^\pm_{\pm r}=z^\pm_{E',\pm r}-z^\pm_{E,\pm r}$. Because $\Phi_{E,r,-r}(z^+_{E,r})=z^-_{E,-r}$ and $\Phi_{E,-r,r}(z^+_{E,-r}) =z^-_{E,r}$, some constant $c>0$ exists, independent of $E$, such that
\begin{equation}\label{eq6.7}
|\pi_{E,\hat u}\Delta\hat z^+_{r}|\le c|E|^{\lambda_2/\lambda_1}|\pi_{E,\hat u} \Delta\hat z^-_{-r}|,\qquad |\pi_{E,\hat v}\Delta\hat z^-_{r}|\le c|E|^{\lambda_2/\lambda_1}|\pi_{E,\hat v}\Delta\hat z^+_{-r}|.
\end{equation}
Being aware that the Hamiltonian is reduced from the one with $\mathbf{s}$-symmetry, we see that the coordinate change is close to identity so that \eqref{eq2.4} in Proposition \ref{flatpro} holds, especially it is down for the Birkhoff normal form. Therefore, there exists some $\nu'\ge 1$ such that
\begin{equation}\label{eq6.8}
\begin{aligned}
&\nu'^{-1}|\Delta z^+_{-r}|\le |\Delta z^-_r|\le \nu'|\Delta z^+_{-r}|,\\
&\nu'^{-1}|\Delta z^-_{-r}|\le |\Delta z^+_{r}|\le \nu'|\Delta z^-_{-r}|.
\end{aligned}
\end{equation}
We see from \eqref{eq6.7} and \eqref{eq6.8} that $|\pi_{E,\hat v}\partial_E z^-_{E,r}\le c|E|^{\lambda_2/\lambda_1}||\partial_E z^-_{E,r}|$. Similar to Equation \eqref{eq6.?}, we also have
\begin{equation}\label{eq6.9}
\begin{aligned}
\pi_{E,\hat u}\Delta z^+_r&=A'_{E,11}\pi_{E,\hat u}\Delta z^-_r+A'_{E,12}\pi_{E,\hat v}\Delta z^-_r+A'_{E,13}\pi_{E,1}\Delta z^-_r+O(|\Delta z^-_r|^2),\\
\pi_{E,\hat v}\Delta z^+_r&=A'_{E,21}\pi_{E,\hat u}\Delta z^-_r+A'_{E,22}\pi_{E,\hat v}\Delta z^-_r +A'_{E,23}\pi_{E,1}\Delta z^-_r+O(|\Delta z^-_r|^2)
\end{aligned}
\end{equation}
where $\mathrm{det}A'_{E,11}\ne 0$. In view of \eqref{eq6.7} and \eqref{eq6.8}, we obtain from the first equation of \ref{eq6.9} that
$$
|A'_{E,11}\pi_{E,\hat u}\partial_E z^-_{E,r}+A'_{E,13}\pi_{E,1}\partial_E z^-_{E,r}|\le c|E|^{\lambda_2/\lambda_1}||\partial_E z^-_{E,r}|.
$$
By the experience to prove Lemma \ref{lem6.1}, these arguments lead to the following:
\begin{lem}\label{lem6.2}
For small $E<0$, let $z^-_{E,r}$ denote the point where the periodic orbit $z_E(t)$ intersects the section $\Sigma^-_r$. Then, the tangent vector $\partial _Ez^-_{E,r}$ of $\Pi|_{\Sigma^-_r}$ at the point $z^-_{E,r}$ satisfies the condition that $\pi_{1}\partial_E z^-_{E,r}=\frac1{\mu_1r}(1+O(r))$, $\pi_{E,\hat v}\partial_E z^-_{E,r}=o(|E|)$ and
$$
\pi_{E,\hat u}\partial_E z^-_{E,r}= -A^{-1}_{E,11}A_{E,13}\pi_{E,1}\partial_E z^-_{E,r}+o(|E|).
$$
\end{lem}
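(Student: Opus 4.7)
The plan is to imitate the proof of Lemma \ref{lem6.1} step by step, substituting the negative-energy diagram (involving two outer maps $\Phi_r$, $\Phi_{-r}$ and two inner maps $\Phi_{E,r,-r}$, $\Phi_{E,-r,r}$) for the positive-energy diagram (one outer and one inner map). All the key analytic ingredients --- the contraction factors \eqref{eq6.7}, the finite-time comparability \eqref{eq6.8}, and the linear relation \eqref{eq6.9} together with the non-degeneracy $\det A'_{E,11}\ne 0$ --- have been assembled immediately before the lemma, so the task is to chain them in the right order.

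First, I would observe that $\Pi|_{E<0}$ is $C^1$-smooth away from the homoclinic orbits: each periodic orbit $z_E(t)$ is hyperbolic, so the implicit function theorem applied to the fixed-point equation in Theorem \ref{theo3.2} produces a smooth one-parameter family. Hence $\partial_E z^-_{E,r}$ is a bona fide tangent vector to $\Pi|_{\Sigma^-_r}$, realized as the limit of $(z^-_{E',r}-z^-_{E,r})/(E'-E)$. Next, to control the $\pi_{E,\hat v}$-component, combine the second inequality of \eqref{eq6.7}, which gains $|E|^{\lambda_2/\lambda_1}$ across the inner passage, with the first line of \eqref{eq6.8}, which shows $|\Delta z^+_{-r}|$ and $|\Delta z^-_r|$ are comparable through the finite-time outer pieces. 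Dividing by $E'-E$ and taking the limit yields
\[
|\pi_{E,\hat v}\partial_E z^-_{E,r}|\le c|E|^{\lambda_2/\lambda_1}|\partial_E z^-_{E,r}|=o(|E|),
\]
since $\lambda_2>\lambda_1$.

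To obtain the formula for $\pi_{E,\hat u}\partial_E z^-_{E,r}$, I would plug the above estimate into the first line of \eqref{eq6.9}. The same chaining of \eqref{eq6.7} with \eqref{eq6.8} also bounds $|\pi_{E,\hat u}\Delta z^+_r|$ by $c|E|^{\lambda_2/\lambda_1}|\Delta z^-_r|$, so in the limit
\[
A'_{E,11}\pi_{E,\hat u}\partial_E z^-_{E,r}+A'_{E,13}\pi_{E,1}\partial_E z^-_{E,r}=o(|E|).
\]
Since $\det A'_{E,11}\ne 0$ by the transversality hypothesis (\textbf{H2}), $A'_{E,11}$ can be inverted to isolate $\pi_{E,\hat u}\partial_E z^-_{E,r}$ in the required form. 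Finally, the expression for $\pi_1\partial_E z^-_{E,r}$ is identical to the one in Lemma \ref{lem6.1}: differentiating the constraint $H(z)=E$ restricted to $\{u_1=r\}$ at $z^-_{E,r}$ and using $|\hat u|,|\hat v|=o(r)$ there, only the term $\lambda_1 r\,\partial_E v_1$ is of order $1$, giving $\pi_1\partial_E z^-_{E,r}=(\mu_1 r)^{-1}(1+O(r))$.

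The only real subtlety I anticipate is the book-keeping in the second step: the smallness factor $|E|^{\lambda_2/\lambda_1}$ is produced by a single hyperbolic inner passage, but for $E<0$ the return map passes through the neighborhood of the origin twice, so one must use \eqref{eq6.8} to shuttle estimates between the four sections $\Sigma^\pm_{\pm r}$ without losing that factor. Beyond this accounting, the proof is a direct transcription of Lemma \ref{lem6.1}, and no new analytic input is needed.
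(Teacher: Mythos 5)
Your proposal reproduces the paper's argument essentially verbatim: the paper establishes Lemma \ref{lem6.2} in the text immediately preceding its statement by combining \eqref{eq6.7} and \eqref{eq6.8} to bound $|\pi_{E,\hat v}\partial_E z^-_{E,r}|$ by $c|E|^{\lambda_2/\lambda_1}|\partial_E z^-_{E,r}|$, inserting this into the first equation of \eqref{eq6.9}, inverting $A'_{E,11}$ via $\det A'_{E,11}\ne 0$, and obtaining the $\pi_1$ component from $\langle\partial H,\partial_E z^-_{E,r}\rangle=1$ exactly as in Lemma \ref{lem6.1}. Your book-keeping of the four sections $\Sigma^\pm_{\pm r}$ through \eqref{eq6.8} and your explicit remark on the $C^1$-smoothness of $\Pi|_{E<0}$ are both sound and consistent with the paper's reasoning.
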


Because of Lemma \ref{lem6.1} and \ref{lem6.2}, the $C^1$-smoothness of $\Pi^\pm$ extends to their boundary if the decomposition $T_{z^-_{E,r}}U_{r}=E^-_1\oplus E_{\hat u}\oplus E_{\hat v}$ for $E>0$ and $T_{z^-_{E,r}}U_{r}=E^-_1\oplus E^-_{\hat u}\oplus E^-_{\hat v}$ for $E<0$ is convergent as $E\to 0$, since the quantities $A^{-1}_{E,11}A_{E,13}$ and $A'^{-1}_{E,11}A'_{E,13}$ continuously depend on the point. What remains to show is that they are $C^1$-joined together.

\subsection{Differentiability along the homoclinics}
As the final step, we verify that $\Pi^+$ is $C^1$-joined to $\Pi^-$ along the homoclinic orbit. In the original coordinate $(u_1,\hat u,v_1,\hat v)$ let $\pi_{\hat u}$, $\pi_{\hat v}$ be the projection to the subspace $\mathrm{Span}\{\frac{\partial}{\partial u_i}:i=2,\cdots n\}$ and to $\mathrm{Span}\{\frac{\partial}{\partial v_i}:i=2,\cdots n\}$ respectively. Let $\eta_{\hat u}=\pi_{\hat u}\eta$ and $\eta_{\hat v}=\pi_{\hat v}\eta$. According to Lemma \ref{lem6.1} and \ref{lem6.2}, we shall see it enough to prove the following theorem
\begin{pro}\label{pro6.6}
For $E>0$, let $\{\eta_i,\eta_{i+n}:i=2,\cdots,n\}$ denote the eigenvectors of $d\Phi_{E,r,r}(z^+_{E,r})$, or of $d\Phi_{-E,\pm r,\mp r}$. Let $\eta_i=(\eta_{i,\hat u},\eta_{i,\hat v})$, where $\eta_{i,\hat u},\eta_{i,\hat v} \in\mathbb{R}^{n-1}$ denote its $\hat u$- and $\hat v$-components respectively, then
$$
|\eta_{i,\hat v}|\le O(|E|)|\eta_{i,\hat u}|,\quad
|\eta_{i+n,\hat u}|\le O(|E|)|\eta_{i+n,\hat v}|, \quad \mathrm{as}\ |E|\to 0.
$$
\end{pro}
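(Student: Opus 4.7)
My plan is to reduce the proposition to the hyperbolic cone estimates of Lemma \ref{lem2.4} (and its negative-energy analogue in Proposition \ref{theo3.1}) applied directly to the eigenvector equations, using Proposition \ref{pro6.2} to verify that the eigenvectors lie inside the correct invariant cones. The decisive observation is that along an eigenvector the projected differential $d\hat\Phi_{E,r,r}$ is multiplication by a scalar, so the $\hat v/\hat u$ ratio is preserved under the map and the cone-squeezing estimates convert directly into ratio estimates for the eigenvectors themselves.

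First, for the expanding eigenvectors $\hat\eta_i = (\eta_{i,\hat u},\eta_{i,\hat v})$ of $d\hat\Phi_{E,r,r}$ with eigenvalue $\sigma_i = \mu_i|E|^{-\lambda_i/\lambda_1}$, Proposition \ref{pro6.2} gives $\eta_i = e_i + b_i$ with $|b_i| = o(r)$, which for small $r$ places $\hat\eta_i$ in the cone $\hat K^-_1$. Since $d\hat\Phi_{E,r,r}\hat\eta_i = \sigma_i\hat\eta_i$, applying the second estimate in (\ref{eq2.11}) to $(\xi_{\hat u},\xi_{\hat v}) = \hat\eta_i$ with image $(\xi^*_{\hat u},\xi^*_{\hat v}) = \sigma_i\hat\eta_i$ and canceling the common factor $\sigma_i$ yields
$$|\eta_{i,\hat v}| \le c'r\,t_z\,e^{-(\lambda_1-cr)t_z}\,|\eta_{i,\hat u}|.$$
Inserting the bound on $t_z$ from (\ref{eq2.9}), exactly as in the chain leading to (\ref{eq2.15}), turns the right-hand side into $c|E|^{1-c'r}|\ln|E|| \cdot|\eta_{i,\hat u}|$, which is $O(|E|)|\eta_{i,\hat u}|$ once $r$ is chosen small enough to absorb the fractional power and logarithmic slack.

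For the contracting eigenvectors $\hat\eta_{i+n}$ with eigenvalue $\sigma_i^{-1}$, Proposition \ref{pro6.2} now gives $\eta_{i+n} = e_{i+n-1} + b_{i+n}$ with $|b_{i+n}| = o(r)$, so $\hat\eta_{i+n} \in \hat K^+_1$. Using $d\hat\Phi^{-1}_{E,r,r}\hat\eta_{i+n} = \sigma_i\hat\eta_{i+n}$, I apply the second estimate in (\ref{eq2.12}) to the pair $(\xi_{\hat u},\xi_{\hat v}) = \hat\eta_{i+n}$, $(\xi^\star_{\hat u},\xi^\star_{\hat v}) = \sigma_i\hat\eta_{i+n}$ and cancel $\sigma_i$ to obtain the symmetric bound $|\eta_{i+n,\hat u}| \le O(|E|)|\eta_{i+n,\hat v}|$. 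The negative-energy case $d\Phi_{-E,\pm r,\mp r}$ runs along the same lines once one invokes the hyperbolic estimates (\ref{eq3.3}) of Proposition \ref{theo3.1} together with their inverse counterpart, which is proved by the same argument that upgraded (\ref{eq2.11}) to (\ref{eq2.12}). The only step to watch is that the eigenvectors really sit in $\hat K^-_1$ respectively $\hat K^+_1$, but this is given for free by the $e_i + o(r)$ description in Proposition \ref{pro6.2}, so there is no serious obstacle beyond careful bookkeeping.
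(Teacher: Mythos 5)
Your central observation — that along an eigenvector of $d\hat\Phi_{E,r,r}$ the ratio of the $\hat v$- to $\hat u$-component is unchanged by the map, so the one-step cone-squeezing bound of Lemma \ref{lem2.4} directly caps that ratio after cancelling the eigenvalue — is a genuine shortcut and quite different from the paper's route. The paper instead splits the inner map into the two half-steps $\Phi_{E,r,0}$ and $\Phi_{E,0,r}$, develops Birkhoff-normal-form dominations of the fundamental matrix in Section \ref{sec.7} (Lemmas \ref{lem7.2}, \ref{lem7.3new}), extracts refined eigenvector descriptions of each half-step (Lemmas \ref{lem7.9}, \ref{lem7.10}), and only then runs an invariant-cone argument with a carefully tuned parameter $\alpha$ and Proposition \ref{productofeigenvalue} to glue the halves. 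You take Proposition \ref{pro6.2} as a black box to place $\hat\eta_i\in\hat K^-_1$ and $\hat\eta_{i+n}\in\hat K^+_1$ — which is logically clean, since Proposition \ref{pro6.2} does not depend on Proposition \ref{pro6.6} — and then read off the ratio bound from \eqref{eq2.11} and \eqref{eq2.12}. That part of the argument is correct: the eigenvector lies in $T_{z^+_{E,r}}S_{E,r}$ as required, the projected image is $\sigma_i\hat\eta_i$, and the cancellation of $\sigma_i$ is legitimate.

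Where your write-up overclaims is in the final absorption step. Feeding the bound on $t_z$ from \eqref{eq2.9} into $c'r\,t_z\,e^{-(\lambda_1-cr)t_z}$ yields a quantity of order $|\ln|E||\,|E|^{1-cr/\lambda_1}$, and no fixed choice of $r>0$ makes $|E|^{-cr/\lambda_1}|\ln|E||$ bounded as $|E|\to0$; what you actually prove is $O(|E|^{1-\delta})$ for every $\delta>0$ after shrinking $r$, not $O(|E|)$. This is not a fatal defect, because the ``$O(|E|)$'' in Proposition \ref{pro6.6} is already loose: the paper's own cone parameter $\alpha$ at the end of Section \ref{sec.7} contains a term $|E|^{\nu_i/2\lambda_1}$ with $0<\nu_i<\lambda_i-(1-\nu')\lambda_1$, which in general is also not $O(|E|)$, and the sole use of Proposition \ref{pro6.6} is to force $\Psi^\pm_{E,12},\Psi^\pm_{E,21}\to0$ so that $(M_E^+)^{-1}$ becomes block-triangular in the limit — for which your $o(1)$ bound suffices. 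Replace the ``choose $r$ small enough to absorb'' sentence with the honest $O(|E|^{1-\delta})$, or simply $o(1)$ as $|E|\to0$, and the argument is a complete and leaner alternative to the paper's.
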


Postponing the proof of the theorem to the next section, we apply it to check the differentiability along the homoclinics. Notice that the differential $d\Phi_{r}$ of the outer map at the point $z^-_{E,r}$ be represented by the matrix $A$
\begin{equation*}
\left[\begin{matrix}\eta^+_{\hat u}\\ \eta^+_{\hat v}\\ \eta^+_{1}\end{matrix}\right]=\left[\begin{matrix} A_{11} & A_{12} & A_{13}\\
A_{21} & A_{22} & A_{23}\\
A_{31} & A_{32} & A_{33}
\end{matrix}\right] \left[\begin{matrix}\eta^-_{\hat u}\\ \eta^-_{\hat v}\\ \eta^-_{1}\end{matrix}\right]=A\left[\begin{matrix}\eta^-_{\hat u}\\ \eta^-_{\hat v}\\ \eta^-_{1}\end{matrix}\right],
\end{equation*}
where $\eta_1^-$, $\eta_1^+$ are the projection of $\eta^-$, $\eta^+$ to $\frac{\partial}{\partial v_1}$ and to $\frac{\partial}{\partial u_1}$ respectively, i.e. $\eta_1^\pm=\pi_1\eta^\pm$, all elements in $A$ continuously depend on $E$ around zero energy. Then, we claim
\begin{equation}\label{eq6.10}
\begin{aligned}
&\pi_{\hat u}\frac{\partial z^-_{E,r}}{\partial E}\Big|_{E\uparrow0}=\pi_{\hat u}\frac{\partial z^-_{E,r}}{\partial E}\Big|_{E\downarrow0}=-A^{-1}_{11}A_{13}\pi_{1}\frac{\partial z^-_{E,r}}{\partial E}\Big|_{E=0}; \\
&\pi_{\hat v}\frac{\partial z^-_{E,r}}{\partial E}\Big|_{E\uparrow0}=\pi_{\hat v}\frac{\partial z^-_{E,r}}{\partial E}\Big|_{E\downarrow0}=0;\\
&\Big\langle\partial H(z^-_{E,r}),\frac{\partial z^-_{E,r}}{\partial E}\Big\rangle\Big|_{E\uparrow0}=1.
\end{aligned}
\end{equation}
It implies the $C^1$-differentiability along the homoclinics since $\Pi^+$ is joined to $\Pi^-$ along the homoclinics. To check it, we obtain from the statement of Lemma \ref{lem6.1} that
\begin{equation}\label{eq6.11}
\pi_{E,\hat u}\frac{\partial z^-_{E,r}}{\partial E}=-A^{-1}_{E,11}\pi_{E,\hat u}A_{E,13}\pi_{E,1}\frac{\partial z^-_{E,r}}{\partial E}+o(|E|), \qquad \pi_{E,\hat v}\frac{\partial z^-_{E,r}}{\partial E}=o(|E|).
\end{equation}
which is obtained in view of \eqref{eq6.2new}, where we are in the coordinates
\begin{equation}\label{eq6.14new}
\left[\begin{matrix}\eta^\pm_{\hat u}\\ \eta^\pm_{\hat v} \\ \eta^\pm_1\end{matrix}\right]=
\left[\begin{matrix}\Psi^\pm_{E,11} & \Psi^\pm_{E,12} & 0\\
\Psi^\pm_{E,21} & \Psi^\pm_{E,22} & 0\\
\Psi^\pm_{E,31} & \Psi^\pm_{E,32} & 1\\
\end{matrix}\right]
\left[\begin{matrix}\eta^\pm_{E,\hat u}\\ \eta^\pm_{E,\hat v} \\ \eta^\pm_{E,1}\end{matrix}\right]=M^\pm_E\left[\begin{matrix}\eta^\pm_{E,\hat u}\\ \eta^\pm_{E,\hat v} \\ \eta^\pm_{E,1}\end{matrix}\right],
\end{equation}
in the case of negative $E$, $M^+_E$ may not be the same as $M^-_E$. It follows from Proposition \ref{pro6.6} that $\Psi^\pm_{E,12}\to 0$ and $\Psi^\pm_{E,21}\to 0$ as $E\to 0$. Therefore, the inverse of $M_E^+$ with small $|E|$ takes a special form
$$
(M_E^+)^{-1}=\left[\begin{matrix}
(\Psi^+_{E,11})^{-1} & 0 & 0 \\
0 & (\Psi^+_{E,22})^{-1} & 0 \\
-\Psi^+_{E,31}(\Psi^+_{E,11})^{-1} & -\Psi^+_{E,32}(\Psi^+_{E,22})^{-1} & 1
\end{matrix}\right]+O(|E|).
$$
Because $A_E=(M_E^+)^{-1}AM_E^-$,  $|\pi_{E,\hat u}\frac{\partial z^+_{E,r}}{\partial E}|\to 0$ and $|\pi_{E,\hat v}\frac{\partial z^-_{E,r}}{\partial E}|\to 0$ as $E\to 0$, the first equation of \eqref{eq6.2new} turns out to be
$$
(\Psi^+_{E,11})^{-1}\Big(A_{11}\Psi^-_{E,11}\pi_{E,\hat u}\frac{\partial z^-_{E,r}}{\partial E} +A_{13} (\Psi_{E,31}^-\pi_{E,\hat u}\frac{\partial z^-_{E,r}}{\partial E}+\pi_{E,1}\frac{\partial z^-_{E,r}}{\partial E}) \Big)=O(E).
$$
Since $\Psi^+_{E,11}$ is non-singular, we obtain from \eqref{eq6.14new} that
\begin{equation}\label{finaldaoshu}
A_{11}\pi_{\hat u}\frac{\partial z^-_{E,r}}{\partial E}+A_{13}\pi_{E,1}\frac{\partial z^-_{E,r}}{\partial E}=O(E), \quad \mathrm{as}\ E\to 0.
\end{equation}
Consequently, we get from \eqref{eq6.14new} that $\pi_{\hat u}\frac{\partial z^-_{E,r}}{\partial E}=O(E)$. Notice the matrix $A$ represents the differential $d\Phi_{r}$ of the outer map $\Phi_r$ at the point $z^-_{E,r}$, which continuously depends on $E$. It completes the proof of \eqref{eq6.10}.

Next, we study the case of compound type homology class. In this case, $\Pi$ is no longer a sub-manifold, but its pull-back $\tilde\Pi=\pi_h^{-1}\Pi$ is a NHIC.

\begin{theo}\label{pro6.4}
The manifold $\tilde\Pi=\pi_h^{-1}\Pi$ defined by \eqref{tilde-Pi} is a $C^1$-invariant cylinder with $(\ell+1)k$ holes.
\end{theo}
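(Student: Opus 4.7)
The plan is to adapt the three-step argument that established Theorem \ref{theo6.1} (differentiability at the fixed point, smoothness of $\Pi^\pm$ up to the boundary, and $C^1$-joining across the homoclinics) to the present case. Working in the cover $\mathbb{T}^n_h\times\mathbb{R}^n$ is essential: hypothesis ({\bf H3}) guarantees that $\bar\pi_h(\bar\Gamma\ast\sigma\bar\Gamma\ast\cdots\ast\sigma^\ell\bar\Gamma)$ has no self-intersection, so the periodic orbit $\tilde z^+_E(t)$ that shadows $(\ell+1)k$ homoclinic excursions in the prescribed order lifts to a simple closed curve, and $\tilde\Pi^+_+$ becomes a genuine (non-self-intersecting) cylinder whose boundary is this closed curve. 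For each $E<0$ and each pair $(j,i)$ with $j=0,\dots,\ell$, $i=1,\dots,k$, the periodic orbit $z_{E,i}(t)$ lifts to an annulus $\tilde\Pi_{j,i}$ attached to $\tilde\Pi_{\ge 0}$ at $\partial D_{j,i}$.

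First, differentiability of $\tilde\Pi$ at each copy of the fixed point $z=0$ is a purely local statement and is unaffected by the global topology. The argument of Section~6.1 applies verbatim to each connected component of $\tilde\Pi\cap B_{r'}(\tilde 0)$: a point $z_0$ of a periodic orbit (other than the fixed point itself) with small norm must satisfy \eqref{components}--\eqref{position}, which gives $|(\hat u_0,\hat v_0)|\le c|(u_{1,0},v_{1,0})|^{1+\nu'}$ and therefore $d\hat z(0,0)=0$ at each lift.

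Second, the smoothness of the regular parts $\tilde\Pi^+$ (i.e.\ the $E>0$ piece) and of each $\tilde\Pi_{j,i}$ away from the boundary is the content of the implicit function theorem applied to hyperbolic periodic orbits. Extending this smoothness up to the boundary on $H^{-1}(0)$ proceeds by running the arguments of Lemmas~\ref{lem6.1} and \ref{lem6.2} at each of the $(\ell+1)k$ intersection points of the periodic orbit with sections $\Sigma^\pm_{\pm r,i}$. The composed return map is $\Phi^*_E=\prod_{i}\Phi^*_{E,i,i+1}$ (Section~\ref{sec.5}); each factor is a contraction and Proposition \ref{pro6.2} provides the required hyperbolic splitting at every inner map, with eigenvalues $\sigma_i=\mu_i|E|^{-\lambda_i/\lambda_1}$. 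Chaining estimates \eqref{conservation}--\eqref{equation6.4} across the $(\ell+1)k$ segments yields, at each puncture $z^-_{E,r,i}$,
\begin{equation*}
\pi_{E,\hat v}\partial_E z^-_{E,r,i}=o(|E|),\qquad \pi_{E,\hat u}\partial_E z^-_{E,r,i}=-A^{-1}_{E,11,i}A_{E,13,i}\,\pi_{E,1}\partial_E z^-_{E,r,i}+o(|E|),
\end{equation*}
exactly as in Lemma~\ref{lem6.1}; the analogue for $E<0$ and each $\tilde\Pi_{j,i}$ is the straightforward extension of Lemma~\ref{lem6.2}. The non-degeneracy of $A_{E,11,i}$ is provided by hypothesis ({\bf H2}) applied to the $i$-th homoclinic, and all matrices depend continuously on $E$ down to $E=0$.

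Third, and most delicately, the $C^1$-joining of $\tilde\Pi^+$ with each $\tilde\Pi_{j,i}$ along $\partial D_{j,i}$ has to be verified at every one of the $(\ell+1)k$ boundary circles. Here one applies Proposition~\ref{pro6.6} at each intersection, so that in the matrix decomposition \eqref{eq6.14new} the off-diagonal blocks $\Psi^\pm_{E,12}$ and $\Psi^\pm_{E,21}$ vanish as $E\to 0$; together with the bounded outer-map matrix $A$ at the $i$-th homoclinic and relation \eqref{finaldaoshu} this yields the three equalities in \eqref{eq6.10} at each puncture, proving that the one-sided limits $\partial_E z^-_{E,r,i}|_{E\uparrow 0}$ and $\partial_E z^-_{E,r,i}|_{E\downarrow 0}$ coincide. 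This matching, performed puncture by puncture, glues the unique $E>0$ cylinder $\tilde\Pi^+$ in a $C^1$ fashion to the $(\ell+1)k$ annuli $\tilde\Pi_{j,i}$ along their shared boundary curves in $H^{-1}(0)$, giving a cylinder with $(\ell+1)k$ holes. Invariance under $\Phi^t_H$ is inherited from the invariance of each stratum. The main obstacle I anticipate is the bookkeeping in the third step: the chain of $(\ell+1)k$ inner and outer maps must be shown to produce estimates uniform in the pass index, so that the $o(|E|)$ error terms do not compound into something non-negligible; this is controlled because each inner factor contributes either a factor $|E|^{\lambda_2/\lambda_1}$ (contraction in $\hat v$, expansion in $\hat u$) or its inverse, while each outer factor is uniformly bounded as $E\to 0$, and the matching condition at every puncture is local and involves only one outer matrix $A$ at a time.
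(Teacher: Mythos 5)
Your proposal follows essentially the same three-step strategy as the paper's own proof of Theorem \ref{pro6.4}: chaining the estimates of Lemmas \ref{lem6.1}--\ref{lem6.2} over the successive passes to control $\partial_E z^-_{E,r,j}$, invoking Proposition \ref{pro6.6} to obtain the matching \eqref{eq6.10} at each boundary circle, and applying the Hartman--Grobman argument of Section~6.1 at each lift of the origin to conclude that the $k$ local leaves are mutually tangent. The only cosmetic difference is that you organize the chain over the $(\ell+1)k$ passes of the lifted orbit while the paper works with the $k$ passes in the base and lets the covering map carry the topology; since $\pi_h$ is a local diffeomorphism this makes no analytical difference, and your closing remark on why the errors do not compound (each inner factor contributes a fresh factor $|E|^{\lambda_2/\lambda_1}$ while each outer factor is uniformly bounded) is precisely the point the paper uses when it says ``Since all eigenvalues of $A_{E,j,11}$ for all $j\le k$ are uniformly away from zero.''
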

\begin{proof}
By the assumptions, the periodic orbit $z^+_{E}(t)$ successively passes through the section $\Sigma_{E,r}^-$ at the point $\{z^-_{E,r,j}:j=1,\cdots,k\}$ in the way $z^+_{E,r,j}=\Phi_{E,r}z^-_{E,r,j}\in \Sigma_{E,r}^+$ and $\Phi_{E,r,r}z^+_{E,r,j}=z^-_{E,r,j+1}\mod k$.

According to Proposition \ref{pro6.2}, the differential map $d\Phi_{E,r,r}(z^+_{E,r,j})$ has
$(n-1)$ pairs of eigenvalues $\{\sigma_{j,i}=\mu_{j,i}^{-1}E^{\lambda_i/\lambda_1}, \sigma_{j,i}^{-1},i=2,\cdots,n\}$ associated with the eigenvector $\eta_{j,i}=(\eta_{j,i,1},\eta_{j,i,\hat u},\eta_{j,i,\hat v})$ and $\eta_{j,i+n}=(\eta_{j,i+n,1},\eta_{j,i+n,\hat u}, \eta_{j,i+n,\hat v})$ respectively.

We set $E^+_{j,\hat u}=\mathrm{span}\{\eta_{j,2},\cdots,\eta_{j,n}\}$, $E^+_{j,\hat v}=\mathrm{span}\{\eta_{j,n+2},\cdots,\eta_{j,2n}\}$, $E^-_1=\frac{\partial}{\partial u_1}\mathbb{R}$ and $E^+_1=\frac{\partial}{\partial v_1}\mathbb{R}$ which lead to the decomposition
$$
T_{z^+_{E,r,j}}S_{r}=E^+_1\oplus E_{j,\hat u}\oplus E_{j,\hat v},\quad T_{z^-_{E,r,j}}S_{r}=E^-_1\oplus E_{j,\hat u}\oplus E_{j,\hat v}.
$$
Let $\pi_{E,j,1}$, $\pi_{E,j,\hat u}$ and $\pi_{E,j,\hat v}$ denote the projection from $T_{z^+_{E,r,j}}S_{r}$ to $E^+_1$, $E_{j,\hat u}$ and $E_{j,\hat v}$ and from $T_{z^-_{E,r,j}}U_{r}$ to $E^-_1$, $E_{j,\hat u}$ and $E_{j,\hat v}$ respectively. Let $\eta^\pm_{E,j}=(\eta^\pm_{E,j,1},\eta^\pm_{E,j,\hat u},\eta^\pm_{E,j,\hat v})$ be tangent vector with $\eta^\pm_{E,j,\hat u}=\pi_{E,j,\hat u}\eta^\pm_{E,j}$, $\eta^\pm_{E,j,\hat v}=\pi_{E,j,\hat v}\eta^\pm_{E,j}$ and $\eta^\pm_{E,j,1}=\pi_{E,j,1}\eta^\pm_{E,j}$.

Similar to Equation \eqref{eq6.2new}, the differential $d\Phi_{r}(z^-_{E,r,j})$ of the outer map $\Phi_r$ at $z^-_{E,r,j}$ is represented by a matrix
\begin{equation*}
\left[\begin{matrix}\eta^+_{E,j,\hat u}\\ \eta^+_{E,j,\hat v}\\ \eta^+_{E,j,1}\end{matrix}\right]=\left[\begin{matrix} A_{E,j,11} & A_{E,j,12} & A_{E,j,13}\\
A_{E,j,21} & A_{E,j,22} & A_{E,j,23}\\
A_{E,j,31} & A_{E,j,32} & A_{E,j,33}
\end{matrix}\right] \left[\begin{matrix}\eta^-_{E,j,\hat u}\\ \eta^-_{E,j,\hat v}\\ \eta^-_{E,j,1}\end{matrix}\right]=A_{E,j}\left[\begin{matrix}\eta^-_{E,j,\hat u}\\ \eta^-_{E,j,\hat v}\\ \eta^-_{E,j,1}\end{matrix}\right]
\end{equation*}
where $\mathrm{det}A_{E,j,11}\ne 0$. In view of Lemma \ref{lem6.1}, we claim that for $E>0$ the tangent vector of $\Pi|_{\Sigma^-_r}$ at $z^-_{E,r,j}$ satisfies the condition that
\begin{equation}\label{daoshufork}
\begin{aligned}
\pi_{E,j,\hat u}\partial_E z^-_{E,r,j}&= -A^{-1}_{E,j,11}A_{E,j,13}\pi_{E,j,1}\partial_E z^-_{E,r,j}+o(|E|),\\
\pi_{E,j,\hat v}\partial_E z^-_{E,r,j}&=o(|E|),\\
\pi_{E,j,1}\partial_E z^-_{E,r,j}&=(\mu_1r)^{-1}(1+O(r)).
\end{aligned}
\end{equation}
Indeed, let $\Delta z^\pm_j=z^\pm_{E',r,j}-z^\pm_{E,r,j}$,
$\nu^{-1}|\Delta z^+_j|\le|\Delta z^-_j|\le\nu|\Delta z^+_j|$ holds for some $\nu>1$ and Equation \eqref{eq6.?} holds if we replace $\Delta z^\pm$, $A_{E,\ell k}$ by $\Delta z^\pm_j$ and $A_{E,j,\ell k}$ respectively. The quantities $\Delta z^+_j$ and $\Delta z^-_{j+1}$ are related by the inner map also, $\Delta z^-_{j+1}=d\Phi_{r,r}\Delta z^+_j+O(|\Delta z^+_j|^2)$. Therefore, for sufficiently small $|\Delta z^+|$ and in view of \eqref{bili}, we have
\begin{equation*}\label{}
|\pi_{E,\hat u}\Delta z^+_j|\le\frac1{2\mu_2}|E|^{\frac{\lambda_2}{\lambda_1}} |\pi_{E,\hat u}\Delta z^-_{j+1}|,\qquad |\pi_{E,\hat v}\Delta z^-_{j+1}|\le \frac 1{2\mu_2}|E|^{\frac{\lambda_2}{\lambda_1}}|\pi_{E,\hat v}\Delta z^+_j|.
\end{equation*}
From the second one, we find $|\pi_{E,j,\hat v}\Delta z^-_{j+1}|\le \frac {\nu}{2\mu_2}|E|^{\lambda_2/\lambda_1}|\pi_{E,j,\hat v}\Delta z^-_j|$. Let $j$ range over $\{1,\cdots,k\}$, we find the $|\pi_{E,j,\hat v}\partial_E z^-_{E,r,j}|\le c|E|^{\lambda_2/\lambda_1}$, from which and the first equation of \eqref{eq6.?} it follows that
$$
|A_{E,j,11}\pi_{E,\hat u}\partial_E z^-_{E,r,j}+A_{E,j,13}\pi_{E,j,1}\partial_E z^-_{E,r,j}|\le c_2|E|^{\lambda_2/\lambda_1}(|\partial_E z^-_{E,r,j+1}|+|\partial_E z^-_{E,r,j-1}|)
$$
holds for each $j\mod k$. Since all eigenvalue of $A_{E,j,11}$ for all $j\le k$ are  uniformly away from zero in $E$, the first equation of \eqref{daoshufork} holds for all $j\le k$. The third follows from the identity $\langle\partial H,\partial_E z^-_{E,r,j}\rangle =1$.

In the same way to show \eqref{finaldaoshu}, we are able to get from \eqref{daoshufork} that for $j=1,\cdots,k$
\begin{equation}\label{daoshuforE>0}
A_{j,11}\pi_{\hat u}\frac{\partial z^-_{E,r,j}}{\partial E}+A_{j,13}\pi_{1}\frac{\partial z^-_{E,r,j}}{\partial E}=O(E), \quad \mathrm{as}\ E\downarrow0.
\end{equation}
where $A_{j,i\ell}$ denotes the submatrix of $A_j$ which represents $d\Phi_r(z_{E,r,j})$ in the coordinate $(u,v)$.  Recall the constitution of $\Pi$. Its negative energy part is made up by shrinkable periodic orbits extending from each pairs of homoclinic orbits $\{z^\pm_j(t):j=1,\cdots,k\}$. It follows from the proof for single homology class case that \eqref{daoshuforE>0} holds as $E\downarrow 0$.

Since $\Pi=\pi_h\tilde\Pi$, it consists of $k$ pieces of surface when it is restricted around the origin, they are made up by the orbits shown in Figure \ref{fig5}.
Since $|\hat z^\pm_i(t)|=o(|z^\pm_{i,1}(t)|)$ when $z^\pm_i(t)$ is close to the origin,   each piece can be treated as the graph $\mathcal{G}_i$ of a map $(u_1,v_1)\to\hat z_i(u_1,v_1)$. Applying the same argument for the case $k=1$, we see that $d\hat z_i(0)=0$, i.e. all leaves are tangent to each other at the origin. So, Proposition \ref{pro6.4} is proved.
\end{proof}

To check the normally hyperbolic property of $\Pi$, we only need to consider the points on the homoclinic orbits $\{z^\pm(t):t\in\mathbb{R}\}$. Since the cylinder is made up by hyperbolic periodic orbits, along which the Lyapunov exponents with respect to the tangent space are equal to zero while they are non-zero when they are restricted on the normal space.

Obviously, $T_{z=0}\Pi=\mathrm{Span}\{\frac{\partial}{\partial x_1},\frac{\partial}{\partial y_1}\}$. Given suitably small $r>0$, there exists $T_r>0$ such that it holds for any $z\in\{z^\pm(t):t\in\mathbb{R}\}$ that $\Phi_H^t(z)\in B_r$ if $t\ge T_r$. Therefore, for each $z\in\{z^\pm(t):t\in\mathbb{R}\}$ there exists a decomposition $T_z\mathbb{R}^{2n}=T_z\Pi\oplus T_zN^+\oplus T_zN^-$ and $c_1\ge 1$ such that
\begin{equation*}
\begin{aligned}
c_1^{-1}e^{-(\lambda_1-cr)(t-T_r)}<\frac {\|d\Phi^t_H(z)v\|}{\|v\|}&<c_1e^{(\lambda_1+cr)(t-T_r)},\qquad &&\forall\ v\in T_z\Pi,\\
\frac{\|d\Phi_{H}^{t}(z)v\|}{\|v\|}&\ge c_1e^{(\lambda_2-cr)(t-T_r)}, &&\forall\ v\in T_zN^+,\\
\frac{\|d\Phi_{H}^{t}(z)v\|}{\|v\|}&\le c_1^{-1}e^{-(\lambda_2-cr)(t-T_r)}, &&\forall\ v\in T_zN^-.
\end{aligned}
\end{equation*}
Reader can refer to the proof of formula (\ref{eq2.12}) for details. With Theorem \ref{theo6.1} and \ref{pro6.4}, the whole proof of Theorem \ref{mainresult} is completed.

\noindent{\bf Remark}. The homoclinic orbits $\{z^\pm_1(t),\cdots,z^\pm_k(t)\}$ are not required to be all different, it is possible that some $z^\pm_i(t)$ is multiply counted, e.g. some $i_1,\cdots i_m\ne i$ such that $z^\pm_{i_1}(t)=\cdots=z^\pm_{i_m}(t)=z^\pm_i(t)$. Although  $\Pi$ is multiply folded along $z^\pm_i(t)$ in this case, it follows from \eqref{daoshufork} that $T_{\tilde z}\tilde\Pi=T_{\tilde z'}\tilde\Pi$ if $\pi_h\tilde z=\pi_h\tilde z'\in\cup_{t\in\mathbb{R}}z^\pm_i(t)$.

\section{The fundamental matrix and its eigenvectors}\label{sec.7}
This section is denoted to prove Proposition \ref{pro6.2} and \ref{pro6.6}. To this end, we study the variational equation along an orbit of $\Phi_H^t$ that starts from the section $\Sigma^+_r=\{v_1=r\}$, remains in $B_r(0)$ until it arrives the section $\Sigma^-_r=\{u_1=r\}$ after a time $t_E$.
\begin{equation}\label{equation7.1}
\dot\xi_z=(\mathrm{diag}\{\Lambda,-\Lambda\}+J\partial^2P(z(t)))\xi_z.
\end{equation}
We assume $H$ is in the Birkhoff normal form $H=\sum_{i=1}^n\lambda_iu_iv_i+N+R$ with
\begin{equation}\label{birkhoffnormalform}
N=N_\kappa(I_1,\cdots,I_n),\quad R=O(\|(u,v)\|^{2\kappa+2})
\end{equation}
where $I_i=u_iv_i$, $N_\kappa$ is a polynomial of degree $\kappa$ without constant and linear part, the integer $\kappa$ is chosen so that $(\kappa-1)\lambda_1>\lambda_n$.

Instead of studying the variational equation along the orbit which is from $\{v_1=r\}$ to $\{u_1=r\}$ in one step for small $E>0$, we study the equation in two steps, from the section $\{v_1=r\}$ to $\{v_1=u_1\}$ first, then to $\{u_1=r\}$. For small $E<0$, we study the equation also in two steps, from $\{v_1=r\}$ to $\{v_1=-u_1\}$ first, then to $\{u_1=-r\}$. Let $\Lambda=\mathrm{diag}\{\lambda_1,\lambda_2,\cdots,\lambda_n\}$ and $I$ denote the identity matrix. 

\begin{lem}\label{lemmaforvariationalequation}
For $E>0$, let $z^+_E(t)=(u^+_E(t),v^+_E(t))$ be the periodic orbit that starts from the section $\{v_1=r\}$ at $t=0$, remains in $\{|z|\le r\}$ before it arrives the section $\{u_1=v_1\}$ after a time $\tau_E\approx\frac{1}{2\lambda_1}\ln\frac 1E$ bounded by $($\ref{eq2.9}$)$. Then, the variational equation along the orbit $z^+_{E}(t)|_{[0,t']}$ or along $z_{-E}(t)|_{[0,t']}$ with $t'\le \tau_{E}$ takes the form
\begin{equation}\label{variational-equation-for-Birkhoff-normal-form}
(\dot\xi_u,\dot\xi_v)=[\mathrm{diag}\{\Lambda,-\Lambda\}+r^2(B'(t)+B''(t))](\xi_u,\xi_v)
\end{equation}
where $B'$ and $B''$ satisfy the conditions

1, let $e^{-\Lambda_Et}=\mathrm{diag} \{e^{-\lambda_{1,E}t},\cdots,e^{-\lambda_{n,E}t}\}$ where $\lambda_{i,E}=\lambda_i-\nu r\sqrt{|E|}$ with $\nu>0$, the matrix $B'$ takes the form
$$
B'=\left[\begin{matrix} \sqrt{|E|}I & 0\\0 & e^{-\Lambda_Et}\end{matrix}\right]B'_0(t)\left[\begin{matrix} e^{-\Lambda_Et} & 0\\0 & \sqrt{|E|}I\end{matrix}\right]
$$
all elements of $B'_0(t)$ are bounded by some $\nu>0$;

2, all elements of $B''$ are bounded by $\nu r^{2\kappa-2}e^{-2\kappa\lambda_{1,E}t}$.

The properties also hold for the variational equation along the orbit $z_{-E}(t)$ that starts from the section $\{v_1=r\}$ at $t=0$, remains in $\{|z|\le r\}$ before it arrives the section $\{u_1=-v_1\}$ after a time $\tau_E$.
\end{lem}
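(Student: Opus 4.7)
The plan is to read off the structure of the variational equation directly from the identity $J\partial^2 H = \mathrm{diag}\{\Lambda,-\Lambda\} + J\partial^2 N + J\partial^2 R$, then bound the two perturbation pieces along the orbit. The diagonal piece from $H_0 = \sum\lambda_i u_iv_i$ is already in place, so the task reduces to recognising $J\partial^2 N$ as $r^2 B'(t)$ with the claimed block structure, and $J\partial^2 R$ as $r^2 B''(t)$ with the claimed decay bound.

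For $B'$, I first compute the Hessian of $N = N(I_1,\dots,I_n)$ by the chain rule using $I_i = u_iv_i$. Writing $N_i = \partial N/\partial I_i$ and $N_{ij} = \partial^2 N/\partial I_i\partial I_j$, one gets
\begin{equation*}
\partial^2_{u_iu_j}N = N_{ij}(I)v_iv_j,\qquad \partial^2_{v_iv_j}N = N_{ij}(I)u_iu_j,\qquad \partial^2_{u_iv_j}N = \delta_{ij}N_i(I) + N_{ij}(I)u_jv_i,
\end{equation*}
with $N_i(I) = O(|I|)$ and $N_{ij}(I) = O(1)$ because $N$ is a polynomial of degree $\ge 2$ in the $I$'s with no constant or linear part. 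Each of the four blocks of $J\partial^2 N$ then carries a precise combination of the coordinate factors $u_i,v_i$, $N_i(I)$, and $N_{ij}(I)$.

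The second step is to estimate $u_i(t),v_i(t),I_i(t)$ along the orbit on $[0,\tau_E]$. Because $N$ depends only on $I$, the Birkhoff normal-form flow preserves every $I_i$ exactly, and the perturbation caused by $R=O(\|z\|^{2\kappa+2})$ drifts each $I_i$ by at most $O(r^{2\kappa-1})$ per unit time, negligible over an interval of length $\tau_E\sim\tfrac1{2\lambda_1}\ln\tfrac1{|E|}$. The initial condition $v_1(0)=r$ together with $H=E$ fixes $I_1(t)\approx E/\lambda_1$, so that $v_1(t)\sim re^{-\lambda_{1,E}t}$ and $u_1(t)\sim\tfrac{E}{\lambda_1 r}e^{\lambda_{1,E}t}\le c\sqrt{|E|}$; the shift $\lambda_{1,E}=\lambda_1-\nu r\sqrt{|E|}$ absorbs the nonlinear correction $N_1(I)$ together with the residual $R$-drift. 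For $j\ge 2$, confinement of the periodic orbit in $B_r$ over $[0,2\tau_E]$ forces $|u_j(0)|\le cre^{-2\lambda_j\tau_E}$ and $|v_j(0)|\le cr$, giving $|v_j(t)|\le cre^{-\lambda_{j,E}t}$, $|u_j(t)|\le c\sqrt{|E|}$, and $|I_j|\le c|E|$. In particular $\|z(t)\|\le cre^{-\lambda_{1,E}t}$ throughout $[0,\tau_E]$. Plugging these bounds into the four blocks from step one and pulling out the common $r^2$ factor reproduces the sandwich $\mathrm{diag}(\sqrt{|E|}I,e^{-\Lambda_Et})B'_0(t)\mathrm{diag}(e^{-\Lambda_Et},\sqrt{|E|}I)$: the $v_iv_j$ factor supplies the outer $e^{-\Lambda_Et}$ in the bottom-left block, $u_iu_j$ supplies the outer $\sqrt{|E|}I$ in the top-right, and $u_jv_i$ with the diagonal $\delta_{ij}N_i(I)$ gives the mixed $\sqrt{|E|}\cdot e^{-\Lambda_Et}$ form of the top-left and bottom-right blocks, with $B'_0$ having all entries uniformly bounded by $\nu$. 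For $B''$, $|\partial^2 R(z(t))|\le C\|z(t)\|^{2\kappa}\le Cr^{2\kappa}e^{-2\kappa\lambda_{1,E}t}$ gives, after dividing by $r^2$, the bound $|B''(t)|\le\nu r^{2\kappa-2}e^{-2\kappa\lambda_{1,E}t}$. The analogous claim for $z_{-E}(t)$ reaching $\{u_1=-v_1\}$ follows verbatim: the sign of $I_1=u_1v_1$ flips but all magnitudes and decay rates are unchanged.

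The principal technical obstacle is justifying the shifted exponential decay $\lambda_{i,E}=\lambda_i-\nu r\sqrt{|E|}$ uniformly over the interval $[0,\tau_E]$ whose length diverges as $E\to 0$. This calls for a Gr\"onwall-type bootstrap: assume a priori a weaker decay rate for $v_i(t)$, use it to control the drift of $I_i$ under the full (not Birkhoff) flow, and close the estimate by showing that the effective correction to the exponent along the orbit is bounded by $\nu r\sqrt{|E|}$. The constant $\nu$ is then chosen so as to simultaneously dominate the entries of $B'_0$ and the modification of the exponents.
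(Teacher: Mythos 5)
Your computation of $J\partial^2 N$ matches the paper's exactly: their $b_{ij}=\partial^2 N/\partial I_i\partial I_j$ and $b_i=I_i^{-1}\partial N/\partial I_i$ are your $N_{ij}$ and $I_i^{-1}N_i$, and the factorisation of $A'$ through diagonal $u$- and $v$-matrices is the same, so your route is the paper's.

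The gap is that you stop short of the step that actually proves the lemma: you flag, but do not carry out, the Gr\"onwall bootstrap that upgrades the crude decay rate $\lambda_1-cr$ (inherited from \eqref{eq2.8}, \eqref{eq2.9}, \eqref{position}) to the sharpened rates $\lambda_{i,E}=\lambda_i-\nu r\sqrt{|E|}$, uniformly in $i$ and over $[0,\tau_E]$. The paper's execution is short but essential: along the orbit one has $|\partial_{I_i}N|\le\nu r\sqrt{|E|}$ and $|\partial_{u_i}R|\le\nu r^{2\kappa}e^{-(2\kappa+1)(\lambda_1-cr)t}$, so the equation $\dot v_{E,i}=-(\lambda_i+\partial_{I_i}N)v_{E,i}-\partial_{u_i}R$ together with Gr\"onwall and the hypothesis $\kappa\lambda_1>\lambda_n$ absorbs the forcing term and gives $|v_{E,i}(t)|\le(|v_{E,i}(0)|+\nu r^{2\kappa})e^{-\lambda_{i,E}t}$. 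This chain is precisely what supplies the $e^{-\Lambda_E t}$ factors in the $B'$-sandwich and the improved decay of $\|z(t)\|$ that your $B''$-bound uses; as written your $B''$ estimate invokes $\|z(t)\|\le cre^{-\lambda_{1,E}t}$, which is the output of the unexecuted bootstrap, so it is circular. The distinction is not cosmetic: over an interval of length $\sim\ln\frac1{|E|}$ the discrepancy between $\lambda_1-cr$ and $\lambda_{1,E}$ costs a diverging factor of order $|E|^{-cr/2\lambda_1}$.

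A secondary issue is that the initial bounds are asserted heuristically. The inequalities ``$|u_j(0)|\le cre^{-2\lambda_j\tau_E}$ by confinement'' and ``$I_1\approx E/\lambda_1$ from $v_1(0)=r$ and $H=E$'' are derived in the paper from the Hartman--Grobman conjugacy estimates of Section~2 and must be invoked rather than appealed to informally. Once those inputs and the Gr\"onwall step are in place, the rest of your sketch is sound.
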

\begin{proof}
In the Birkhoff normal form, the variational equation takes the form
\begin{equation}\label{variationalequation}
\left[\begin{matrix}\dot\xi_u\\
\dot\xi_v
\end{matrix}\right]=(\mathrm{diag}\{\Lambda,-\Lambda\}+A(t))\left[\begin{matrix}\xi_u\\ \xi_v
\end{matrix}\right]
\end{equation}
where the matrix $A$ admits a decomposition $A=A'+A''$, $A'$ is from the main part $N$ and $A''$ is from the remaining part $R$. Denoting by $a'_{ij}$ the element at the crossroad of the $i$-th row and $j$-th column of $A'$ we find that for $1\le i,j\le n$
$$
\begin{aligned}
&a'_{ij}=(b_{ij}+\delta_{ij}b_i)u_iv_j, \quad &&a'_{i(j+n)}=b_{ij}u_iu_j,\\
&a'_{(i+n)j}=b_{ij}v_iv_j, \quad &&a'_{(i+n)(j+n)}=(b_{ij}+\delta_{ij}b_i)v_iu_j,
\end{aligned}
$$
where $b_{ij}=\frac{\partial^2 N}{\partial I_i\partial I_j}$, $\delta_{ij}$ is the  Kronecker Delta and $b_{i}=I_i^{-1}\frac{\partial N}{\partial I_i}$. Therefore,  the matrix $A'$ admits the form
\begin{equation}\label{matrix-A'}
A'=\left[\begin{matrix} u & 0\\0 & v\end{matrix}\right] \left[\begin{matrix}\bar A_1+\bar A_0 & \bar A_1 \\
-\bar A_1 & -\bar A_1-\bar A_0\end{matrix}\right]\left[\begin{matrix} v & 0\\0 & u\end{matrix}\right]
\end{equation}
where $u=\mathrm{diag}\{u_1,\cdots,u_n\}$, $v=\mathrm{diag}\{v_1,\cdots,v_n\}$, $\bar A_1=\{b_{ij}\}_{1\le i,j\le n}$ is a matrix of order $n$ and $\bar A_0=\mathrm{diag}\{I_1^{-1}\frac{\partial N}{\partial I_1},\cdots,I_n^{-1}\frac{\partial N}{\partial I_n}\}$. Since $N$ does not have linear term, $b_i$ is bounded, so all elements of $\bar A_1$ and $\bar A_0$ are bounded.

Restricted to a segment of the periodic orbit $\Gamma_E=\{u_E(t),v_E(t):t\in[0, \tau_E]\}$ that starts from the section $\{v_1=r\}$ when $t=0$ and arrives the section $\{v_1=\pm u_1\}$ after a time $\tau_E$. For small $|E|>0$, it follows from \eqref{eq2.8}, \eqref{eq2.9} and \eqref{position} that $|\tau_E-\frac{1}{2\lambda_1}\ln\frac 1E|$ is uniformly bounded in $|E|$
\begin{equation}\label{Birkhoffcontrol1}
|v_E(t)|\le |v_E(0)|e^{-(\lambda_1-cr)t}, \quad |u_E(t)|\le |u_E(\tau_E)|\le cr\sqrt{|E|}, \quad \forall\ t\in[0,\tau_E],
\end{equation}
which leads to the estimate on the main part and the remainder of the Birkhoff normal form when they are restricted on $\Gamma_E$. According to (\ref{Birkhoffcontrol1}), some constant $\nu>0$ exists such that
\begin{equation}\label{Birkhoffcontrol2}
|\partial_{I_i}N|\le\nu r\sqrt{|E|},\quad
|\partial_{u_i}R(u_E(t),v_E(t))|\le\nu |v_E(0)|^{2\kappa}e^{-2\kappa(\lambda_1-cr)t}
\end{equation}
where the second inequality is got by applying the properties that $|\partial R(u(t),v(t))|=O(\|u(t),v(t)\|^{2\kappa+1})$ and $|v_E(t)|\ge |v_E(0)|e^{-(\lambda_1-cr)t}=|v_E(0)||E|^{\frac{\lambda_1-cr}{2\lambda_1}}\ge |v_E(0)|\sqrt{|E|}\ge c|u_E(t)|$ holds for $t\in[0,\tau_E]$. Applying the estimates \eqref{Birkhoffcontrol1} and \eqref{Birkhoffcontrol2} to the Hamiltonian equation associated with the Birkhoff normal form, we find
$$
\dot v_{E,i}\le -(\lambda_i-\nu r\sqrt{|E|})v_{E,i}+\nu r^{2\kappa}e^{-(2\kappa+1)(\lambda_1-cr)t}, \qquad \forall \, t\in[0,t'_E].
$$
It follows from a variant of Gr\"onwell inequality that $v_{E,i}(t)$ is bounded by
$$
v_{E,i}(t)\le e^{-\lambda_{i,E}t}\Big(v_{E,i}(0)+\nu r^{2\kappa}\int_{0}^{t} e^{(\lambda_{i,E}-(2\kappa+1)(\lambda_1-cr))s}ds\Big), \qquad \forall\ t\in[0,\tau_E].
$$
Since $|\tau_E-\frac{1}{2\lambda_1}\ln\frac 1E|$ is uniformly bounded in $E$, $\kappa\lambda_1>\lambda_n$ and $r>0$ is small, one has $e^{-(2\kappa+1)(\lambda_1-cr)t}< e^{-\lambda_{i,E}t}$ for $t\in[0,t'_E]$. Therefore, one has
\begin{equation}\label{Birkhoffcontrol3}
|v_{E,i}(t)|\le (|v_{E,i}(0)|+\nu r^{2\kappa})e^{-\lambda_{i,E}t}, \qquad \forall \ t\in[0,\tau_E],\ i=1,\cdots,n.
\end{equation}
Notice that $|u_i|$ and $|v_i|$ in \eqref{matrix-A'} are bounded by the second estimate in \eqref{Birkhoffcontrol1} and \eqref{Birkhoffcontrol3} respectively, and the elements of $A''$ are from the second derivative of the remainder $R$ of order $O(\|(u,v)\|^{2\kappa})$, $|v(t)|\ge|u(t)|$ holds for $t\in[0,\tau_E]$, i.e. the elements of $B''$ are bounded $\nu r^{2\kappa-2}e^{-2\kappa\lambda_{1,E}t}$.
\end{proof}

In the following, a matrix $M(t)$ is said to {\it be dominated} by another matrix $\bar M(t)$ if any element of $M$ is bounded by the corresponding element of $\bar M$, i.e.
$|m_{ij}(t)|\le \bar m_{ij}(t)$ holds for all $i,j\le n$. We denote the relation by $M(t)\prec\bar M(t)$, or $M\prec\bar M$ for short. Let $\mathbb{I}$ denote the matrix in which all elements are equal to $1$. By the notation, we have
\begin{equation}\label{dominate}
\begin{aligned}
B'(t)&\prec \nu\bar B'(t)=\nu\left[\begin{matrix} \sqrt{|E|}I & 0\\0 & e^{-\Lambda't}\end{matrix}\right] \mathbb{I}\left[\begin{matrix} e^{-\Lambda't} & 0\\0 & \sqrt{|E|}I\end{matrix}\right],\\
r^{-2\kappa+2}B''(t)&\prec\nu\bar B''(t)= \nu e^{-\lambda_{\kappa}t}\mathbb{I}
\end{aligned}
\end{equation}
where $e^{-\Lambda' t}=\mathrm{diag}\{e^{-\lambda'_1t}, \cdots,e^{-\lambda'_nt}\}$ and $\lambda_\kappa=2\kappa\lambda_{1,E}$. As $\lambda_{i,E}=\lambda_i-O(\sqrt{|E|})$, we are able to choose $\lambda'_i\le \lambda_{i,E}$ such that $3(\lambda_i-\lambda'_i)<\lambda_1$ holds for each $i\le n$.

To apply the relation (\ref{dominate}) to study the fundamental matrix $Z(t)$ of Equation (\ref{variational-equation-for-Birkhoff-normal-form}), by adopting the notation $e^{(\Lambda,-\Lambda)t}=\mathrm{diag}\{e^{\Lambda t},e^{-\Lambda t}\}$ and $e^{(-\Lambda,\Lambda)t}=\mathrm{diag}\{e^{-\Lambda t},e^{\Lambda t}\}$ we consider the matrices
\begin{equation}\label{MandNfort}
\begin{aligned}
N_t&=e^{(-\Lambda,\Lambda)t}\bar B'e^{(\Lambda,-\Lambda)t}\\
&=\left[\begin{matrix}\sqrt{|E|}e^{-\Lambda t}\mathbb{I}e^{(\Lambda-\Lambda')t} &
|E|e^{-\Lambda t}\mathbb{I}e^{-\Lambda t}\\
e^{(\Lambda-\Lambda')t}\mathbb{I}e^{(\Lambda-\Lambda')t} & \sqrt{|E|}e^{(\Lambda-\Lambda')t}\mathbb{I}e^{-\Lambda t}
\end{matrix}\right],\\
M_t&=e^{(-\Lambda,\Lambda)t}\bar B''e^{(\Lambda,-\Lambda)t}
=e^{-\lambda_\kappa t}\left[\begin{matrix}e^{-\Lambda t}\mathbb{I}e^{\Lambda t} &
e^{-\Lambda t}\mathbb{I}e^{-\Lambda t}\\
e^{\Lambda t}\mathbb{I}e^{\Lambda t} & e^{\Lambda t}\mathbb{I}e^{-\Lambda t}
\end{matrix}\right]
\end{aligned}
\end{equation}
Let $N_{t,ij}$, $M_{t,ij}$ denote the element of $N_t$, $M_t$ at the crossroad at the $i$-th row and the $j$-th column respectively, then for $1\le i,j\le n$ we have
\begin{equation}\label{element-N}
\begin{aligned}
&N_{t,ij}=\sqrt{|E|}e^{-(\lambda_i+\lambda'_j-\lambda_j)t},\quad
&&N_{t,i(j+n)}=|E|e^{-(\lambda_i+\lambda_j)t},\\
&N_{t,(i+n)j}=e^{(\lambda_i-\lambda'_i+\lambda_j-\lambda'_j)t},\quad
&&N_{t,(i+n)(j+n)}=\sqrt{|E|}e^{-(\lambda_j+\lambda'_i-\lambda_i)t}.\\
&M_{t,ij}=e^{-(\lambda_i-\lambda_j+\lambda_\kappa)t},\quad &&M_{t,i(j+n)}=e^{-(\lambda_i+\lambda_j+\lambda_\kappa)t},\\
&M_{t,(i+n)j}=e^{-(\lambda_\kappa-\lambda_i-\lambda_j)t},\quad &&M_{t,(i+n)(j+n)}=e^{-(\lambda_\kappa-\lambda_i+\lambda_j)t}.
\end{aligned}
\end{equation}
Since $\kappa\lambda_1>2\lambda_n$ is assumed, all elements of $M_t$ are smaller than 1 for $t>0$.

\begin{lem}\label{lem7.2}
Let $Z(t)$ with $Z(0)=I_{2n}$ be the fundamental matrix of the variational equation $($\ref{variational-equation-for-Birkhoff-normal-form}$)$ which satisfies the conditions listed in Lemma \ref{lemmaforvariationalequation}. Then, some constant matrices $D_0,D_1$ exist such that for $t\in [0,\tau_E]$
\begin{equation}\label{fundamentalM}
Z(t)-e^{(\Lambda,-\Lambda)t}\prec r^2e^{(\Lambda,-\Lambda)t}(N_tD_0+D_1).
\end{equation}
\end{lem}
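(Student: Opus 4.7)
The approach is variation of parameters. Write $D=\mathrm{diag}\{\Lambda,-\Lambda\}$ and $Z(t)=e^{Dt}Y(t)$, so $Y(0)=I_{2n}$ satisfies
\begin{equation*}
\dot Y = Q(t)Y,\qquad Q(t)=r^2 e^{-Dt}(B'(t)+B''(t))e^{Dt}.
\end{equation*}
The bounds on $B', B''$ from Lemma \ref{lemmaforvariationalequation} combined with the definitions \eqref{MandNfort} (and the fact that conjugation by the positive diagonal matrices $e^{\pm Dt}$ preserves componentwise inequalities) give the estimate $|Q(t)|\prec\nu r^2(N_t+r^{2\kappa-2}M_t)$. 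Because $e^{Dt}$ is diagonal with positive entries, the claim of the lemma is equivalent to proving $Y(t)-I\prec r^2(N_tD_0+D_1)$ for some $t$-independent matrices $D_0,D_1$ with non-negative entries.

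The computational core is the explicit entrywise integration of $N_s$ and $M_s$. From \eqref{element-N}, each $(N_s)_{ij}$ is a single exponential $c_{ij}e^{\alpha_{ij}s}$, with $c_{ij}$ a power of $\sqrt{|E|}$ and $\alpha_{ij}$ an explicit linear combination of the $\lambda_k,\lambda'_k$. Choosing the auxiliary rates $\lambda'_k$ close to $\lambda_{E,k}$ while satisfying $3(\lambda_k-\lambda'_k)<\lambda_1$ keeps every $|\alpha_{ij}|$ bounded from below by some $\underline\alpha>0$, and direct integration gives
\begin{equation*}
\Bigl|\int_0^t (N_s)_{ij}\,ds\Bigr|=\frac{|(N_t)_{ij}-c_{ij}|}{|\alpha_{ij}|}\le\underline\alpha^{-1}(N_t)_{ij}+\underline\alpha^{-1}|c_{ij}|.
\end{equation*}
Taking $D_0=\underline\alpha^{-1}I_{2n}$ yields $(N_tD_0)_{ij}=\underline\alpha^{-1}(N_t)_{ij}$, and the constant remainder contributes to $D_1$. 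For the $M_s$ part, the assumption $(\kappa-1)\lambda_1>\lambda_n$ forces $\lambda_\kappa>2\lambda_n$, so every exponent in the entries of $M_s$ from \eqref{element-N} is strictly negative; hence $\int_0^t M_s\,ds$ is dominated entrywise by a constant matrix, which is also absorbed into $D_1$. Thus $\int_0^t|Q(s)|\,ds\prec r^2(N_tD_0+D_1)$.

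To close the argument, one rewrites $\dot Y=QY$ as the integral equation $Y-I=\int_0^t Q(s)Y(s)\,ds$, and uses a standard Gr\"onwall bound together with the componentwise estimate on $Q$ to show that $\|Y(t)\|\le C$ uniformly on $[0,\tau_E]$ for sufficiently small $r$ (the higher-order Picard iterates form a geometric series whose tail is absorbed into $D_1$). Substituting this uniform bound back into the integral equation and applying the entrywise identities above gives $|Y(t)-I|_{ij}\le Cr^2(N_tD_0+D_1)_{ij}$; absorbing the constant $C$ into enlarged $D_0, D_1$ produces the stated form of the bound, and multiplying on the left by $e^{Dt}$ recovers the estimate for $Z(t)-e^{Dt}$. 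The main obstacle is the poor interaction between the componentwise relation $\prec$ and genuine matrix multiplication: the formula $(\int_0^t N_s\,ds)_{ij}=\alpha_{ij}^{-1}((N_t)_{ij}-c_{ij})$ has a single exponential per entry, whereas $(N_tD_0)_{ij}=\sum_k (N_t)_{ik}(D_0)_{kj}$ is a sum over $k$ of different exponentials. The resolution---taking $D_0$ proportional to the identity rather than a more refined diagonal matrix---is precisely what makes the componentwise bound close up cleanly, and it is the reason for the particular form of the estimate \eqref{fundamentalM} in the statement.
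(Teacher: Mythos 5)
Your proof hinges on the claim that a standard Gr\"onwall bound gives $\|Y(t)\|\le C$ uniformly on $[0,\tau_E]$, and this step fails. From \eqref{element-N} the lower-left entries $(N_s)_{(i+n)j}=e^{(\lambda_i-\lambda'_i+\lambda_j-\lambda'_j)s}$ grow in $s$ with a positive exponent; for the constant $\sigma$ (equivalently your $\underline\alpha^{-1}$) to remain bounded as $E\to0$, the rates $\lambda_i-\lambda'_i$ must be bounded away from zero, so $\max_{ij}(N_s)_{ij}\ge e^{cs}$ with $c>0$ independent of $E$. Consequently $\int_0^{\tau_E}\|Q(s)\|\,ds\ge c'r^2e^{c\tau_E}\approx c'r^2|E|^{-c/2\lambda_1}\to\infty$, so the Gr\"onwall exponent blows up for every fixed $r$ as $E\to0$. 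This is not an artifact of a crude norm: $Y(\tau_E)=e^{-D\tau_E}Z(\tau_E)$ really is unbounded, since even the target estimate \eqref{fundamentalM} allows the lower-left entries of $Y(\tau_E)-I$ to be of size $r^2(N_{\tau_E})_{(i+n)j}$, a negative power of $|E|$. The uniform bound you invoke is therefore false and the ``substitute back once'' step cannot be run.

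There is a second, independent problem in the shape of $D_0$. Even if $\|Y\|\le C$ were available, feeding it into $Y-I=\int_0^tQ(s)Y(s)\,ds$ yields
$$|(Y(t)-I)_{ij}|\le C\int_0^t\sum_k|Q_{ik}(s)|\,ds\le C\nu r^2\sigma\Big(\sum_k(N_t)_{ik}+\mathrm{const}\Big),$$
and the row sum $\sum_k(N_t)_{ik}$ equals $(N_t\mathbb{I})_{ij}$ for every $j$, not $(N_t)_{ij}$. Thus $D_0$ must contain a multiple of the all-ones matrix $\mathbb{I}$, which is exactly what the paper's $D_0=\sigma\nu I_{2n}+\sum_{\ell\ge2}(\nu\sigma)^\ell r^{2\ell-2}\prod_{j=2}^\ell\nu_j\,\mathbb{I}^{\ell-1}$ encodes. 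Your closing remark, that taking $D_0\propto I_{2n}$ is ``precisely what makes the componentwise bound close up cleanly,'' inverts the actual situation: with $D_0\propto I_{2n}$ the bound does not close.

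The paper avoids both obstacles by expanding $Z=\sum_{\ell\ge0}\epsilon^\ell Z_\ell$ with $\epsilon=\nu r^2$ and constructing, by induction, an explicit entrywise dominating sequence $\bar Z_\ell=(\prod_{j=2}^\ell\nu_j)\sigma^\ell e^{(\Lambda,-\Lambda)t}(N_t+\mu_\ell\mathbb{I})\mathbb{I}^{\ell-1}$. The structural inputs that make this geometrically small in $r$ despite the blow-up of the naive operator norms are the domination relations $N_t^2\prec 2n\sqrt{|E|}N_t$ and $M_tN_t\prec 2ne^{-t/\sigma}\mathbb{I}$, which a scalar Gr\"onwall estimate discards entirely. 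This structured iteration, not a one-step refinement of a uniform bound, is what your proposal is missing.
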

\begin{proof} Treating $\epsilon=\nu r^2$ as a small parameter, we develop the fundamental matrix into a series of $\epsilon$
\begin{equation}\label{seriesforZ}
Z(t)=\sum_{\ell=0}^\infty \epsilon^\ell Z_\ell(t).
\end{equation}
Substituting $z$ in Equation \eqref{variational-equation-for-Birkhoff-normal-form}) with \eqref{seriesforZ}, one obtains a series of linear equations
$$
\begin{aligned}
\dot Z_0(t)&=\mathrm{diag}\{\Lambda,-\Lambda\} Z_0(t);\\
\dot Z_\ell(t)&=\mathrm{diag}\{\Lambda,-\Lambda\} Z_\ell(t)+(B'(t)+B''(t))Z_{\ell-1}(t),\qquad \ell=1,2,\cdots.
\end{aligned}
$$
Hence, we have $Z_0(t)=e^{(\Lambda,-\Lambda)t}$ and
\begin{equation}\label{inductive1}
Z_\ell(t)=\frac{1}{\nu}e^{(\Lambda,-\Lambda)t}\int_0^te^{(-\Lambda,\Lambda)s} (B'(s)+B''(s))Z_{\ell-1}(s)ds,\quad \forall\,\ell\ge 1.
\end{equation}
For $\ell=1$, we decompose $Z_1(t)=Z'_1(t)+Z''_1(t)$ where
$$
\begin{aligned}
Z'_1(t)&=\frac{1}{\nu}e^{(\Lambda,-\Lambda)t}\int_0^te^{(-\Lambda,\Lambda)s}B'(s) e^{(\Lambda,-\Lambda)s}ds
\prec e^{(\Lambda,-\Lambda)t}\int_0^tN_sds,\\
Z''_1(t)&=\frac{r^{2k}}{\nu}e^{(\Lambda,-\Lambda)t}\int_0^te^{(-\Lambda,\Lambda)s} B''(s)e^{(\Lambda,-\Lambda)s}ds
\prec r^{2k-2}e^{(\Lambda,-\Lambda)t}\int_0^tM_sds.
\end{aligned}
$$
Let $\sigma=\max_{1\le i,j\le n} \{|\lambda_i-\lambda'_i+\lambda_j-\lambda'_j|^{-1}, |\lambda_i-\lambda_j+\lambda'_j|^{-1}, |\lambda_\kappa-\lambda_i-\lambda_j|^{-1},(\lambda_i+\lambda_j)^{-1}\}$. Since all elements of $N_t$ and of $M_t$ are exponential function, in particular, the elements of $M_t$ have negative exponents guaranteed by $\lambda_\kappa>\max_{1\le i,j\le n}\{\lambda_i+\lambda_j\}$, we find from \eqref{element-N} that
$$
\int_0^tN_sds\prec \sigma(N_t+\mathbb{I}),\qquad \int_0^tM_sds\prec \sigma(\mathbb{I}-M_t)
$$
Let $\mu_1=1+r^{2\kappa-2}$, we have
\begin{equation}\label{k=1}
\begin{aligned}
Z_1(t)&\prec\sigma e^{(\Lambda,-\Lambda)t}(N_t+\mathbb{I}+r^{2\kappa-2} (\mathbb{I}-M_t))\\
&\prec\sigma e^{(\Lambda,-\Lambda)t}(N_t+\mu_1\mathbb{I})=\bar Z_1.
\end{aligned}
\end{equation}
To get the dominating matrix $\bar Z_2\succ Z_2$, we apply \eqref{inductive1} and \eqref{k=1} while being aware of \eqref{MandNfort}
\begin{equation}\label{barZ-2}
\begin{aligned}
Z_2(t)=&\frac{1}{\nu}e^{(\Lambda,-\Lambda)t}\int_0^te^{(-\Lambda,\Lambda)s} (B'(s)+B''(s))Z_1(s)ds\\
\prec&\sigma e^{(\Lambda,-\Lambda)t}\int_0^te^{(-\Lambda,\Lambda)s}(\bar B'(s)+r^{2\kappa-2}\bar B''(s))\bar Z_{1}(s)ds\\
\prec&\sigma e^{(\Lambda,-\Lambda)t}\int_0^t(N_s+r^{2\kappa-2}M_s)(N_s+\mu_1\mathbb{I})ds.
\end{aligned}
\end{equation}
Writing $N_t$ in a block matrix, we get from \eqref{MandNfort} that for $t\ge 0$
$$
\begin{aligned}
N_t^2=&2\sqrt{|E|}\left[\begin{matrix}\sqrt{|E|}e^{-\Lambda t}\mathbb{I}e^{-\Lambda't}\mathbb{I}e^{(\Lambda-\Lambda')t} &
|E|e^{-\Lambda t}\mathbb{I}e^{-\Lambda't}\mathbb{I}e^{-\Lambda t}\\
e^{(\Lambda-\Lambda')t}\mathbb{I}e^{-\Lambda't}\mathbb{I}e^{(\Lambda-\Lambda')t} & \sqrt{|E|}e^{(\Lambda-\Lambda')t}\mathbb{I}e^{-\Lambda't}\mathbb{I}e^{-\Lambda t}
\end{matrix}\right]\\
\prec&2n\sqrt{|E|}N_t.
\end{aligned}
$$
Each element of $M_tN_t$ is a sum of $2n$ exponential function in $t$. Notice $\kappa\lambda_1>\lambda_n$. We derive from \eqref{MandNfort} that all of the functions have negative exponent $M_tN_t\prec 2ne^{-t/\sigma}\mathbb{I}$. Since each element of $M_t$ is also an exponential function in $t$ with negative exponent, we obtain from \eqref{barZ-2} that
\begin{equation*}\label{barZ-2-2}
\begin{aligned}
Z_2(t)\prec& \sigma e^{(\Lambda,-\Lambda)t}\int_0^tN_s(\mu_1I+2n\sqrt{|E|}\mathbb{I})ds\\
&+\sigma e^{(\Lambda,-\Lambda)t}\int_0^t\mu_1r^{2k-2} M_s\mathbb{I}+2nr^{2k-2}e^{-t/\sigma} \mathbb{I}ds\\
\prec& \sigma e^{(\Lambda,-\Lambda)t}\int_0^tN_s(\mu_1+2n\sqrt{|E|}) \mathbb{I}ds\\
&+\sigma e^{(\Lambda,-\Lambda)t}\int_0^t(1+\mu_1)2nr^{2k-2}e^{-t/\sigma} \mathbb{I}ds\\
\prec&\nu_2\sigma^2e^{(\Lambda,-\Lambda)t}(N_t+\mu_2\mathbb{I}) \mathbb{I}=\bar Z_2(t)
\end{aligned}
\end{equation*}
where $\nu_2=\mu_1+2n\sqrt{|E|}$ and $\mu_2=1+\frac{1+\mu_1}{\mu_1+2n\sqrt{|E|}}r^{2\kappa-2}$. By induction, we have
\begin{equation}\label{barZ}
\bar Z_\ell(t)=(\Pi_{j=2}^\ell\nu_j)\sigma^\ell e^{(\Lambda,-\Lambda)t}(N_t+\mu_\ell\mathbb{I}) \mathbb{I}^{\ell-1}
\end{equation}
where $\mu_{j+1}=1+\frac{1+\mu_j}{\mu_j+2n\sqrt{|E|}}r^{2\kappa-2}$ and $\nu_{j+1}=\mu_j+2n\sqrt{|E|}$.
Let $\mu^*=1+2\sum_{\ell=1}^\infty r^{(2\kappa-1)\ell}$, then $\mu_\ell\le\mu^*$ and $\nu_\ell\le\mu^*+2n\sqrt{|E|}$ for all $\ell$. Since $\mathbb{I}^\ell=(2n)^{\ell-1}\mathbb{I}$, to make the series of matrices
$$
D_0=\sigma\nu I_{2n}+\sum_{\ell=2}^{\infty}(\nu\sigma)^\ell r^{2\ell-2} \prod_{j=2}^\ell\nu_j\mathbb{I}^{\ell-1}
$$
convergent we only need to set $r\le\frac1{4n(\mu^*+2n\sqrt{|E|})\nu\sigma}$. Let $D_1=\mu\mathbb{I}D_0$, we find $Z(t)-e^{(\Lambda,-\Lambda)t}\prec r^2e^{(\Lambda,-\Lambda)t}(N_tD_0+D_1)$.
\end{proof}

\begin{lem}\label{lem7.3new}
Let $z'_{ji}$ be the element of $Z(\tau_E)$ at the crossroad of $i$-th column and $j$-th row, some constant $\mu'_i\ge 1$ and $c>0$ exist such that
\begin{equation}\label{eq7.18}
\begin{aligned}
\mu'^{-1}_i|E|^{-\frac{\lambda_i}{2\lambda_1}}\le z'_{ii}\le&\mu'_i|E|^{-\frac{\lambda_i}{2\lambda_1}}, && i=1,\cdots,n,\\
|z'_{ji}|\le &o(r)\mu'_j|E|^{-\frac{\lambda_j}{2\lambda_1}}, \quad && 1\le j\le n,\ j\ne i,\\
|z'_{j(i+n)}|\le &o(r)\mu'_j|E|^{-\frac{\lambda_j}{2\lambda_1}},\quad && 1\le j\le n,\\
|z'_{(j+n)i}|\le& c|E|^{\frac 16}, \quad && 1\le j\le n,\\
|z'_{(j+n)(i+n)}|\le &c|E|^{\frac 16}, \quad && 1\le j\le n.
\end{aligned}
\end{equation}
\end{lem}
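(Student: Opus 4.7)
The plan is to extract each entry of $Z(\tau_E)$ directly from the elementwise bound
\[Z(t) - e^{(\Lambda,-\Lambda)t} \prec r^2\, e^{(\Lambda,-\Lambda)t}(N_t D_0 + D_1)\]
of Lemma~\ref{lem7.2}, specialised to $t = \tau_E$. Since $\tau_E = \frac{1}{2\lambda_1}\ln\frac{1}{|E|} + O(1)$ by \eqref{eq2.9}, every factor $e^{\pm\lambda_i \tau_E}$ is of order $|E|^{\mp\lambda_i/(2\lambda_1)}$ with multiplicative constants bounded uniformly in $E$. The decisive input is the condition $3(\lambda_i - \lambda'_i) < \lambda_1$ built into the choice of the auxiliary rates just after \eqref{dominate}, together with its consequence $\lambda'_j \ge 2\lambda_1/3$: combined, they force any exponent of the form $-\lambda'_j + (\lambda_k-\lambda'_k)$ to be at most $-\lambda_1/3$, which converts into the gain $|E|^{1/6}$ appearing in the last two lines of \eqref{eq7.18}.

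I begin by multiplying out $e^{(\Lambda,-\Lambda)t}N_t$ in block form; using $e^{-\Lambda t}e^{(\Lambda-\Lambda')t} = e^{-\Lambda' t}$ the four blocks collapse to $\sqrt{|E|}\,\mathbb{I}\,e^{(\Lambda-\Lambda')t}$, $|E|\,\mathbb{I}\,e^{-\Lambda t}$, $e^{-\Lambda' t}\,\mathbb{I}\,e^{(\Lambda-\Lambda')t}$, and $\sqrt{|E|}\,e^{-\Lambda' t}\,\mathbb{I}\,e^{-\Lambda t}$. At $t = \tau_E$ the entries of these four blocks are of orders $|E|^{1/3}$, $|E|$, $|E|^{1/6}$, $|E|^{1/2}$ respectively. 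Right-multiplication by the bounded constant matrix $D_0$ preserves these orders up to a fixed multiplicative constant, while $e^{(\Lambda,-\Lambda)\tau_E}D_1$, with $D_1 = \mu\mathbb{I}D_0$ bounded and constant, has its row $i$ equal to $e^{\lambda_i \tau_E}$ (respectively $e^{-\lambda_{i-n}\tau_E}$) times a bounded row, giving entries of order $|E|^{-\lambda_i/(2\lambda_1)}$ or $|E|^{\lambda_{i-n}/(2\lambda_1)}$ accordingly.

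The five estimates of \eqref{eq7.18} are then read off block by block. The diagonal entries $z'_{ii}$ with $i \le n$ have leading term $e^{\lambda_i\tau_E}$ and correction bounded by $r^2\cdot O(e^{\lambda_i\tau_E})$ from $D_1$ (the $N_{\tau_E}D_0$ contribution being strictly smaller), yielding the two-sided estimate with $\mu'_i = e^{O(1)}(1+O(r^2))$. For off-diagonal entries in the upper-left block, and for all entries in the upper-right block, the leading term vanishes and the correction is bounded by $r^2\cdot O(|E|^{-\lambda_j/(2\lambda_1)})$ from $D_1$; since $r^2 = o(r)$ as $r\to 0$, this gives the required $o(r)\,\mu'_j|E|^{-\lambda_j/(2\lambda_1)}$. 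For the lower-left block the leading term is zero and the bound is $r^2\cdot O(|E|^{1/6})$ from $N_{\tau_E}D_0$ plus $r^2\cdot O(|E|^{\lambda_j/(2\lambda_1)}) \le r^2\cdot O(|E|^{1/2})$ from $D_1$, both dominated by $c|E|^{1/6}$. For the lower-right block the leading term is $\delta_{ji}e^{-\lambda_j\tau_E} = O(|E|^{1/2})$ with strictly smaller corrections.

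The only delicate step is the exponent arithmetic for the lower-left block, where combining $\lambda'_j \ge 2\lambda_1/3$ with $\lambda_k - \lambda'_k < \lambda_1/3$ yields the crucial bound $-\lambda'_j + (\lambda_k - \lambda'_k) \le -\lambda_1/3$ uniformly in $k$; everything else is routine block multiplication and substitution into the bound of Lemma~\ref{lem7.2}.
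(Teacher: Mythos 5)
Your proof is correct and follows the same route as the paper: you specialise the dominance estimate of Lemma~\ref{lem7.2} at $t=\tau_E$, use $\lambda'_j>\tfrac23\lambda_1$ together with $\lambda_k-\lambda'_k<\tfrac13\lambda_1$ to bound all elements of $e^{(\Lambda,-\Lambda)\tau_E}N_{\tau_E}D_0$ by $c|E|^{1/6}$, and then read off the entries of $Z(\tau_E)$ block by block, which is exactly what the paper does (it writes out only the diagonal case and says ``the rest of the proof can be done similarly''). One tiny imprecision: in the lower-right block the correction $r^2\,O(|E|^{1/6})$ is not ``strictly smaller'' than the leading term $O(|E|^{1/2})$ for fixed $r$ and small $|E|$; it in fact dominates, but both are bounded by $c|E|^{1/6}$, so the stated conclusion is unaffected.
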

\begin{proof}
Since $\lambda'_i$ is set such that $\lambda_i-\lambda'_i<\frac 13\lambda_1$, all elements in the matrix
$$
e^{(\Lambda,-\Lambda)\tau_E}N_{\tau_E}D_0=\left[\begin{matrix} \sqrt{|E|}\mathbb{I}e^{(\Lambda- \Lambda')\tau_E} & |E|\mathbb{I}e^{-\Lambda \tau_E}\\
e^{-\Lambda' \tau_E}\mathbb{I}e^{(\Lambda-\Lambda')\tau_E} & \sqrt{|E|}e^{- \Lambda'\tau_E} \mathbb{I}e^{- \Lambda'\tau_E}
\end{matrix}\right]D_0,
$$
are bounded by $ce^{\frac13\lambda_1\tau_E}\approx c|E|^{\frac 16}$ because $|\tau_E-\frac{1}{2\lambda_1}\ln\frac{1}{|E|}|$ is bounded as $|E|\to 0$. In this case, some $c\ge 1$ exists such that $c^{-1}|E|^{-\lambda_i/2\lambda_1}\le e^{\lambda_i\tau_E}\le c|E|^{-\lambda_i/2\lambda_1}$. From \eqref{fundamentalM} we see that $z'_{ii}=(1+o(r))e^{\lambda_i\tau_E}$, it leads to the first inequality of \eqref{eq7.18}. The rest of the proof can be done similarly.
\end{proof}

By applying Lemma \ref{lem7.3new} we are able to calculate the differential $d\Phi_{r,0}$. Recall the inner map $\Phi_{r,0}$: $S_r\subset\{v_1=r\}\to\{u_1=v_1\}$ is defined by the flow $\Phi_H^t$. Emanating from a point $z\in S_r$, the orbit $\Phi_H^t(z)$ keeps close to the stable manifold until it arrives at $\{u_1=v_1\}$ after a time $t_z$, we define $\Phi_{r,0}(z)=\Phi_H^{t_z}(z)$.
\begin{lem}\label{lem7.4}
Let $\xi=(\xi_{u_1},\xi_{\hat u},0,\xi_{\hat v})\in T_zS_r$ be a tangent vector. If we write $d\Phi_H^{t_z}(z)\xi=(\xi'_{u_1},\xi'_{\hat u},\xi'_ {v_1},\xi'_{\hat v})$ and $X_H(\Phi_H^{t_z}(z))=(X_{u_1},X_{\hat u},X_{v_1},X_{\hat v})$, then
\begin{equation}\label{differentialmapofsection-1}
d\Phi_{r,0}(z)\xi=d\Phi_H^{t_z}(z)\xi-\frac{\xi'_{u_1}-\xi'_{v_1}}{X_{u_1}-X_{v_1}} X_H(\Phi_H^{t_z}(z)).
\end{equation}
\end{lem}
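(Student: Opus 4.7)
The plan is to mimic the proof of Lemma \ref{differentialmapofsection} almost verbatim, with the only substantive change being that the defining equation of the section $\{u_1 = r\}$ is replaced by that of $\{u_1 = v_1\}$, i.e.\ the linear functional $\pi_{u_1}$ is replaced by $\pi_{u_1} - \pi_{v_1}$. So the correction term along $X_H$ needs to cancel the $(u_1-v_1)$-component rather than the $u_1$-component of $d\Phi_H^{t_z}\xi$.

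Concretely, I would proceed as follows. Fix $z \in S_r$ and a nearby point $z' \in S_r$; let $t_z,t_{z'}$ be the respective hitting times with the section $\{u_1=v_1\}$. By continuous dependence of solutions of ODEs on initial data and the implicit function theorem (applied to the defining function $u_1 - v_1$ along the trajectory), $t_{z'}\to t_z$ as $z'\to z$, and $t_{z'}-t_z$ depends smoothly on $z'-z$. Then expand
$$
\Phi_H^{t_{z'}}(z')-\Phi_H^{t_z}(z) = X_H(\Phi_H^{t_z}(z))(t_{z'}-t_z) + d\Phi_H^{t_z}(z)(z'-z) + O(|z'-z|^2,|t_{z'}-t_z|^2),
$$
exactly as in the proof of Lemma \ref{differentialmapofsection}. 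Since both $\Phi_H^{t_{z'}}(z')$ and $\Phi_H^{t_z}(z)$ lie on $\{u_1=v_1\}$, the $(u_1-v_1)$-component of the left-hand side vanishes, which yields, to first order,
$$
(X_{u_1}-X_{v_1})(t_{z'}-t_z) + (\xi'_{u_1}-\xi'_{v_1}) = 0.
$$
Solving for $t_{z'}-t_z$ and substituting back gives the claimed formula (\ref{differentialmapofsection-1}).

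The only point that needs checking is that the denominator $X_{u_1}-X_{v_1}$ does not vanish at the endpoint $\Phi_H^{t_z}(z)$, i.e.\ that $X_H$ is transversal to the section $\{u_1=v_1\}$. From the Hamiltonian equations in $(u,v)$-coordinates plus the form \eqref{eq2.5} of $R$, one has $X_{u_1}=\lambda_1 u_1(1+O(r))$ and $X_{v_1}=-\lambda_1 v_1(1+O(r))$. At a point with $u_1=v_1$ (and $u_1>0$, since we are in the branch relevant to small $E\ge 0$), this gives $X_{u_1}-X_{v_1}=2\lambda_1 u_1(1+O(r))\ne 0$ for $r$ small, so the definition of $t_{z'}-t_z$ is unambiguous and the asserted formula is valid. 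This transversality remark is the only place where the argument differs in content from Lemma \ref{differentialmapofsection}, and it is essentially routine; I do not foresee a genuine obstacle.
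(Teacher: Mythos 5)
Your proposal is correct and follows essentially the same route as the paper: the paper likewise reuses the Taylor expansion from Lemma~\ref{differentialmapofsection} verbatim, imposes that the $(u_1-v_1)$-component of $\Phi_H^{t_{z'}}(z')-\Phi_H^{t_z}(z)$ vanishes, and solves for the time increment to obtain $\nu=\frac{\xi'_{u_1}-\xi'_{v_1}}{X_{u_1}-X_{v_1}}$. The paper stops at asserting transversality to $\{u_1=v_1\}$, so your explicit check $X_{u_1}-X_{v_1}=2\lambda_1 u_1(1+O(r))\ne 0$ at the crossing point is a small but worthwhile elaboration rather than a different argument.
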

\begin{proof} Emanating from the points $z,z'\in S_r$, the trajectories arrive at the section $\{u_1=v_1\}$ after the time $t_z$ and $t_{z'}$ respectively. One has $t_{z'}-t_z\to 0$ if $z'\to z$. We have the identity
$$
\begin{aligned}
\Phi_H^{t_{z'}}(z')-\Phi_H^{t_z}(z)=&\Phi_H^{t_{z'}}(z')-\Phi_H^{t_z}(z')+ \Phi_H^{t_z}(z')-\Phi_H^{t_z}(z)\\
=&\Phi_H^{t_{z'}-t_z}\Phi_H^{t_z}(z')-\Phi_H^{t_z}(z')+d\Phi_H^{t_z}(z)(z'-z) +O(|z'-z|^2))\\
=&X_H(\Phi_H^{t_z}(z'))(t_{z'}-t_z)+d\Phi_H^{t_z}(z)(z'-z)\\
&+O(|z'-z|^2,|t_{z'}-t_z|^2).
\end{aligned}
$$
It implies that $d\Phi_{r,0}(z)\xi=d\Phi_H^{t_z}(z)\xi-\nu X_H(\Phi_H^{t_z}(z))$ holds for some $\nu$. The property that the $u_1$-component of $\Phi_H^{t_{z'}}(z')-\Phi_H^{t_z}(z)$ is equal to its $v_1$-component requires $\nu=\frac{\xi'_{u_1}-\xi'_{v_1}}{X_{u_1}-X_{v_1}}$. We are aware that $X_{u_1}\ne X_{v_1}$ at the point $\Phi_H^{t_z}(z)$ because the orbit passes the section $\{u_1=v_1\}$ transversally.
\end{proof}

Treating each column $z'_i=\{z'_{ji}:j\le 2n\}$ of $Z(\tau_E)$ as a vector, we have  $z'_i=Z(\tau_E)e_i$. We set $z_i=z'_i-\frac{z'_{1i}-z'_{(n+1)i}}{X_{u_1}-X_{v_1}} X_H(\Phi_H^{\tau_E}(z))$ for $i=1,2,\cdots,2n$. It follows from Lemma \ref{lem7.4} that $z_1=z_{n+1}$. Because $T_zS_r=\mathrm{Span}\{e_1,\cdots,e_n,e_{n+2},\cdots e_{2n}\}$, we find
$$
\Psi=[z_1,z_2,\cdots,z_n,z_{n+2},\cdots,z_{2n}]
$$
which represents the differential $d\Phi_{r,0}$. For the periodic orbit lying on $H^{-1}(E)$, the property \eqref{position} guarantees
\begin{equation}\label{eq7.20}
\frac{|X_{\hat u}|}{|X_{u_1}-X_{v_1}|},\frac{|X_{\hat v}|}{|X_{u_1}-X_{v_1}|}\le cr^2|E|^\frac{\nu'}{2}.
\end{equation}
We obtain from \eqref{eq7.18} and \eqref{differentialmapofsection-1} that some $\{\mu_i=\mu_i(E)>0:i=1,2,\cdots,n\}$ exist such that $\inf_{E\ne 0}\mu_i(E)>0$
\begin{equation}\label{eq7.21}
\begin{aligned}
z_{ii}=&\mu_i|E|^{-\frac{\lambda_i}{2\lambda_1}}, && i=1,\cdots,n,\\
|z_{ji}|\le &o(r)\mu_j|E|^{-\frac{\lambda_j}{2\lambda_1}}, \quad && 1\le j\le n,\ j\ne i,\\
|z_{j(i+n)}|\le &o(r)\mu_j|E|^{-\frac{\lambda_j}{2\lambda_1}},\quad && 1\le j\le n,\\
|z_{(j+n)i}|\le &o(r)|E|^{-\frac{1-\nu'}{2}}, \quad && 1\le j\le n,\\
|z_{(j+n)(i+n)}|\le &o(r)|E|^{-\frac{1-\nu'}{2}}, \quad && 1\le j\le n.
\end{aligned}
\end{equation}
By the same method, we get the matrix $\Psi_E$ representing the differential $d\Phi_{E,r,0}$. To make $\xi_{1,i}\frac{\partial}{\partial u_1}+\frac{\partial}{\partial u_i},\xi_{1,i+n}\frac{\partial}{\partial u_1}+\frac{\partial}{\partial v_i}\in T_zS_{E,r}$ we find $\xi_{1,i}=-\frac{\lambda_iv_i}{\lambda_1r}(1+O(r))$ and $\xi_{1,i+n}=-\frac{\lambda_iu_i}{\lambda_1r}(1+O(r))$ since $z^+_{E,r}\in\{v_1=r\}$. In view of \eqref{Birkhoffcontrol3}, we find $|\xi_{1,i+n}|\le c|E|^{\frac{1-cr}{2}}$. Because the tangent space $T_zS_{E,r}=\mathrm{Span}\{ \xi_{1,i}\frac{\partial}{\partial u_1}+\frac{\partial}{\partial u_i},\xi_{1,i+n}\frac{\partial}{\partial u_1}+\frac{\partial}{\partial v_i}:i=2,\cdots,n\}$, we obtain the matrix $\Psi_E$ representing $d\Phi_{E,r,0}$ as follows
$$
\Psi_E=[z_{E,2},\cdots,z_{E,n},z_{E,n+2},\cdots,z_{E,2n}]
$$
where $z_{E,j}=z_j+\xi_{1,j}z_1$ and $z_{E,j+n}=z_j+\xi_{1,j+n}z_1$ for $j=2,\cdots,n$. Let $\imath\Psi_E$ be the matrix obtained from $\Psi_E$ by eliminating the $1$-st and the $(n+1)$-th row and let $\imath\Psi$ be the matrix obtained from $\Psi$ by eliminating the $(n+1)$-th row.

\begin{lem}\label{lem7.5}
The matrix $\imath\Psi_E$ has $n-1$ pairs of eigenvalues $\{\sigma_i|E|^{-\frac{\lambda_i}{2\lambda_1}}, \sigma^{-1}_i|E|^{\frac{\lambda_i} {2\lambda_1}}: 2\le i\le n\}$ associated with the eigenvectors $\{\xi_i=e_i+b_i,\xi_{i+n}=e_{i+n}+b_{i+n}\}$ respectively, where $\inf_E\sigma_i>0$, all elements of $b_i,b_{i+n}\in\mathbb{R}^{2(n-1)}$ are bounded by $o(r)$.

The matrix $\imath\Psi$ has $n$ large eigenvalues $\{\bar\sigma_i|E|^{-\frac{\lambda_i} {2\lambda_1}}: 1\le i\le n\}$ associated with the eigenvectors $\{\zeta_i=e_i+o(r)\}$ respectively. Other $n-1$ eigenvalues are not larger than $|E|^{-(1-v')/2}$, the first $n$ elements of their normalized eigenvector are of order $o(r)$.
\end{lem}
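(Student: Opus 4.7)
The plan is to exploit the diagonally dominant structure of the matrices provided by \eqref{eq7.21}: the upper-left blocks of $\imath\Psi_E$ and $\imath\Psi$ carry the large diagonal entries $a_i:=\mu_i|E|^{-\lambda_i/(2\lambda_1)}$, which are well-separated across distinct $i$ by the spectral gap $\lambda_1<\cdots<\lambda_n$. All other entries in those rows are bounded by $o(r)$ times the corresponding diagonal entry, while the lower-block rows carry entries bounded by $o(r)|E|^{-(1-\nu')/2}$, which is negligible compared to every $a_i$ since $\lambda_i/(2\lambda_1)\ge 1/2>(1-\nu')/2$.

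For $\imath\Psi_E$ and each $i\in\{2,\ldots,n\}$ I would seek a right eigenvector $\xi_i=e_i+b_i$, with the $i$-th coordinate of $b_i$ equal to zero, and an eigenvalue $\tau_i$ near $a_i$. Setting $A:=\imath\Psi_E$, the eigenvalue equation at row $j\ne i$ reads $(A_{jj}-\tau_i)(b_i)_j = A_{ji}+\sum_{k\ne i,j}A_{jk}(b_i)_k$, and I would solve this system by Banach contraction on the ball $\{\|b\|\le Cr\}$. The pivotal estimate $|(A_{jj}-\tau_i)^{-1}A_{ji}|=o(r)$ splits into three cases: if $j>i$ and $j\le n$, the denominator is of order $|A_{jj}|$ and absorbs $|A_{ji}|\le o(r)|A_{jj}|$ directly; if $j<i$ and $j\le n$, the denominator is of order $a_i$ and the ratio gains an extra factor $|E|^{(\lambda_i-\lambda_j)/(2\lambda_1)}=o(1)$; if $j>n$, the denominator is again of order $a_i$ and the ratio carries the small factor $|E|^{(\lambda_i-(1-\nu')\lambda_1)/(2\lambda_1)}$, positive since $\lambda_i\ge\lambda_1>(1-\nu')\lambda_1$. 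Inserting the fixed-point solution into the $i$-th row yields $\tau_i-a_i=o(r)a_i$, hence $\tau_i=\sigma_i a_i$ with $\sigma_i=\mu_i(1+o(r))$ bounded away from zero.

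The remaining $n-1$ eigenvalues come from symplecticity: Lemma \ref{map1.2} shows that $\hat\Phi_{E,r,r}$ preserves $d\hat u\wedge d\hat v$, so $\imath\Psi_E$ is symplectic and its spectrum is invariant under $\tau\mapsto\tau^{-1}$; the remaining eigenvalues are therefore exactly $\tau_i^{-1}=\sigma_i^{-1}|E|^{\lambda_i/(2\lambda_1)}$. To place the corresponding eigenvectors near $e_{i+n}$, I would rerun the contraction argument on $\imath\Psi_E^{-1}$, whose matrix representation inherits the analogous dominant-diagonal pattern with $\hat u$ and $\hat v$ interchanged; this follows from repeating the Birkhoff-normal-form analysis of Section \ref{sec.7} on the time-reversed flow, for which the roles of stable and unstable coordinates are swapped. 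The resulting eigenvectors have the required form $\xi_{i+n}=e_{i+n}+b_{i+n}$ with $|b_{i+n}|=o(r)$.

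The analysis of $\imath\Psi$ runs in parallel but with one structural difference: symplecticity is unavailable because $\Phi_{r,0}$ maps between distinct sections. The same fixed-point scheme, now applied to the full $n\times n$ upper block (including the index $i=1$, since $z_{11}=\mu_1|E|^{-1/2}$ also dominates), yields the $n$ large eigenvalues $\bar\sigma_i|E|^{-\lambda_i/(2\lambda_1)}$ with eigenvectors $\zeta_i=e_i+o(r)$. After deflating these, the induced map on the complementary $(n-1)$-dimensional subspace inherits the lower-block bounds of \eqref{eq7.21}, so its eigenvalues have size at most a constant times $|E|^{-(1-\nu')/2}$; and for each such eigenvector, the first $n$ coordinates satisfy a nondegenerate linear system whose coefficients are of order $o(r)$ relative to the $a_i$, forcing them to be $o(r)$ after normalisation. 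I expect the main technical obstacle to be the case analysis of the first contraction: the matrix fails to be diagonally dominant in the classical sense since $|A_{ji}|\gg|A_{ii}|$ when $j>i$, and converting the raw bounds of \eqref{eq7.21} into an $o(r)$ contraction rate relies on a case-specific use of the spectral gap together with the size estimates of Lemma \ref{lem7.3new}.
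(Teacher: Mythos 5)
Your proposal reaches the same conclusions by a genuinely different technical route, and both work. You construct the eigenvector perturbations $b_i$ by Banach contraction on the off-diagonal rows of the eigenvalue equation, using the row-dominance of \eqref{eq7.21} together with the spectral gap; the paper instead locates the large eigenvalues by a sign change of the characteristic polynomial on each band $[\tfrac12\mu_i|E|^{-\lambda_i/2\lambda_1},\tfrac32\mu_i|E|^{-\lambda_i/2\lambda_1}]$, pins down uniqueness via the symplectic pairing $\sigma\mapsto\sigma^{-1}$, and then reads off $|b_i|=o(r)$ by a row-by-row contradiction argument — essentially the fixed-point estimate you run, presented in reverse. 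The real divergence is in handling $\xi_{i+n}$. You propose to rerun the Birkhoff-normal-form variational analysis on the time-reversed flow to show $\imath\Psi_E^{-1}$ has the mirrored dominant-diagonal structure. That would work, but it is much heavier than necessary: the paper first bounds the first $n-1$ components of $\xi_{i+n}$ directly from the row-dominant equations (no new flow analysis), then uses symplectic orthogonality $\langle\xi_{n+i},J\xi_j\rangle=\sigma_i^{-1}\sigma_j|E|^{(\lambda_i-\lambda_j)/2\lambda_1}\langle\xi_{n+i},J\xi_j\rangle$ to kill the remaining off-diagonal $\hat v$-components. (If you do want to stay with your inverse-matrix framing, the cheap way to get the mirrored bounds on $\imath\Psi_E^{-1}$ is the identity $\imath\Psi_E^{-1}=-J\,\imath\Psi_E^{\,T}J$ from symplecticity, which transfers \eqref{eq7.21} immediately with $\hat u\leftrightarrow\hat v$ and transposition; no time-reversed normal form is needed.) For $\imath\Psi$ you speak of deflating the $n$ large directions, but without symplectic structure the complementary invariant subspace is not under control, which is exactly why the paper stops at the weaker claim that only the first $n$ components of $\zeta_{n+i}$ are $o(r)$; your proposed deflation would need to be made precise before it goes beyond what the paper asserts. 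Net assessment: the argument is sound where it matters, the symplectic shortcut would shorten it considerably, and the $\imath\Psi$ tail needs the same caveat the paper already acknowledges.
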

\begin{proof}
Since $z_{E,j}=z_j+\xi_{1,j}z_1$ and $z_{E,j+n}=z_j+\xi_{1,j+n}z_1$, we see from \eqref{eq7.21} that the diagonal element in the first $n-1$ rows of the matrix $\Psi_E$ is much larger then other elements in the same row for small $|E|$,
$$
\mu_i|E|^{-\lambda_i/2\lambda_1}+o(r)\xi_{1,i}\mu_1|E|^{-1/2}\gg o(r)(\mu_i|E|^{-\lambda_i/2\lambda_1}+\xi_{1,j}\mu_j|E|^{-1/2})
$$
holds for all $j\ne i$. To consider the characteristic polynomial $F(\sigma)$ of $\Psi_E$, we notice that for $\sigma\ge |E|^{-1/2}$, the diagonal element of $\Psi_E-\sigma I$ in other $(n-1)$ rows is much larger than other elements in the same row. Since
$$
|E|^{-\lambda_2/2\lambda_1}\ll |E|^{-\lambda_3/2\lambda_1}\ll\cdots \ll |E|^{-\lambda_n/2\lambda_1},
$$
we have $F(\frac12 \mu_i|E|^{-\lambda_i/2\lambda_1})F(\frac32 \mu_i|E|^{-\lambda_i/2\lambda_1})<0$ for $i=2,\cdots,n$. It implies that there are at least eigenvalues which are larger than $\frac 12\mu_i|E|^{-\lambda_i/2\lambda_1}$. Since $\imath\Psi_E$ is symplectic, guaranteed by Lemma \ref{map1.2}, the eigenvalues appear in paired $(\sigma,\sigma^{-1})$. Therefore, there exists exactly one eigenvalue lying between $\frac12 \mu_i|E|^{-\lambda_i/2\lambda_1}$ and $\frac32 \mu_i|E|^{-\lambda_i/2\lambda_1}$.

To study the eigenvector $\xi_i$ for $\sigma_i|E|^{-\lambda_i/2\lambda_1}$, we see that the diagonal element of $\Psi_E-\sigma_i|E|^{-\lambda_i/2\lambda_1}I$ in the $j$-th row with $j\ne i$ is much larger than other elements in the same row. So we have $|\xi_{i\ell}|\le o(r)|\xi_{ii}|$ if the notation $\xi_i=\{\xi_{i\ell}:\ell\le 2n, \ell\ne 1,n+1\}$, otherwise one would have $\sum_{\ell\ne j}z_{E,j\ell}\xi_{i\ell}+(z_{E,jj}-\sigma_i|E|^{-\lambda_i/2\lambda_1})\xi_{ij}\ne 0$. Hence, there exists $b_i\in\mathbb{R}^{2(n-1)}$ with $|b_i|=o(r)$ such that $\xi_i=e_i+b_i$.

Let $\xi_{n+i}$ be the normalized eigenvector for $\sigma_i^{-1} |E|^{\lambda_i/2\lambda_1}$, its first $n-1$ elements have to be $o(r)$. Otherwise one would have $\sum_{\ell\ne j}z_{E,j\ell}\xi_{i+n,\ell}+(z_{E,jj}- \sigma_i^{-1}|E|^{\lambda_i/2\lambda_1})\xi_{i+n,j}\ne 0$ if $\xi_{i+n,j}$ is larger than $o(r)$. Since $\imath\Psi_E$ is symplectic, $\langle\xi_{n+i},J\xi_{j} \rangle=\langle\imath\Psi_E\xi_{n+i},J\imath\Psi_E\xi_{j}\rangle= \sigma_j^{-1}\sigma_i|E|^{(\lambda_i-\lambda_j)/2\lambda_1}\langle\xi_{n+i},J\xi_{j} \rangle=0$ holds for all $j\ne i$, the element $\xi_{n+i,j}$ can not be larger than $o(r)$ either for $j\ge n+2$ with $j\ne n+i$. So we have $\xi_{n+i,n+i}=1+o(r)$.

The proof for the properties of $\imath\Psi$ is similar. Due to the lack of symplectic structure in $\imath\Psi$, we only know the smallness of the first $n$ elements of the eigenvector $\zeta_{n+i}$, we are unable to get that $\zeta_{n+i}$ is close to $e_{n+i}$.
\end{proof}

Notice that the $1$-st row of $\Psi$ is the same as its $(n+1)$-th row. If $\zeta_i=(\zeta_{i,1},\zeta_{i,\hat u},\zeta_{i,\hat v})$ is an eigenvector for the eigenvalue $\mu_i$, then $d\Phi_{r,0}(\zeta_{i,1},\zeta_{i,\hat u},0,\zeta_{i,\hat v})=\mu_i(\zeta_{i,1},\zeta_{i,\hat u},\zeta_{i,1},\zeta_{i,\hat v})$. If $(\xi_{i,\hat u},\xi_{i,\hat v})$ is an eigenvector of $\Psi_E$ for the eigenvalue $\mu_i$, then $\exists$ components $\xi_{i,1}$ and $\bar\xi_{i,1}$ such that $d\Phi_{E,r,0}(\xi_{i,1},\xi_{i,\hat u},0,\xi_{i,\hat v})=\mu_i(\bar\xi_{i,1}, \xi_{i,\hat u},\bar\xi_{i,1},\xi_{i,\hat v})$ and both vectors lie in the tangent space of the energy level set.

Let $\Psi'$ and $\Psi'_E$ be the matrix of the tangent map $d\Phi_{0,r}$ and $d\Phi_{E,0,r}$ respectively.
\begin{cor}
The matrix $\imath\Psi'_E$ has $2(n-1)$ eigenvalues $\{ \sigma'_i|E|^{-\frac{\lambda_i}{2\lambda_1}},\sigma'^{-1}_i|E|^{\frac{\lambda_i} {2\lambda_1}}:  2\le i\le n\}$ associated with the eigenvectors $\{\xi'_i=e_i+b'_i,\xi'_{i+n}=e_{i+n}+b'_{i+n}\}$ respectively, where $\inf_E\sigma'_i>0$, all elements of $b'_i,b'_{i+n}\in\mathbb{R}^{2(n-1)}$ are bounded by $o(r)$.

The matrix $\imath\Psi'$ has $n$ small eigenvalues $\{\bar\sigma'^{-1}_i|E|^{\frac{\lambda_i} {2\lambda_1}}: 1\le i\le n\}$ associated with the eigenvectors $\{\zeta'_i=e_{i+n-1}+o(r)\}$ respectively. Other $n-1$ eigenvalues are not smaller than $|E|^{(1-v')/2}$, the last $n$ elements of their normalized eigenvector $\{\zeta'_{n+2},\cdots,\zeta'_{2n}\}$ are of order $o(r)$.
\end{cor}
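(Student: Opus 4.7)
The plan is to mirror, for the second half of the inner map, the analysis of Lemmas \ref{lemmaforvariationalequation}--\ref{lem7.5} developed for the first half. Along the orbit from $\{u_1 = v_1\}$ to $\{u_1 = r\}$ (resp.\ from $\{u_1 = -v_1\}$ to $\{u_1 = -r\}$) of duration $\tau'_E$ satisfying the same bound \eqref{eq2.9} as $\tau_E$, the estimates are the $u \leftrightarrow v$ reflection of those used in Lemma \ref{lemmaforvariationalequation}: $|u_E(t)|$ grows from $O(\sqrt{|E|})$ up to $r$ while $|v_E(t)|$ remains at scale $\sqrt{|E|}$. Consequently the variational equation \eqref{variational-equation-for-Birkhoff-normal-form} admits an analogous decomposition, with the dominating matrix $\bar B'$ obtained from the one in Lemma \ref{lemmaforvariationalequation} by the time reflection $s \mapsto \tau'_E - s$. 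The Picard iteration of Lemma \ref{lem7.2} then produces the same form of domination \eqref{fundamentalM} for the fundamental matrix $Z'(t)$, and the entry-wise estimates of Lemma \ref{lem7.3new} go through verbatim for $Z'(\tau'_E)$, including $z'_{ii} = \mu'_i |E|^{-\lambda_i/2\lambda_1}$ in the $u$-block together with the $c|E|^{1/6}$ bound in the last $n$ rows.

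The differential $d\Phi_{0,r}$ is then reconstructed from $Z'(\tau'_E)$ by the analog of Lemma \ref{lem7.4}, now eliminating the $\partial_{u_1}$-row (because the exit section is $\{u_1 = r\}$) and incorporating the Hamiltonian-vector-field correction; this produces the matrix $\Psi'$, and after a further column adjustment adapted to the source section $\{u_1 = v_1\}$, the reduced matrix $\imath\Psi'_E$ (representing $d\Phi_{E,0,r}$) inherits the symplecticity guaranteed by Lemma \ref{map1.2}. The eigenvalue/eigenvector computation of Lemma \ref{lem7.5} transfers with the same arguments: diagonal dominance in the $u$-block isolates the eigenvalues $\sigma'_i |E|^{-\lambda_i/2\lambda_1}$ with eigenvectors $e_i + b'_i$, $|b'_i| = o(r)$, for $2 \le i \le n$, and the symplectic pairing then gives the conjugate eigenvalues $\sigma'^{-1}_i |E|^{\lambda_i/2\lambda_1}$ with eigenvectors $e_{i+n} + b'_{i+n}$.

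For the non-symplectic $\imath\Psi'$, the same domination isolates $n$ small eigenvalues $\bar\sigma'^{-1}_i |E|^{\lambda_i/2\lambda_1}$; however, because the removed row is now the $\partial_{u_1}$-row rather than the $\partial_{v_1}$-row, the eigenvectors of this spectrum concentrate in the $\hat v$-indices, giving $\zeta'_i = e_{i+n-1} + o(r)$ in duality with $\zeta_i = e_i + o(r)$ for $\imath\Psi$. The main obstacle is precisely this duality: since $\imath\Psi'$ lacks the symplectic pairing enjoyed by $\imath\Psi'_E$, the remaining $n-1$ ``large'' eigenvalues admit only the one-sided bound $\ge |E|^{(1-\nu')/2}$ together with the $o(r)$ estimate on the last $n$ components of their normalized eigenvectors $\zeta'_{n+2}, \ldots, \zeta'_{2n}$, exactly paralleling the corresponding assertion for $\imath\Psi$ in Lemma \ref{lem7.5}. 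Carefully tracking the row-index convention on the exit section $\{u_1 = r\}$ is the only part of the argument that requires attention beyond a direct transfer from the first-half analysis.
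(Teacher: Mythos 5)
The paper's proof of this corollary is a short reduction: it passes to the inverse maps $d\Phi^{-1}_{0,r}$ and $d\Phi^{-1}_{E,0,r}$ (the backward flow from $\{u_1=r\}$ to $\{u_1=v_1\}$); after exchanging $u\leftrightarrow v$ these are structurally identical to $d\Phi_{r,0}$ and $d\Phi_{E,r,0}$, so Lemma~\ref{lem7.5} applies directly to $\imath\Psi'_{-}$ and $\imath\Psi'_{E-}$; and finally, since an invertible matrix and its inverse share eigenvectors while their eigenvalues are reciprocal, the statement follows after swapping $u$ and $v$ back. Your proposal instead re-derives the entire machinery (dominating matrix, Picard iteration, entry-wise estimates, eigenvector analysis) forward in time along the second segment of the inner orbit. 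That is a genuinely different route, and it would reach the same conclusion, but it is substantially more work; moreover, the claim that the estimates of Lemmas~\ref{lemmaforvariationalequation}--\ref{lem7.3new} go through \emph{verbatim} is not quite right. Along the second segment $|u_E(s)|$ is no longer bounded by $cr\sqrt{|E|}$ but grows like $re^{-(\lambda_1-cr)(\tau'_E-s)}$, while $|v_E(s)|$ decays from $c\sqrt{|E|}$, so the dominating matrix $\bar B'$ has the time-reflected, $u\leftrightarrow v$-swapped form rather than the form in~\eqref{dominate}, and both the key Picard integrals $\int_0^t N_s\,ds$ and the resulting entry-wise bounds in~\eqref{eq7.18} need to be re-checked against this new form rather than transferred unchanged. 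Your final paragraph — the shift of the eliminated row to $\partial_{u_1}$, symplecticity for $\imath\Psi'_E$ versus the one-sided bound and partial eigenvector control for $\imath\Psi'$, and the duality with Lemma~\ref{lem7.5} — correctly anticipates the conclusion, but the heavy lifting you sketch is precisely what the inverse-map observation eliminates in one stroke.
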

\begin{proof}
If we exchange the places of $u$ with $v$, the differential $d\Phi^{-1}_{0,r}$ and $d\Phi^{-1}_{E,0,r}$ is found by the same method to find $d\Phi_{r,0}$ and $d\Phi_{E,r,0}$. Let $\Psi'_{-}$ and $\Psi'_{E-}$ be the matrix of $d\Phi^{-1}_{0,r}$ and $d\Phi^{-1}_{E,0,r}$ respectively, Lemma \ref{lem7.5} works for the eigenvectors and the eigenvalues of $\imath\Psi'_{-}$ and $\imath\Psi'_{E-}$. Since the inverse of the map has the same eigenvectors, the proof is completed if we exchange the place of $u$ with $v$.
\end{proof}

To prove Proposition \ref{pro6.2} concerning the composition of $d\Phi_{E,r,r}=d\Phi_{E,0,r}d\Phi_{E,r,0}$ and $d\Phi_{r,r}=d\Phi_{0,r}d\Phi_{r,0}$, we apply the following proposition by postponing the proof to the end of this section.

\begin{pro}\label{productofeigenvalue}
Let $\Psi$ and $\Psi'$ be linear maps $\mathbb{R}^d\to\mathbb{R}^d$. Assume $\mathbb{R}^d$ admits decomposition of subspaces
$\mathbb{R}^d=E_s\oplus E_0\oplus E_\ell=E'_s\oplus E'_0\oplus E'_\ell$ such that
\begin{enumerate}
  \item $E_s,E,E_\ell$ are invariant for $\Psi$ and $E'_s,E',E'_\ell$ are invariant for $\Psi'$;
  \item $\mathrm{dim}E_s=\mathrm{dim}E'_s$, $\mathrm{dim}E_0=\mathrm{dim}E'_0=1$ and $\mathrm{dim}E_\ell=\mathrm{dim}E'_\ell$;
  \item some numbers $\sigma_\ell>\sigma_0>\sigma_s>0$ and $\sigma'_\ell>\sigma'_0>\sigma'_s>0$ exist such that
    $$
    \begin{aligned}
    &|\Psi v|\ge \sigma_\ell|v|,\ \ \ \forall\ v\in E_\ell,\quad &&|\Psi' v|\ge \sigma'_\ell|v|,\ \ \ \forall\ v\in E'_\ell;\\
    &|\Psi v|\le \sigma_s|v|,\ \ \ \forall\ v\in E_s,\quad &&|\Psi' v|\le \sigma'_s|v|,\ \ \ \forall\ v\in E'_s;\\
    &|\Psi v|= \sigma_0|v|,\ \ \ \forall\ v\in E_0,\quad &&|\Psi' v|= \sigma'_0|v|,\ \ \ \forall\ v\in E'_0
    \end{aligned}
    $$
    and
    $$
    \min\Big\{\frac{\sigma'_\ell}{\sigma'_0},\frac{\sigma_\ell}{\sigma_0}, \frac{\sigma_0}{\sigma_s},\frac{\sigma'_0}{\sigma'_s}\Big\}\ge 4;
    $$
  \item $\exists$ $\alpha<1$ such that $|\langle v_\imath,v_\jmath\rangle|\le\alpha|v_\imath||v_\jmath|$ holds $\forall$ $v_\imath\in E_\imath,v_\jmath\in E_\jmath$ with $\imath\ne\jmath$ where $\imath,\jmath\in\{s,0,\ell\}$;
  \item the subspace $E_\imath$ is close to $E'_\imath$ for $\imath=s,0,\ell$ in the following sense, for any $v\in E_\imath$ $(v'\in E'_\imath\ resp.)$, some $v'\in E'_\imath$ $(v\in E_\imath\ resp.)$ exists such that $\langle v,v'\rangle\ge |v||v'|(1-\delta)$ holds for some small $\delta\ge 0$.
\end{enumerate}
Then, $\exists$ small $\delta_0>0$ such that for $\delta\in(0,\delta_0]$, the map $\Psi^*=\Psi'\Psi$ has an eigenvalue $\sigma^*_0=\sigma'_0\sigma_0(1+O(\delta))$ associated with an eigenvector $v^*_0$ satisfying the condition that $\langle v,v^*_0\rangle\ge |v||v^*_0|(1-O(\delta))$ holds for $v\in E_0$. The quantities $O(\delta)$ are independent of the size of the eigenvalues, only depend on the ratio $|\frac{\sigma_0\sigma'_0}{\sigma_s\sigma'_s}|$ and $|\frac{\sigma_\ell\sigma'_\ell}{\sigma_0\sigma'_0}|$.
\end{pro}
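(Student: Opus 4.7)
The plan is to realize $\Psi^*=\Psi'\Psi$ as a perturbation of the ``unperturbed'' case $\delta=0$, in which both splittings coincide, so that $\Psi^*$ leaves each of $E_s,E_0,E_\ell$ invariant and acts on $E_0$ exactly as multiplication by $\sigma_0\sigma'_0$. Under the hypotheses (3)--(5), the three spectral clusters of $\Psi^*$ sit near $\sigma_s\sigma'_s$, $\sigma_0\sigma'_0$, $\sigma_\ell\sigma'_\ell$ with pairwise ratios at least $16$ by (3), which dominates the $O(\delta)$ distortion coming from the mismatch of the two splittings supplied by (5). Rather than invoke complex-analytic spectral projections, I will construct the middle eigenvector by an invariant-graph/contraction-mapping argument, the same style used elsewhere in the paper.

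First I fix a unit vector $v_0\in E_0$ and parameterize lines close to $\mathbb{R}v_0$ by the space $\mathscr{F}_\eta=\{F=(F_s,F_\ell):E_0\to E_s\oplus E_\ell,\ \|F\|\le\eta\}$, assigning to $F$ the line $L_F=\mathbb{R}(v_0+F(v_0))$. Next I define a graph transform $T^*:\mathscr{F}_\eta\to\mathscr{F}_\eta$ by requiring $\Psi^*L_F=L_{T^*F}$. Because $\Psi$ alone does not preserve a narrow cone around $E_0$ (the $E_\ell$ component can be amplified by a potentially large $\|\Psi|_{E_\ell}\|_{\mathrm{op}}$), I treat $F_s$ and $F_\ell$ separately: $F_s$ is transported forward by $\Psi^*$ and contracted by a factor $\le\sigma_s\sigma'_s/\sigma_0\sigma'_0\le 1/16$, while $F_\ell$ is obtained from $\Psi^{*-1}$, contracting by $\le\sigma_0\sigma'_0/\sigma_\ell\sigma'_\ell\le 1/16$; the identification $\Psi^*L_F=L_{T^*F}$ then pins down both components simultaneously. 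Each passage between the two splittings introduces an $O(\delta)$ perturbation, controlled by (4)--(5) uniformly in the absolute sizes of the $\sigma$'s. Checking that $T^*$ sends $\mathscr{F}_\eta$ into itself and is a contraction in the sup-norm for $\eta=C\delta$ with $C$ depending only on the ratios in (3), Banach's theorem yields a unique fixed point $F^*$ with $\|F^*\|=O(\delta)$. The line $L_{F^*}$ gives the eigenvector $v^*_0$ with $\langle v_0,v^*_0\rangle\ge|v_0||v^*_0|(1-O(\delta))$, and the eigenvalue is extracted by projecting $\Psi^*(v_0+F^*(v_0))$ onto $E_0$: the leading term is $\sigma_0\sigma'_0$ and the correction, essentially the $E_0$-component of $(\Psi'-\sigma'_0 I)\Psi F^*(v_0)$, is of order $\delta\|F^*\|=O(\delta^2)$, giving $\sigma^*_0=\sigma_0\sigma'_0(1+O(\delta))$.

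The main obstacle is precisely the mismatch of the two invariant splittings: the forward graph transform of $\Psi$ alone blows up in the $E_\ell$-direction, so one cannot directly iterate $\Psi$ to produce an attractor near $E_0$. The way around this is to package the forward-stable and backward-unstable graph transforms into a single fixed-point equation for $\Psi^*$, and to track carefully how the $O(\delta)$ change-of-splitting errors interact with the factor-$4$ spectral gaps of (3). Condition (4) is used exactly to keep the norms of the projections $\pi_s,\pi_0,\pi_\ell$ and $\pi'_s,\pi'_0,\pi'_\ell$ bounded in terms of $\alpha$ alone, while condition (5) supplies the $O(\delta)$ closeness needed to pass a graph from one splitting to the other. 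With these in hand, the constants in the final estimates depend only on the ratios $\sigma_0\sigma'_0/\sigma_s\sigma'_s$ and $\sigma_\ell\sigma'_\ell/\sigma_0\sigma'_0$, which is exactly the uniformity claimed.
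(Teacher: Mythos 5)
Your overall strategy — find the middle eigenvector by a graph-transform / contraction argument, with forward iteration for the ``stable'' correction and backward iteration for the ``unstable'' one — is conceptually in the same family as the paper's. But the central claim on which your contraction rests, namely that ``Each passage between the two splittings introduces an $O(\delta)$ perturbation, controlled by (4)--(5) uniformly in the absolute sizes of the $\sigma$'s,'' is not correct as stated, and it is exactly here that your scheme fails to close.

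Concretely: if you write $\Psi^{*}=\Psi'\Psi$ as a block matrix in the unprimed splitting $E_s\oplus E_0\oplus E_\ell$, the off-diagonal blocks are \emph{not} $O(\delta)$; they are $O(\delta)$ multiplied by large dynamical factors. For instance, the block $\pi_s\Psi^{*}|_{E_\ell}$ has norm of order $\delta\,\sigma_\ell\sigma'_\ell$ (because $\Psi$ first stretches $E_\ell$ by $\sigma_\ell$, the $O(\delta)$ change-of-splitting leaks $O(\delta)$ of this into $E'_s\oplus E'_0$, and $\Psi'$ can still stretch the leak by $\sigma'_\ell$ before it is projected back). If you then transport $F_s$ ``forward by $\Psi^*$'' and normalize by $\lambda\approx\sigma_0\sigma'_0$, the new $F_s$ acquires a term of size $\delta\,\tfrac{\sigma_\ell\sigma'_\ell}{\sigma_0\sigma'_0}\,|F_\ell|$, which is unbounded in the ratios you allow; the claimed contraction factor $\sigma_s\sigma'_s/\sigma_0\sigma'_0$ is simply not the norm of the relevant operator. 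The same problem shows up symmetrically if you obtain $F_\ell$ from $\Psi^{*-1}$: the leak through $E'_s$ is multiplied by $(\sigma'_s)^{-1}$ and is then not killed. Even in the final eigenvalue estimate you compute a correction ``$(\Psi'-\sigma'_0 I)\Psi F^{*}$'' of order $\delta\|F^{*}\|$; in fact this quantity contains a term of size $\delta^{2}\,\sigma_0\sigma'_\ell$, which is only compensated by a matching $\delta^{2}$-term hidden in $\pi_0\Psi^{*}v_0 - \sigma_0\sigma'_0$. The correct conclusion survives only because of cancellations that your argument never isolates; with the operator you set up, Banach's fixed-point theorem does not apply because the map does not send a $C\delta$-ball into itself.

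What the paper does differently, and essentially, is introduce the radial-projection map $T\colon L\to L'$ between the two codimension-one affine hyperplanes $L=v_0+E_s\oplus E_\ell$ and $L'=v'_0+E'_s\oplus E'_\ell$, and then work with the conjugated map $M=T^{-1}\Psi'_0T\Psi_0$ where $\Psi_0=\sigma_0^{-1}\Psi$ and $\Psi'_0=(\sigma'_0)^{-1}\Psi'$. Because $\Psi_0$ is block-diagonal in the unprimed splitting and $\Psi'_0$ is block-diagonal in the primed splitting, the only place where the two splittings talk to each other is through $T$ and $T^{-1}$, and $T$ is genuinely $\mu\delta$-close to identity on the bounded region of $L$ under consideration; the dynamical stretching never multiplies the $O(\delta)$ mismatch. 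The fixed point is then produced as the intersection of two separately constructed invariant graphs: a graph $F:E_s\to E_\ell$ invariant under $\mathscr M^{-1}$ (fiber values contracted by $\sigma_0\sigma'_0/\sigma_\ell\sigma'_\ell$) and a graph $G:E_\ell\to E_s$ invariant under $\mathscr M$ (fiber values contracted by $\sigma_s\sigma'_s/\sigma_0\sigma'_0$). Your parameterization of a single line by a pair $(F_s,F_\ell)$ collapses both graphs to a point and loses the structure that keeps the cross-terms from being amplified. To make your approach rigorous you would either have to introduce something playing the role of $T$ explicitly, or carry out the expansion carefully enough to exhibit the $\delta^2\sigma'_\ell$-level cancellations, neither of which is present in the sketch.
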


\begin{proof}[Proof of Proposition \ref{pro6.2}]
We set $E_\ell=\mathrm{Span}\{\zeta_2,\cdots,\zeta_n\}$, $E_s=\mathrm{Span} \{\zeta_{n+2}, \cdots,\zeta_{2n}\}$ and $E_0=\mathrm{Span}\{\zeta_1\}$ for the matrix $\Psi$, set $E'_\ell=\mathrm{Span}\{\zeta'_{n+2},\cdots, \zeta'_{2n}\}$, $E_0=\mathrm{Span}\{\zeta'_1\}$ and $E_s=\mathrm{Span}\{\zeta'_2,\cdots,\zeta'_n\}$ for the matrix $\Psi'$. Under such setting, we have
$$
\begin{aligned}
&\frac{\sigma_\ell}{\sigma_0}=\frac{\sigma_2}{\sigma_1}|E|^{-\frac{\lambda_2-\lambda_1} {2}}, \qquad &&\frac{\sigma_0}{\sigma_s}\ge \sigma_1|E|^{-\frac{\nu'}{2}},\\
&\frac{\sigma'_\ell}{\sigma'_0}\ge \frac 1{\sigma_1}|E|^{-\frac{\nu'}{2}}, \quad && \frac{\sigma'_0}{\sigma'_s}=\frac{\sigma_1}{\sigma_2}|E|^{-\frac{\lambda_2-\lambda_1} {2}}.
\end{aligned}
$$
They are quite large for small $|E|$. Clearly, $E_\imath$ is close to $E'_\imath$ for $\imath=s,0,\ell$. Applying Proposition \ref{productofeigenvalue} we see that $\imath\Psi'\imath\Psi$ has an eigenvalue $\sigma^*_1=1+o(r)$ associated with an eigenvector $e_1+o(r)$. It corresponds to a vector $\eta_1=(1,\eta_{1,\hat u},0,\eta_{1,\hat v})\in T_{z^+_{E,r}}S_r$ that is mapped by $d\Phi_{r,r}$ to a vector $(1+o(r))(1,\eta_{1,\hat u},0,\eta_{1,\hat v})\in T_{z^-_{E,r}}U_r$ where $|\eta_{1,\hat u}|,|\eta_{1,\hat v}|=o(r)$.
The same method applies in the study of the eigenvectors of $\Psi_E^*=\Psi'_E\Psi_E$. Hence, the proof of the proposition is completed if we prove the following

\begin{lem}\label{lem7.6}
Given a symplectic matrix $M$, if its spectrum consists of $2d$ different real numbers $\{\sigma_i,\sigma_i^{-1}:i=1,\cdots,d\}$, associated with the eigenvectors $\eta_i$ and $\eta_{i+d}$ respectively, then the matrix $\Psi=[\eta_1,\cdots,\eta_d,\eta_{1+d},\cdots,\eta_{2d}]$ is symplectic if a suitable factor $\nu_i$ is applied to each $\eta_i$ for $i=1,\cdots,d$.
\end{lem}

Indeed, because $M$ is symplectic, we have $\lambda_j\lambda_i\langle \eta_i,J\eta_j\rangle=\langle M\eta_i,JM\eta_j\rangle= \langle\eta_i,J\eta_j\rangle$. So, it has to be zero if $\lambda_j\lambda_i\ne 1$. It implies that
$$
\Psi^tJ\Psi=\left[\begin{matrix} 0 & \Upsilon \\
-\Upsilon & 0\end{matrix}\right]
$$
where $\Upsilon=\mathrm{diag}\{\langle\eta_1,J\eta_{d+1}\rangle,\cdots, \langle\eta_d,J\eta_{2d}\rangle\}$. Clearly, $\langle\eta_i,J\eta_{i+d}\rangle\ne 0$, otherwise $M$ would be degenerate. Let $\nu_i^{-1}=\langle\eta_i, J\eta_{d+i}\rangle$, one has $\langle\nu_i\eta_i,J\eta_{d+i}\rangle=1$. Applying this lemma to the eigenvectors of $d\Phi_{E,r,r}$, the matrix $T_E=[\xi^*_2,\cdots,\xi^*_n, \xi^*_{2+n},\cdots,\xi^*_{2n}]$ of the eigenvectors can be made symplectic.
\end{proof}


\begin{proof}[Proof of Proposition \ref{pro6.6}]
We consider the inner map $\Phi_{r,r}$. For $E>0$, emanating from the point $z^+_{E,r}$ at $t=0$, the periodic orbit $z^+_E(t)$ arrives at the point $z^-_{E,r}$ after a time $t_E$ satisfying the condition \eqref{eq2.9}. Given any small $\varepsilon>0$, there exists $E(\varepsilon)>0$ such that for $E\in(0,E(\varepsilon)]$, the orbit $z^+_E(t)$ passes through the disk $|z|\le\varepsilon$ before it arrives at $z^-_{E,r}$.
Let $t'_E<t''_E$ be the time when the periodic orbit passes through the section $\{v_1=\varepsilon\}$ and $\{u_1=\varepsilon\}$ respectively, then $z^+_E|_{[0,t'_E]}$ keeps close to the stable manifold, $z^+_E|_{[t''_E,t_E]}$ keeps close to the unstable manifold and $z^+_E|_{[t'_E,t''_E]}$ remains in the disk $|z|\le\varepsilon$. Clearly, some finite $t',t''$ exists such that $t'_E\to t'$, $t_E-t''\to t''$ and $t''_E-t'_E\to\infty$ as $E\to 0$.

Recall that $z_E(t)$ passes through the section $\{u_1=v_1\}$ at the time $t=\tau_E$. With the experience to prove Proposition \ref{pro6.2}, let $Z'_{E}(t)$, $Z^*_{E}(t)$ be the fundamental matrix of the variational equation of the Hamiltonian \eqref{eq2.2} along the orbit $z^+_E|_{[0,t'_E]}$, $z^+_E|_{[t'_E,\tau_E]}$ respectively with $Z'_{E}(0)=Z^*_{E}(0)=I$. So,  $Z_E(\tau_E)=Z^*_{E,\varepsilon} (\tau_E-t'_E)Z'_{E,\varepsilon}(t'_E)$ is the fundamental matrix of the variational equation along the orbit $z^+_E|_{[0,\tau_E]}$.

Let $z^+_+(t)|_{[0,t']}$ be a piece of the homoclinic orbit $z^+(t)$ such that $z^+_+(0)=z^+_r$, thus we have $z^+_E(t'_E)\to z^+(t')$ as $E\to 0$. Let $Z'_0(t)$ be the fundamental matrix along $z^+_+(t)|_{[0,t']}$ such that $Z'_0(0)=I$. Clearly, $Z'_{E}(t'_E)\to Z'_0(t')$ as $E\to 0$.

From the special form of the Hamiltonian \eqref{eq2.5}, we are able to get more information about the fundamental matrix $Z'_{\varepsilon}(t')$. Notice $\partial_IN=0$ when it is restricted on the stable or unstable manifold since $N$ is a function of $(u_1v_1,\cdots,u_nv_n)$ without linear term. Because $\partial^2_{vv}R(z^+_+(t)|_{[0,t']})=0$, the variational equation of the the Birkhoff normal form \eqref{eq2.3} along $z^+_+(t)|_{[0,t']}$ takes the form
\begin{equation}\label{equation6-16}
\begin{aligned}
\dot\xi_u&=(\Lambda+\partial_{vu}R)\xi_u,\\
\dot\xi_v&=-(\Lambda+\partial_{uv}R)\xi_v-\partial^2_{uu}R\xi_u,
\end{aligned}
\end{equation}
where $\Lambda=\mathrm{diag}\{\lambda_1,\cdots,\lambda_n\}$. The terms $\partial^2_{uv}R$ and $\partial^2_{vv}R$ depend on the $v$-component of $z^+_+(t)$  only if we write $z^+_+(t)=(u^+_+(t),v^+_+(t))$ since $u^+_+(t)=0$. Notice that the first equation is independent of $\xi_v$, we find that the fundamental matrix takes the form
$$
Z'_0(t)=\left[\begin{matrix}\Psi_{11}(t) & 0\\
\Psi_{12}(t) & \Psi_{22}(t)
\end{matrix}\right]
$$
where $\Psi_{11}(t)$ is the fundamental matrix of the first equation of \eqref{equation6-16}, $\Psi_{22}(t)$ is the one of the equation $\dot\xi_v=-(\Lambda+\partial_{uv}R)\xi_v$ and
\begin{equation}\label{Psi12}
\Psi_{12}(t)=-\Psi_{22}(t)\int_0^t\Psi_{22}^{-1}(s)\partial^2_{uu}R(0,v^+_+(s)) \Psi_{11}(s)ds.
\end{equation}
Since $\partial^2_{uu} R=O(|z|^{2\kappa-1})$, we expand $\Psi_{11}(t)$ into a sequence of $\rho^\ell=r^{(2\kappa-1)\ell}$
$$
\Psi_{11}(t)=\sum_{\ell=0}^\infty\rho^\ell \Psi_{11,\ell}(t).
$$
The matrices $\{\Psi_{11,\ell}(t)\}$ are obtained inductively. Clearly $\Psi_{11,0}=e^{\Lambda t}$ and for $\ell\ge 1$ one has
$$
\Psi_{11,\ell}(t)=\frac{e^{\Lambda t}}{r^{2\kappa-1}}\int_{0}^{t}e^{-\Lambda s}\partial^2_{vu}R(0,v^+_+(s))\Psi_{11,\ell-1}(s)ds.
$$
As each element of $\partial^2_{vu}R$ decreases to zero not slower than $v^{2\kappa-1}(t)\le cr^{2\kappa-1}e^{-(2\kappa-1)\lambda_1t}$, each element in the integrands is dominated by a exponential function with negative exponent, its coefficient is bounded by $cr^{2\kappa-1}$. Therefore,
\begin{equation*}\label{psi11}
\Psi_{11,\ell}(t')\prec ce^{\Lambda t'}\mathbb{I}, \qquad \forall\ \ell\ge 1.
\end{equation*}
The method is also applied to get an estimate on the fundamental matrix $\Psi_{22}(t)=\sum_{\ell=0}^\infty\rho^\ell\Psi_{22}(t)$ such that $\Psi_{22,0}(t)=e^{-\Lambda t'}$ and
\begin{equation*}\label{psi22}
\Psi_{22,\ell}(t')\prec ce^{-\Lambda t'}\mathbb{I},\qquad \forall\ \ell\ge 1.
\end{equation*}
Hence, the absolute value of each element in the matrix $\Psi_{22}^{-1}(s)\partial^2_{uu}R(0,v^+_+(s))\Psi_{11}(s)$ is bounded by $ce^{(2\lambda_n-(2\kappa-1)\lambda_1)s}$. So, if $(\kappa-1)\lambda_1>\lambda_n$ holds, we obtain from \eqref{Psi12} that some larger constant $c>0$ exists such that
\begin{equation*}\label{psi12}
\Psi_{12}(t')-e^{\Lambda t'}\prec cr^{2\kappa-1}e^{-\Lambda t'}\mathbb{I}.
\end{equation*}

Because $Z'_{E}(t'_E)\to Z'_{0}(t')$ as $E\to 0$, for any small $\epsilon>0$, some $E(\epsilon)>0$ exists such that the following holds for any $E\in(0,E(\epsilon)]$
\begin{equation}\label{ZforsmallE}
Z'_{E}(t'_E)-e^{(\Lambda,-\Lambda)t'_E}\prec cr^{2\kappa-1}e^{(\Lambda,-\Lambda)t'_E}\left[\begin{matrix} \mathbb{I} & \epsilon\mathbb{I}\\
\mathbb{I} & \mathbb{I}\end{matrix}\right]\\
\end{equation}

Let $t^*_E$ be the time so that $\Phi_H^{t^*_E+t'_E}(z^+_{E,r})\in\{u_1=v_1\}$, we apply Lemma \ref{lem7.2} to study $d\Phi_H^{t^*_E}(\Phi_H^{t'_E}(z^+_{E,r}))$. Hence, along the orbit $\Phi_H^{t}(\Phi_H^{t'_E}(z^+_{E,r}))|_{[0,t^*_E]}$, the fundamental matrix $Z^*_E(t)$ satisfies the relation
$$
Z^*_E(t)-e^{(\Lambda,-\Lambda)t}\prec \varepsilon^2e^{(\Lambda,-\Lambda)t}(N_tD_0+D_1)
$$
where all elements in the matrices $D_0,D_1$ are of order 1 and $N_t$ is defined as in \eqref{MandNfort}. So we have
$$
\begin{aligned}
Z'_E(t)&=\left[\begin{matrix} e^{\Lambda t}(I+B_{r,11}) & \epsilon e^{\Lambda t}B_{r,12}\\
e^{-\Lambda t}B_{r,21} & e^{-\Lambda t}(I+B_{r,22})
\end{matrix}\right],\\
Z^*_E(t)&=\left[\begin{matrix} e^{\Lambda t}(I+\varepsilon^2 B_{11}) & \varepsilon^2 e^{\Lambda t}B_{12}\\
\varepsilon^2 e^{-\Lambda t}B_{t} & e^{-\Lambda t}(I+\varepsilon^2 B_{22})
\end{matrix}\right]\\
\end{aligned}
$$
where $B_t\prec e^{(\Lambda-\Lambda')t}\mathbb{I}e^{(\Lambda-\Lambda')t}$, $|B_{r,ij}|\le cr^{2\kappa-1}$, $|B_{ij}|=O(1)$. Let $Z=Z^*_E(t^*_E)Z'_{E,\varepsilon}(t'_E)$ and write
$$
Z=\left[\begin{matrix}Z_{11} & Z_{12}\\ Z_{21} & Z_{22}
\end{matrix}\right]
$$
where each block is a matrix of order $n$, then
$$
\begin{aligned}
Z_{12}&=\epsilon e^{\Lambda t^*_E}(I+\varepsilon^2B_{\varepsilon,11})e^{\Lambda t'_E}B_{r,12}+\varepsilon^2 e^{\Lambda t^*_E}B_{\varepsilon,12}e^{-\Lambda t'_E}(I+B_{r,22}),\\
\end{aligned}
$$
On the other hand, by applying Lemma \ref{lem7.2} to the variational equation along the orbit $\Phi_H^{t}(z^+_{E,r})|_{[0,t^*_E+t'_E]}$ directly, we find
\begin{equation}\label{FundamentalMatrix}
Z^*_E(t^*_E)Z'_{E,\varepsilon}(t'_E)-e^{(\Lambda,-\Lambda)(t^*_E+t'_E)}\prec r^2e^{(\Lambda,-\Lambda)(t^*_E+t'_E)}(N_{t^*_E+t'_E}D_0+D_1)
\end{equation}
The matrix $Z$ represents the tangent map $d\Phi_H^{\tau_E}$ with $\tau_E=t'_E+t^*_E$, which results in the maps $d\Phi_{r,0}$ and $d\Phi_{E,r,0}$.

Let $z_i=d\Phi_{r,0}\frac{\partial}{\partial u_i}$, $z_{n+i}=d\Phi_{r,0}\frac{\partial} {\partial v_i}$ and notice $z'_i=Z e_i$, $z'_{n+i}=Z e_{n+i}$ are the $i$-th and $(n+i)$-th column of $Z$ respectively. By applying Lemma \ref{lem7.4} and in view of \eqref{eq7.20} we obtain \eqref{eq7.21} again where the second inequality is improved by applying the special form of $Z_{12}$
\begin{equation}\label{eq7.26}
|z_{j(i+n)}|\le o(r)(\epsilon(1+\varepsilon^2e^{(\lambda_n-\lambda_1)t'_E})+\varepsilon^2) |E|^{-\frac{\lambda_j}{2\lambda_1}},
\end{equation}
we notice that $t'_E$ remains bounded as $E\to 0$. As we did before, we obtain from the matrix $Z$ the matrix $\Psi$ representing $d\Phi_{r,0}$ and the matrix $\Psi_E$ representing $d\Phi_{E,r,0}$.
$$
\begin{aligned}
\Psi&=[z_1,z_2,\cdots,z_n,z_{n+2},\cdots,z_{2n}];\\
\Psi_E&=[z_{E,2},\cdots,z_{E,n},z_{E,n+2},\cdots,z_{E,2n}]
\end{aligned}
$$
where $z_{E,j}=z_j+\xi_{1,j}z_1$ and $z_{E,j+n}=z_{j+n}+\xi_{1,j+n}z_1$ with $\xi_{1,i}=-\frac{\lambda_iv_i}{\lambda_1r}(1+O(r))$ and $\xi_{1,i+n}=-\frac{\lambda_iu_i}{\lambda_1r}(1+O(r))$. Since the correction terms $\xi_{1,j}z_1$ and $\xi_{1,j+n}z_1$ are relatively small, we still have $z_{E,ii}\ge \frac 12\mu_i|E|^{-\lambda_i/2\lambda_1}$ for $i=2,\cdots,n$, the first inequality of \eqref{eq7.21} and \eqref{eq7.26} also hold for $\{z_{E,ij}: i=2,\cdots,n\}$.

Recall that $\imath\Psi_E$ denote the matrix obtained from $\Psi_E$ by eliminating the $1$-st and the $(n+1)$-th row.

\begin{lem}\label{lem7.9}
Lemma \ref{lem7.5} holds for $\Psi_E$ with extra properties: if $\xi_i=e_{i-1}+b_i$ and $\xi_{i+n}=e_{i+n-2}+b_{i+n}$ denote the eigenvectors for $\sigma_i|E|^{-\lambda_i/2\lambda_1}$ and $\sigma^{-1}_i|E|^{\lambda_i/2\lambda_1}$ respectively and $b_i=(b_{i,\hat u},b_{i,\hat v})$, $b_{n+i}=(b_{n+i,\hat u}, b_{n+i,\hat v})$ then $|b_{n+i,\hat u}|\le \epsilon(1+\varepsilon^2 e^{(\lambda_n-\lambda_1)t'_E})+\varepsilon^2$ and $|b_{i,\hat v}|\le |E|^{\nu_i/2\lambda_1}$ if $0<\nu_i<\lambda_i-(1-\nu')\lambda_1$.
\end{lem}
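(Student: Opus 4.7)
The plan is to follow exactly the strategy of Lemma \ref{lem7.5} --- reading eigenvalues and eigenvectors off the dominant-diagonal block structure of $\Psi_E$ --- and then sharpen the bounds on the small coordinates of $b_i$ and $b_{n+i}$ using two pieces of finer information that are available in this particular section: the improved estimate \eqref{eq7.26} on the $\hat v$-columns of the first $n-1$ rows (coming from the decomposition of the fundamental matrix through the auxiliary section $\{v_1=\varepsilon\}$), and the estimate on the last $n-1$ rows of $\Psi_E$ given in \eqref{eq7.21}. First I would note that the construction $z_{E,j}=z_j+\xi_{1,j}z_1$ with $|\xi_{1,j}|=O(r)$ perturbs the diagonal entries $\mu_i|E|^{-\lambda_i/2\lambda_1}$ by only a relatively small amount, so the proof of Lemma \ref{lem7.5} carries over verbatim and produces the eigenvalues $\sigma_i|E|^{-\lambda_i/2\lambda_1},\ \sigma_i^{-1}|E|^{\lambda_i/2\lambda_1}$ and eigenvectors of the form $\xi_i=e_{i-1}+b_i$, $\xi_{i+n}=e_{i+n-2}+b_{i+n}$ (the index shift reflects that $\imath\Psi_E$ acts on a $2(n-1)$-dimensional space).

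For the bound on $b_{n+i,\hat u}$, take the $j$-th row ($1\le j\le n-1$, the $\hat u$-block) of the eigenvector identity $\Psi_E\xi_{n+i}=\sigma_i^{-1}|E|^{\lambda_i/2\lambda_1}\xi_{n+i}$. The diagonal entry $z_{E,jj}\approx\mu_j|E|^{-\lambda_j/2\lambda_1}$ dominates the row, so the equation can be solved for the $j$-th $\hat u$-component,
\[
b_{n+i,\hat u,j}=\frac{-z_{E,j,\,i+n-1}+O(\text{off-diagonal}\cdot b_{n+i})}{z_{E,jj}-\sigma_i^{-1}|E|^{\lambda_i/2\lambda_1}}.
\]
By the sharpened estimate \eqref{eq7.26}, the numerator is bounded by $o(r)(\epsilon(1+\varepsilon^2e^{(\lambda_n-\lambda_1)t'_E})+\varepsilon^2)|E|^{-\lambda_j/2\lambda_1}$, while the denominator is comparable to $|E|^{-\lambda_j/2\lambda_1}$; the scale cancels and yields the stated bound $|b_{n+i,\hat u}|\le\epsilon(1+\varepsilon^2e^{(\lambda_n-\lambda_1)t'_E})+\varepsilon^2$ after the usual iterative correction to absorb the $O(b_{n+i})$ terms.

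For the bound on $b_{i,\hat v}$, I would apply the mirror argument to the last $n-1$ rows of $\Psi_E\xi_i=\sigma_i|E|^{-\lambda_i/2\lambda_1}\xi_i$. Every entry in a row belonging to the $\hat v$-block is bounded by $o(r)|E|^{-(1-\nu')/2}$ according to \eqref{eq7.21}, whereas the right-hand side contains the large factor $\sigma_i|E|^{-\lambda_i/2\lambda_1}$. Solving for $b_{i,\hat v,j}$ and using $\|\xi_i\|=1+o(1)$ gives
\[
|b_{i,\hat v,j}|\le \frac{c\,|E|^{-(1-\nu')/2}}{\sigma_i|E|^{-\lambda_i/2\lambda_1}}\le c\,|E|^{(\lambda_i-(1-\nu')\lambda_1)/2\lambda_1},
\]
which is bounded by $|E|^{\nu_i/2\lambda_1}$ for any $\nu_i<\lambda_i-(1-\nu')\lambda_1$ once $|E|$ is sufficiently small. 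The only real obstacle is book-keeping: $\Psi_E$ has four blocks with four different natural scales, and one has to match each entry in the eigenvector equation to the correct scale; once that is done, both improvements reduce to the same dominant-diagonal inversion already used in the proof of Lemma \ref{lem7.5}.
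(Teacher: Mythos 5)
Your argument is essentially the paper's proof, just phrased directly rather than by contradiction: the paper takes the row where the offending coordinate attains its maximum and shows the diagonal term would dominate the eigenvector identity, while you solve that same row for the coordinate using the dominant-diagonal entry, but in both cases the bounds come from combining \eqref{eq7.26} (for $b_{n+i,\hat u}$) and the third/fourth lines of \eqref{eq7.21} (for $b_{i,\hat v}$) with the size of the relevant eigenvalue. The scaling computations you carry out match the ones implicit in the paper, so the proposal is correct.
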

\begin{proof}
Let $\psi_j$ denote the $j$-th row of $\imath\Psi_E-\sigma_i^{-1}|E|^{\lambda_i/2\lambda_1}I$. If $|b_{n+i,\hat u}|$ is reached at its $j$-th element $b_{n+i,u_{j+1}}$ which is not smaller than $\epsilon(1+\varepsilon^2e^{(\lambda_n-\lambda_1)t'_E})+\varepsilon^2$, we see from the first inequality in \eqref{eq7.21} and \eqref{eq7.26} that the term $(z_{E,jj}-\sigma_i^{-1}|E|^{\lambda_i/2\lambda_1})b_{n+i,u_{j+1}}$ is much larger than all other terms in $\langle\psi_{j},\xi_{n+i}\rangle$, because $|z_{E,jj}|\ge c|E|^{-\lambda_i/2\lambda_1}$. But it is absurd since $\langle\psi_{j},\xi_{n+i}\rangle=0$.

Let $\psi_{n+j}$ denote the $(n+j-2)$-th row of $\imath\Psi_E-\sigma_i|E|^{-\lambda_i/2\lambda_1}I$. If $|b_{i,\hat v}|$ is reached at its $(j-1)$-th element $b_{i,v_j}$ which is not smaller than $|E|^{\nu_i/2\lambda_1}$, we see from the third and the fourth inequalities in \eqref{eq7.21} that the term $|(z_{E,(n+j)(n+j)}-\sigma_i|E|^{-\lambda_i/2\lambda_1})b_{i,v_j}|$ is much bigger than all other terms in $\langle\varphi_j,\xi_i\rangle$ because $|z_{E,(n+j)(n+j)}|\le c|E|^{-(1-\nu')/2}$. It contradicts the fact that $\langle\varphi_j,\xi_i\rangle=0$.
\end{proof}

To study the inner map $\Phi_{E,r,r}$, we consider the map $\Phi_{E,0,r}$: $H^{-1}(E)\cap\{u_1=v_1\}\to H^{-1}(E)\cap\{u_1=r\}$, defined by the flow $\Phi_H^t$. Emanating from $\Phi^{t}_H(z^+_{E,r})|_{t=\tau_E}$, the orbit arrives at the point $z^-_{E,r}\in\{u_1=r\}$ after a time $\tau'_E$. So we have $t_E=\tau_E+\tau'_E$. The inverse of $d\Phi_{E,0,r}$ has the same property as $d\Phi_{E,r,0}$ if we exchange the place $u$ and $v$. Therefore, in virtue of Lemma \ref{lem7.5} and \ref{lem7.9}, we have

\begin{lem}\label{lem7.10}
The map $d\Phi_{E,0,r}$ has $n-1$ pairs of eigenvalues $\{\sigma'_i|E|^{-\frac{\lambda_i}{2\lambda_1}}, \sigma'^{-1}_i|E|^{\frac{\lambda_i} {2\lambda_1}}: 2\le i\le n\}$ associated with the eigenvectors $\{\xi'_i=e_i+b'_i,\xi'_{i+n}=e_{i+n}+b'_{i+n}\}$ respectively, where $|b'_i|,|b'_{i+n}|\le o(r)$ with extra properties $|b'_{i,\hat v}|\le \epsilon(1+\varepsilon^2 e^{(\lambda_n-\lambda_1)t''_E})+\varepsilon^2$, and $|b'_{n+i,\hat u}|\le |E|^{\nu_i/2\lambda_1}$ with $0<\nu_i<\lambda_i-(1-\nu')\lambda_1$.
\end{lem}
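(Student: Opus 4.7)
The plan is to reduce the lemma to Lemmas \ref{lem7.5} and \ref{lem7.9} by exploiting the symmetry that swaps the roles of $u$ and $v$ together with time-reversal. The map $\Phi_{E,r,0}$ is defined by following the flow $\Phi_H^t$ forward along a piece of orbit close to the local stable manifold $\{u=0\}$, from the section $\{v_1=r\}$ to $\{u_1=v_1\}$. Its counterpart $\Phi_{E,0,r}$ is defined by following the flow forward along a piece of orbit close to the local unstable manifold $\{v=0\}$, from $\{u_1=v_1\}$ to $\{u_1=r\}$. Replacing $t$ by $-t$ and interchanging $(u,v)\leftrightarrow(v,u)$ sends one situation into the other: the Birkhoff normal form \eqref{birkhoffnormalform} is invariant under this involution, the remainder $R$ is still of order $\ge 2\kappa+2$, and the condition $\partial_uR(u,0)=0$, $\partial_vR(0,v)=0$ from \eqref{eq2.5} is preserved. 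So I would first establish that $\Phi_{E,0,r}^{-1}$ is conjugate, via this involution, to a map whose structure is identical to that of $\Phi_{E,r,0}$, so that Lemma \ref{lem7.5} (as refined in Lemma \ref{lem7.9}) applies verbatim to $\Phi_{E,0,r}^{-1}$.

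Concretely, I would run the argument of Section \ref{sec.7} on the orbit segment $z^+_E(t)|_{[\tau_E,t_E]}$ from $\{u_1=v_1\}$ to $\{u_1=r\}$, reading it backward in time. Let $\tau'_E=t_E-\tau_E$. On this segment the periodic orbit keeps close to the unstable manifold; by the same Gronwall-type argument that produced \eqref{Birkhoffcontrol3}, now the $u$-component is bounded by $|u_E(t)|\le(|u_E(t_E)|+\nu r^{2\kappa})e^{-\lambda_{i,E}(t_E-t)}$, while the $v$-component stays of order $r\sqrt{|E|}$. Therefore the variational equation along this segment has the same block structure as \eqref{variational-equation-for-Birkhoff-normal-form} once $(u,v)$ and $(\xi_u,\xi_v)$ are interchanged, and Lemma \ref{lem7.2} yields, for the backward fundamental matrix $\tilde Z(s)$ with $\tilde Z(0)=I$, an estimate of the form
\begin{equation*}
\tilde Z(s)-e^{(-\Lambda,\Lambda)s}\prec r^2 e^{(-\Lambda,\Lambda)s}(\tilde N_s\tilde D_0+\tilde D_1),
\end{equation*}
where $\tilde N_s$ is obtained from $N_s$ in \eqref{MandNfort} by the $u\leftrightarrow v$ swap. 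The same projection procedure used after Lemma \ref{lem7.4} (using now the section $\{u_1=r\}$ and the tangent constraint coming from $H=E$) produces a matrix $\tilde\Psi_E$ representing $d\Phi_{E,0,r}^{-1}$ whose structure entries satisfy the mirror of \eqref{eq7.21} and \eqref{eq7.26}.

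At this point Lemma \ref{lem7.5} applied to $\tilde\Psi_E$ provides $n-1$ pairs of eigenvalues $\{\sigma'_i|E|^{-\lambda_i/(2\lambda_1)},\sigma'^{-1}_i|E|^{\lambda_i/(2\lambda_1)}\}$ and normalised eigenvectors $\tilde\xi_i=e_i+\tilde b_i$, $\tilde\xi_{i+n}=e_{i+n}+\tilde b_{i+n}$ of $\tilde\Psi_E^{-1}=d\Phi_{E,0,r}$, with $|\tilde b_i|,|\tilde b_{i+n}|=o(r)$. The refined bounds are obtained from Lemma \ref{lem7.9}: the argument there bounded the $\hat u$-component of the eigenvector for the small eigenvalue and the $\hat v$-component for the large one by comparing the dominant diagonal entry with the off-diagonal block. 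Running the same row-comparison on $\tilde\Psi_E$, in which the roles of the $\hat u$- and $\hat v$-blocks are interchanged, gives $|\tilde b_{i,\hat v}|\le \epsilon(1+\varepsilon^2 e^{(\lambda_n-\lambda_1)t''_E})+\varepsilon^2$ (coming from the analog of \eqref{eq7.26}, now controlled on the segment near the unstable manifold of length $\tau'_E-t''_E\to t''$ as $E\to 0$) and $|\tilde b_{n+i,\hat u}|\le|E|^{\nu_i/(2\lambda_1)}$ for $\nu_i<\lambda_i-(1-\nu')\lambda_1$.

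I expect the only delicate point to be the bookkeeping for the improved bound $|b'_{i,\hat v}|\le\epsilon(1+\varepsilon^2 e^{(\lambda_n-\lambda_1)t''_E})+\varepsilon^2$. This requires splitting the orbit on $[\tau_E,t_E]$ at the section $\{u_1=\varepsilon\}$ so that the fundamental matrix factors as $\tilde Z^*_E(t''_E)\,\tilde Z'_{E}(\tau'_E-t''_E)$, where $\tilde Z'_E$ is computed along the piece close to the outer part of the unstable manifold (an analog of \eqref{ZforsmallE} with $r^{2\kappa-1}\epsilon$ in the upper-right $\hat v\times \hat u$ block), and $\tilde Z^*_E$ is the Birkhoff piece controlled by Lemma \ref{lem7.2}. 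Composing these two matrices, exactly as was done after \eqref{FundamentalMatrix}, then tracking how the small $\epsilon$-entry propagates through projection onto $T_{z^-_{E,r}}U_{E,r}$ and finally into the eigenvector perturbation, is the only place where care is needed; everything else is a routine transcription of the $\Phi_{E,r,0}$-argument under the $u\leftrightarrow v$ involution.
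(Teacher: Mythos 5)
Your proposal is correct and follows essentially the same route as the paper, which simply invokes the $u\leftrightarrow v$ involution to reduce $d\Phi_{E,0,r}^{-1}$ to the structure of $d\Phi_{E,r,0}$ and then cites Lemmas \ref{lem7.5} and \ref{lem7.9}; your walkthrough supplies the details the paper leaves implicit. One small bookkeeping slip: the length of the outer segment near the unstable manifold is $t_E-t''_E$ (which stays bounded as $E\to 0$), not $\tau'_E-t''_E$, but this does not affect the argument.
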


By applying Proposition \ref{productofeigenvalue} on $\Phi_{E,r,r}=\Phi_{E,0,r}\Phi_{E,r,0}$, we see that $d\Phi_{E,r,r}$ has $(n-1)$ pairs of eigenvalues $\{\mu_i|E|^{-\lambda_i/\lambda_1}, \mu^{-1}_i|E|^{\lambda_i/\lambda_1}: i=2\cdots n\}$ with the eigenvectors $\hat\eta_i=e_{i}+o(r)$ and $\hat\eta_{i+n}=e_{i+n}+o(r)$ respectively. We next exploit more precise properties of $\hat\eta_i-e_i$ and $\hat\eta_{i+n}-e_{i+n}$. With
$$
\alpha=2\max\{\epsilon(1+\varepsilon^2 e^{(\lambda_n-\lambda_1)t})+\varepsilon^2, |E|^{\nu_i/2\lambda_1}:t\in\{t'_E,t''_E\},i=2,\cdots,n\},
$$
we define the cones
$$
\begin{aligned}
\hat K^-_{\alpha}&=\{(\hat u,\hat v)\in \mathbb{R}^{2n-2}:\alpha|\hat u|\ge |\hat v|\},\\
\hat K^+_{\alpha}&=\{(\hat u,\hat v)\in \mathbb{R}^{2n-2}:\alpha|\hat v|\ge |\hat u|\}.
\end{aligned}
$$
We are going to show that both $d\Phi_{E,r,0}$ and $d\Phi_{E,0,r}$ map the cone $\hat K^-_{\alpha}$ into itself and their inverse maps $\hat K^+_{\alpha}$ into itself either.

Let $E_u=\mathrm{Span}\{\xi_2,\cdots,\xi_n\}$ and $E_v=\mathrm{Span}\{\xi_{2+n}, \cdots,\xi_{2n}\}$. Any $\xi\in\mathbb{R}^{2(n-1)}$ has a decomposition $\xi=\xi_u+\xi_v$ such that $\xi_u\in E_u$ and $\xi_v\in E_v$. Because of Lemma \ref{lem7.9}, we have $\xi_u\in\hat K^-_{\alpha/2}$. Hence, $\xi\in K^-_{\alpha}$ implies $|\xi_v|\le\frac{\alpha}{2}|\xi_u|$. In the decomposition $\xi'=d\Phi_{E,r,0}\xi=\xi'_u+\xi'_v$ with $\xi_u\in\hat K^-_{\alpha/2}$, $\xi'_u\in E_u$ and $\xi'_v\in E_v$, we have $$
|\xi'_v|\le \mu_2^{-1}|E|^{\frac{\lambda_2}{\lambda_1}}|\xi_v|\le\frac{\alpha}{2\mu_2} |E|^{\frac{\lambda_2}{\lambda_1}}|\xi_u|\le\frac{\alpha}{2\mu_2^2} |E|^{2\frac{\lambda_2}{\lambda_1}}|\xi'_u|.
$$
It implies that $\xi'\in\hat K^-_{\alpha/2}$ provided $|E|$ is small, i.e. $\hat K^-_{\alpha}$ is invariant for $d\Phi_{E,r,0}$. With the same argument, we see that $\hat K^-_{\alpha}$ is invariant for $d\Phi_{E,0,r}$. It proves the invariance of $\hat K^-_{\alpha}$ for $d\Phi_{E,r,r}$. By the same reason, we see that the cone $K^+_\alpha$ is invariant for the inverse of $d\Phi_{E,r,r}$.

Obviously, the eigenvectors $\{\hat\eta_2,\cdots,\hat\eta_n\}$ fall into the cone $\hat K^-_\alpha$ and the eigenvectors $\{\hat\eta_{2+n},\cdots,\hat\eta_{2n}\}$ fall into the cone $\hat K^+_\alpha$. Let $\hat\eta_i=(\eta_{i,\hat u},\eta_{i,\hat v})$ and $\hat\eta_{i+n}=(\eta_{i+n,\hat u},\eta_{i+n,\hat v})$, that $\hat\eta_i\in\hat K^-_{\alpha}$ and $\hat\eta_{i+n}\in\hat K^+_{\alpha}$ implies
$$
|\eta_{i,\hat v}|\le \alpha|\eta_{i,\hat u}|,\qquad
|\eta_{i+n,\hat u}|\le \alpha\eta_{i+n,\hat v}|.
$$
We can choose $\alpha\to 0$ as $E\to 0$ because $\varepsilon$ can be set sufficiently small if $|E|$ is small and $\epsilon\to 0$ as $E\to 0$. So, we complete the proof for $d\Phi_{E,r,r}$. The proof for $d\Phi_{-E,-r,r}$ and for $d\Phi_{-E,r,-r}$ is similar.
\end{proof}

What remains to complete the section is the proof of Proposition \ref{productofeigenvalue}, we do it now.

\begin{proof}[Proof of Proposition \ref{productofeigenvalue}]
Let $v_0\in E_0$ and $v'_0\in E'_0$ be unit vector such that $\langle v,v'_0\rangle\ge |v||v'_0|(1-\delta)$, we consider codimension-one affine manifolds $L=E_s\oplus E_\ell +v_0$ and $L'=E'_s\oplus E'_\ell +v'_0$. A map $T$ between $L$ and $L'$ is introduced as follows. Connecting a point $v\in L$ with the origin, we get a line that intersects $L'$ at a point $v'$. The map $T$ is defined such that $T$: $v\to Tv=v'$. Since $E_\imath$ is close to $E'_\imath$ for $\imath=s,0,\ell$, $T$ is an affine map close to identity, some constant $\mu=\mu(\alpha)\ge 1$ exists such that $|T0|\le \mu\delta$ and $\|DT-I\|\le \mu\delta$.

Each point $v\in L$ admits a decomposition $v=v_s+v_\ell+v_0$. Correspondingly the point $v'=Tv$ admits a decomposition $v'=v'_s+v'_\ell+v'_0$ with $\langle v_\imath,v'_\imath\rangle\ge |v_\imath||v'_\imath|(1-O(\delta))$ for $\imath=s,\ell$. The map $\Psi$ induces a map $\Psi_0$: $L_0\to L_0$
\begin{equation}\label{shensuo}
\Psi_0 v=\frac{1}{\sigma_0}\Psi v=\frac{1}{\sigma_0}\Psi(v_s+v_\ell)+v_0,
\end{equation}
The map $\Psi'_0$: $L'\to L'$ is defined similarly. We consider the map $M=T^{-1}\Psi'_0T\Psi_0$, it induces a contraction map on graphs as we are going to study in the following.


For affine map $F:E_s\to E_\ell$, its graph is defined to be the set $\mathcal{G}_F=\{(z_s,F(z_s),v_0):z_s\in E_s\}$. Any affine map $F$ induces another affine map $\mathscr{M}^{-1}F$ such that $M^{-1}\mathcal{G}_F= \mathcal{G}_{\mathscr{M}^{-1}F}$.
We introduce a set of affine maps $\mathfrak{F}_{R,N}$: $F\in\mathfrak{F}_{R,N}$ implies $\|F\|\le N$ and $\|DF\|\le 1$, where  $\|F\|=\max_{|z_s|\le R}|F(z_s)|$.

\begin{lem}\label{lem7.3}
If $1\le N\le 2R$, $\mu\delta\le\frac{1}{12}$, $\min\{\frac{\sigma'_{\ell}}{\sigma'_0}, \frac{\sigma_{\ell}}{\sigma_0}\}\ge 2\max\{2,1+\mu\delta R\}$ and $\min\{\frac{\sigma_0}{\sigma_{s}},\frac{\sigma'_0}{\sigma'_{s}}\}\ge\sqrt{2}$, then there exists a unique $F\in\mathfrak{F}_{R,N}$ such that $\mathscr{M}^{-1}F=F$.
\end{lem}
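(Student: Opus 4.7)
The plan is to apply Banach's fixed point theorem to the graph transform $\mathscr{M}^{-1}: \mathfrak{F}_{R,N} \to \mathfrak{F}_{R,N}$ equipped with the $C^0$ metric induced by $\|\cdot\|$. First I would identify $L$ with $E_s \oplus E_\ell$ via the affine structure and decompose the restriction of $M = T^{-1}\Psi'_0 T \Psi_0$ to $L$ in block form as $(z_s, z_\ell) \mapsto (Az_s + Bz_\ell + c_s,\ Cz_s + Dz_\ell + c_\ell)$. Since $\Psi_0$ and $\Psi'_0$ are block-diagonal with respect to the $E_s \oplus E_\ell$ splittings and $T$ has $\|T - I\| \le \mu\delta$ with $\|T 0\| \le \mu\delta$, the diagonal blocks $A, D$ are $O(\mu\delta)$-perturbations of $(\sigma_s \sigma'_s)/(\sigma_0 \sigma'_0)\,I$ and $(\sigma_\ell \sigma'_\ell)/(\sigma_0 \sigma'_0)\,I$ respectively, while the off-diagonal blocks $B, C$ and the constants $c_s, c_\ell$ are all $O(\mu\delta)$.

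Given $F \in \mathfrak{F}_{R,N}$, the relation $M\mathcal{G}_{\mathscr{M}^{-1}F} = \mathcal{G}_F$ is equivalent, for each fixed $z_s$ with $|z_s| \le R$, to the pointwise implicit equation
\begin{equation*}
z_\ell \;=\; D^{-1}\bigl(F(Az_s + Bz_\ell + c_s) - Cz_s - c_\ell\bigr).
\end{equation*}
The right-hand side as a function of $z_\ell$ has Lipschitz constant at most $\|D^{-1}\|\,\|DF\|\,\|B\|$, which the hypotheses force to be well below $1$, so a pointwise Banach fixed point defines $G(z_s) := \mathscr{M}^{-1}F(z_s)$. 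To verify $G \in \mathfrak{F}_{R,N}$ I would estimate $|G(z_s)|$ directly from the fixed-point equation using $\|F\| \le N$, $|C z_s| \le O(\mu\delta R)$ and $|c_\ell| \le O(\mu\delta)$, obtaining $\|G\| \le \|D^{-1}\|(N + O(\mu\delta R)) \le N$ because $\sigma_\ell\sigma'_\ell/(\sigma_0\sigma'_0) \ge 4(1+\mu\delta R)^2$; implicit differentiation gives $\|DG\| \le (\|D^{-1}\|\|A\|\|DF\| + \|D^{-1}\|\|C\|)/(1 - \|D^{-1}\|\|DF\|\|B\|) \le 1$ by the same ratios combined with $\|A\| \le 1/2$ coming from $\min\{\sigma_0/\sigma_s, \sigma'_0/\sigma'_s\} \ge \sqrt 2$. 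For the contraction, subtracting the defining equations for $G_i := \mathscr{M}^{-1}F_i$ and inserting $F_2(Az_s + BG_1 + c_s)$ yields
\begin{equation*}
\|G_1 - G_2\| \;\le\; \frac{\|D^{-1}\|}{1 - \|D^{-1}\|\,\|B\|}\,\|F_1 - F_2\|,
\end{equation*}
and $\|D^{-1}\| \le 1/16$ makes this a strict contraction on the complete metric space $(\mathfrak{F}_{R,N}, \|\cdot\|)$, producing the unique fixed point.

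The main obstacle is simply the bookkeeping of constants: the four numerical hypotheses are tuned so that the affine perturbation introduced by $T$ (whose pointwise size is $\mu\delta$, but becomes $\mu\delta R$ once converted to $C^0$ bounds over the ball $\{|z_s| \le R\}$) is exactly absorbed by the expansion gap on $E_\ell$. In particular, the hybrid quantity $2\max\{2, 1+\mu\delta R\}$ plays two roles: the branch $1+\mu\delta R$ is what forces $\|G\| \le N$, while the branch $2$ is what forces the Lipschitz constant of the implicit equation (and of the contraction on $\mathfrak{F}_{R,N}$) to stay uniformly below $1$. Once this accounting is carried out, invariance, contraction and uniqueness all follow without further issue, and the theorem is established.
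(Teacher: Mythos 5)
Your block-decomposition approach has a genuine gap: the claim that the off-diagonal blocks $B,C$ of $M=T^{-1}\Psi'_0T\Psi_0$ are $O(\mu\delta)$, and that $A,D$ are $O(\mu\delta)$-perturbations of scalar multiples of the identity, does not follow from the hypotheses. Item (3) of Proposition \ref{productofeigenvalue} only provides one-sided bounds: $|\Psi v|\ge\sigma_\ell|v|$ on $E_\ell$, $|\Psi v|\le\sigma_s|v|$ on $E_s$, and likewise for $\Psi'$. There is no upper bound on $\|\Psi|_{E_\ell}\|$ or $\|\Psi'|_{E'_\ell}\|$, and the remark at the end of Section \ref{sec.7} stresses that this is essential: in the application half the eigenvalues of $d\Phi_{E,\pm r,0}$ tend to $\infty$ as $|E|\to 0$. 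Since $E_\imath$ and $E'_\imath$ are only $O(\delta)$-close (not equal), a vector $v\in E_s$ is carried by $T\Psi_0$ to a vector whose $E'_\ell$-component is of size $\sim\delta(\sigma_s/\sigma_0)|v|$, and $\Psi'_0$ then amplifies this component by $\|\Psi'_0|_{E'_\ell}\|$, which may be arbitrarily larger than $\sigma'_\ell/\sigma'_0$. Thus $\|C\|$, and by a symmetric computation $\|B\|$ and even $\|A\|$, can be unbounded. Your estimates such as $\|D^{-1}\|\,\|B\|<1$, $\|A\|\le\frac12$, and the contraction factor $\|D^{-1}\|/(1-\|D^{-1}\|\|B\|)$ all use products of operator norms of individual blocks, and they break down precisely in the regime the proposition is designed for.

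What makes the paper's argument work is that it never estimates the blocks of $M$ in isolation: it tracks the \emph{ratio} $|\xi_\ell|/|\xi_s|$ of tangent components and the $C^0$ size of the graph step by step through the four factors $T,\Psi'^{-1}_0,T^{-1},\Psi_0^{-1}$, and at each stage the needed inequality (e.g. $\|\bar F'\|\le\frac{\sigma'_0}{\sigma'_\ell}\|F'\|$) uses only the given one-sided contraction on $E'_\ell$ together with the one-sided expansion on $E'_s$ to control the domain of definition. The large expansion is harmless because it only appears where it helps (contracting the preimage in $E'_s$). To rescue your version you would need to bound the compound operators $D^{-1}B$ and $D^{-1}C$ directly rather than via $\|D^{-1}\|\cdot\|B\|$ etc., exploiting the cancellation between the expansion in $D$ and the same expansion hiding in $B$ and $C$; making that precise is essentially a disguised form of the paper's ratio argument, so you have not found a shortcut.
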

\begin{proof}
Let $\pi_\imath$: $\mathbb{R}^d\to E_\imath$ denote the projection for $\imath=s,\ell$.
For a vector $\xi\in T_z\mathcal{G}_F$, let $\xi_s=\pi_s\xi$ and $\xi_\ell=\pi_\ell\xi$. If $\|DF\|\le 1$, then $|\xi_\ell|\le |\xi_s|$ holds for $(\xi_s,\xi_\ell)\in T_v\mathcal{G}_F$. Since $\mathscr{M}^{-1}=\Psi_0^{-1}T^{-1}\Psi'^{-1}_0T$, if we write $(\xi'_s,\xi'_\ell)=DT(\xi_s,\xi_\ell)$, $(\bar\xi'_s,\bar\xi'_\ell)=D\Psi_0^{-1}(\xi'_s,\xi'_\ell)$, $(\bar\xi_s,\bar\xi_\ell)=DT^{-1}(\bar\xi'_s,\bar\xi'_\ell)$ and $(\xi^*_s,\xi^*_\ell)=D\Psi^{-1}_0(\bar\xi_s,\bar\xi_\ell)$,
then one obtains step by step
$$
\begin{aligned}
&|\xi'_\ell|\le (1+\mu\delta)|\xi'_s|, \quad && |\bar\xi'_\ell|\le (1+\mu\delta)\frac{\sigma_{s}}{\sigma_{\ell}}|\bar\xi'_s|,\\
&|\bar\xi_\ell|\le (1+\mu\delta)^2\frac{\sigma_{s}}{\sigma_{\ell}}|\bar\xi_s|, \quad && |\xi_\ell^*|\le (1+\mu\delta)^2\frac{\sigma_{s}\sigma'_{s}} {\sigma_{\ell}\sigma'_{\ell}}|\xi_s^*|.
\end{aligned}
$$
It follows from the condition that $(1+\mu\delta)^2\frac{\sigma_{s}\sigma'_{s}} {\sigma_{\ell}\sigma'_{\ell}}<1$. Therefore, $\|DF\|\le 1$ implies $\|D\mathscr{M}^{-1}F\|\le 1$.

For a map $F\in\mathfrak{F}_{R,M}$, let $F'$, $\bar F'$, $\bar F$ and $F^*$ be the maps such that $\mathcal{G}_{F'}=T\mathcal{G}_{F}$, $\mathcal{G}_{\bar F'}=\Psi'^{-1}_0 \mathcal{G}_{F'}$, $\mathcal{G}_{\bar F}=T^{-1}\mathcal{G}_{\bar F'}$ and $\mathcal{G}_{F^*}=\Psi^{-1}_0\mathcal{G}_{\bar F}$. Hence, $\|F'\|\le(1+\mu\delta  R)N+\mu\delta$, $\|\bar F'\|\le\frac{\sigma'_0}{\sigma'_{\ell}} \|F'\|$, $\|\bar F\|\le(1+\mu\delta R)\|\bar F'\|+\mu\delta$ and $\|F^*\|\le\frac{\sigma_0} {\sigma_{\ell}} \|\bar F\|$. Since $N\ge 1$,
\begin{equation}\label{positionofgraph}
\|F^*\|\le\frac{\sigma_0}{\sigma_{\ell}}\Big[\frac{\sigma'_0}{\sigma'_{\ell}} \Big((1+\mu\delta R)N+\mu\delta\Big)(1+\mu\delta)+\mu\delta\Big]<N.
\end{equation}
By the assumptions, we find $2(1+\mu\delta R)<\frac{\sigma'_{\ell}}{\sigma'_0}$ and $2(1+2\mu\delta(1+\mu\delta)) <\frac{\sigma_{\ell}}{\sigma_0}$. In this case, $\mathscr{M}^{-1}$ maps $\mathfrak{F}_{R,M}$ into itself.

For affine maps $F_1,F_2$: $E_s\to E_\ell$ with $\|DF_1\|,\|DF_2\| \le 1$, we have $F^*_1=\mathscr{M}^{-1}F_1$ and $F^*_2=\mathscr{M}^{-1}F_2$. To check $\|\mathscr{M}^{-1}F_1-\mathscr{M}^{-1}F_2\|$, we notice that a vector $\Delta v^*=v^*_2-v^*_1$ is mapped to $\Delta v$ by $M=T^{-1}\Psi'_0T\Psi_0$ through the procedure
$$
\Delta v^*\stackrel{\Psi_i}{\longrightarrow}\Delta\bar v\stackrel{T} {\longrightarrow}\Delta\bar v'\stackrel{\Psi'_0}{\longrightarrow}\Delta z'\stackrel{T^{-1}}{\longrightarrow}\Delta v.
$$
For $v^*_s\in E_s$, let $v^*_j=(v^*_s,F^*_j(v^*_s))+ v_0$ and $\Delta v^*=v^*_2-v^*_1$. If we set $\Delta v^*=(\Delta v^*_s,\Delta v^*_\ell)$, then $\Delta v^*_s=0$. Hence, we have  $|\Delta\bar v_\ell|\ge\frac{\sigma_{\ell}}{\sigma_0} |\Delta v^*_\ell|$ and $|\Delta\bar v_s|=0$. Since $T$ is close to identity, $|\Delta\bar v'_\ell| \ge(1-\mu\delta)\frac{\sigma_{\ell}}{\sigma_0}|\Delta v^*|$ and $|\Delta\bar v'_s|\le \mu\delta|\Delta\bar v'_\ell|$. Applying $\Psi'_0$ to $\Delta\bar v'$ we get
\begin{equation}\label{difference-0}
\begin{aligned}
|\Delta v'_\ell|&\ge\frac{\sigma'_{\ell}}{\sigma'_0}|\Delta\bar v'_\ell| \ge(1-\mu\delta) \frac{\sigma_{\ell}\sigma'_{\ell}}{\sigma_0\sigma'_0}|\Delta v^*|, \\
|\Delta v'_s|&\le\frac{\sigma'_{s}}{\sigma'_0}|\Delta\bar v'_s|\le \mu\delta \frac{\sigma'_{s}}{\sigma'_{0}}|\Delta\bar v'_{\ell}|\le \mu\delta \frac{\sigma'_{s}}{\sigma'_{\ell}}|\Delta v'_{\ell}|.
\end{aligned}
\end{equation}
Applying $T^{-1}$ to $\Delta v'$ and by assuming $(1+\mu\delta)\frac{\sigma'_{s}}{\sigma'_{\ell}}\le 1$ we get
\begin{equation}\label{difference}
\begin{aligned}
|\Delta v_s|&\le(1+\mu\delta)|\Delta v'_s|+\mu\delta|\Delta v'_\ell|\le 2\mu\delta|\Delta v'_\ell|\\
|\Delta v_\ell|&\ge (1-\mu\delta)|\Delta v'_\ell|-\mu\delta|\Delta v'_s|\ge (1-2\mu\delta) |\Delta v'_\ell|.
\end{aligned}
\end{equation}
Let $v_j=(v_{s,j},v_{\ell,j},v_0)=Mv^*_j$. Since $\|DF_j\|\le 1$ for $j=1,2$, we obtain from \eqref{difference} and \eqref{difference-0} that
$$
\begin{aligned}
|F_1(v_{s,1})-F_2(v_{s,1})|&\ge|F_1(v_{s,1})-F_2(v_{s,2})|-|F_2(v_{s,1})-F_2(v_{s,2})|\\
&\ge (1-\mu\delta)(1-4\mu\delta)\frac{\sigma_{\ell}\sigma'_{\ell}}{\sigma_0\sigma'_0} |F^*_1(v_{s,1})-F^*_2(v_{s,1})|.
\end{aligned}
$$
If we choose $v^*_{s,1}\in\{|v_s|\le R\}$ such that $|F^*_1(v^*_{s,1})-F^*_2(v^*_{s,1})| =\|F^*_1-F^*_2\|$ and if $v_{s,1}\in\{|v_s|\le R\}$, we obtain that
\begin{equation}\label{yasuo}
\|F_1-F_2\|\ge|F_1(v_{s,1})-F_2(v_{s,1})|\ge 2\|F^*_1-F^*_2\|,
\end{equation}
because the conditions of the proposition ensure $(1-\mu\delta)(1-4\mu\delta)\frac{\sigma_{\ell}\sigma'_{\ell}}{\sigma_0\sigma'_0}\ge 2$. We derive from Banach's fixed point theorem the existence and uniqueness of the fixed point $F_0\in\mathfrak{F}_{R,N}$ for $\mathscr{M}^{-1}$.

Hence, what remains to prove is $v_{s,1}\in\{|v_s|\le R\}$. Let $(v^*_s,v^*_u,v_0)=M^{-1}(v_s,v_u,v_0)$, one has
$$
\begin{aligned}
|v^*_s|\ge\frac{\sigma_0}{\sigma_{s}}\Big[&(1-\mu\delta)\frac{\sigma'_0}{\sigma'_{s}} \Big((1-\mu\delta)|v_s|-\mu\delta|v_\ell|-\mu\delta\Big)\\
&-\mu\delta\frac{\sigma_0}{\sigma_{\ell}}\Big((1+\mu\delta)|v_s|+\mu\delta|v_\ell| +\mu\delta\Big)-\mu\delta \Big].
\end{aligned}
$$
Therefore, for any $(v_s,v_u,v_0)$ with $|v_s|=R$, $|v_\ell|\le N$ and small $\delta>0$, its image $(v^*_s,v^*_u,v_0)=M^{-1}(v_s,v_\ell,v_0)$ satisfies the condition $|v^*_s|>\frac12\frac{\sigma_0\sigma'_0}{\sigma_{s}\sigma'_{s}}R\ge R$. Since it holds for all $(v_s,v_u,v_0)\in\mathcal{G}_F$ with $|v_s|=R$ that $|v_\ell|\le N$, we see that $\pi_sM^{-1}\mathcal{G}_F\supset\{|v_s|\le R\}$. It implies that $v_{s,1}\in\{|v_s|\le R\}$.
\end{proof}

We also study the set $\mathfrak{G}_{R,N}$ of affine maps $G:E_\ell\to E_s$, which is defined in the same way as $\mathfrak{F}_{R,N}$. The map $M$ induces a map $G\to\mathscr{M}G$. Similar to the proof of Lemma \ref{lem7.3}, we see the existence and uniqueness of the fixed point $G_0=\mathscr{M}G_0\in\mathfrak{G}_{R,N}$. Both graphs intersect at one point $v\in\mathcal{G}_F\cap\mathcal{G}_G$ which is the fixed point of $M$. Recall the definition of $M$, the line passing through $\Psi(v_0+v)$ and the origin intersects the affine manifold $L'$ at a point $\frac 1{\sigma_0}T\Psi(v_0+v)$ which is mapped by $\Psi'$ to a point lying on the line connecting $v_0+v$, namely, $\Psi'\Psi(v_0+v)$ is a point lying on the line passing through $v_0+v$ and the origin, i.e. $v_0+v$ is an eigenvector of $\Psi'\Psi$. Hence, to complete the proof of Proposition \ref{productofeigenvalue}, we only need to localize $v$ and get an estimate on the eigenvalue.

Let $R=2$, we consider a map $F$ with $\|F\|\le 2\mu\delta$. Repeating the procedure to get \eqref{positionofgraph} we have
$$
\|\mathscr{M}^{-1}F\|\le\frac{\sigma_0}{\sigma_{\ell}}\Big[\frac{\sigma'_0} {\sigma'_{\ell}}\Big((1+2\mu\delta)2\mu\delta+\mu\delta\Big)(1+\mu\delta)+\mu\delta\Big] <2\mu\delta,
$$
i.e. $\mathscr{M}^{-1}$ maps $\mathfrak{F}_{2,2\mu\delta}$ into itself. It implies $|v_\ell|\le 2\mu\delta$. Similarly, $\mathscr{M}$ maps $\mathfrak{G}_{2,2\mu\delta}$ into itself either, which implies $|v_s|\le 2\mu\delta$. So we have $|v|\le 2\mu\delta$. Let $v^*_0=\frac{v_0+v}{|v_0+v|}$, then $\langle v_0,v^*_0\rangle\ge 1-2\mu\delta$.

To study the eigenvalue, we use the relation $v'_0+\bar v'=T\Psi_0(v_0+v)= \Psi'^{-1}_0T(v_0+v)$, since both $G_0$ and $F_0$ are invariant for $\mathscr{M}$. From the relation $v'_0+\bar v'=T\Psi_0(v_0+v)$ we see that $|\pi_s\bar v'|\le 2\mu\delta$, from the relation $v'_0+\bar v'=\Psi'^{-1}_0T(v_0+v)$ we see that $|\pi_\ell\bar v'|\le 2\mu\delta$, i.e. $|\bar v'|\le 2\mu\delta$. By the definition of $v'_0+\bar v'$ and $T$, some $\nu\in[-3\mu\delta,3\mu\delta]$ exists such that $\Psi(v_0+v)=(1+\nu)\sigma_0(v'_0+\bar v')$. Because $T^{-1}\Psi'_0(v'_0+\bar v')=v_0+v$, one has $\Psi'(v'_0+\bar v')=(1+\nu')\sigma'_0(v_0+v)$ with some $\nu'\in[-3\mu\delta,3\mu\delta]$. It follows that
$$
\Psi'\Psi(v_0+v)=(1+\nu)(1+\nu')\sigma_0\sigma'_0(v_0+v),
$$
namely, we have $\sigma_0^*=(1+O(\delta))\sigma_0\sigma'_0$.
\end{proof}

\noindent{\bf Remark}. It is crucial in Proposition \ref{productofeigenvalue} that the number $\delta$ is independent of size of the eigenvalues of $\Psi$ and $\Psi'$. In the application, half eigenvalues of $d\Phi_{E,0,-r}$ and of $d\Phi_{E,\pm r,0}$ approach infinity while the other half approach $0$ as $|E|\to 0$.

\section{Applications}\label{sec.8}
The study of nearly integrable Hamiltonian systems was thought by Poincar\'e to be a fundamental problem of dynamics. Soon after Kolmogorov's theorem was established, Arnold discovered the dynamical instability in \cite{A64} and proposed a conjecture about nearly integrable Hamiltonians,
\begin{equation}\label{EqHam}
H(x,y)=h(y)+\epsilon P(x,y),\qquad (x,y)\in\mathbb{T}^d\times\mathbb{R}^d
\end{equation}
now it is named after him as the conjecture of Arnold diffusion

{\bf Conjecture} (\cite{A66}): {\it The ``general case" for a Hamiltonian system \eqref{EqHam} with $d\ge 3$ is represented by the situation that for an arbitrary pair of neighborhood of tori $y=y'$, $y=y''$, in one component of the level set $h(y)=h(y')$ there exists, for sufficiently small $\eps$, an orbit intersecting both neighborhoods.}

In the study of Arnold diffusion, especially after the diffusion in {\it a priori} unstable case has been solved in the works \cite{CY04,DLS,Tr,B08,CY09,Z11}, the main difficulty is to cross double resonance, as foreseen by Arnold in \cite{A66}. The study of the problem was initiated by Mather \cite{M04,M09} and it has been solved in \cite{C17a,C17b,CZ16} in a way by skirting around the double resonant point, where some abstruse theories was involved. It is of great interest to explore a way easier to visualize, to understand.

With Theorem \ref{mainresult}, we are surprised to see that the method for {\it a priori} unstable case still works for the construction of diffusion orbits passing through double resonance, since along the prescribed resonant path there still exists a NHIC with compound type homology class passing double resonance. It is not necessary to switch from the path of compound type homology class to a path of single homology class, as suggested by Mather. To this end, a special case of Theorem \ref{mainresult} for $n=2$ with the type of single homology class was announced in \cite{Mar,KZ} without complete proof.

Along a path of compound type homology class, there exist two pairs of homoclinic orbits $\{z_1^\pm(t),z_2^\pm(t)\}$ associated with positive integers $k_1,k_2$ such that class of the path is $k_1[z_1^+(t)]+k_2[z_2^+(t)]$, and $\langle [z_1^+(t)],[z_2^+(t)]\rangle>-\|[z_1^+(t)]\|\|[z_2^+(t)]\|$. In this case, the condition ({\bf H3}) holds. Therefore, there is a $C^1$-normally hyperbolic cylinder passing through double resonance, along which diffusion orbits are constructed by the method developed in \cite{CY04,CY09}. What is more, it reminds us of a possible way to cross multiple resonance in the systems with arbitrarily many degrees of freedom. We shall discuss it in another paper.

\noindent{\bf Acknowledgement}. The authors are supported by NNSF of China (No.11790272 and No.11631006). They thank J. Xue and J. Zhang for helpful discussions.

\end{document}